\newtheorem{theorem}{Theorem}[section]
\newtheorem{proposition}[theorem]{Proposition}
\newtheorem{corollary}[theorem]{Corollary}
\newtheorem{lemma}[theorem]{Lemma}
\newtheorem{question}[theorem]{Question}
\theoremstyle{definition}
\newtheorem{example}[theorem]{Example}
\newtheorem{definition}[theorem]{Definition}
\newtheorem{definition-proposition}[theorem]{Definition-Proposition}
\newtheorem{remark}[theorem]{Remark}
\newcommand{\Hom}{\operatorname{Hom}\nolimits}
\renewcommand{\mod}{\mathsf{mod}\hspace{.01in}}
\newcommand{\proj}{\mathsf{proj}\hspace{.01in}}
\newcommand{\inj}{\mathsf{inj}\hspace{.01in}}
\newcommand{\xto}{\xrightarrow}
\newcommand{\im}{\operatorname{Im}\nolimits}
\newcommand{\pd}{\operatorname{pd}\nolimits}
\newcommand{\End}{\operatorname{End}\nolimits}
\newcommand{\Ext}{\operatorname{Ext}\nolimits}
\newcommand{\op}{\operatorname{op}\nolimits}
\newcommand{\ann}{\operatorname{ann}\nolimits}
\newcommand{\rad}{\operatorname{rad}\nolimits}
\newcommand{\Cok}{\operatorname{Cok}\nolimits}
\newcommand{\Ker}{\operatorname{Ker}\nolimits}
\newcommand{\Tr}{\operatorname{Tr}\nolimits}
\newcommand{\CC}{{\mathcal C}}
\newcommand{\TT}{{\mathcal T}}
\newcommand{\FF}{{\mathcal F}}
\newcommand{\functor}[1]{{\overline{#1}}}
\newcommand{\add}{\mathsf{add}\hspace{.01in}}
\newcommand{\Fac}{\mathsf{Fac}\hspace{.01in}}
\newcommand{\Sub}{\mathsf{Sub}\hspace{.01in}}
\newcommand{\KKb}{\mathsf{K}^{\rm b}}
\newcommand{\KKtwo}{\mathsf{K}^2}
\newcommand{\thick}{\mathsf{thick}\hspace{.01in}}
\newcommand{\Q}{{\rm Q}}
\newcommand{\G}{{\rm G}}
\newcommand{\ftors}{\mbox{\rm f-tors}\hspace{.01in}}
\newcommand{\sftors}{\mbox{\rm sf-tors}\hspace{.01in}}
\newcommand{\fftors}{\mbox{\rm ff-tors}\hspace{.01in}}
\newcommand{\ftorf}{\mbox{\rm f-torf}\hspace{.01in}}
\newcommand{\sftorf}{\mbox{\rm sf-torf}\hspace{.01in}}
\newcommand{\fftorf}{\mbox{\rm ff-torf}\hspace{.01in}}
\newcommand{\silt}{\mbox{\rm silt}\hspace{.01in}}
\newcommand{\twosilt}{\mbox{\rm 2-silt}\hspace{.01in}}
\newcommand{\twopresilt}{\mbox{\rm 2-presilt}\hspace{.01in}}
\newcommand{\ctilt}{\mbox{\rm c-tilt}\hspace{.01in}}
\newcommand{\rigid}{\mbox{\rm rigid}\hspace{.01in}}
\newcommand{\mrigid}{\mbox{\rm m-rigid}\hspace{.01in}}
\newcommand{\trigid}{\mbox{\rm $\tau$-rigid}\hspace{.01in}}
\newcommand{\sttilt}{\mbox{\rm s$\tau$-tilt}\hspace{.01in}}
\newcommand{\stcotilt}{\mbox{\rm s$\tau^-$-tilt}\hspace{.01in}}
\newcommand{\ttilt}{\mbox{\rm $\tau$-tilt}\hspace{.01in}}
\newcommand{\tcotilt}{\mbox{\rm $\tau^-$-tilt}\hspace{.01in}}
\newcommand{\tilt}{\mbox{\rm tilt}\hspace{.01in}}
\newcommand{\cotilt}{\mbox{\rm cotilt}\hspace{.01in}}
\newcommand{\iso}{\mbox{\rm iso}\hspace{.01in}}
\newcommand{\kD}{D}
\begin{document}
\title{$\tau$-tilting theory}
\author{Takahide Adachi, Osamu Iyama and Idun Reiten}
\address{Graduate School of Mathematics, Nagoya University, Chikusa-ku, Nagoya. 464-8602, Japan}
\email{m09002b@math.nagoya-u.ac.jp}
\address{Graduate School of Mathematics, Nagoya University, Chikusa-ku, Nagoya. 464-8602, Japan}
\email{iyama@math.nagoya-u.ac.jp}
\urladdr{http://www.math.nagoya-u.ac.jp/~iyama/}
\thanks{The second author was supported by JSPS Grant-in-Aid for Scientific Research 21740010, 21340003, 20244001 and 22224001.}
\address{Department of Mathematics, NTNU, Norway}
\email{idunr@math.ntnu.no}
\dedicatory{Dedicated to the memory of Dieter Happel}
\thanks{The second and third authors were supported by FRINAT grant 19660 from the Research Council of Norway.}
\thanks{2010 {\em Mathematics Subject Classification.} 16D20, 16G20, 18E40}
\thanks{{\em Key words and phrases.} $\tau$-tilting module, tilting module, torsion class, silting complex, cluster-tilting object, $E$-invariant}

\begin{abstract}
The aim of this paper is to introduce $\tau$-tilting theory, which `completes' (classical) tilting theory from the viewpoint of mutation.
It is well-known in tilting theory that an almost complete tilting module for any finite dimensional algebra over a field $k$ is a direct summand of exactly 1 or 2 tilting modules.
An important property in cluster tilting theory is that an almost complete cluster-tilting object in a 2-CY triangulated category is a direct summand of exactly 2 cluster-tilting objects.
Reformulated for path algebras $kQ$, this says that an almost complete support tilting module has exactly two complements.
We generalize (support) tilting modules to what we call (support) $\tau$-tilting modules, and
show that an almost complete support $\tau$-tilting module has exactly two complements for any finite dimensional algebra.

For a finite dimensional $k$-algebra $\Lambda$, we establish bijections between functorially finite torsion classes in $\mod\Lambda$, 
support $\tau$-tilting modules and two-term silting complexes in $\KKb(\proj \Lambda)$.
Moreover these objects correspond bijectively to cluster-tilting objects in $\CC$
if $\Lambda$ is a 2-CY tilted algebra associated with a 2-CY triangulated category $\CC$.
As an application, we show that the property of having two complements holds also for two-term silting complexes in $\KKb(\proj \Lambda)$.
\end{abstract}
\maketitle
\tableofcontents

\section*{Introduction}
Let $\Lambda$ be a finite dimensional basic algebra over an algebraically closed field $k$,
$\mod\Lambda$\index{3mod@$\mod\Lambda$} the category of finitely generated left $\Lambda$-modules,
$\proj\Lambda$\index{3proj@$\proj\Lambda$} the category of finitely generated projective left $\Lambda$-modules and 
$\inj\Lambda$\index{3inj@$\inj\Lambda$} the category of finitely generated injective left $\Lambda$-modules.
For $M\in\mod\Lambda$, we denote by $\add M$\index{3add@$\add M$} (respectively, $\Fac M$\index{3fac@$\Fac M$}, $\Sub M$\index{3sub@$\Sub M$})
the category of all direct summands (respectively, factor modules, submodules) of finite direct sums of copies of $M$.
Tilting theory for $\Lambda$, and its predecessors, have been central in the
representation theory of finite dimensional algebras since the early
seventies \cite{BGP,APR,BB,HR,B}. When $T$ is
a (classical) tilting module (which always has the same number of
non-isomorphic indecomposable direct summands as $\Lambda$), there is  an
associated torsion pair $(\TT, \FF)$, where
$\TT=\Fac T$, and the interplay between tilting modules and
torsion pairs has played a central role. Another important fact is
that an almost complete tilting module $U$ can be completed in at most
two different ways to a tilting module \cite{RS,U}. Moreover there are exactly two ways if
and only if $U$ is a faithful $\Lambda$-module \cite{HU1}.

Even for a finite dimensional path algebra $kQ$, where $Q$  is a finite quiver with no oriented cycles, not all almost complete tilting modules $U$ are
faithful. However, for the associated cluster category $\CC_Q$, where we have cluster-tilting objects induced from
tilting modules over path algebras $kQ'$ derived
equivalent to  $kQ$, then the almost complete cluster-tilting objects
have exactly two complements \cite{BMRRT}. This fact, and its generalization to
2-Calabi-Yau triangulated categories \cite{IY}, plays an important
role in the categorification of cluster algebras. In the case of
cluster categories, this can be reformulated in terms of the path
algebra $\Lambda=kQ$ as follows \cite{IT,Ri}: A $\Lambda$-module
$T$ is {\it support tilting}\index{support tilting module} if $T$ is a tilting $(\Lambda/\langle e\rangle)$-module for some idempotent $e$ of $\Lambda$.
Using the more general class of support tilting modules,
it holds for path algebras that almost complete support tilting modules
can be completed in exactly two ways to support tilting modules.

The above result for path algebras does not necessarily hold for a finite dimensional algebra. 
The reason is that there may be sincere modules which are not faithful.
We are looking for a generalization  of tilting
modules where we have such a result, and where at the same time some of
the essential properties of tilting modules still hold.
It is then natural to try to find a class of modules satisfying the following properties:
\begin{itemize}
\item[(i)] There is a natural connection with torsion pairs in $\mod\Lambda$.
\item[(ii)] The modules have exactly $|\Lambda|$ non-isomorphic indecomposable direct summands,
where $|X|$\index{$|X|$} denotes the number of nonisomorphic indecomposable direct summands of $X$.
\item[(iii)] The analogs of basic almost complete tilting modules have exactly two complements.
\item[(iv)] In the hereditary case the class of modules should
  coincide with the classical tilting modules.
\end{itemize}
For the (classical) tilting modules we have in addition that when the almost complete ones
have two complements, then they are connected in a special short exact sequence.
Also there is a naturally associated quiver, where the isomorphism classes of tilting
modules are the vertices.

There is a generalization of classical tilting modules to tilting modules of finite projective dimension \cite{Ha,Miy}.
But it is easy to see that they do not satisfy the required properties.
The category $\mod\Lambda$ is naturally embedded in the derived category of
$\Lambda$. The tilting and silting complexes for $\Lambda$ \cite{Ri,AI,Ai}
are also extensions of the tilting modules.
An almost complete silting complex has infinitely many complements.
But as we shall see, things work well when we restrict to the two-term silting complexes.

In the module case, it turns out that a natural class of modules to consider is given as follows.
As usual, we denote by $\tau$ the AR translation (see section 1.2).

\begin{definition}\label{first definition}
\begin{itemize}
\item[(a)] We call $M$ in $\mod\Lambda$ {\it $\tau$-rigid}\index{taurigid module@$\tau$-rigid module} if $\Hom_{\Lambda}(M,\tau M)=0$.
\item[(b)] We call $M$ in $\mod\Lambda$ {\it $\tau$-tilting}\index{tautilting module@$\tau$-tilting module} (respectively, \emph{almost complete $\tau$-tilting}\index{almost complete tautilting module@almost complete $\tau$-tilting module})
if $M$ is $\tau$-rigid and $|M|=|\Lambda|$ (respectively, $|M|=|\Lambda|-1$).
\item[(c)] We call $M$ in $\mod\Lambda$ {\it support $\tau$-tilting}\index{support tautilting module@support $\tau$-tilting module} if there exists an idempotent $e$ of $\Lambda$ such that $M$ is a $\tau$-tilting $(\Lambda/\langle e\rangle)$-module.
\end{itemize}
\end{definition}

Any $\tau$-rigid module is rigid (i.e. $\Ext^1_\Lambda(M,M)=0$), and the converse holds if the projective dimension is at most one.
In particular, any partial tilting module is a $\tau$-rigid module, and any tilting module is a $\tau$-tilting module.
Thus we can regard $\tau$-tilting modules as a generalization of tilting modules.

The first main result of this paper is the following analog of Bongartz completion for tilting modules.

\begin{theorem}[Theorem \ref{thm2.3}]
Any $\tau$-rigid $\Lambda$-module is a direct summand of some $\tau$-tilting $\Lambda$-module.
\end{theorem}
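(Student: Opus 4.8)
The plan is to perform a Bongartz-type completion inside the torsion class cut out by $\tau U$. Write $U$ for the given $\tau$-rigid module and set
\[
\TT={}^{\perp}(\tau U)=\{X\in\mod\Lambda\mid\Hom_{\Lambda}(X,\tau U)=0\}.
\]
Because $\Hom_{\Lambda}(-,\tau U)$ is left exact, every short exact sequence $0\to X'\to X\to X''\to0$ yields an inclusion $\Hom_{\Lambda}(X'',\tau U)\hookrightarrow\Hom_{\Lambda}(X,\tau U)$, so $\TT$ is closed under factor modules and extensions and is therefore a torsion class; the hypothesis $\Hom_{\Lambda}(U,\tau U)=0$ says precisely that $U\in\TT$, and then $\Fac U\subseteq\TT$. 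Moreover $U$ is $\Ext$-projective in $\TT$: by the Auslander--Reiten formula $\Ext^{1}_{\Lambda}(U,X)\cong D\overline{\Hom}_{\Lambda}(X,\tau U)$, and $\overline{\Hom}_{\Lambda}(X,\tau U)$ is a quotient of $\Hom_{\Lambda}(X,\tau U)=0$ for every $X\in\TT$.

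Granting that $\TT$ is functorially finite, the Auslander--Smal{\o} theory of Ext-projectives applies: such a torsion class has only finitely many indecomposable Ext-projective objects, their direct sum $T:=P(\TT)$ is a $\tau$-rigid module with $\Fac T=\TT$, and the indecomposable summands of $T$ are exactly the indecomposable Ext-projectives of $\TT$. Since $U$ is Ext-projective, $U\in\add T$, so $U$ is a direct summand of the $\tau$-rigid module $T$. It then remains to check that $T$ is genuinely $\tau$-tilting, i.e. $|T|=|\Lambda|$; this is equivalent to $\TT$ being sincere. Using $\Hom_{\Lambda}(X,I_{j})\cong D(e_{j}X)$ for the indecomposable injective $I_{j}$ at vertex $j$, sincerity of $\TT$ amounts to no $I_{j}$ lying in the torsion-free class $\TT^{\perp}$, and I would rule this out from the $\tau$-rigidity of $U$ (observe that $\tau U$ itself lies in $\TT^{\perp}$, which constrains the support). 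Once $\TT$ is sincere the support idempotent of $T=P(\TT)$ vanishes, so $T$ has exactly $|\Lambda|$ indecomposable summands and is a $\tau$-tilting module containing $U$.

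The main obstacle is the functorial finiteness of $\TT={}^{\perp}(\tau U)$. One half is automatic: for any $M$ the submodule $t_{\TT}(M)$ given by the sum of all submodules of $M$ that lie in $\TT$ is itself in $\TT$, and the inclusion $t_{\TT}(M)\hookrightarrow M$ is a right $\TT$-approximation, since the image of any map from an object of $\TT$ is a factor module in $\TT$ and hence lands in $t_{\TT}(M)$; thus $\TT$ is contravariantly finite. The delicate half is covariant finiteness, the existence of left $\TT$-approximations, and this is exactly the point where the $\tau$-rigidity of $U$ must be exploited rather than the softer fact that $\Fac U$ is functorially finite for every $U$. Indeed, if one were content with exhibiting $U$ as a summand of a \emph{support} $\tau$-tilting module the enlargement to ${}^{\perp}(\tau U)$ would be unnecessary, since $\Fac U$ is already functorially finite and $U\in\add P(\Fac U)$; the whole force of the theorem lies in passing to the sincere torsion class ${}^{\perp}(\tau U)$ and producing a genuine $\tau$-tilting module.
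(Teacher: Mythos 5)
Your overall architecture is the same as the paper's (pass to $\TT={}^{\perp}(\tau U)$, show it is a sincere functorially finite torsion class, take $T=P(\TT)$), and the routine parts — that $\TT$ is a torsion class containing $\Fac U$, that $U$ is $\Ext$-projective in $\TT$ via the AR formula, and that $|P(\TT)|=|\Lambda|$ reduces to sincerity of $\TT$ — are all correct. But the two steps you explicitly defer (``Granting that $\TT$ is functorially finite\dots'' and ``I would rule this out from the $\tau$-rigidity of $U$'') are precisely the content of the theorem, and neither is supplied. Your discussion of contravariant finiteness via the torsion radical $t_{\TT}(M)$ is true but beside the point, and your hint for sincerity (``$\tau U$ lies in $\TT^{\perp}$'') is not an argument.

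Both gaps are closed in the paper by a single identification that your proposal never makes: the torsionfree class paired with $\TT$ is $\TT^{\perp}=\Sub\tau U$. Since $U$ is $\tau$-rigid, $\tau U$ is $\tau^{-}$-rigid, so by the dual of Auslander--Smal{\o} (Proposition 1.2(b) in the paper) $\Sub\tau U$ is a (functorially finite) torsionfree class, and $({}^{\perp}(\tau U),\Sub\tau U)$ is a torsion pair. Covariant finiteness of $\TT$ then comes for free from Smal{\o}'s theorem (Proposition 1.1, the equivalence of (a), (b) and (d)): a torsion class is functorially finite as soon as its torsionfree partner has the form $\Sub Y$. For sincerity, if $\TT$ were not sincere there would be a primitive idempotent $e$ with $\Hom_{\Lambda}(\TT,D(e\Lambda))=0$, i.e.\ the indecomposable injective $D(e\Lambda)$ would lie in $\TT^{\perp}=\Sub\tau U$; an injective module embedding in some $(\tau U)^{n}$ is a direct summand of it, contradicting the fact that $\tau U$ has no nonzero injective direct summands. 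Without this dual viewpoint your proof does not get off the ground, because there is no direct construction of left $\TT$-approximations from $\tau$-rigidity alone.
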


As indicated above, in order to get our theory to work nicely, we need to consider support $\tau$-tilting modules.
It is often convenient to view them, and the $\tau$-rigid modules, as certain pairs of $\Lambda$-modules.

\begin{definition}\label{define support tau tilting}
Let $(M,P)$ be a pair with $M\in\mod\Lambda$ and $P\in\proj\Lambda$.
\begin{itemize}
\item[(a)] We call $(M,P)$ a {\it $\tau$-rigid} pair\index{taurigid pair@$\tau$-rigid pair} if $M$ is $\tau$-rigid and $\Hom_\Lambda(P,M)=0$.
\item[(b)] We call $(M,P)$ a {\it support $\tau$-tilting}\index{support tautilting pair@support $\tau$-tilting pair} 
(respectively, \emph{almost complete support $\tau$-tilting}\index{almost complete suppport tautilting pair@almost complete support $\tau$-tilting pair})
pair if $(M,P)$ is $\tau$-rigid and $|M|+|P|=|\Lambda|$ (respectively, $|M|+|P|=|\Lambda|-1$).
\end{itemize}
\end{definition}

These notions are compatible with those in Definition \ref{first definition} (see Proposition \ref{two notion of tau-rigidity} for details).
As usual, we say that $(M,P)$ is \emph{basic} \index{basic pair} if $M$ and $P$ are basic.
Similarly we say that $(M,P)$ is a \emph{direct summand} \index{direct summand of pair} of $(M',P')$ if $M$ is a direct summand of $M'$ and $P$ is a direct summand of $P'$.

The second main result of this paper is the following.

\begin{theorem}[Theorem \ref{2 complements}]\label{main theorem 1}
Let $\Lambda$ be a finite dimensional $k$-algebra.
Then any basic almost complete support $\tau$-tilting pair for $\Lambda$ is a direct summand of exactly two basic support $\tau$-tilting pairs.
\end{theorem}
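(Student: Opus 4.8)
The plan is to classify completions by the single indecomposable summand that must be adjoined, and to organize the bookkeeping through the functorially finite torsion class $\Fac M$ attached to a support $\tau$-tilting pair $(M,P)$. Since $(U,Q)$ is basic and a completion $(M,P)$ is a basic support $\tau$-tilting pair containing it, the equality $|M|+|P|=(|U|+|Q|)+1$ shows that exactly one new indecomposable summand is adjoined: either an indecomposable module $X$, giving $(U\oplus X,\,Q)$, or an indecomposable projective $Y$ with $\Hom_\Lambda(Y,U)=0$, giving $(U,\,Q\oplus Y)$. So the theorem amounts to the assertion that the set of admissible $X$'s and $Y$'s has exactly two elements. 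The guiding picture is that the two completions are the maximal and the minimal member of a length-one interval of functorially finite torsion classes; throughout I will use the Auslander--Reiten--Smal\o{} formula $\Hom_\Lambda(M,\tau N)\cong \kD\,\functor{\Ext}^1_\Lambda(N,M)$, which translates $\tau$-rigidity of $M$ into $\Ext$-projectivity of $M$ in $\Fac M$ and guarantees that $\Fac M$ is a functorially finite torsion class.

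For the existence of at least two completions, one of them is the Bongartz completion: by Theorem \ref{thm2.3} the $\tau$-rigid module $U$ is a direct summand of a $\tau$-tilting module, and intersecting this completion with the support condition $\Hom_\Lambda(Q,-)=0$ yields a support $\tau$-tilting pair $(M_+,P_+)$ containing $(U,Q)$ whose torsion class $\Fac M_+$ is the largest functorially finite torsion class in which $U$ is $\Ext$-projective and $\Hom_\Lambda(Q,-)$ vanishes. For the second completion I will produce the minimal such torsion class. Concretely, I take a minimal left $\add(U)$-approximation of a suitable object and form an exchange sequence that either replaces the Bongartz summand $X_+$ by a different indecomposable module $X_-$ or, when no such module exists, adjoins an indecomposable projective to $Q$ (this is the co-Bongartz step, and it is exactly here that the support is allowed to drop). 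Since the two completions realize different torsion classes, they are genuinely distinct, giving at least two.

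For the bound of at most two, I will start from an arbitrary completion with new indecomposable summand $Z$ and build an exchange sequence expressing $Z$ through minimal $\add(U)$-approximations; combined with the formula above and the $\tau$-rigidity of the whole pair, this pins $\Fac M$ to a length-one interval over $\Fac U$ and forces $Z$ into one of two isomorphism classes. The main obstacle is precisely this uniqueness-up-to-two step, and it has two delicate points absent from classical tilting theory. First, one must control the case distinction between $Z$ a module and $Z$ a projective: the completion can change the support, so the exchange sequence degenerates and must be treated separately, and one has to rule out that a new module and a new projective are simultaneously admissible beyond the allotted two choices. Second, the exchange sequence only manifestly preserves rigidity, whereas the conclusion demands $\tau$-rigidity; bridging this gap---verifying that both candidate completions are honestly $\tau$-rigid support $\tau$-tilting pairs and that no third one can occur---is where the Auslander--Reiten theory and the approximation theory must be combined most carefully.
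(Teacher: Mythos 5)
Your overall skeleton matches the paper's: reduce to counting the torsion classes $\Fac M$ realized by completions (legitimate, since $M\mapsto\Fac M$ is injective on support $\tau$-tilting modules and $M$ determines $P$), identify the two candidates $\Fac U$ and ${}^\perp(\tau U)\cap Q^\perp$, and get existence of two completions from the Bongartz completion together with $P(\Fac U)$; these are distinct precisely because $(U,Q)$ is not itself a support $\tau$-tilting pair (Corollary \ref{analog of 3 conditions}). The reduction of the general pair case to $Q=0$ by passing to $\Lambda/\langle e\rangle$ is also as in the paper.

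The gap is in the ``at most two'' direction, which you correctly flag as the crux but do not actually carry out, and the mechanism you propose for it does not work as stated. What must be shown is the dichotomy of Proposition \ref{prop2.10}: for an indecomposable complement $X$ of $U$, exactly one of $X\in\Fac U$ and ${}^\perp(\tau U)\subseteq{}^\perp(\tau X)$ holds, the second alternative forcing $\Fac(X\oplus U)={}^\perp(\tau(X\oplus U))={}^\perp(\tau U)$. You propose to derive this from a minimal left $(\add U)$-approximation $X\to U'\to C\to0$ of the new summand $X$. But the structural properties of that sequence --- $C$ being $\Ext$-projective in $\Fac U$, $C\oplus U$ being $\tau$-rigid, $\add U'\cap\add C=0$ --- are established in Lemma \ref{approximation lemma} only under the hypothesis ${}^\perp(\tau U)\subseteq{}^\perp(\tau X)$ (this is exactly how Theorem \ref{thm2.8} invokes it), i.e.\ only after one already knows that $X$ is the Bongartz-side complement. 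Invoking it for an arbitrary complement is circular. The paper's proof of Proposition \ref{prop2.10} approximates in the opposite direction: it takes the Bongartz complement $Y$ of $U$ and forms a minimal left $(\Fac(X\oplus U))$-approximation $Y\oplus U\to T'\oplus U\to T''\to0$; since $T''\in\add(X\oplus U)$ with $\add(T'\oplus U)\cap\add T''=0$, one has $T''\in\add X$, and then either $T''\neq0$, which forces $T'\in\add U$ and hence $X\in\Fac T'\subseteq\Fac U$, or $T''=0$, in which case Lemma \ref{approximation is surjective} (using sincerity of $X\oplus U$) gives $Y\simeq X$ and hence ${}^\perp(\tau U)\subseteq{}^\perp(\tau X)$. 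Some device of this kind --- comparing the arbitrary complement against the Bongartz one, rather than approximating the arbitrary complement by $\add U$ --- is what your outline is missing, and without it the bound of two is not established.
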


These two support $\tau$-tilting pairs are said to be \emph{mutations}\index{mutation!support tautilting pair@support $\tau$-tilting pair} of each other.
We will define the support $\tau$-tilting quiver $\Q(\sttilt\Lambda)$ by using mutation (Definition \ref{support tau-tilting quiver}).

When extending (classical) tilting modules to tilting complexes or silting complexes we have
pointed out that we do not have exactly two complements in the almost complete case. But considering
instead only the two-term silting complexes, we prove that this is the case.

The third main result is to obtain a close connection between support
$\tau$-tilting modules and other important objects in tilting theory.
The corresponding definitions will be given in section 1.

\begin{theorem}[Theorems \ref{basic bijection}, \ref{sptsil4}, \ref{bijection between CT and PT} and \ref{ctsilt3}]\label{bijections}
Let $\Lambda$ be a finite dimensional $k$-algebra. We have bijections between
\begin{itemize}
\item[(a)] the set $\ftors\Lambda$\index{3ftors@$\ftors\Lambda$} of functorially finite torsion classes in $\mod\Lambda$,
\item[(b)] the set $\sttilt\Lambda$\index{3stautilt@$\sttilt\Lambda$} of isomorphism classes of basic support $\tau$-tilting modules,
\item[(c)] the set $\twosilt\Lambda$\index{2silt@$\twosilt\Lambda$} of isomorphism classes of basic two-term silting complexes for $\Lambda$,
\item[(d)] the set $\ctilt\CC$\index{3ctilt@$\ctilt\CC$} of isomorphism classes of basic cluster-tilting objects in a 2-CY triangulated category $\CC$ if $\Lambda$ is an associated 2-CY tilted algebra to $\CC$.
\end{itemize}
\end{theorem}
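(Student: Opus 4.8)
The plan is to prove the theorem by constructing explicit maps between consecutive sets and checking that they are mutually inverse, using throughout the Auslander--Reiten formula that relates $\Hom_\Lambda(M,\tau N)$ to the homology of minimal projective presentations. I would organize the argument as three separate bijections, $\mathrm{(a)}\leftrightarrow\mathrm{(b)}$, $\mathrm{(b)}\leftrightarrow\mathrm{(c)}$, and $\mathrm{(b)}\leftrightarrow\mathrm{(d)}$, since the cleanest bridge between each pair of notions runs through the support $\tau$-tilting modules in $\mathrm{(b)}$. The Bongartz-type completion established as the first main theorem will be invoked repeatedly to guarantee that $\tau$-rigid objects satisfying a maximality condition really have the full complement of $|\Lambda|$ indecomposable summands.

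For $\mathrm{(a)}\leftrightarrow\mathrm{(b)}$, I would define the map $\sttilt\Lambda\to\ftors\Lambda$ by $M\mapsto\Fac M$. First I would verify that $\Fac M$ is a torsion class (closure under quotients and extensions is formal) and that it is functorially finite because $M$ is an Ext-projective generator. For the inverse I would send a functorially finite torsion class $\TT$ to the direct sum of its indecomposable Ext-projective objects; the crucial input is that an object is Ext-projective in $\Fac M$ if and only if it is $\tau$-rigid, which is exactly the content of the Auslander--Reiten formula. The genuinely delicate step is the counting: I must show that an Ext-projective generator of a functorially finite torsion class, together with its support idempotent, accounts for precisely $|\Lambda|$ summands, so that it is $\tau$-tilting and not merely $\tau$-rigid.

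For $\mathrm{(b)}\leftrightarrow\mathrm{(c)}$, I would send a support $\tau$-tilting pair $(M,P)$ to the two-term complex with $P_1\oplus P$ in degree $-1$ and $P_0$ in degree $0$, where $P_1\to P_0\to M\to 0$ is a minimal projective presentation, and recover $(M,P)$ from a complex by taking $H^0$ for the module and reading the support part off the degree-$(-1)$ summands. The heart of the matter is to translate $\Hom_\Lambda(M,\tau M)=0$ into the silting condition $\Hom_{\KKb(\proj\Lambda)}(T,T[1])=0$; this is again the Auslander--Reiten formula, now identifying the morphism space between projective presentations in the homotopy category with $\Hom_\Lambda(M,\tau M)$ up to the contribution of $P$. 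I would then match the $\tau$-tilting count $|M|+|P|=|\Lambda|$ with the generation condition $\thick T=\KKb(\proj\Lambda)$.

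For $\mathrm{(b)}\leftrightarrow\mathrm{(d)}$, assuming $\Lambda=\End_\CC(T)$ for a cluster-tilting object $T$ in a 2-CY triangulated category $\CC$, I would use the functor $\Hom_\CC(T,-)\colon\CC\to\mod\Lambda$, which induces an equivalence modulo $\add T[1]$, and send a cluster-tilting object $U$ to $\Hom_\CC(T,U)$. The 2-CY property gives a functorial isomorphism turning $\Ext^1_\CC(U,U)$ into $\Hom_\Lambda(\Hom_\CC(T,U),\tau\Hom_\CC(T,U))$, so rigidity becomes $\tau$-rigidity and the maximality of cluster-tilting objects matches the $\tau$-tilting count. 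The main obstacle throughout these correspondences, and especially here, is the careful bookkeeping of the summands that the relevant functor annihilates---here those lying in $\add T[1]$---which must be absorbed into the projective part $P$ of the pair; this is why the correspondence is most naturally phrased in terms of support $\tau$-tilting pairs before passing to modules, and it is the point at which the counting arguments that pin down the number of indecomposable summands become the crux of the proof.
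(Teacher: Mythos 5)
Your proposal follows essentially the same route as the paper: the theorem is assembled from three bijections anchored at the support $\tau$-tilting modules, namely $M\mapsto\Fac M$ with inverse $\TT\mapsto P(\TT)$ for (a)$\leftrightarrow$(b), the passage between minimal projective presentations (with the support part added in degree $-1$) and $H^0$ for (b)$\leftrightarrow$(c), and the functor $\Hom_{\CC}(T,-)$ with the $\add T[1]$-summands recorded in the projective part of the pair for (b)$\leftrightarrow$(d), with the Auslander--Reiten formula and the summand-counting via Bongartz-type completion carrying the weight exactly as in the paper. The only points you leave implicit --- that a two-term presilting complex with $|\Lambda|$ summands is silting requires the completion of presilting to silting complexes, and that identifying cluster-tilting (rather than merely maximal rigid) objects uses the characterization of support $\tau$-tilting pairs by the condition that $\Hom_\Lambda(T,\tau X)=\Hom_\Lambda(X,\tau T)=\Hom_\Lambda(P,X)=0$ forces $X\in\add T$ --- are exactly the lemmas the paper supplies, so the plan is sound as stated.
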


Note that the correspondence between (b) and (d) improves results in \cite{Smi,FL}.

By Theorem \ref{bijections}, we can regard $\sttilt\Lambda$ as a partially ordered set by using the inclusion relation of $\ftors\Lambda$ (i.e. we write $T\ge U$\index{$\ge$} if $\Fac T\supseteq\Fac U$).
Then we have the following fourth main result, which is an analog of \cite[Theorem 2.1]{HU2} and \cite[Theorem 2.35]{AI}.

\begin{theorem}[Corollary \ref{Hasse is mutation 2}]
The support $\tau$-tilting quiver $\Q(\sttilt\Lambda)$ is the Hasse quiver of the partially ordered set $\sttilt\Lambda$.
\end{theorem}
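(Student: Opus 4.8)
The plan is to show that the arrow set of $\Q(\sttilt\Lambda)$ coincides with the set of covering relations of the poset $(\sttilt\Lambda,\ge)$, where by Theorem \ref{bijections} I identify $\sttilt\Lambda$ with $\ftors\Lambda$ and read $T\ge U$ as $\Fac T\supseteq\Fac U$. By Definition \ref{support tau-tilting quiver} an arrow $T\to U$ of $\Q(\sttilt\Lambda)$ records that $U$ is obtained from $T$ by the torsion-decreasing (left) mutation at an indecomposable summand $X$ of $T=X\oplus M$; I write $U=\mu_X^-(T)$. Since both quivers have vertex set $\sttilt\Lambda$, it suffices to prove the single equivalence that $U=\mu_X^-(T)$ for some $X$ if and only if $T$ covers $U$, i.e. $\Fac U\subsetneq\Fac T$ and no $V\in\sttilt\Lambda$ satisfies $\Fac U\subsetneq\Fac V\subsetneq\Fac T$.

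I would isolate two statements. First (L1): if $U=\mu_X^-(T)$ then $\Fac U\subsetneq\Fac T$ and no functorially finite torsion class lies strictly between $\Fac U$ and $\Fac T$. Second (L2): if $\Fac U\subsetneq\Fac T$ then there is an indecomposable summand $X$ of $T$ with $\Fac U\subseteq\Fac \mu_X^-(T)\subsetneq\Fac T$. Granting both, the equivalence is immediate: (L1) gives the implication from mutation to covering, while for the converse, if $T$ covers $U$ then (L2) produces a left mutation $W=\mu_X^-(T)$ with $U\le W<T$, and $U<W$ would contradict covering, forcing $U=W$ to be a mutation.

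For (L1) I would use the exchange sequence attached to the left mutation at $X$, together with the common almost complete support $\tau$-tilting pair $(M,\cdot)$ shared by $T$ and $U$, to obtain $\Fac U\subsetneq\Fac T$. The covering part is then handled by showing that any $V$ with $\Fac U\subsetneq\Fac V\subsetneq\Fac T$ must contain this same almost complete pair as a direct summand; by the uniqueness clause of Theorem \ref{2 complements} (that $T$ and $U$ are the only two completions of $(M,\cdot)$), this forces $V\in\{T,U\}$, a contradiction. The main obstacle is exactly this covering analysis, needed in both (L1) and (L2): one must control the functorially finite torsion classes lying between two mutation-adjacent modules and forbid intermediate ones. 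Concretely, the delicate point is proving that membership of $M$ in $\Fac V$ can be upgraded to $M$ being a \emph{direct summand} of $V$, which I expect to require a $\tau$-rigidity argument (the indecomposable summands of $M$ remaining $\tau$-rigid relative to $V$) rather than a formal manipulation of the $\Fac$ operator. Statement (L2), that any strict descent can be realized at the top by a single left mutation of $T$, I anticipate following from the same interval analysis, applied to a summand of $T$ along which $\Fac T$ can be shrunk toward $\Fac U$.
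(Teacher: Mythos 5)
Your overall architecture is the one the paper uses: identify arrows with covering relations, prove ``mutation $\Rightarrow$ cover'' by squeezing any intermediate $V$ against the common almost complete pair and invoking the uniqueness of the two completions (this is exactly the paper's combination of Proposition \ref{common summand} with Theorem \ref{2 complements} in the proof of Theorem \ref{Hasse is mutation}), and prove ``cover $\Rightarrow$ mutation'' by producing, for any strict descent $T>U$, a left mutation of $T$ that still lies above $U$. Your (L1) is essentially correct as sketched; the upgrade from ``$M\in\Fac V$'' to ``$M$ is a direct summand of $V$'' is handled cleanly by the observation that an $\Ext$-projective object of $\Fac T$ lying in the smaller torsion class $\Fac V$ is $\Ext$-projective there, i.e. $\add T\cap\Fac V\subseteq\add P(\Fac V)=\add V$, plus the squeeze $\add P\subseteq\add P_V\subseteq\add Q$ on the projective parts coming from Lemma \ref{partial order of tautilting}.

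The genuine gap is (L2). This is precisely Theorem \ref{make closer} of the paper, and it does not follow from ``the same interval analysis.'' The covering analysis in (L1) only controls torsion classes sitting between two modules that are \emph{already known} to be mutation-adjacent; it gives no mechanism for selecting, given an arbitrary pair $T>U$ (possibly far apart in the poset), an indecomposable summand $X$ of $T$ such that $\mu_X^-(T)\ge U$. The paper's proof of this is the technical heart of the section: one takes the exact sequence $U\to T^0\to T^1\to 0$ of Proposition \ref{two term left resolution} (a minimal left $(\Fac T)$-approximation of $U$), shows via Lemma \ref{make closer lemma} that any indecomposable summand of $T$ \emph{not} occurring in $T^0$ works --- which itself requires passing to $\Lambda^{\op}$ through the duality $(-)^\dagger$ of Theorem \ref{left-right symmetry} and comparing minimal right approximations of transposes --- and then disposes of the degenerate case $T\in\add T^0$ by an explicit manipulation of projective presentations, ending with the case $U\simeq T\oplus P$ handled on the projective part of the pair. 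None of this is a formal consequence of Theorem \ref{2 complements} or of the $\Fac$-calculus you have set up, so as it stands your proposal asserts rather than proves the harder half of the equivalence.
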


We have the following direct consequences of Theorem \ref{bijections}, where the second part is known by \cite{IY},
and the third one by \cite{ZZ}.

\begin{corollary}[Corollaries \ref{sptsil5}, \ref{application to CT}]
\begin{itemize}
\item[(a)] Two-term almost complete silting complexes have exactly two complements.
\item[(b)] In a 2-Calabi-Yau triangulated category with cluster-tilting objects, any almost complete cluster-tilting objects in a 2-CY category have exactly two complements.
\item[(c)] In a 2-Calabi-Yau triangulated category with cluster-tilting objects, any maximal rigid object is cluster-tilting.
\end{itemize}
\end{corollary}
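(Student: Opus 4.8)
The plan is to deduce all three statements by transporting the two-complement theorem (Theorem~\ref{main theorem 1}) and the Bongartz-type completion (Theorem~\ref{thm2.3}) across the bijections of Theorem~\ref{bijections}. The key preliminary observation I would isolate is that each of these bijections is compatible with the operation of taking direct summands: it sends indecomposable summands to indecomposable summands and matches the number of summands, so that "almost complete" objects correspond to "almost complete" ones and completions correspond to completions. In the support case this requires passing through the pair formalism of Definition~\ref{define support tau tilting}, so that the projective/shifted part is correctly accounted for.

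For part~(a), I would use the bijection $\sttilt\Lambda\leftrightarrow\twosilt\Lambda$ from Theorem~\ref{bijections}(b)$\leftrightarrow$(c) in its refined pair form: a basic support $\tau$-tilting pair $(M,P)$ corresponds to a basic two-term silting complex whose indecomposable summands match those of $M$ together with the shifted projectives coming from $P$. Granting the summand-compatibility above, an almost complete two-term silting complex (with $|\Lambda|-1$ indecomposable summands) corresponds to an almost complete support $\tau$-tilting pair, and completing the former is the same as completing the latter. Theorem~\ref{main theorem 1} gives exactly two completions of the pair, hence exactly two complements of the complex.

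For part~(b), let $\CC$ be a 2-CY triangulated category with a cluster-tilting object and let $U$ be an almost complete cluster-tilting object. Since $U$ is rigid and has one fewer indecomposable summand than a cluster-tilting object, it admits at least one complement; I would fix a basic cluster-tilting object $T=U\oplus X$ and set $\Lambda=\End_\CC(T)$, a 2-CY tilted algebra, so that Theorem~\ref{bijections}(b)$\leftrightarrow$(d) applies with this base. Because $\Hom_\CC(T,-)$ sends indecomposable summands to indecomposable summands, $U$ corresponds to an almost complete support $\tau$-tilting pair for $\Lambda$ and its complements in $\CC$ correspond bijectively to the completions of that pair, so Theorem~\ref{main theorem 1} again yields exactly two. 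For part~(c), I would fix any basic cluster-tilting object $T$ and put $\Lambda=\End_\CC(T)$; under $\Hom_\CC(T,-)$ a maximal rigid object $M$ becomes a maximal $\tau$-rigid pair $(N,Q)$. By the pair version of Theorem~\ref{thm2.3} this pair is a direct summand of a support $\tau$-tilting pair, which transports back to a cluster-tilting object containing $M$ as a summand; maximality of $M$ forces equality, so $M$ is cluster-tilting.

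The hard part will be the summand-compatibility claim itself: Theorem~\ref{bijections} is phrased for the complete objects, and one must verify that the constructions realizing the bijections (the minimal-projective-presentation construction of two-term silting complexes, respectively the functor $\Hom_\CC(T,-)$) preserve indecomposability of summands and reflect the almost-complete/complement relation through the pair formalism. Once this is established the three statements are immediate transports, with (a) and (b) resting on Theorem~\ref{main theorem 1} and (c) on Theorem~\ref{thm2.3}.
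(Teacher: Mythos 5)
Your proposal is correct and follows essentially the same route as the paper: both deduce (a) and (b) by transporting Theorem \ref{2 complements} across the bijections of Theorem \ref{sptsil4} and Theorem \ref{bijection between CT and PT}, and the summand-compatibility you flag as the ``hard part'' is exactly what the paper supplies via Lemmas \ref{sptsil1} and \ref{lemma for support} (presilting complexes correspond to $\tau$-rigid pairs summand-by-summand) and Proposition \ref{thm3.3} (rigid objects correspond to $\tau$-rigid pairs under $\widetilde{(-)}$). For (c) the paper instead invokes the equivalence (a)$\Leftrightarrow$(b) of Corollary \ref{analog of 3 conditions} inside the proof of Theorem \ref{bijection between CT and PT}, but since that equivalence is itself proved by Bongartz completion, your completion-plus-maximality argument is the same idea in only slightly unpacked form.
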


Part (a) was first proved directly by Derksen-Fei \cite{DF} without dealing with support $\tau$-tilting modules.
Here we obtain this result by combining a bijection in Theorem \ref{bijections} with Theorem \ref{main theorem 1}.

Another important part of our work is to investigate to which extent the main properties of tilting
modules mentioned above remain valid in the settings of support $\tau$-tilting modules,
two-term silting complexes and cluster-tilting objects in 2-CY triangulated categories.

A motivation for considering the problem of exactly two complements for
almost complete support $\tau$-tilting modules was that the condition of $\tau$-rigid module
appears naturally when we express $\Ext^1_{\CC}(X,Y)$
for $X$ and $Y$ objects in a 2-CY category $\CC$ in terms of corresponding
modules $\overline{X}$ and $\overline{Y}$ over an associated 2-CY tilted
algebra (Proposition \ref{thm3.3}).

There is some relationship to the $E$-invariants of \cite{DWZ}
in the case of finite dimensional Jacobian algebras, where the expression
$\Hom_\Lambda(M,\tau N)$ appears. Here we introduce $E$-invariants in section 5 for any
finite dimensional $k$-algebras, and express them in terms of
dimension vectors and $g$-vectors as defined in \cite{DK}, inspired by \cite{DWZ}.

In the last section 6 we illustrate our results with examples.

There is a curious relationship with interesting independent work by Cerulli-Irelli,
Labardini-Fragoso and Schr\"oer \cite{CLS}, where the authors deal with $E$-invariants in
the more general setting of basic algebras which are not necessarily finite dimensional.
We refer to recent work by K\"onig and Yang \cite{KY} for connection with t-structures and co-t-structures.
Hoshino, Kato and Miyachi \cite{HKM} and Abe \cite{Ab} studied two term tilting complexes.
Buan and Marsh have considered a direct map from cluster-tilting objects
in cluster categories to functorially finite torsion classes for associated
cluster-tilted algebras.

\medskip
\noindent{\bf Acknowledgements }
Part of this work was done when the authors attended conferences
in Oberwolfach (February 2011), Banff (September 2011), Shanghai (October 2011)
and Trondheim (March 2012). 
Parts of the results in this paper were presented at conferences in Kagoshima (February 2012), Graz, Nagoya, Trondheim (March 2012), Matsumoto, 
Bristol (September 2012) and MSRI (October 2012).
The authors would like to thank the organizers of these conferences.
Part of this work was done while the second author visited Trondheim
in March 2010, March 2011 and March-April 2012. He would like to thank
the people at NTNU for hospitality and stimulating discussions.
We thank Julian K\"ulshammer and Xiaojin Zhang for pointing out typos in the first draft.

\section{Background and preliminary results}\label{sec1.1}
In this section we give some background material on each of the 4 topics
involved in our main results. This concerns the relationship
between tilting modules and functorially finite subcategories and some results
on $\tau$-rigid and $\tau$-tilting modules, including new
basic results about them which will be useful in the next section.
Further we recall known results on silting complexes, and on cluster-tilting
objects in 2-CY triangulated categories.

\subsection{Torsion pairs and tilting modules}\label{ssec1.1}
Let $\Lambda$ be a finite dimensional $k$-algebra.
For a subcategory $\CC$ of $\mod\Lambda$, we let
\begin{eqnarray*}
\CC^\perp&:=&\{X\in\mod\Lambda\mid \Hom_\Lambda(\CC,X)=0\},\\
\CC^{\perp_1}&:=&\{X\in\mod\Lambda\mid \Ext^1_\Lambda(\CC,X)=0\}.
\end{eqnarray*}\index{$(-)^\perp$, $(-)^{\perp_1}$, ${}^\perp(-)$, ${}^{\perp_1}(-)$}
Dually we define ${}^\perp\CC$ and ${}^{\perp_1}\CC$.
We call $T$ in $ \mod \Lambda$ a \emph{partial tilting module}\index{partial tilting module} 
if $\pd_{\Lambda}T\le 1$ and $\Ext^1_{\Lambda}(T,T)=0$.
A partial tilting module is called a \emph{tilting module}\index{tilting module}
if there is an exact sequence 
$0\to\Lambda\to T_0\to T_1\to 0$ with $T_0$ and $T_1$ in $\add T$.
Then any tilting module satisfies $|T|=|\Lambda|$.
Moreover it is known that for any partial tilting module $T$,
there is a tilting module $U$ such that $T\in\add U$ and $\Fac U=T^{\perp_1}$,
called the \emph{Bongartz completion}\index{Bongartz completion!tilting module} of $T$.
Hence a partial tilting module $T$ is a tilting module if and only if
$|T|=|\Lambda|$.
Dually $T$ in $\mod\Lambda$ is a (\emph{partial}) \emph{cotilting module}\index{partial cotilting module}\index{cotilting module} if $DT$ is
a (partial) tilting $\Lambda^{\op}$-module.

On the other hand, we say that a full subcategory $\TT$ of $\mod\Lambda$ is a \emph{torsion class}\index{torsion class} (respectively, \emph{torsionfree class}\index{torsionfree class}) if it is closed under factor modules (respectively, submodules) and extensions.
A pair $(\TT,\FF)$ is called a \emph{torsion pair}\index{torsion pair} if $\TT={}^\perp\FF$ and $\FF=\TT^\perp$.
In this case $\TT$ is a torsion class and $\FF$ is a torsionfree class. 
Conversely, any torsion class $\TT$ (respectively, torsionfree class $\FF$) gives rise to a torsion pair $(\TT,\FF)$.

We say that $X\in\TT$ is \emph{$\Ext$-projective}\index{extprojective@$\Ext$-projective module} (respectively, \emph{$\Ext$-injective}\index{extinjective@$\Ext$-injective module}) if $\Ext^1_\Lambda(X,\TT)=0$ (respectively, $\Ext^1_\Lambda(\TT,X)=0$).
We denote by $P(\TT)$\index{3p@$P(\TT)$} the direct sum of one copy of each of the indecomposable
$\Ext$-projective objects in $\TT$ up to isomorphism.
Similarly we denote by $I(\FF)$\index{3i@$I(\FF)$} the direct sum of one copy of each of the indecomposable $\Ext$-injective objects in $\FF$ up to isomorphism.

We first recall the following relevant result on torsion pairs and tilting modules.

\begin{proposition}\label{basics}\cite{AS,Ho,Sma}
Let $(\TT,\FF)$ be a torsion pair in $\mod\Lambda$. Then the following conditions are equivalent.
\begin{itemize}
\item[(a)] $\TT$ is functorially finite.
\item[(b)] $\FF$ is functorially finite.
\item[(c)] $\TT=\Fac X$ for some $X\text{ in } \mod \Lambda$.
\item[(d)] $\FF=\Sub Y$ for some $Y\text{ in }\mod \Lambda$.
\item[(e)] $P(\TT)$ is a tilting $(\Lambda/\ann\TT)$-module.
\item[(f)] $I(\FF)$ is a cotilting $(\Lambda/\ann\FF)$-module.
\item[(g)] $\TT=\Fac P(\TT)$.
\item[(h)] $\FF=\Sub I(\FF)$.
\end{itemize}
\end{proposition}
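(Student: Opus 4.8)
The plan is to collapse the eight conditions to the torsion-class side by $k$-duality and then run the Auslander--Smal\o{} construction of $\Ext$-projectives. First I would record that $D=\Hom_k(-,k)\colon\mod\Lambda\to\mod\Lambda^{\op}$ sends the torsion pair $(\TT,\FF)$ to the torsion pair $(D\FF,D\TT)$ in $\mod\Lambda^{\op}$, interchanging torsion and torsionfree classes, $\Fac$ with $\Sub$, tilting with cotilting modules, $P(-)$ with $I(-)$, and the quotient $\Lambda/\ann\TT$ with $\Lambda^{\op}/\ann(D\TT)$, while preserving functorial finiteness. Under $D$ the conditions pair off as (a)$\leftrightarrow$(b), (c)$\leftrightarrow$(d), (e)$\leftrightarrow$(f), (g)$\leftrightarrow$(h). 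Hence it suffices to prove the equivalence of the torsion-class statements (a), (c), (e), (g) and to supply one bridge to the torsionfree side; its dual then yields (b), (d), (f), (h).

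For any torsion pair the canonical sequence $0\to tM\to M\to M/tM\to 0$ exhibits $tM\hookrightarrow M$ as a right $\TT$-approximation (a map from a torsion module into $M$ must land in $tM$, since $M/tM\in\FF$) and $M\twoheadrightarrow M/tM$ as a left $\FF$-approximation. Thus $\TT$ is automatically contravariantly finite and $\FF$ automatically covariantly finite, so (a) amounts to covariant finiteness of $\TT$. The implication (a)$\Rightarrow$(c) is then immediate: take a minimal left $\TT$-approximation $\Lambda\to X_0$; any $T\in\TT$ is a quotient of some $\Lambda^n$ and, factoring that epimorphism through the approximation, is a quotient of $X_0^n$, so $\TT=\Fac X_0$. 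Conversely (e)$\Rightarrow$(g)$\Rightarrow$(c) and (e)$\Rightarrow$(a) are formal once the tilting module is in hand: a tilting module $T$ over $A:=\Lambda/\ann\TT$ satisfies $\Fac T=T^{\perp_1}$ in $\mod A$, which recovers $\TT$; its tilting torsion pair in $\mod A$ is functorially finite; and since every object of $\TT$ is an $A$-module, a left $\TT$-approximation of $M\in\mod\Lambda$ is obtained by composing $M\twoheadrightarrow M/(\ann\TT)M$ with a left $\Fac T$-approximation computed in $\mod A$.

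The heart of the argument, and the main obstacle, is the passage (a)$\Rightarrow$(g) and (a)$\Rightarrow$(e): showing that a functorially finite torsion class has \emph{enough $\Ext$-projectives}, that there are only finitely many indecomposable ones so that $P(\TT)$ is a genuine module with $\TT=\Fac P(\TT)$, and that $P(\TT)$ is a tilting $(\Lambda/\ann\TT)$-module. Following Auslander--Smal\o, I would use the right $\TT$-approximations supplied by functorial finiteness to build, for each $T\in\TT$, a short exact sequence $0\to T'\to P\to T\to 0$ with $P$ an $\Ext$-projective of $\TT$ and $T'\in\TT$; this simultaneously gives the generation statement (g) and $\Ext$-self-orthogonality. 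The delicate points are the bound $\pd_A P(\TT)\le 1$ over $A=\Lambda/\ann\TT$ and the count forcing the defining sequence $0\to A\to T^0\to T^1\to 0$ of a tilting module (equivalently $|P(\TT)|=|A|$); this homological bookkeeping over the quotient algebra is where the real work lies.

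Finally I would bridge the two halves. Having produced the tilting $A$-module $T=P(\TT)$ with torsion class $\TT$, classical tilting theory attaches to it the torsionfree class $\mathcal Y=\{N\in\mod A:\Hom_A(T,N)=0\}$, which is the $\Sub$ of a cotilting $A$-module; identifying $\mathcal Y$ with $\FF$ inside $\mod\Lambda$ (using $Y\in\FF\Leftrightarrow\Hom_\Lambda(T,Y)=0$) then produces a cotilting description of $\FF$ and hence (f), and the dual of the torsion-side cycle delivers (b), (d), (h). The compatibility to verify here is that the resulting cotilting module genuinely lives over $\Lambda/\ann\FF$ and not merely over $\Lambda/\ann\TT$, i.e. the matching of the two annihilators under the correspondence; granting this, all eight conditions are equivalent.
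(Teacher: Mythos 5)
Your architecture (dualize to pair off (a)/(b), (c)/(d), (e)/(f), (g)/(h), then work entirely on the torsion-class side) is sound, and the portions you actually carry out are correct: the observation that $\TT$ is automatically contravariantly finite via $tM\hookrightarrow M$, the implication (a)$\Rightarrow$(c) via a left $\TT$-approximation of $\Lambda$, and the derivation of (g) from (e) using $\Fac P(\TT)=P(\TT)^{\perp_1}$ inside $\mod(\Lambda/\ann\TT)$ all check out. That last step is essentially the only implication the paper proves by hand (it factors a surjection $(\Lambda/\ann\TT)^{\ell}\to X$ through the coresolution $0\to\Lambda/\ann\TT\to T^0\to T^1\to0$ using $\Ext^1_\Lambda(T^1{}^\ell,X)=0$); the entire equivalence of (a)--(f) is delegated to Smal\o{}'s theorem, which is why the statement carries the citation \cite{AS,Ho,Sma}.

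The genuine gap is that you never execute what you yourself flag as ``the heart of the argument''. For (a)$\Rightarrow$(e) one must: (i) construct, from the right $\TT$-approximations supplied by functorial finiteness, for each $T\in\TT$ a short exact sequence $0\to T'\to P\to T\to 0$ with $P$ an $\Ext$-projective of $\TT$ and $T'\in\TT$; (ii) show there are only finitely many indecomposable $\Ext$-projectives, so that $P(\TT)$ is a well-defined finitely generated module; (iii) prove $\pd_{A}P(\TT)\le 1$ over $A=\Lambda/\ann\TT$ --- the standard route being that $\tau_A P(\TT)$ lies in the torsionfree class over $A$, where $\TT$ is faithful, so $\Hom_A(DA,\tau_A P(\TT))=0$ and the projective dimension bound follows; and (iv) establish the count $|P(\TT)|=|A|$ together with the coresolution $0\to A\to T^0\to T^1\to 0$. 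None of (i)--(iv) appears in your text beyond the sentence that this is ``where the real work lies''; likewise the final bridge to (f) is left conditional on an unverified matching of $\ann\FF$ with the annihilator arising on the cotilting side. As it stands the proposal is a correct outline whose central implications are declared intentions rather than proofs; to be complete it must either supply (i)--(iv) or, as the paper does, explicitly invoke Smal\o{}'s theorem for (a)--(f) and prove only (g) and (h).
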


\begin{proof}
The conditions (a), (b), (c), (d), (e) and (f) are equivalent by \cite[Theorem]{Sma}.

(g)$\Rightarrow$(c) is clear.

(e)$\Rightarrow$(g) There exists an exact sequence $0\to\Lambda/\ann\TT\xto{a} T^0\to T^1\to0$ with $T^0,T^1\in\add P(\TT)$. For any $X\in\TT$, we take a surjection
$f:(\Lambda/\ann\TT)^\ell\to X$. It follows from $\Ext^1_\Lambda(T^1{}^\ell,X)=0$ that
$f$ factors through $a^\ell:(\Lambda/\ann\TT)^\ell\to T^0{}^\ell$.
Thus $X\in\Fac P(\TT)$.

Dually (h) is also equivalent to the other conditions.
\end{proof}

There is also a tilting quiver associated with the (classical) tilting modules. The vertices are the
isomorphism classes of basic tilting modules.
Let $X\oplus U$ and $Y\oplus U$ be basic tilting modules, where $X$ and $Y\ {\not \simeq}\ X$ are indecomposable.
Then it is known that there is some exact sequence $0\to X\xto{f} U'\xto{g} Y\to0$, where
$f:X\to U'$ is a minimal left $(\add U)$-approximation and $g:U'\to Y$ is a minimal right $(\add U)$-approximation. We say that $Y\oplus U$ is a left mutation of $X\oplus U$.
Then we draw an arrow $X\oplus U\to Y\oplus U$, so that we get a quiver for
the tilting modules. 
On the other hand, the set of basic tilting modules has a natural partial order given by $T\ge U$ if and only if $\Fac T\supseteq\Fac U$, and we can consider the associated Hasse quiver. These two quivers coincide \cite[Theorem 2.1]{HU2}.

\subsection{$\tau$-tilting modules}\label{ssec1.2}

Let $\Lambda$ be a finite dimensional $k$-algebra.
We have dualities
\[D:=\Hom_k(-,k):\mod\Lambda\leftrightarrow\mod\Lambda^{\op}\ \ \ \mbox{and}\ \ \ (-)^*:=\Hom_\Lambda(-,\Lambda):\proj\Lambda\leftrightarrow\proj\Lambda^{\op}\]\index{3d@$D$}\index{$(-)^{*}$}
which induce equivalences
\[\nu:=D(-)^*:\proj\Lambda\to\inj\Lambda\ \ \ \mbox{and}\ \ \ \nu^{-1}:=(-)^*D:\inj\Lambda\to\proj\Lambda\]\index{3nu@$\nu$,$\nu^{-1}$}
called \emph{Nakayama functors}\index{Nakayama functor}.
For $X$ in $\mod\Lambda$ with a minimal projective presentation
\[\xymatrix{P_1\ar[r]^{d_1}&P_0\ar[r]^{d_0}&X\ar[r]&0,}\]
we define $\Tr X$\index{3tr@$\Tr$} in $\mod\Lambda^{\op}$ and $\tau X$\index{3tau@$\tau$, $\tau^{-1}$} in $\mod\Lambda$ by exact sequences
\begin{eqnarray*}
\xymatrix{P_0^*\ar[r]^{d_1^*}&P_1^*\ar[r]^{}&\Tr X\ar[r]&0}\ \ \ \mbox{and}\ \ \ \xymatrix{0\ar[r]&\tau X\ar[r]&\nu P_0\ar[r]^{\nu d_1}&\nu P_1.}
\end{eqnarray*}
Then $\Tr$ and $\tau$ give bijections between the isomorphism classes of indecomposable non-projective $\Lambda$-modules,
the isomorphism classes of indecomposable non-projective $\Lambda^{\op}$-modules
and the isomorphism classes of indecomposable non-injective $\Lambda$-modules.
We denote by $\underline{\mod}\Lambda$\index{3mod@$\underline{\mod}\Lambda$, $\overline{\mod}\Lambda$} the \emph{stable category}\index{stable category} modulo projectives and by 
$\overline{\mod}\Lambda$\index{3mod@$\underline{\mod}\Lambda$, $\overline{\mod}\Lambda$} the \emph{costable category}\index{costable category} modulo injectives.
Then $\Tr$ gives the \emph{Auslander-Bridger transpose duality}\index{Auslander-Bridger transpose duality}
\[\Tr:\underline{\mod}\Lambda\leftrightarrow\underline{\mod}\Lambda^{\op}\]
and $\tau$ gives the \emph{AR translations}\index{Auslander-Reiten translation@AR translation}
\begin{eqnarray*}
\tau=D\Tr:\underline{\mod}\Lambda\to\overline{\mod}\Lambda\ \ \ \mbox{and}\ \ \ \tau^{-1}=\Tr D:\overline{\mod}\Lambda\to\underline{\mod}\Lambda.
\end{eqnarray*}\index{3tau@$\tau$, $\tau^{-1}$}
We have a functorial isomorphism
\[\underline{\Hom}_\Lambda(X,Y)\simeq D\Ext^1_\Lambda(Y,\tau X)\]
for any $X$ and $Y$ in $\mod\Lambda$ called \emph{AR duality}\index{Auslander-Reiten duality@AR duality}. In particular,
if $M$ is $\tau$-rigid, then we have $\Ext_\Lambda^1(M,M)=0$ (i.e. $M$ is rigid) by AR duality. More precisely, we have the following result, which we often use in this paper.

\begin{proposition}\label{Auslander-Smalo}
For $X$ and $Y$ in $\mod\Lambda$, we have the following.
\begin{itemize}
\item[(a)] \cite[Proposition 5.8]{AS} $\Hom_\Lambda(X,\tau Y)=0$ if and only if $\Ext_\Lambda^1(Y,\Fac X)=0$.
\item[(b)] \cite[Theorem 5.10]{AS} If $X$ is $\tau$-rigid, then $\Fac X$ is a functorially finite torsion class and $X\in\add P({\Fac X})$.
\item[(c)] If $\TT$ is a torsion class in $\mod\Lambda$, then $P(\TT)$ is a $\tau$-rigid $\Lambda$-module.
\end{itemize}
\end{proposition}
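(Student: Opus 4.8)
The plan is to read this as a package in which (a) and (b) are exactly the cited Auslander--Smalø results and only (c) needs a genuine argument, which I would deduce from (a). So I would record how (a) and (b) follow from AR theory and spend the real effort on reducing (c) to (a).

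For (a) I would start from the defining sequence $0\to\tau Y\to\nu P_0\xrightarrow{\nu d_1}\nu P_1$ attached to a minimal projective presentation $P_1\xrightarrow{d_1}P_0\to Y\to0$ of $Y$. Applying $\Hom_\Lambda(X,-)$ and using the Nakayama adjunction $\Hom_\Lambda(X,\nu P)\cong D\Hom_\Lambda(P,X)$ for projective $P$, I would identify
\[\Hom_\Lambda(X,\tau Y)\cong D\Cok\bigl(\Hom_\Lambda(P_0,X)\xrightarrow{\Hom_\Lambda(d_1,X)}\Hom_\Lambda(P_1,X)\bigr),\]
so that $\Hom_\Lambda(X,\tau Y)=0$ precisely when every map $P_1\to X$ factors through $d_1$. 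A diagram chase then matches this factorization property with the vanishing of $\Ext^1_\Lambda(Y,Z)$ for every $Z\in\Fac X$, each such $Z$ being a quotient of a power of $X$. This is \cite[Proposition 5.8]{AS}, which I would cite rather than reprove. Likewise for (b) I would invoke \cite[Theorem 5.10]{AS}: for $\tau$-rigid $X$, part (a) with $Y=X$ gives $\Ext^1_\Lambda(X,\Fac X)=0$, and together with closure of $\Fac X$ under factor modules and extensions this makes $\Fac X$ a functorially finite torsion class with $X\in\add P(\Fac X)$.

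The genuinely new statement is (c), and the argument is short once (a) is in hand. Let $\TT$ be a torsion class and write $P:=P(\TT)$ for the direct sum of the indecomposable $\Ext$-projective objects of $\TT$. By definition $P\in\TT$ and $\Ext^1_\Lambda(P,\TT)=0$. Since $\TT$ is closed under factor modules, every factor module of a finite direct sum of copies of $P$ again lies in $\TT$, that is $\Fac P\subseteq\TT$, and hence $\Ext^1_\Lambda(P,\Fac P)=0$. Applying part (a) with $X=Y=P$ converts this vanishing into $\Hom_\Lambda(P,\tau P)=0$, which is exactly the assertion that $P(\TT)$ is $\tau$-rigid.

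The only real subtlety lies in (a) itself, namely matching the abstract cokernel produced by AR duality with the concrete condition $\Ext^1_\Lambda(Y,\Fac X)=0$; this demands care about minimality of the presentation and about passing from a single test module $X$ to the whole class $\Fac X$. Since (a) and (b) are cited verbatim to Auslander--Smalø, the main obstacle in the part we actually prove, (c), is merely the bookkeeping observation that $\Fac P(\TT)$ remains inside $\TT$, so that $\Ext$-projectivity of $P(\TT)$ in $\TT$ can be fed directly into (a).
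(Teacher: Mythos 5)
Your proposal is correct and matches the paper's own proof: parts (a) and (b) are cited to Auslander--Smal\o{}, and (c) is deduced exactly as in the paper by observing that $\Ext$-projectivity of $P(\TT)$ in $\TT$ together with $\Fac P(\TT)\subseteq\TT$ gives $\Ext^1_\Lambda(P(\TT),\Fac P(\TT))=0$, which part (a) converts into $\Hom_\Lambda(P(\TT),\tau P(\TT))=0$.
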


\begin{proof}
(c) Since $T:=P(\TT)$ is $\Ext$-projective in $\TT$, we have $\Ext_\Lambda^1(T,\Fac T)=0$.
This implies that $\Hom_\Lambda(T,\tau T)=0 $ by (a).
\end{proof}

We have the following direct consequence (see also \cite{Sk,ASS}).

\begin{proposition}\label{number of summands of tautilting}
Any $\tau$-rigid $\Lambda$-module $M$ satisfies $|M|\le|\Lambda|$.
\end{proposition}

\begin{proof}
By Proposition \ref{Auslander-Smalo}(b) we have $|M|\le|P(\Fac M)|$.
By Proposition \ref{basics}(e), we have $|P(\Fac M)|=|\Lambda/\ann M|$.
Since $|\Lambda/\ann M|\le|\Lambda|$, we have the assertion.
\end{proof}

As an immediate consequence, if $\tau$-rigid $\Lambda$-modules $M$ and $N$ satisfy $M\in\add N$ and $|M|\ge|\Lambda|$, then $\add M=\add N$.

Finally we note the following relationship between $\tau$-tilting modules and classical notions.

\begin{proposition}\cite[VIII.5.1]{ASS}\label{faithful}
\begin{itemize}
\item[(a)] Any faithful $\tau$-rigid $\Lambda$-module is a partial tilting $\Lambda$-module.
\item[(b)] Any faithful $\tau$-tilting $\Lambda$-module is a tilting $\Lambda$-module.
\end{itemize}
\end{proposition}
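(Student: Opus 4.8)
The plan is to deduce both parts from the structural results on torsion classes recorded above, so that the only new input needed is that faithfulness makes the annihilator of the associated torsion class vanish.

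First I would handle part (a). Since $M$ is $\tau$-rigid, Proposition \ref{Auslander-Smalo}(b) gives that $\TT := \Fac M$ is a functorially finite torsion class and that $M \in \add P(\TT)$. Rigidity comes for free: applying Proposition \ref{Auslander-Smalo}(a) with $X = Y = M$ to the hypothesis $\Hom_\Lambda(M,\tau M)=0$ yields $\Ext^1_\Lambda(M,\Fac M)=0$, and since $M \in \Fac M$ this forces $\Ext^1_\Lambda(M,M)=0$. So the entire content of (a) is the estimate $\pd_\Lambda M \le 1$.

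The crucial point is that faithfulness forces $\ann\TT = 0$: because $M \in \TT = \Fac M$, we have $\ann\TT = \bigcap_{X \in \Fac M}\ann X \subseteq \ann M = 0$. Proposition \ref{basics}(e) then says $P(\TT)$ is a tilting module over $\Lambda/\ann\TT = \Lambda$, i.e.\ an honest tilting $\Lambda$-module, so in particular $\pd_\Lambda P(\TT) \le 1$. As $M$ is a direct summand of a finite direct sum of copies of $P(\TT)$ (this is what $M \in \add P(\TT)$ means), I conclude $\pd_\Lambda M \le 1$. Together with $\Ext^1_\Lambda(M,M)=0$ this exhibits $M$ as a partial tilting module, proving (a). For part (b) I would simply add the hypothesis $|M|=|\Lambda|$: by part (a) the module $M$ is partial tilting, and it was recorded in \S\ref{ssec1.1} that a partial tilting module $T$ with $|T|=|\Lambda|$ is a tilting module, so $M$ is tilting. (Equivalently, staying inside the same argument, $|M|=|\Lambda|=|P(\TT)|$ together with $M \in \add P(\TT)$ gives $\add M = \add P(\TT)$, so $M$ is itself tilting.)

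I expect the only genuinely delicate step to be the identification $\ann(\Fac M)=0$, since this is exactly what upgrades the relative conclusion of Proposition \ref{basics}(e) — a tilting module over $\Lambda/\ann\TT$ — to a statement over $\Lambda$ itself; everything else is a direct application of results already in hand. In effect the real work is carried out by Propositions \ref{Auslander-Smalo}(b) and \ref{basics}(e). Were those structural results not available, the hard part would instead be a direct verification that $\Hom_\Lambda(M,\tau M)=0$ together with faithfulness forces the kernel of a minimal projective presentation $P_1 \to P_0 \to M \to 0$ to vanish, which is precisely the computation those propositions package for us.
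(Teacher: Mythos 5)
Your argument is correct. The paper itself offers no proof of this proposition---it is quoted from [ASS, VIII.5.1]---and the classical argument behind that citation (a version of which survives as a stray fragment in the source file) is more direct than yours: faithfulness of $M$ gives $D\Lambda\in\Fac M$, $\tau$-rigidity gives $\Hom_\Lambda(\Fac M,\tau M)=0$ and hence $\Hom_\Lambda(D\Lambda,\tau M)=0$, and the standard homological criterion ($\pd_\Lambda X\le 1$ if and only if $\Hom_\Lambda(D\Lambda,\tau X)=0$) then yields $\pd_\Lambda M\le 1$; rigidity is AR duality, just as in your write-up. You instead route everything through Smal\o's correspondence, Proposition \ref{basics}(e): your observation that $M\in\Fac M$ forces $\ann(\Fac M)\subseteq\ann M=0$ is exactly the right reduction, after which $P(\Fac M)$ is a genuine tilting $\Lambda$-module and $M\in\add P(\Fac M)$ inherits $\pd_\Lambda M\le1$. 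This is a legitimate derivation within the paper's stated toolkit, and part (b) follows correctly either via Bongartz completion or via $\add M=\add P(\Fac M)$. The trade-off is that the implication (c)$\Rightarrow$(e) of Proposition \ref{basics}, restricted to the faithful case, is essentially the statement being proven (applied to $P(\TT)$, which is $\tau$-rigid by Proposition \ref{Auslander-Smalo}(c)); so your proof relocates the real homological content into that cited black box, whereas the classical route makes the $\pd\le1$ step explicit. Both are valid; yours buys brevity at the cost of self-containedness.
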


\subsection{Silting complexes}

Let $\Lambda$ be a finite dimensional $k$-algebra and $\KKb(\proj \Lambda)$\index{3kb@$\KKb(\proj \Lambda)$}
be the category of bounded complexes of finitely generated projective
$\Lambda$-modules.
We recall the definition of silting complexes and mutations. 

\begin{definition}\cite{AI,Ai,BRT,KV}
Let $P\in\KKb(\proj \Lambda)$.
\begin{enumerate}
\item[(a)] We call $P$ \emph{presilting}\index{presilting complex} if $\Hom_{\KKb(\proj \Lambda)}(P,P[i])=0$ for any $i > 0$.
\item[(b)] We call $P$ \emph{silting}\index{silting complex} if it is presilting and satisfies $\thick P=\KKb(\proj \Lambda)$, 
where $\thick P$ is the smallest full subcategory of $\KKb(\proj\Lambda)$
which contains $P$ and is closed under cones, $[\pm1]$, direct summands and isomorphisms.
\end{enumerate}
We denote by $\silt\Lambda$\index{3silt@$\silt\Lambda$} the set of isomorphism classes of basic silting complexes for $\Lambda$.
\end{definition}

The following result is important.

\begin{proposition}\cite[Theorem 2.27, Corollary 2.28]{AI}\label{number of summands of silting}
\begin{itemize}
\item[(a)] For any $P\in\silt\Lambda$, we have $|P|=|\Lambda|$.
\item[(b)] Let $P=\bigoplus_{i=1}^nP_n$ be a basic silting complex for $\Lambda$ with $P_i$ indecomposable. 
Then $P_1,\cdots,P_n$ give a basis of the Grothendieck group $K_0(\KKb(\proj\Lambda))$.
\end{itemize}
\end{proposition}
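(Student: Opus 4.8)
The plan is to derive everything from an analysis of the Grothendieck group. First I would record the ambient structure: $K_0(\KKb(\proj\Lambda))$ is free abelian of rank $|\Lambda|$. The cleanest way to pin this down, and the one I would use for the computations below, is the Euler pairing
\[
\langle-,-\rangle\colon K_0(\KKb(\proj\Lambda))\times K_0(\mod\Lambda)\to\mathbb{Z},\qquad \langle P,X\rangle=\sum_{i\in\mathbb{Z}}(-1)^i\dim_k\Hom_{\mathrm{D}^{\mathrm b}(\mod\Lambda)}(P,X[i]),
\]
which is well defined because $P$ is perfect. Writing $\Lambda_1,\dots,\Lambda_n$ for the indecomposable projectives and $S_1,\dots,S_n$ for the simples, the vanishing of higher $\Ext$ out of a projective gives $\langle\Lambda_i,S_j\rangle=\dim_k\Hom_\Lambda(\Lambda_i,S_j)=\delta_{ij}$ (projective covers are dual to simples), so the pairing is perfect and both groups are free of rank $n=|\Lambda|$. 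I would then observe that (b) implies (a): a basis of a free abelian group of rank $|\Lambda|$ has exactly $|\Lambda|$ members, so once (b) is proved, $|P|=|\Lambda|$ is automatic. Hence I would concentrate on (b).

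Next I would show that the classes $[P_1],\dots,[P_{|P|}]$ generate $K_0(\KKb(\proj\Lambda))$. Let $G$ be the subgroup they span. The relations in the Grothendieck group of a triangulated category give $[Y]=[X]+[Z]$ for every triangle $X\to Y\to Z\to X[1]$ and $[X[1]]=-[X]$; consequently the class of any object obtained from $\add P$ by iterated cones, shifts and direct summands already lies in $G$. Since $\thick P=\KKb(\proj\Lambda)$ by the silting hypothesis, this forces $G=K_0(\KKb(\proj\Lambda))$, and in particular $|P|\ge|\Lambda|$.

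The remaining and genuinely harder point is linear independence of the $[P_i]$, and this is where the presilting condition $\Hom(P,P[i])=0$ $(i>0)$ must be used in an essential way; I expect it to be the main obstacle. I would stress that the Euler pairing above does \emph{not} settle this by itself: the induced form $\langle\Lambda_i,\Lambda_j\rangle$ on $K_0(\KKb(\proj\Lambda))$ is the Cartan form, which can be degenerate when $\Lambda$ has infinite global dimension, so independence cannot be read off from a Gram determinant. The route I would take is through the bounded co-t-structure (weight structure) attached to a silting complex: $\add P$ is the co-heart of a bounded weight structure on $\KKb(\proj\Lambda)$, and the associated weight complex functor induces an isomorphism $K_0(\KKb(\proj\Lambda))\xrightarrow{\ \sim\ }K_0(\add P)$, the target being free on the indecomposable summands of $P$; this delivers independence (and re-proves generation) in one stroke, so the $[P_i]$ form a basis. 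A more hands-on alternative is to argue by mutation: show that every silting complex is reached from $\Lambda=\bigoplus\Lambda_i$ by a finite sequence of silting mutations, note that $\Lambda$ gives the standard basis, and check that each mutation triangle expresses the new summand as $\pm$ the old one plus a combination of the unchanged summands, i.e.\ a unimodular change of basis preserving the basis property. Either way, the crux is extracting independence from presilting; the rest is formal.
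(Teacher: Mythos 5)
The paper offers no proof of this proposition at all: it is quoted verbatim from \cite[Theorem 2.27, Corollary 2.28]{AI}, so the only meaningful comparison is with the proof given there. Your skeleton is the right one and matches the structure of that proof: the reduction of (a) to (b), the computation of the rank of $K_0(\KKb(\proj\Lambda))$ via the Euler pairing against the simples, and the diagnosis that linear independence is where the presilting condition must enter are all correct.

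There are, however, two genuine gaps. First, your generation step does not work as written. The set of objects $X$ with $[X]$ in the subgroup $G$ generated by $[P_1],\dots,[P_n]$ is closed under shifts and cones but \emph{not} under direct summands, so $\thick P=\KKb(\proj\Lambda)$ alone does not force $G=K_0(\KKb(\proj\Lambda))$. This is not a pedantic point: by Thomason's classification of dense triangulated subcategories, an object can generate the whole category thickly while the subgroup generated by the classes of its indecomposable summands is proper in $K_0$. The cited proof avoids this by showing, using $\Hom(P,P[i])=0$ for $i>0$, that every object admits an honest finite tower of triangles with subquotients in the various $\add P[\ell]$, i.e. $\KKb(\proj\Lambda)=\bigcup_{\ell}\add P[\ell]*\cdots*\add P[-\ell]$, the presilting vanishing being exactly what makes these $*$-products closed under direct summands. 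So presilting is needed already for generation, not only for independence. Second, your ``hands-on alternative'' via mutation is not available: it is not known in general that every silting complex is an iterated mutation of $\Lambda$ (this silting-connectedness is precisely the open problem studied in \cite{Ai}), so that route cannot be used as a fallback. Your primary route --- the bounded co-t-structure with co-heart $\add P$ and the induced isomorphism $K_0(\KKb(\proj\Lambda))\simeq K_0^{\mathrm{split}}(\add P)$ --- is valid and is essentially a repackaging of the argument in \cite{AI}, which constructs the inverse map directly as the alternating sum of the filtration quotients above and checks it is well defined on triangles; but as written you cite the key isomorphism rather than prove it, so the crux of the statement remains outsourced, just as it is in the paper itself.
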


We call a presilting complex $P$ for $\Lambda$ \emph{almost complete silting}\index{almost complete silting complex} if $|P|=|\Lambda|-1$.
There is a similar type of mutation as for tilting modules.

\begin{definition-proposition}\cite[Theorem 2.31]{AI}\label{silt2.31}
Let $P=X\oplus Q$ be a basic silting complex with $X$ indecomposable.
We consider a triangle
\[
\xymatrix{
X\ar[r]^{f}&Q^{\prime}\ar[r]&Y\ar[r]&X[1]
}
\]
with a minimal left $(\add Q)$-approximation $f$ of $X$.
Then the \emph{left mutation}\index{left mutation!silting complex} of $P$ with respect to $X$ is $\mu^{-}_{X}(P):=Y\oplus Q$\index{3mu@$\mu^{+}$, $\mu^{-}$}.
Dually we define the \emph{right mutation}\index{right mutation!silting complex} $\mu^{+}_{X}(P)$\index{3mu@$\mu^{+}$, $\mu^{-}$} of $P$ with respect to $X$.\footnote{These notations $\mu^{-}$ and $\mu^{+}$ are the opposite of those in \cite{AI}. They are easy to remember since they are the same direction as $\tau^{-1}$ and $\tau$. }
Then the left mutation and the right mutation of $P$ are also basic silting complexes.
\end{definition-proposition}

There is the following partial order on the set $\silt\Lambda$.

\begin{definition-proposition}\cite[Theorem 2.11, Proposition 2.14]{AI}\label{silt2.11}
For $P,Q\in\silt\Lambda$, we write
\[P\geq Q\]
if $\Hom_{\KKb(\proj\Lambda)}(P,Q[i])=0$ for any $i>0$, which is equivalent to
$P^{\perp_{>0}}\supseteq Q^{\perp_{>0}}$ where $P^{\perp_{>0}}$ is a subcategory of $\KKb(\proj\Lambda)$ consisting of the $X$ satisfying $\Hom_{\KKb(\proj\Lambda)}(P,X[i])=0$ for any $i>0$.
Then we have a partial order on $\silt\Lambda$.
\end{definition-proposition}

We define the \emph{silting quiver}\index{silting quiver} $\Q(\silt\Lambda)$\index{3qsilt@$\Q(\silt\Lambda)$} of $\Lambda$ as follows:
\begin{itemize}
\item The set of vertices is $\silt\Lambda$.
\item We draw an arrow from $P$ to $Q$ if $Q$ is a left mutation of $P$.  
\end{itemize}
Then the silting quiver gives the Hasse quiver of the partially ordered set $\silt\Lambda$ by \cite[Theorem 2.35]{AI}, similar to the situation for tilting modules.
We shall later restrict to two-term silting complexes to get exactly two complements for almost complete silting complexes.

\subsection{Cluster-tilting objects}

Let $\CC$ be a $k$-linear Hom-finite Krull-Schmidt triangulated category. 
Assume that $\CC$ is \emph{2-Calabi-Yau}\index{2-Calabi-Yau category} (\emph{2-CY} for short) i.e. there exists a functorial isomorphism $D\Ext^1_{\CC}(X,Y)\simeq\Ext^1_{\CC}(Y,X)$.
An important class of objects in these categories are the cluster-tilting objects.
We recall the definition of these and related objects.

\begin{definition}
\begin{enumerate}
\item[(a)] We call $T$ in $\CC$ \emph{rigid}\index{rigid object} if $\Hom_{\CC}(T,T[1])=0$.
\item[(b)] We call $T$ in $\CC$ \emph{cluster-tilting}\index{cluster-tilting object} if $\add T =\{ X\in \CC \ |\ \Hom_{\CC}(T, X[1])=0 \}$.
\item[(c)] We call $T$ in $\CC$ \emph{maximal rigid}\index{maximal rigid object} if it is rigid and maximal with respect to this property, that is,
$\add T=\{ X\in \CC \ |\ \Hom_{\CC}(T\oplus X, (T\oplus X)[1])=0\}$.
\end{enumerate}
\end{definition}
We denote by $\ctilt\CC$\index{3ctilt@$\ctilt\CC$} the set of isomorphism classes of basic cluster-tilting objects in $\CC$.
In this setting, there are also mutations of cluster-tilting objects defined via approximations,
which we recall \cite{BMRRT,IY}.

\begin{definition-proposition}\label{ct3}\cite[Theorem 5.3]{IY}
Let $T=X\oplus U$ be a basic cluster-tilting object in $\CC$ and $X$ indecomposable in $\CC$.
We consider the triangle
\[
\xymatrix{
X\ar[r]^{f}&U'\ar[r]&Y\ar[r]&X[1]
}
\]
with a minimal left $(\add U)$-approximation $f$ of $X$.
Let $\mu^{-}_{X}(T):=Y\oplus U$\index{3mu@$\mu^{+}$, $\mu^{-}$}. Dually we define $\mu^{+}_{X}(T)$\index{3mu@$\mu^{+}$, $\mu^{-}$}.
A different feature in this case is that we have $\mu^{-}_{X}(T)\simeq\mu^{+}_{X}(T)$.
This is a basic cluster-tilting object which as before we call the \emph{mutation}\index{mutation!cluster-tilting object} of $T$ with respect to $X$.
\end{definition-proposition}

In this case we get just a graph rather than a quiver.
We define the \emph{cluster-tilting graph}\index{cluster-tilting graph} $\G(\ctilt\CC)$\index{3gctilt@$\G(\ctilt\CC)$} of $\CC$
as follows:
\begin{itemize}
\item The set of vertices is $\ctilt\CC$.
\item We draw an edge between $T$ and $U$ if $U$ is a mutation of $T$.
\end{itemize}
Note that $U$ is a mutation of $T$ if and only if $T$ and $U$ have all but one indecomposable direct summand in common \cite[Theorem 5.3]{IY} 
(see Corollary \ref{application to CT}(a)).

\section{Support $\tau$-tilting modules}\label{sec2}
Our aim in this section is to develop a basic theory of support $\tau$-tilting modules over any finite dimensional $k$-algebra.
We start with discussing some basic properties of $\tau$-rigid modules and connections between $\tau$-rigid
modules and functorially finite torsion classes (Theorem \ref{basic bijection}). 
As an application, we introduce Bongartz completion of $\tau$-rigid modules (Theorem \ref{thm2.3}).
Then we give characterizations of $\tau$-tilting modules (Theorem \ref{3 conditions}).
We also give left-right duality of $\tau$-rigid modules (Theorem \ref{left-right symmetry}).
Further we prove our main result which states that an almost complete support $\tau$-tilting module has exactly two complements 
(Theorem \ref{2 complements}). As an application, we introduce mutation of support $\tau$-tilting modules.
We show that mutation gives the Hasse quiver of the partially ordered set of support $\tau$-tilting modules (Theorem \ref{Hasse is mutation}).

\subsection{Basic properties of $\tau$-rigid modules}

When $T$ is a $\Lambda$-module with $I$ an ideal contained in $\ann T
$, we investigate the relationship between $T$ being $\tau$-rigid as a
$\Lambda$-module and as a $(\Lambda/I)$-module. We have the following.

\begin{lemma}\label{prop1.8}
Let $\Lambda$ be a finite dimensional algebra, and $I$ an ideal in
$\Lambda$. Let $M$ and $N$ be $(\Lambda/I)$-modules. Then we have the following.
\begin{itemize}
\item[(a)] If $\Hom_{\Lambda}(N,\tau M)=0$, then $\Hom_{\Lambda/I}(N,\tau_{\Lambda/I}M)=0$.
\item[(b)] Assume $I=\langle e\rangle$ for an idempotent $e$ in $\Lambda$.
Then $\Hom_{\Lambda}(N,\tau M)=0$ if and only if $\Hom_{\Lambda/I}(N,\tau_{\Lambda/I}M)=0$.
\end{itemize}
\end{lemma}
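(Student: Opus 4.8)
The plan is to reduce both statements, via the Auslander--Smal\o{} equivalence of Proposition \ref{Auslander-Smalo}(a), to a comparison of first extension groups over $\Lambda$ and over $\Lambda/I$. The first thing I would record is that $\Fac N$ is insensitive to whether it is computed in $\mod\Lambda$ or in $\mod(\Lambda/I)$: since $N$ is a $(\Lambda/I)$-module, so is every factor module of a finite direct sum of copies of $N$, hence $\Fac N$ is one and the same class of $(\Lambda/I)$-modules in either category. Applying Proposition \ref{Auslander-Smalo}(a) over $\Lambda$ and (as $\Lambda/I$ is again finite-dimensional) over $\Lambda/I$, the two conditions become
\[
\Hom_\Lambda(N,\tau M)=0\iff\Ext^1_\Lambda(M,L)=0\ \ (L\in\Fac N),
\]
\[
\Hom_{\Lambda/I}(N,\tau_{\Lambda/I}M)=0\iff\Ext^1_{\Lambda/I}(M,L)=0\ \ (L\in\Fac N).
\]
Thus everything reduces to comparing $\Ext^1_\Lambda(M,L)$ with $\Ext^1_{\Lambda/I}(M,L)$ for $(\Lambda/I)$-modules $M$ and $L$.

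For part (a) I would use only the easy, always-valid inclusion. Any short exact sequence of $(\Lambda/I)$-modules $0\to L\to E\to M\to0$ is in particular a sequence of $\Lambda$-modules, and a $\Lambda$-linear splitting $M\to E$ between $(\Lambda/I)$-modules is automatically $(\Lambda/I)$-linear; hence an extension that splits over $\Lambda$ already splits over $\Lambda/I$. This shows $\Ext^1_\Lambda(M,L)=0\Rightarrow\Ext^1_{\Lambda/I}(M,L)=0$ for an arbitrary ideal $I$. Feeding this into the displayed equivalences for every $L\in\Fac N$ gives (a).

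For part (b), with $I=\langle e\rangle$ and $e$ idempotent, the point is to upgrade this to an equality $\Ext^1_\Lambda(M,L)=\Ext^1_{\Lambda/I}(M,L)$, and the decisive step is that extensions cannot leave $\mod(\Lambda/I)$. Given a short exact sequence $0\to L\to E\to M\to0$ of $\Lambda$-modules with $L$ and $M$ killed by $\langle e\rangle$, I claim $eE=0$: for $x\in E$ the image of $ex$ in $M$ is $e\bar x=0$, so $ex\in L$, and then $ex=e(ex)\in eL=0$ since $e^2=e$. Consequently $\langle e\rangle E=\Lambda e\Lambda E=\Lambda eE=0$, so $E$ is itself a $(\Lambda/I)$-module and the given class lies in $\Ext^1_{\Lambda/I}(M,L)$. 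Combined with the inclusion from part (a) this yields the equality, hence the asserted equivalence in (b).

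I expect the idempotent computation $eE=0$ to be the crux: it is precisely where $e^2=e$ is used, and it is exactly what fails for a general ideal $I$ (where the converse of (a) is false, as the two-sided vanishing can genuinely differ). The remaining ingredients---the $\Fac$ identification and the Auslander--Smal\o{} translation---are formal once Proposition \ref{Auslander-Smalo}(a) is in hand.
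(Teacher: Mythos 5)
Your proof is correct and follows essentially the same route as the paper: both reduce the statement via Proposition \ref{Auslander-Smalo}(a) to comparing $\Ext^1_\Lambda(M,-)$ with $\Ext^1_{\Lambda/I}(M,-)$ on $\Fac N$, use the natural inclusion $\Ext^1_{\Lambda/I}\hookrightarrow\Ext^1_\Lambda$ for (a), and for (b) use that $\mod(\Lambda/\langle e\rangle)$ is closed under extensions in $\mod\Lambda$ to upgrade the inclusion to an isomorphism. The only difference is that you spell out the computation $eE=0$ that the paper merely asserts, which is a welcome addition rather than a deviation.
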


\begin{proof}
Note that we have a natural inclusion $\Ext^1_{\Lambda/I}(M,N)\to\Ext^1_{\Lambda}(M,N)$.
This is an isomorphism if $I=\langle e\rangle$ for an idempotent $e$ since $\mod(\Lambda/\langle e\rangle)$ is closed under extensions in $\mod\Lambda$.

(a) Assume $\Hom_{\Lambda}(N,\tau M)=0$.
Then by Proposition \ref{Auslander-Smalo}, we have $\Ext^1_{\Lambda}(M,\Fac N)=0$.
By the above observation, we have $\Ext^1_{\Lambda/I}(M,\Fac N)=0$.
By Proposition \ref{Auslander-Smalo} again, we have $\Hom_{\Lambda/I}(N,\tau_{\Lambda/I}M)=0$.

(b) Assume that $I=\langle e\rangle$ and $\Hom_{\Lambda/I}(N,\tau_{\Lambda/I}M)=0$.
By Proposition \ref{Auslander-Smalo}, we have $\Ext^1_{\Lambda/I}(M,\Fac N)=0$. By the above observation, we have $\Ext^1_{\Lambda}(M,\Fac N)=0$.
By Proposition \ref{Auslander-Smalo} again, we have $\Hom_{\Lambda}(N,\tau M)=0$.
\end{proof}

Recall that $M$ in $\mod\Lambda$ is \emph{sincere}\index{sincere module} if every simple $\Lambda$-module appears as a composition factor in $M$.
This is equivalent to the fact that there does not exist a non-zero idempotent $e$ of $\Lambda$ which annihilates $M$.

\begin{proposition}\label{sincere}
\begin{itemize}
\item[(a)] $\tau$-tilting modules are precisely sincere support $\tau$-tilting modules.
\item[(b)] Tilting modules are precisely faithful support $\tau$-tilting modules.
\item[(c)] Any $\tau$-tilting (respectively, $\tau$-rigid) $\Lambda$-module $T$ is a tilting (respectively, partial tilting) $(\Lambda/\ann T)$-module.
\end{itemize}
\end{proposition}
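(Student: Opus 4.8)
The plan is to prove the three parts in the order (c), (a), (b), since once (c) and the counting behind it are in hand, parts (a) and (b) drop out with little extra work. Throughout write $\bar\Lambda:=\Lambda/\ann T$, so that $T$ is by construction a \emph{faithful} $\bar\Lambda$-module. For (c) the first move is to transport $\tau$-rigidity down to $\bar\Lambda$: applying Lemma \ref{prop1.8}(a) with $I=\ann T$ and $M=N=T$, the hypothesis $\Hom_\Lambda(T,\tau T)=0$ forces $\Hom_{\bar\Lambda}(T,\tau_{\bar\Lambda}T)=0$, so $T$ is $\tau$-rigid over $\bar\Lambda$. Being faithful and $\tau$-rigid over $\bar\Lambda$, Proposition \ref{faithful}(a) at once makes $T$ a partial tilting $\bar\Lambda$-module, which handles the $\tau$-rigid case. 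For the $\tau$-tilting case I would upgrade this by counting: Proposition \ref{number of summands of tautilting} applied over $\bar\Lambda$ gives $|T|\le|\bar\Lambda|$, while $\bar\Lambda$ is a quotient algebra of $\Lambda$ and so $|\bar\Lambda|\le|\Lambda|$; together with the hypothesis $|T|=|\Lambda|$ this squeezes out $|T|=|\bar\Lambda|=|\Lambda|$. Hence $T$ is a faithful $\tau$-tilting $\bar\Lambda$-module, and Proposition \ref{faithful}(b) promotes it to a tilting $\bar\Lambda$-module.

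For (a), the implication ``sincere support $\tau$-tilting $\Rightarrow$ $\tau$-tilting'' is the formal direction: if $T$ is a $\tau$-tilting $(\Lambda/\langle e\rangle)$-module then $e\in\ann T$, so $e$ is an idempotent annihilating the sincere module $T$, whence $e=0$ and $T$ is already $\tau$-tilting over $\Lambda$. The reverse implication is where the work in (c) pays off. A $\tau$-tilting module is trivially support $\tau$-tilting (take $e=0$), so I only need sincerity, and this is exactly what the equality $|\bar\Lambda|=|\Lambda|$ delivers: a nonzero idempotent $e\in\ann T$ would realise $\bar\Lambda$ as a quotient of $\Lambda/\langle e\rangle$, forcing $|\bar\Lambda|\le|\Lambda/\langle e\rangle|<|\Lambda|$, contradicting $|\bar\Lambda|=|\Lambda|$. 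Thus no nonzero idempotent annihilates $T$, i.e. $T$ is sincere.

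Part (b) then follows from (a) and standard facts. A tilting module is faithful (from $0\to\Lambda\to T_0\to T_1\to0$ with $T_0\in\add T$, any $x\in\ann T$ kills $T_0$ and hence, by injectivity of $\Lambda\hookrightarrow T_0$, equals $0$), and it is in particular $\tau$-tilting, so it is a faithful support $\tau$-tilting module. Conversely a faithful module is sincere, so a faithful support $\tau$-tilting module is sincere support $\tau$-tilting, hence $\tau$-tilting by (a); being faithful and $\tau$-tilting it is a tilting module by Proposition \ref{faithful}(b).

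The only genuinely delicate point I anticipate is the counting step $|T|=|\bar\Lambda|=|\Lambda|$, together with the accompanying observation that passing from $\Lambda$ to $\Lambda/\langle e\rangle$ along a \emph{nonzero} idempotent $e$ strictly decreases the number of non-isomorphic simple modules; everything else is a bookkeeping assembly of Lemma \ref{prop1.8} and Proposition \ref{faithful}. The assumption I would flag explicitly is that $\Lambda$ is basic, as this is what guarantees that nonzero idempotents correspond to genuine vertices and therefore to the strict drop $|\Lambda/\langle e\rangle|<|\Lambda|$ used in (a).
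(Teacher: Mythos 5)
Your proof is correct and uses the same ingredients as the paper's own argument (Lemma \ref{prop1.8}(a), Proposition \ref{faithful}, and the bound $|M|\le|\Lambda|$ for $\tau$-rigid modules from Proposition \ref{number of summands of tautilting}); the only differences are that you prove the parts in the order (c), (a), (b) and extract sincerity in (a) from the equality $|\Lambda/\ann T|=|\Lambda|$, whereas the paper applies the counting bound directly over $\Lambda/\langle e\rangle$ and deduces (c) from (b). These are equivalent reroutings with no circularity, and your explicit flag that $\Lambda$ is basic matches the paper's standing assumption.
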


\begin{proof}
(a) Clearly sincere support $\tau$-tilting modules are $\tau$-tilting.
Conversely, if a $\tau$-tilting $\Lambda$-module $T$ is not sincere, then there exists a non-zero idempotent $e$ of $\Lambda$ such that $T$ is a $(\Lambda/\langle e\rangle)$-module.
Since $T$ is $\tau$-rigid as a $(\Lambda/\langle e\rangle)$-module by Lemma \ref{prop1.8}(a),
we have $|T|=|\Lambda|>|\Lambda/\langle e\rangle|$, a contradiction to Proposition \ref{number of summands of tautilting}.

(b) Clearly tilting modules are faithful $\tau$-tilting.
Conversely, any faithful support $\tau$-tilting module $T$ is partial tilting by Proposition \ref{faithful} and satisfies $|T|=|\Lambda|$.
Thus $T$ is tilting.

(c) By Lemma \ref{prop1.8}(a), we know that $T$ is a faithful $\tau$-tilting (respectively, $\tau$-rigid) $(\Lambda/\ann T)$-module.
Thus the assertion follows from (b) (respectively, Proposition \ref{faithful}).
\end{proof}

Immediately we have the following basic observation, which will be used frequently in this paper.

\begin{proposition}\label{two notion of tau-rigidity}
Let $(M,P)$ be a pair with $M\in\mod\Lambda$ and $P\in\proj\Lambda$. 
Let $e$ be an idempotent of $\Lambda$ such that $\add P=\add\Lambda e$.
\begin{itemize}
\item[(a)] $(M,P)$ is a $\tau$-rigid (respectively, support $\tau$-tilting, almost complete support $\tau$-tilting) pair for $\Lambda$ if and only if
$M$ is a $\tau$-rigid (respectively, $\tau$-tilting, almost complete $\tau$-tilting) $(\Lambda/\langle e\rangle)$-module.
\item[(b)] If $(M,P)$ and $(M,Q)$ are support $\tau$-tilting pairs for $\Lambda$, then $\add P=\add Q$. In other words, $M$ determines $P$ and $e$ uniquely.
\end{itemize}
\end{proposition}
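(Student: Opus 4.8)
The plan is to translate both statements into the factor algebra $\Lambda/\langle e\rangle$, so that the module-theoretic notions of Definition \ref{first definition} can be matched against the pair notions of Definition \ref{define support tau tilting}. The first and key step is the elementary identification $\Hom_\Lambda(\Lambda e,M)\cong eM$, which shows that the condition $\Hom_\Lambda(P,M)=0$ in the definition of a $\tau$-rigid pair is equivalent to $eM=0$. Since $M=\Lambda M$ forces $\langle e\rangle M=\Lambda e\Lambda M=\Lambda eM$, the vanishing $eM=0$ is in turn equivalent to $\langle e\rangle M=0$, i.e.\ to $M$ being a $(\Lambda/\langle e\rangle)$-module. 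Thus ``$\Hom_\Lambda(P,M)=0$'' is precisely ``$M\in\mod(\Lambda/\langle e\rangle)$''.

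Granting this, part (a) for the $\tau$-rigid case is immediate from Lemma \ref{prop1.8}(b): once $M\in\mod(\Lambda/\langle e\rangle)$, $\tau$-rigidity of $M$ over $\Lambda$ and over $\Lambda/\langle e\rangle$ coincide, so $(M,P)$ is a $\tau$-rigid pair iff $M$ is a $\tau$-rigid $(\Lambda/\langle e\rangle)$-module. For the two tilting variants I would add the numerical bookkeeping: as $\Lambda$ is basic and $P=\Lambda e$ is basic, $|P|$ is the number of vertices in the support of $e$, and the simple $(\Lambda/\langle e\rangle)$-modules are exactly the simple $\Lambda$-modules outside this support, giving $|\Lambda/\langle e\rangle|=|\Lambda|-|P|$. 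Hence $|M|+|P|=|\Lambda|$ (resp.\ $|\Lambda|-1$) is equivalent to $|M|=|\Lambda/\langle e\rangle|$ (resp.\ $|\Lambda/\langle e\rangle|-1$), and combined with the $\tau$-rigid equivalence this yields the support $\tau$-tilting and almost complete support $\tau$-tilting cases.

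For part (b), let $\add P=\add\Lambda e$ and $\add Q=\add\Lambda f$. By part (a) the module $M$ is $\tau$-tilting both as a $(\Lambda/\langle e\rangle)$- and as a $(\Lambda/\langle f\rangle)$-module, so by Proposition \ref{sincere}(a) (applied to each factor algebra) it is sincere over each of them. Writing $\operatorname{supp}M$ for the set of vertices $i$ with $e_iM\neq0$ (equivalently, with $S_i$ a composition factor of $M$), the relation $eM=0$ gives that the support of $e$ is contained in the complement of $\operatorname{supp}M$, while sincerity of $M$ over $\Lambda/\langle e\rangle$ gives the reverse inclusion. Hence the support of $e$ equals the complement of $\operatorname{supp}M$, a set depending only on $M$; the same computation applies to $f$, so the supports of $e$ and $f$ agree and $\add P=\add Q$.

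I expect the genuine content to sit in part (b): the equation $\Hom_\Lambda(P,M)=0$ by itself pins down only one inclusion, namely that $e$ is supported away from the composition factors of $M$, and it is exactly the sincerity forced by $M$ being $\tau$-tilting over the factor algebra (Proposition \ref{sincere}(a)) that supplies the opposite inclusion and hence the uniqueness of $e$. The rest is the dictionary $\Hom_\Lambda(\Lambda e,M)\cong eM$ together with Lemma \ref{prop1.8}(b) and a cardinality count, all of which are routine.
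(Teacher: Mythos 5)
Your proposal is correct and follows essentially the same route as the paper: part (a) is reduced to Lemma \ref{prop1.8}(b) together with the count $|\Lambda/\langle e\rangle|=|\Lambda|-|P|$, and part (b) is deduced from Proposition \ref{sincere}(a). You merely make explicit the dictionary $\Hom_\Lambda(\Lambda e,M)\cong eM$ and the support computation that the paper leaves implicit.
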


\begin{proof}
(a) The assertions follow from Lemma \ref{prop1.8} and the equation $|\Lambda/\langle e\rangle|=|\Lambda|-|P|$.

(b) This is a consequence of Proposition \ref{sincere}(a).
\end{proof}

The following observations are useful.

\begin{proposition}\label{no common summand}
Let $X$ be in $\mod\Lambda$ with a minimal projective presentation $P_1\xto{d_1}P_0\xto{d_0} X\to0$.
\begin{itemize}
\item[(a)] For $Y$ in $\mod\Lambda$, we have an exact sequence
\[0\to\Hom_\Lambda(Y,\tau X)\to D\Hom_\Lambda(P_1,Y)\xto{D(d_1,Y)}D\Hom_\Lambda(P_0,Y)\xto{D(d_0,Y)}D\Hom_\Lambda(X,Y)\to0.\]
\item[(b)] $\Hom_\Lambda(Y,\tau X)=0$ if and only if the map $\Hom_\Lambda(P_0,Y)\xto{(d_1,Y)}\Hom_\Lambda(P_1,Y)$ is surjective.
\item[(c)] $X$ is $\tau$-rigid if and only if the map $\Hom_\Lambda(P_0,X)\xto{(d_1,X)}\Hom_\Lambda(P_1,X)$ is surjective.
\end{itemize}
\end{proposition}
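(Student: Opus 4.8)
The plan is to deduce all three statements from the definition of $\tau X$ together with a single natural duality isomorphism, splicing two exact sequences. The key technical input I would establish first is the functorial isomorphism
\[\Hom_\Lambda(Y,\nu P)\cong D\Hom_\Lambda(P,Y)\qquad(P\in\proj\Lambda,\ Y\in\mod\Lambda),\]
natural in both arguments. For $P=\Lambda$ this is the adjunction $\Hom_\Lambda(Y,D\Lambda)=\Hom_\Lambda(Y,\Hom_k(\Lambda,k))\cong\Hom_k(Y,k)=DY$, and the general case follows because both sides are additive in $P$ and every projective is a summand of a free module.

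Next I would set up the two exact sequences to be glued. Since $\Tr X=\Cok(d_1^*\colon P_0^*\to P_1^*)$ and $\tau X=D\Tr X$, applying the exact duality $D$ to $P_0^*\xto{d_1^*}P_1^*\to\Tr X\to0$ identifies $\tau X$ with $\Ker(\nu d_1\colon\nu P_1\to\nu P_0)$. Applying the left exact functor $\Hom_\Lambda(Y,-)$ to $0\to\tau X\to\nu P_1\xto{\nu d_1}\nu P_0$ and rewriting the last two terms via the isomorphism above yields the left exact sequence
\[0\to\Hom_\Lambda(Y,\tau X)\to D\Hom_\Lambda(P_1,Y)\xto{D(d_1,Y)}D\Hom_\Lambda(P_0,Y),\]
where naturality in $P$ is what identifies the induced map with $D(d_1,Y)$. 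On the other hand, applying the left exact contravariant functor $\Hom_\Lambda(-,Y)$ to $P_1\xto{d_1}P_0\xto{d_0}X\to0$ and then the exact duality $D$ gives the right exact sequence
\[D\Hom_\Lambda(P_1,Y)\xto{D(d_1,Y)}D\Hom_\Lambda(P_0,Y)\xto{D(d_0,Y)}D\Hom_\Lambda(X,Y)\to0.\]
As the middle map $D(d_1,Y)$ is common to both, splicing along $D\Hom_\Lambda(P_0,Y)$ produces the four-term exact sequence of (a).

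Parts (b) and (c) then follow formally. By (a), $\Hom_\Lambda(Y,\tau X)=0$ if and only if $D(d_1,Y)$ is injective, and since $D$ is a duality this holds if and only if $(d_1,Y)\colon\Hom_\Lambda(P_0,Y)\to\Hom_\Lambda(P_1,Y)$ is surjective, proving (b); specializing $Y=X$ and recalling that $X$ is $\tau$-rigid exactly when $\Hom_\Lambda(X,\tau X)=0$ gives (c). The one step I expect to require genuine care is the naturality of $\Hom_\Lambda(Y,\nu P)\cong D\Hom_\Lambda(P,Y)$ in the variable $P$, since this is precisely what guarantees that the connecting map coming from the $\tau X$-sequence coincides with the map $D(d_1,Y)$ coming from the presentation, allowing the two sequences to be spliced; everything else is a formal manipulation with exactness of $D$ and left/right exactness of $\Hom$.
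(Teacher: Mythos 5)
Your proof is correct and follows essentially the same route as the paper: the paper's argument for (a) is precisely the commutative diagram obtained by applying $\Hom_\Lambda(Y,-)$ to $0\to\tau X\to\nu P_1\xto{\nu d_1}\nu P_0$ and identifying the last two terms with $D\Hom_\Lambda(P_i,Y)$ via the natural isomorphism $\Hom_\Lambda(Y,\nu P)\simeq D\Hom_\Lambda(P,Y)$, then splicing with the dualized sequence coming from the presentation; parts (b) and (c) are read off exactly as you do. Your extra care about naturality in $P$ is exactly the point the paper's diagram encodes implicitly.
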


\begin{proof}
(a) We have an exact sequence $0\to\tau X\to\nu P_1\xrightarrow{\nu d_1} \nu P_0$. Applying $\Hom_\Lambda(Y,-)$, we have a commutative diagram of exact sequences:
\[\xymatrix@C=3em{
0\longrightarrow\Hom_\Lambda(Y,\tau X)\ar[r]&\Hom_\Lambda(Y,\nu P_1)\ar[r]^{(Y,\nu d_1)}\ar[d]^\wr&\Hom_\Lambda(Y,\nu P_0)\ar[d]^\wr\\
&D\Hom_\Lambda(P_1,Y)\ar[r]^{D(d_1,Y)}&D\Hom_\Lambda(P_0,Y)\ar[r]^(.43){D(d_0,Y)}&D\Hom_\Lambda(X,Y)\longrightarrow0.
}\]
Thus the assertion follows.

(b)(c) Immediate from (a).
\end{proof}

We have the following standard observation (cf. \cite{HU2,DK}).

\begin{proposition}\label{no common summand 2}
Let $X$ be in $\mod\Lambda$ with a minimal projective presentation $P_1\xto{d_1}P_0\xto{d_0} X\to0$.
If $X$ is $\tau$-rigid, then $P_0$ and $P_1$ have no non-zero direct summands in common.
\end{proposition}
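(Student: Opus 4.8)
The plan is to argue by contradiction, using the characterization of $\tau$-rigidity from Proposition \ref{no common summand}(c): since $X$ is $\tau$-rigid, the map
\[
\Hom_\Lambda(P_0,X)\xto{(d_1,X)}\Hom_\Lambda(P_1,X),\qquad g\mapsto g\circ d_1,
\]
is surjective. So I would assume, for contradiction, that $P_0$ and $P_1$ share a nonzero (indecomposable) direct summand $P$, and then exhibit an explicit element of $\Hom_\Lambda(P_1,X)$ that cannot lie in the image of $(d_1,X)$, contradicting surjectivity.

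Concretely, write $P_0=P\oplus P_0'$ and $P_1=P\oplus P_1'$, and let $\kappa\colon P\to P_0$, $\iota\colon P\to P_1$ be the inclusions of the common summand and $\rho\colon P_1\to P$ the projection onto it. I would test the map $h:=d_0\kappa\rho\in\Hom_\Lambda(P_1,X)$. If $h=g\circ d_1$ for some $g\colon P_0\to X$, then precomposing with $\iota$ and using $\rho\iota=\mathrm{id}_P$ gives $d_0\kappa=g\circ(d_1\iota)$. Now minimality of the presentation forces $\im d_1\subseteq\rad P_0$ (as $d_0$ is a projective cover, $\ker d_0=\im d_1$ is superfluous), so $\im(d_1\iota)\subseteq\rad P_0$; since $g$ is a $\Lambda$-homomorphism, $g(\rad P_0)\subseteq\rad X$, whence $\im(d_0\kappa)\subseteq\rad X$. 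On the other hand, because $d_0$ is a projective cover it induces an isomorphism $P_0/\rad P_0\to X/\rad X$, and the nonzero summand $P$ maps nontrivially into $P_0/\rad P_0$, so the composite $P\xto{\kappa}P_0\xto{d_0}X\to X/\rad X$ is nonzero, i.e. $\im(d_0\kappa)\not\subseteq\rad X$. This contradiction shows $h$ is not in the image of $(d_1,X)$, so no such common summand $P$ can exist.

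The main point, and the only place the hypothesis is genuinely used, is the two competing facts about $\im(d_0\kappa)$: minimality ($\im d_1\subseteq\rad P_0$) pushes it into $\rad X$, while the projective-cover property keeps it out of $\rad X$. Everything else is formal manipulation of inclusions, projections, and the $\Lambda$-linearity of $g$. I would expect no real obstacle beyond stating the projective-cover top-isomorphism cleanly; note that minimality is essential, since for a non-minimal presentation a common summand of $P_0$ and $P_1$ is of course permitted.
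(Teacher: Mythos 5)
Your proof is correct. It rests on the same key ingredient as the paper's proof, namely Proposition \ref{no common summand}(c) (surjectivity of $(d_1,X)\colon\Hom_\Lambda(P_0,X)\to\Hom_\Lambda(P_1,X)$), but the execution is genuinely different. The paper argues directly: for an \emph{arbitrary} $s\colon P_1\to P_0$ it uses surjectivity to write $d_0s=td_1$, lifts $t$ through $d_0$ to get $u$ with $t=d_0u$, then uses $\Ker d_0=\im d_1$ and projectivity of $P_1$ to write $s=ud_1+d_1v$; since $d_1$ is a radical morphism, so is $s$, and $\Hom_\Lambda(P_1,P_0)=\rad(P_1,P_0)$ is exactly the Krull--Schmidt criterion for $P_0$ and $P_1$ to share no summand. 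You instead argue by contradiction, testing the single map $h=d_0\kappa\rho$ against surjectivity and deriving the contradiction from two facts about the projective cover: $\im d_1=\Ker d_0\subseteq\rad P_0$ pushes $\im(d_0\kappa)$ into $\rad X$, while the induced isomorphism $P_0/\rad P_0\to X/\rad X$ keeps it out (your justification that a nonzero direct summand of $P_0$ cannot land in $\rad P_0$, and that $g(\rad P_0)\subseteq\rad X$ for any homomorphism $g$, are both sound). What the paper's route buys is the cleaner and slightly stronger intermediate statement that \emph{every} morphism $P_1\to P_0$ is radical, obtained purely by lifting through projectives; what yours buys is that you avoid the second lifting step entirely, at the cost of invoking the top-isomorphism property of the projective cover. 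Both are complete proofs of the proposition.
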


\begin{proof}
We only have to show that any morphism $s:P_1\to P_0$ is in the radical.
By Proposition \ref{no common summand}(c), there exists $t:P_0\to X$ such that $d_0s=td_1$.
Since $P_0$ is projective, there exists $u:P_0\to P_0$ such that $t=d_0u$.
Since $d_0(s-ud_1)=0$, there exists $v:P_1\to P_1$ such that $s=ud_1+d_1v$.
\[\xymatrix{
&P_1\ar[r]^{d_1}\ar[d]^s\ar@{.>}[dl]_v&P_0\ar[r]^{d_0}\ar[d]^t\ar@{.>}[dl]^u&X\ar[r]&0\\
P_1\ar[r]_{d_1}&P_0\ar[r]_{d_0}&X\ar[r]&0
}\]
Since $d_1$ is in the radical, so is $s$. Thus the assertion is shown.
\end{proof}

The following analog of Wakamatsu's lemma \cite{AR4} will be useful.

\begin{lemma}\label{prop1.9}
Let $\eta:0\to Y\to T'\xto{f}X$ be an exact sequence in $\mod\Lambda$, where
$T$ is $\tau$-rigid, and $f:T'\to X$ is a right $(\add T)$-approximation. Then we have $Y\in{}^{\perp}(\tau T)$.
\end{lemma}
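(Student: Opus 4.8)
The plan is to reduce the vanishing $\Hom_\Lambda(Y,\tau T)=0$ (which is exactly $Y\in{}^\perp(\tau T)$) to a surjectivity statement about a minimal projective presentation of $T$, via Proposition \ref{no common summand}(b), and then to run a diagram chase in which the $\tau$-rigidity of $T$ supplies the one non-formal ingredient. It is worth emphasizing at the outset why this is only an \emph{analog} of Wakamatsu's lemma: $\add T$ is in general not closed under extensions, so the usual Wakamatsu argument (pushing out an extension and invoking closure) is unavailable, and the hypothesis $\Hom_\Lambda(T,\tau T)=0$ is what will play the role of that missing closure property.

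Concretely, I would fix a minimal projective presentation $P_1\xrightarrow{d_1}P_0\xrightarrow{d_0}T\to0$. By Proposition \ref{no common summand}(b), applied with $T$ in the role of $X$ there, we have $\Hom_\Lambda(Y,\tau T)=0$ if and only if the map $(d_1,Y)\colon\Hom_\Lambda(P_0,Y)\to\Hom_\Lambda(P_1,Y)$ is surjective; this surjectivity is therefore the goal. So let $a\colon P_1\to Y$ be given and write $i\colon Y\to T'$ for the monomorphism in $\eta$. The key step, and the only place the hypotheses really enter, is to lift $ia\colon P_1\to T'$ through $d_1$: since $T'\in\add T$ and $T$ is $\tau$-rigid we have $\Hom_\Lambda(T',\tau T)=0$, so Proposition \ref{no common summand}(b) again (with $T$ in the role of $X$, now testing against $T'$) gives that $(d_1,T')$ is surjective, and hence there is $c\colon P_0\to T'$ with $cd_1=ia$.

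It then remains to transport $c$ across the approximation sequence. Since $fc\,d_1=f(ia)=0$ and $\im d_1=\ker d_0$, the map $fc\colon P_0\to X$ factors as $fc=h\,d_0$ for a unique $h\colon T\to X$; as $T\in\add T$ and $f$ is a right $(\add T)$-approximation, $h$ factors as $h=fe$ for some $e\colon T\to T'$. Consequently $f(c-e\,d_0)=0$, so $c-e\,d_0$ factors through $i\colon Y=\ker f\to T'$, say $c-e\,d_0=ib$ with $b\colon P_0\to Y$. Composing with $d_1$ and using $d_0d_1=0$ gives $ib\,d_1=c\,d_1=ia$, whence $b\,d_1=a$ because $i$ is monic. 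Thus $(d_1,Y)$ is surjective and $Y\in{}^\perp(\tau T)$, as required. I expect the lifting of $ia$ through $d_1$ to be the crux: it is precisely the failure of $\add T$ to be extension-closed that would block a naive approach, and it is the $\tau$-rigidity of $T$, funneled through Proposition \ref{no common summand}(b), that makes the lift available; the remaining manipulations are a routine chase that, in particular, never require $f$ to be surjective.
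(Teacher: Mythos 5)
Your proof is correct, but it takes a genuinely different route from the paper's. The paper first replaces $X$ by $\im f$ so that $\eta$ becomes short exact, applies $\Hom_\Lambda(-,\tau T)$ to get
$0=\Hom_\Lambda(T',\tau T)\to\Hom_\Lambda(Y,\tau T)\to\Ext^1_\Lambda(X,\tau T)\to\Ext^1_\Lambda(T',\tau T)$,
and then shows the last map is injective by dualizing, via AR duality, the surjection $\underline{\Hom}_\Lambda(T,T')\to\underline{\Hom}_\Lambda(T,X)$ induced by the approximation $f$. You instead translate the target statement $\Hom_\Lambda(Y,\tau T)=0$ into the surjectivity of $(d_1,Y)$ via Proposition \ref{no common summand}(b) and carry out an explicit chase on a minimal projective presentation of $T$; each of your steps checks out (the lift $c$ of $ia$ through $d_1$ exists because $\Hom_\Lambda(T',\tau T)=0$, the factorization $fc=hd_0$ uses only $fi=0$ and $\ker d_0=\im d_1$, the approximation property supplies $h=fe$, and $c-ed_0$ lands in $\ker f=Y$ with $bd_1=a$ following from $d_0d_1=0$ and $i$ monic). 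In effect you have made explicit, at the level of projective presentations, the same duality the paper invokes abstractly: your use of Proposition \ref{no common summand}(b) is the concrete form of the AR-duality step. What your version buys is that it avoids both the reduction to a surjective $f$ and any appeal to $\Ext^1$ or AR duality, so it is more elementary and self-contained; what it costs is length, since the paper's four-line exact-sequence argument becomes a multi-step diagram chase.
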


\begin{proof}
Replacing $X$ by $\im f$, we can assume that $f$ is surjective.
We apply $\Hom_{\Lambda}(-,\tau T)$ to $\eta$ to get the exact sequence 
\[0=\Hom_{\Lambda}(T',\tau T)\to \Hom_{\Lambda}(Y,\tau T)\to\Ext_{\Lambda}^1(X,\tau T)\xto{\Ext^1(f,\tau T)}\Ext_{\Lambda}^1(T',\tau T),\]
where we have $\Hom_{\Lambda}(T',\tau T)=0$ because $T$ is $\tau$-rigid.
Since $f:T'\to X$ is a right $(\add T)$-approximation, the induced map
$(T,f):\Hom_{\Lambda}(T,T')\to\Hom_{\Lambda}(T,X)$ is surjective.
Then also the induced map
$\underline{\Hom}_{\Lambda}(T,T')\to\underline{\Hom}_{\Lambda}(T,X)$ of the maps modulo
projectives is surjective, so by the AR duality the map
$\Ext^1(f,\tau T):\Ext^1_{\Lambda}(X,\tau T)\to \Ext^1_{\Lambda}(T',\tau T)$ is injective.
It follows that $\Hom_{\Lambda}(Y,\tau T)=0$.
\end{proof}

\subsection{$\tau$-rigid modules and torsion classes}\label{ssec2.1}

The following correspondence is basic in our paper, where we denote by
$\ftors\Lambda$\index{3ftors@$\ftors\Lambda$} the set of functorially finite torsion classes in $\mod\Lambda$.

\begin{theorem}\label{basic bijection}
There is a bijection
\[\sttilt\Lambda\longleftrightarrow\ftors\Lambda\]
given by $\sttilt\Lambda\ni T\mapsto\Fac T\in\ftors\Lambda$ and $\ftors\Lambda\ni\TT\mapsto P(\TT)\in\sttilt\Lambda$.
\end{theorem}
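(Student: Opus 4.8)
The plan is to show that the two assignments $T \mapsto \Fac T$ and $\TT \mapsto P(\TT)$ are mutually inverse bijections between $\sttilt\Lambda$ and $\ftors\Lambda$. I would organize the argument as a round trip in both directions, relying heavily on the machinery already assembled in Section~\ref{ssec1.2} and Proposition~\ref{basics}.

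First I would verify that each map lands in the correct set. For a basic support $\tau$-tilting module $T$, by Proposition~\ref{two notion of tau-rigidity}(a) there is an idempotent $e$ with $T$ a $\tau$-tilting $(\Lambda/\langle e\rangle)$-module, hence $\tau$-rigid over $\Lambda$; then Proposition~\ref{Auslander-Smalo}(b) gives that $\Fac T$ is a functorially finite torsion class, so $\Fac T \in \ftors\Lambda$. Conversely, for $\TT \in \ftors\Lambda$, Proposition~\ref{Auslander-Smalo}(c) shows $P(\TT)$ is $\tau$-rigid, and I must check it is actually support $\tau$-tilting: setting $e$ so that $\add \Lambda e = \add(\ann\TT)$-projectives appropriately, Proposition~\ref{basics}(e) tells us $P(\TT)$ is a tilting $(\Lambda/\ann\TT)$-module, which is in particular a $\tau$-tilting $(\Lambda/\langle e\rangle)$-module for the idempotent $e$ generating $\ann\TT$; thus $P(\TT) \in \sttilt\Lambda$ via Proposition~\ref{two notion of tau-rigidity}(a).

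Next I would close the two loops. For the direction $\TT \mapsto P(\TT) \mapsto \Fac P(\TT)$, I need $\Fac P(\TT) = \TT$; this is exactly the equivalence of (g) with the defining conditions in Proposition~\ref{basics}, so it is immediate. For the direction $T \mapsto \Fac T \mapsto P(\Fac T)$, I must show $\add T = \add P(\Fac T)$, i.e.\ that $T$ recovers the full module of indecomposable $\Ext$-projectives of its own factor category. By Proposition~\ref{Auslander-Smalo}(b) we already have $T \in \add P(\Fac T)$, so it suffices to compare the number of summands. Here I would compute $|P(\Fac T)|$ using Proposition~\ref{basics}(e): it equals $|\Lambda/\ann(\Fac T)|$. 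Writing $\TT = \Fac T$, one checks $\ann\TT = \ann T$ (a factor category and its generator have the same annihilator), and since $T$ is a $\tau$-tilting $(\Lambda/\ann T)$-module by Proposition~\ref{sincere}(c), we get $|T| = |\Lambda/\ann T| = |P(\Fac T)|$. The remark following Proposition~\ref{number of summands of tautilting} then forces $\add T = \add P(\Fac T)$, so for basic modules $T = P(\Fac T)$.

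The step I expect to be the main obstacle is the count $|T| = |P(\Fac T)|$, specifically matching the annihilators so that $|\Lambda/\ann T| = |P(\Fac T)|$; the identity $\ann(\Fac T) = \ann T$ is the linchpin and deserves care, since $\Fac T$ is larger than $\add T$ but cannot introduce new annihilated simples. Everything else is an assembly of Proposition~\ref{basics} and the $\tau$-tilting dictionary of Section~\ref{ssec1.2}; the genuinely new content is recognizing that the number of indecomposable $\Ext$-projectives in $\Fac T$ is pinned down exactly by $|\Lambda/\ann T|$, which prevents $P(\Fac T)$ from being strictly larger than $T$.
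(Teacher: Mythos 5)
Your proposal is correct and follows essentially the same route as the paper: both directions rest on Proposition \ref{Auslander-Smalo} plus the count $|P(\TT)|=|\Lambda/\ann\TT|$ from Proposition \ref{basics}(e), the loop $\TT\mapsto P(\TT)\mapsto\Fac P(\TT)$ is Proposition \ref{basics}(g), and the loop $T\mapsto\Fac T\mapsto P(\Fac T)$ is closed by $T\in\add P(\Fac T)$ together with a count of summands (your detour through $\ann(\Fac T)=\ann T$ and Proposition \ref{sincere}(c) replaces the paper's appeal to Proposition \ref{number of summands of tautilting}, but these are the same counting argument). The one slip is the phrase ``the idempotent $e$ generating $\ann\TT$'': in general $\ann\TT$ is not generated by an idempotent (e.g.\ when $P(\TT)$ is sincere but not faithful, as for $T=S_1\oplus P_1\oplus P_3$ in the paper's last example), so one must instead take $e$ maximal with $\langle e\rangle\subseteq\ann\TT$ and record the easy equality $|\Lambda/\langle e\rangle|=|\Lambda/\ann\TT|$, which the paper states explicitly. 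With that choice, $\tau$-rigidity of $P(\TT)$ over $\Lambda/\langle e\rangle$ should be deduced from its $\tau$-rigidity over $\Lambda$ (Proposition \ref{Auslander-Smalo}(c) and Lemma \ref{prop1.8}) rather than from the tilting property over the possibly strictly smaller algebra $\Lambda/\ann\TT$; this is a local repair, not a change of strategy.
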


\begin{proof}
Let first $\TT$ be a functorially finite torsion class in $\mod\Lambda$.
Then we know that $T=P(\TT)$ is $\tau$-rigid by Proposition \ref{Auslander-Smalo}(c).
Let $e\in\Lambda$ be a maximal idempotent such that
$\TT\subseteq \mod(\Lambda/\langle e\rangle)$.
Then we have $|\Lambda/\langle e\rangle|=|\Lambda/\ann\TT|$,
and $|\Lambda/\ann\TT|=|T|$ by Proposition \ref{basics}(e). 
Hence $(T,\Lambda e)$ is a support $\tau$-tilting pair for $\Lambda$.
Moreover we have $\TT=\Fac P(\TT)$ by Proposition \ref{basics}(g).

Assume conversely that $T$ is a support $\tau$-tilting $\Lambda$-module. 
Then $T$ is a $\tau$-tilting $(\Lambda/\langle e\rangle)$-module for an idempotent $e$ of $\Lambda$.
Thus $\Fac T$ is a functorially finite torsion class in $\mod(\Lambda/\langle e\rangle)$ 
such that $T\in\add P(\Fac T)$ by Proposition \ref{Auslander-Smalo}(b).
Since $|T|=|\Lambda/\langle e\rangle|$, we have $\add T=\add P(\Fac T)$ by Proposition \ref{number of summands of tautilting}. Thus $T\simeq P(\Fac T)$.
\end{proof}

We denote by $\ttilt\Lambda$\index{3tautilt@$\ttilt\Lambda$} (respectively, $\tilt\Lambda$\index{3tilt@$\tilt\Lambda$})
the set of isomorphism classes of basic $\tau$-tilting $\Lambda$-modules (respectively, tilting $\Lambda$-modules).
On the other hand, we denote by $\sftors\Lambda$\index{3sftors@$\sftors\Lambda$} (respectively, $\fftors\Lambda$\index{3fftors@$\fftors\Lambda$}) the set of sincere (respectively, faithful) functorially finite torsion classes in $\mod\Lambda$.

\begin{corollary}\label{basic bijection2}
The bijection in Theorem \ref{basic bijection} induces bijections
\begin{eqnarray*}
\ttilt\Lambda\longleftrightarrow\sftors\Lambda\ \ \ \mbox{and}\ \ \ \tilt\Lambda\longleftrightarrow\fftors\Lambda.
\end{eqnarray*}
\end{corollary}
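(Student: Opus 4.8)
The plan is to show that the mutually inverse bijections $T\mapsto\Fac T$ and $\TT\mapsto P(\TT)$ of Theorem \ref{basic bijection} restrict to the two asserted correspondences. Since these maps are already known to be mutually inverse bijections between $\sttilt\Lambda$ and $\ftors\Lambda$, it suffices to check that $\tau$-tilting modules are sent exactly to sincere torsion classes and tilting modules exactly to faithful ones. Here a torsion class is \emph{sincere} (respectively, \emph{faithful}) if it contains a sincere (respectively, faithful) module. The key simplification is that Proposition \ref{sincere} already translates the module-side hypotheses: $T$ is $\tau$-tilting precisely when $T$ is a sincere support $\tau$-tilting module, and $T$ is tilting precisely when $T$ is a faithful support $\tau$-tilting module. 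Thus the whole corollary reduces to proving, for $T\in\sttilt\Lambda$ with $\TT=\Fac T$, that $T$ is sincere (respectively, faithful) if and only if $\TT$ is.

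For the sincere case I would argue as follows. Since $T\simeq P(\TT)\in\TT$, if $T$ is sincere then $\TT$ contains a sincere module and so is sincere. For the converse I would invoke the identity $\TT=\Fac P(\TT)=\Fac T$ from Proposition \ref{basics}(g): any sincere $M\in\TT$ is a factor of a finite direct sum of copies of $T$, whence every composition factor of $M$ is a composition factor of $T$; sincerity of $M$ then forces every simple $\Lambda$-module to occur in $T$, so $T$ is sincere.

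The faithful case runs in parallel. If $T$ is faithful then $T\in\TT$ witnesses faithfulness of $\TT$. Conversely, given a faithful $M\in\TT=\Fac T$, write $M$ as a quotient of $T^n$; since the annihilator of a quotient contains the annihilator of the module, $\ann T=\ann(T^n)\subseteq\ann M=0$, so $T$ is faithful. Combining these equivalences with Proposition \ref{sincere}(a) and (b) yields the two claimed bijections $\ttilt\Lambda\leftrightarrow\sftors\Lambda$ and $\tilt\Lambda\leftrightarrow\fftors\Lambda$.

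The only genuinely substantive point, and the step I would treat most carefully, is the passage from ``$\TT$ contains a sincere (respectively, faithful) module'' back to ``$P(\TT)$ itself is sincere (respectively, faithful)''. This is exactly where the $\Ext$-projectivity built into $P(\TT)$ enters, through the equality $\TT=\Fac P(\TT)$: precisely because every object of $\TT$ is generated by $P(\TT)$, a single sincere or faithful member of $\TT$ propagates its composition factors, respectively its trivial annihilator, up to $P(\TT)$. Everything else is formal once Theorem \ref{basic bijection} and Proposition \ref{sincere} are in hand.
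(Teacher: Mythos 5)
Your proposal is correct and follows essentially the same route as the paper: the paper's proof likewise invokes Proposition \ref{sincere} to identify $\tau$-tilting (respectively, tilting) modules with sincere (respectively, faithful) support $\tau$-tilting modules, and then asserts that $T$ is sincere (respectively, faithful) if and only if $\Fac T$ is. You have merely spelled out that last equivalence (composition factors, respectively annihilators, pass between $T$ and quotients of $T^{n}$), which the paper leaves implicit.
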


\begin{proof}
Let $T$ be a support $\tau$-tilting $\Lambda$-module.
By Proposition \ref{sincere}, it follows that $T$ is a $\tau$-tilting $\Lambda$-module
(respectively, tilting $\Lambda$-module) if and only if $T$ is sincere
(respectively, faithful)
if and only if $\Fac T$ is sincere (respectively, faithful).
\end{proof}

We are interested in the torsion classes where our original module $U$ is a direct summand of $T=P(\TT)$, since
we would like to complete $U$ to a (support) $\tau$-tilting module. The
conditions for this to be the case are the following.

\begin{proposition}\label{prop2.1}
Let $\TT$ be a functorially finite torsion class and $U$ a $\tau$-rigid $\Lambda$-module.
Then $U\in\add P(\TT)$ if and only if $\Fac U \subseteq \TT\subseteq{^{\perp}(\tau U)}$.
\end{proposition}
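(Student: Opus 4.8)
The plan is to prove the two inclusions of the biconditional by unwinding what $U\in\add P(\TT)$ means, namely that $U$ lies in $\TT$ and is $\Ext$-projective there, and then to convert $\Ext$-projectivity into a vanishing of $\Hom$ into $\tau U$ by means of Proposition \ref{Auslander-Smalo}(a). That proposition is the only real tool needed: it states that $\Hom_\Lambda(X,\tau U)=0$ if and only if $\Ext^1_\Lambda(U,\Fac X)=0$, and both directions of the claim are obtained by applying it symmetrically, combined with the fact that a torsion class is closed under factor modules.

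For the forward direction, suppose $U\in\add P(\TT)$. Then $U$ is $\Ext$-projective in $\TT$ and $U\in\TT$, so $\Fac U\subseteq\TT$ because $\TT$ is closed under factors; this gives the left inclusion immediately. For the right inclusion I would take an arbitrary $X\in\TT$. Since $\Fac X\subseteq\TT$ and $U$ is $\Ext$-projective in $\TT$, we have $\Ext^1_\Lambda(U,\Fac X)=0$, and Proposition \ref{Auslander-Smalo}(a) then gives $\Hom_\Lambda(X,\tau U)=0$, i.e. $X\in{}^{\perp}(\tau U)$. Hence $\TT\subseteq{}^{\perp}(\tau U)$.

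For the converse, assume $\Fac U\subseteq\TT\subseteq{}^{\perp}(\tau U)$. From $U\in\Fac U$ and the first inclusion we get $U\in\TT$. To see that $U$ is $\Ext$-projective, take any $X\in\TT$; the second inclusion yields $\Hom_\Lambda(X,\tau U)=0$, so Proposition \ref{Auslander-Smalo}(a) gives $\Ext^1_\Lambda(U,\Fac X)=0$, and in particular $\Ext^1_\Lambda(U,X)=0$ since $X\in\Fac X$. Thus $\Ext^1_\Lambda(U,\TT)=0$, so $U$ is $\Ext$-projective in $\TT$. Writing $U$ as a direct sum of indecomposables, each summand lies in $\TT$ and is again $\Ext$-projective, hence is an indecomposable $\Ext$-projective object of $\TT$ and therefore a summand of $P(\TT)$; consequently $U\in\add P(\TT)$.

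I do not expect a genuine obstacle here: the argument is just a pair of mirror-image applications of the $\Ext$--$\tau$ duality. The only point requiring care is bookkeeping on the auxiliary subcategory $\Fac X$: in the forward direction one passes from $\Fac X\subseteq\TT$ to the vanishing of $\Ext^1_\Lambda(U,\Fac X)$, while in the converse one recovers $\Ext^1_\Lambda(U,X)$ from $\Ext^1_\Lambda(U,\Fac X)$ using $X\in\Fac X$. It is worth noting that the $\tau$-rigidity of $U$ is not invoked directly in either implication; it serves to place $U$ in the support $\tau$-tilting setting, but the stated equivalence rests entirely on the duality of Proposition \ref{Auslander-Smalo}(a).
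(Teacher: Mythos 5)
Your proof is correct and follows essentially the same route as the paper: both directions reduce to Proposition \ref{Auslander-Smalo}(a) together with closure of $\TT$ under factor modules. The only cosmetic difference is that the paper applies the duality once via $\TT=\Fac P(\TT)$ while you apply it to each $X\in\TT$ through $\Fac X\subseteq\TT$; your closing remark that $\tau$-rigidity is not directly invoked is also accurate.
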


\begin{proof}
We have $\TT=\Fac P(\TT)$ by Proposition \ref{basics}(g).

Assume  $\Fac U \subseteq \TT\subseteq {^{\perp}(\tau U)}$. Then $U$ is in $\TT$.
We want to show that $U$ is $\Ext$-projective in $\TT$, that is,
$\Ext_\Lambda^1(U, \TT)=0$, or equivalently
$\Hom_\Lambda(P(\TT),\tau U)=0$, by Proposition \ref{Auslander-Smalo}(a).
This follows since $P(\TT)\in\TT\subseteq{^{\perp}(\tau U)}$.
Hence $U$ is a direct summand of $P(\TT)$.

Conversely, assume $U\in\add P(\TT)$. Then we must have $U\in\TT$, and
hence $\Fac U\subseteq \TT$. 
Since $U$ is $\Ext$-projective in $\TT$, we have $\Ext_\Lambda^1(U,\TT)=0$.
Since $\TT=\Fac\TT$, we have $\Hom_\Lambda(\TT,\tau U)=0$ by Proposition \ref{Auslander-Smalo}(a).
Hence we have $\TT\subseteq{^{\perp}(\tau U)}$.
\end{proof}

We now prove the analog, for $\tau$-tilting modules, of the Bongartz
completion of classical tilting modules.

\begin{theorem}\label{thm2.3}
Let $U$ be a $\tau$-rigid $\Lambda$-module. Then
$\TT:={}^{\perp}(\tau U)$ is a sincere functorially finite torsion class and
$T:=P(\TT)$ is a $\tau$-tilting $\Lambda$-module satisfying $U\in\add T$ and $^{\perp}(\tau T)=\Fac T$.
\end{theorem}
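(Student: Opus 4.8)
The plan is to reduce the whole statement to one construction and then read off every assertion from results already in hand. First I would record two easy facts. The class $\TT:={}^{\perp}(\tau U)$ is automatically a torsion class: for any module $N$, ${}^{\perp}N$ is closed under factor modules (a surjection $X\twoheadrightarrow X'$ gives an injection $\Hom_\Lambda(X',N)\hookrightarrow\Hom_\Lambda(X,N)$) and under extensions (apply $\Hom_\Lambda(-,N)$ to a short exact sequence). Moreover $\Fac U\subseteq\TT$: since $U$ is $\tau$-rigid, Proposition \ref{Auslander-Smalo}(a) gives $\Ext^1_\Lambda(U,\Fac U)=0$, and for $X\in\Fac U$ we have $\Fac X\subseteq\Fac U$, so $\Ext^1_\Lambda(U,\Fac X)=0$ and hence $\Hom_\Lambda(X,\tau U)=0$ by Proposition \ref{Auslander-Smalo}(a) again.

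Next comes the key reduction: it suffices to produce a $\tau$-rigid module $T$ with $U\in\add T$, with $|T|=|\Lambda|$, and with $\Fac T=\TT$. Granting such a $T$, it is $\tau$-tilting by definition, so $\TT=\Fac T$ is a functorially finite torsion class by Proposition \ref{Auslander-Smalo}(b) and is sincere by Proposition \ref{sincere}(a). Since $T$ is $\tau$-rigid, $T\in\add P(\TT)$ by Proposition \ref{Auslander-Smalo}(b); and $P(\TT)$ is $\tau$-rigid by Proposition \ref{Auslander-Smalo}(c) with $|P(\TT)|\le|\Lambda|=|T|$ by Proposition \ref{number of summands of tautilting}, so the remark following that proposition forces $\add T=\add P(\TT)$, i.e. $T=P(\TT)$. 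Finally ${}^{\perp}(\tau T)=\Fac T$ follows by a sandwich: $T=P(\TT)$ is $\Ext$-projective in $\TT$, so $\Ext^1_\Lambda(T,\TT)=0$ and hence $\TT\subseteq{}^{\perp}(\tau T)$ by Proposition \ref{Auslander-Smalo}(a); conversely $U\in\add T$ gives $\tau U\in\add\tau T$, whence ${}^{\perp}(\tau T)\subseteq{}^{\perp}(\tau U)=\TT=\Fac T$. The requirement $U\in\add T$ is built into the construction (and is anyway forced by Proposition \ref{prop2.1}, since $\Fac U\subseteq\TT\subseteq{}^{\perp}(\tau U)$).

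It therefore remains to build $T$, the analogue of Bongartz's completion, and this is the heart of the argument. The natural device is a universal extension. When $\pd_\Lambda U\le1$ I would take the short exact sequence $0\to\Lambda\to E\to U^{\oplus d}\to0$ classifying a generating set of $\Ext^1_\Lambda(U,\Lambda)$; applying $\Hom_\Lambda(U,-)$ and using $\Ext^1_\Lambda(U,U)=0$ yields $\Ext^1_\Lambda(U,E)=0$, and $T:=U\oplus E$ is then a classical tilting module with $\Fac T=\TT$. For a general $\tau$-rigid $U$ this is exactly where the difficulty lies: extending by all of $\Lambda$ produces a faithful $E$, whereas if $\pd_\Lambda U>1$ then $U$ is not a direct summand of any tilting module, so $\TT$ cannot be faithful and $T$ cannot be tilting. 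Thus the completion must be performed inside $\TT$ itself. Since any torsion class is automatically contravariantly finite — its torsion radical supplies right approximations — the real content is covariant finiteness, i.e. the existence of a left $\TT$-approximation of $\Lambda$, from which the Bongartz complement $B$ (so that $T=U\oplus B$) is to be extracted. I would verify that $T$ is $\tau$-rigid via the projective-presentation criterion of Proposition \ref{no common summand}(c), using Wakamatsu's Lemma \ref{prop1.9}, which places the kernels of right $(\add U)$-approximations into ${}^{\perp}(\tau U)$.

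The hard part will be controlling the number of indecomposable summands: showing that the complement contributes exactly $|\Lambda|-|U|$ of them, equivalently that $\Fac T$ exhausts all of $\TT$ so that $T$ is genuinely $\tau$-tilting rather than merely a proper $\tau$-rigid summand. Here I expect to combine the bound $|T|\le|\Lambda|$ for $\tau$-rigid modules (Proposition \ref{number of summands of tautilting}) with the characterization of when $U\in\add P(\TT)$ (Proposition \ref{prop2.1}) and the bijection of Theorem \ref{basic bijection}, pinning down the count and thereby closing the argument.
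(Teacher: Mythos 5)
There is a genuine gap. Your reduction is sound: if one produces a $\tau$-rigid $T$ with $U\in\add T$, $|T|=|\Lambda|$ and $\Fac T=\TT$, then the remaining assertions do follow by the counting and sandwich arguments you give. But the two claims that carry all the content of the theorem --- that $\TT={}^{\perp}(\tau U)$ is functorially finite, and that $P(\TT)$ has exactly $|\Lambda|$ indecomposable summands (equivalently that $\TT$ is sincere) --- are never proved. Your text for the construction consists of intentions (``I would take\dots'', ``I expect to combine\dots'', ``the hard part will be\dots''), and the universal-extension device you propose is, as you yourself observe, exactly the step that breaks down when $\pd_\Lambda U>1$; you do not supply a replacement. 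In particular nothing in the proposal establishes covariant finiteness of $\TT$, and nothing pins down the count of summands of $P(\TT)$.

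The paper closes both gaps without any Bongartz-style universal extension. Functorial finiteness is free: since $U$ is $\tau$-rigid, $\Sub\tau U$ is a torsionfree class by the dual of Proposition \ref{Auslander-Smalo}(b), so $({}^{\perp}(\tau U),\Sub\tau U)$ is a torsion pair whose torsionfree part is of the form $\Sub Y$; Proposition \ref{basics} then gives that both classes are functorially finite. Sincerity is a short separate argument (Lemma \ref{lem2.4}): if $\TT$ missed the simple at a primitive idempotent $e$, then $\Hom_\Lambda(\TT,D(e\Lambda))=0$ would force the injective $D(e\Lambda)$ into $\Sub\tau U$, impossible because $\tau U$ has no nonzero injective direct summands. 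With $\TT$ sincere and functorially finite, Corollary \ref{basic bijection2} (built on Proposition \ref{basics}(e), which makes $P(\TT)$ a tilting $(\Lambda/\ann\TT)$-module and hence gives the count $|P(\TT)|=|\Lambda/\ann\TT|=|\Lambda|$) immediately yields that $T=P(\TT)$ is $\tau$-tilting with $\Fac T=\TT$; then $U\in\add T$ by Proposition \ref{prop2.1} and the sandwich you wrote finishes ${}^{\perp}(\tau T)=\Fac T$. To repair your proof you should replace the planned universal-extension construction with these two arguments (or supply equivalent ones); as written, the heart of the theorem is assumed rather than proved.
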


We call $P({}^{\perp}(\tau U))$ the \emph{Bongartz completion}\index{Bongartz completion!tautilting module@$\tau$-tilting module} of $U$.

\begin{proof}
The first part follows from the following observation.

\begin{lemma}\label{lem2.4}
For any $\tau$-rigid $\Lambda$-module $U$, we have a sincere functorially finite torsion class ${^{\perp}(\tau U)}$.
\end{lemma}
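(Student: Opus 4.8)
The plan is to first dispose of the torsion-class property, which is formal, and then to produce a \emph{single} sincere module $E$ lying in $\TT:={}^{\perp}(\tau U)$ for which $\Fac E=\TT$; sincerity and functorial finiteness will both fall out of this. The torsion-class part needs nothing about $\tau$-rigidity: for any module $C$ the subcategory ${}^{\perp}C$ is closed under factor modules (a surjection $X\twoheadrightarrow X'$ induces an injection $\Hom_\Lambda(X',C)\hookrightarrow\Hom_\Lambda(X,C)$) and under extensions (apply $\Hom_\Lambda(-,C)$ to a short exact sequence), so $\Tom:={}^{\perp}(\tau U)$ is a torsion class. (Here I write $\TT={}^{\perp}(\tau U)$.)

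For the module $E$ I would form the Bongartz-type universal extension. Taking a $k$-basis of $\Ext^1_\Lambda(U,\Lambda)$, I build a short exact sequence $0\to\Lambda\xrightarrow{a}E\xrightarrow{b}U^d\to0$ whose defining property is that the connecting map $\Hom_\Lambda(U,U^d)\to\Ext^1_\Lambda(U,\Lambda)$ is surjective. Since $U$ is $\tau$-rigid it is rigid, so $\Ext^1_\Lambda(U,U^d)=\Ext^1_\Lambda(U,U)^d=0$, and the long exact sequence forces $\Ext^1_\Lambda(U,E)=0$. Sincerity is then immediate: $a$ is injective, so $\Lambda$ is a submodule of $E$, and the regular module $\Lambda$ is sincere, whence $E$ is sincere. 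Thus, once $E\in\TT$ is established, $\TT$ contains a sincere module.

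The crux, and the step I expect to be the main obstacle, is $E\in\TT$, i.e. $\Hom_\Lambda(E,\tau U)=0$; granting it, $\Fac E\subseteq\TT$ because $\TT$ is closed under factor modules. I would verify it through the presentation criterion of Proposition \ref{no common summand}(b): for a minimal projective presentation $P_1\xrightarrow{d_1}P_0\to U\to0$ it suffices that every $g\colon P_1\to E$ factor through $d_1$. Composing with $b$ and using that $\Hom_\Lambda(P_0,U)\to\Hom_\Lambda(P_1,U)$ is surjective (Proposition \ref{no common summand}(c), as $U$ is $\tau$-rigid), I can subtract a map factoring through $d_1$ so that the remainder of $g$ has image in $\ker b=a(\Lambda)$, say it equals $ag'$ with $g'\colon P_1\to\Lambda$. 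The class of $ag'$ in $\Ext^1_\Lambda(U,E)$ is $a_\ast$ applied to the class of $g'$, hence vanishes because $\Ext^1_\Lambda(U,E)=0$; but vanishing of that class is exactly the assertion that $ag'$ also factors through $d_1$. Therefore $g$ factors through $d_1$, as needed. The delicate point is precisely feeding the abstract vanishing $\Ext^1_\Lambda(U,E)=0$ back into this presentation-level computation, since a direct attempt via $\Ext^1_\Lambda(U,\Fac E)=0$ would stumble on $\Ext^2$-terms when $\pd_\Lambda U>1$.

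Finally, to upgrade $\Fac E\subseteq\TT$ to an equality (and hence obtain functorial finiteness via Proposition \ref{basics}), I would prove $\TT\subseteq\Fac E$. Given $X\in\TT$, Proposition \ref{Auslander-Smalo}(a) yields $\Ext^1_\Lambda(U,X)=0$. Choosing a surjection $\Lambda^m\twoheadrightarrow X$ and pushing the sequence $0\to\Lambda^m\to E^m\to U^{dm}\to0$ out along it produces a module $E'\in\Fac E$ fitting into $0\to X\to E'\to U^{dm}\to0$; this extension splits because $\Ext^1_\Lambda(U^{dm},X)=\Ext^1_\Lambda(U,X)^{dm}=0$, so $X$ is a direct summand, hence a factor module, of $E'\in\Fac E$ and thus lies in $\Fac E$. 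This gives $\TT=\Fac E$ with $E$ sincere, proving that ${}^{\perp}(\tau U)$ is a sincere functorially finite torsion class.
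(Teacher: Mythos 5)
Your reduction of the torsion-class property, the sincerity of $E$, the vanishing $\Ext^1_\Lambda(U,E)=0$, and the final inclusion $\TT\subseteq\Fac E$ are all fine, but the step you yourself flag as the crux --- $E\in{}^{\perp}(\tau U)$ --- contains a genuine gap, and in fact the claim is false for the universal extension $E$. The error is in the sentence ``the class of $ag'$ in $\Ext^1_\Lambda(U,E)$ is $a_\ast$ applied to the class of $g'$ \dots vanishing of that class is exactly the assertion that $ag'$ also factors through $d_1$.'' A map $\varphi\colon P_1\to M$ factors through $d_1$ if and only if \emph{both} $\varphi d_2=0$ (where $P_2\xto{d_2}P_1$ continues the resolution, so $\im d_2=\Ker d_1$) \emph{and} the resulting class in $\Ext^1_\Lambda(U,M)$ vanishes. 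Your $g'\colon P_1\to\Lambda$ is an arbitrary leftover of an arbitrary $g$, so there is no reason for $g'd_2=0$; it simply does not represent a class in $\Ext^1_\Lambda(U,\Lambda)$, and when $\pd_\Lambda U>1$ the surjectivity of $\Hom_\Lambda(P_0,E)\to\Hom_\Lambda(P_1,E)$ (equivalently $\Hom_\Lambda(E,\tau U)=0$) is strictly stronger than $\Ext^1_\Lambda(U,E)=0$; the discrepancy is exactly $D\Hom_\Lambda(E,\tau U)$ versus $D\overline{\Hom}_\Lambda(E,\tau U)$. A concrete counterexample is the paper's own example $\Lambda=k(1\rightleftarrows 2)/(a^2)$ with $U=S_1$: here $\Lambda$ is self-injective, so $\Ext^1_\Lambda(U,\Lambda)=0$ and your universal extension is split, $E=\Lambda$; but $U$ is not projective, $\tau U=S_2\neq0$, so $\Hom_\Lambda(E,\tau U)=\tau U\neq 0$ and $E\notin{}^{\perp}(\tau U)$. (Indeed $\Fac E=\mod\Lambda$ while ${}^{\perp}(\tau U)=\add(S_1\oplus P_1)$, so your intended identity $\TT=\Fac E$ fails, and $E\oplus U$ is not even $\tau$-rigid.) So the Bongartz-type universal extension by $\Ext^1_\Lambda(U,\Lambda)$ cannot be the sincere generator of ${}^{\perp}(\tau U)$ in general.

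The paper avoids producing a generator altogether. For functorial finiteness it observes that $\Sub\tau U$ is a torsionfree class by the dual of Proposition \ref{Auslander-Smalo}(b) (here is where $\tau$-rigidity of $U$ enters), so $({}^{\perp}(\tau U),\Sub\tau U)$ is a torsion pair with $\FF=\Sub\tau U$ of the form $\Sub Y$, and Proposition \ref{basics} gives functorial finiteness of both classes. For sincerity it argues by contradiction: if ${}^{\perp}(\tau U)\subseteq\mod(\Lambda/\langle e\rangle)$ for a primitive idempotent $e$, then $\Hom_\Lambda({}^{\perp}(\tau U),D(e\Lambda))=0$ forces the injective $D(e\Lambda)$ into $\Sub\tau U$, which is impossible since $\tau U$ (hence every module in $\Sub\tau U$) has no nonzero injective direct summand. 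If you want to keep your strategy of exhibiting a sincere module in $\TT$, you would need the actual Bongartz completion $P(\TT)$, whose existence is what this lemma is setting up --- so some argument independent of an explicit generator, like the paper's, is needed.
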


\begin{proof}
When $U$ is $\tau$-rigid, then $\Sub \tau U$ is a torsionfree class
by the dual of Proposition \ref{Auslander-Smalo}(b). Then $({^{\perp}(\tau U)}, \Sub\tau U)$
is a torsion pair, and $\Sub \tau U$ and $^{\perp}(\tau U)$ are functorially finite by Proposition \ref{basics}.

Assume that $^{\perp}(\tau U)$ is not sincere. Then we have
$^{\perp}(\tau U)\subseteq \mod(\Lambda/\langle e\rangle)$ for some
primitive idempotent $e$ in $\Lambda$. The corresponding simple
$\Lambda$-module $S$ is not a composition factor of any module in
$^{\perp}(\tau U)$; in particular $\Hom({^{\perp}(\tau U)},D(e\Lambda))=0$.
Then $D(e\Lambda)$ is in $\Sub\tau U$. But this is a contradiction since $\tau U$, and hence also any module in $\Sub\tau U$, has no nonzero injective direct summands.
\end{proof}

By Corollary \ref{basic bijection2}, it follows that $T$ is a $\tau$-tilting $\Lambda$-module such that $^{\perp}(\tau U)=\Fac T$.
By Proposition \ref{prop2.1}, we have $U\in\add T$.
Clearly $^{\perp}(\tau U)\supseteq{^{\perp}(\tau T)}$
since $U$ is in $\add T$. Hence we get $\Fac T={^{\perp}(\tau U)}
\supseteq{^{\perp}(\tau T)}\supseteq\Fac T$, and consequently $^{\perp}(\tau T)=\Fac T$. 
\end{proof}

We have the following characterizations of a $\tau$-rigid module being $\tau$-tilting.

\begin{theorem}\label{3 conditions}
The following are equivalent for a $\tau$-rigid $\Lambda$-module $T$.
\begin{itemize}
\item[(a)] $T$ is $\tau$-tilting.
\item[(b)] $T$ is maximal $\tau$-rigid, i.e. if $T\oplus X$ is $\tau$-rigid for some $\Lambda$-module $X$, then $X\in\add T$.
\item[(c)] $^{\perp}(\tau T)=\Fac T$.
\item[(d)] If $\Hom_\Lambda(T,\tau X)=0$ and $\Hom_\Lambda(X,\tau T)=0$, then $X\in\add T$.
\end{itemize}
\end{theorem}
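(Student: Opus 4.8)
The plan is to prove Theorem \ref{3 conditions} by establishing the cycle of implications (a)$\Rightarrow$(c)$\Rightarrow$(b)$\Rightarrow$(a) together with the equivalence of (d) with the others. Several of these pieces are almost immediate from the results already developed, so the real content lies in just one implication, which I identify below.

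First I would dispose of (a)$\Rightarrow$(c). If $T$ is $\tau$-tilting, it is $\tau$-rigid with $|T|=|\Lambda|$, so by Theorem \ref{thm2.3} applied to $U=T$ its Bongartz completion $P({}^\perp(\tau T))$ is a $\tau$-tilting module containing $T$ as a summand; since $|T|=|\Lambda|$ already, the remark following Proposition \ref{number of summands of tautilting} forces $\add T=\add P({}^\perp(\tau T))$, and then Theorem \ref{thm2.3} gives ${}^\perp(\tau T)=\Fac P({}^\perp(\tau T))=\Fac T$. Alternatively, this follows even more directly since Theorem \ref{thm2.3} already records ${}^\perp(\tau T)=\Fac T$ for the Bongartz completion $T$ of any $\tau$-rigid module, and (a) says $T$ equals its own completion.

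Next, (c)$\Rightarrow$(b) is the step I expect to carry the weight, though it too is short. Suppose ${}^\perp(\tau T)=\Fac T$ and that $T\oplus X$ is $\tau$-rigid. Then in particular $\Hom_\Lambda(X,\tau T)=0$, which says $X\in{}^\perp(\tau T)=\Fac T$. So there is a surjection $T'\twoheadrightarrow X$ with $T'\in\add T$; refining it to a right $(\add T)$-approximation and taking its kernel, Lemma \ref{prop1.9} (the Wakamatsu-type lemma) shows the kernel lies in ${}^\perp(\tau T)=\Fac T$ as well. The key point is then to promote ``$X$ is a factor of $\add T$'' to ``$X\in\add T$'' using that $T\oplus X$ is itself $\tau$-rigid: one argues that $X$ is $\Ext$-projective in $\Fac T$, hence a summand of $P(\Fac T)$, and $P(\Fac T)\simeq T$ by the bijection of Theorem \ref{basic bijection} (note $T$ is support $\tau$-tilting, indeed $\tau$-tilting, once we have $|T|=|\Lambda|$ from (b) being forced). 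To see $X$ is $\Ext$-projective I would use Proposition \ref{Auslander-Smalo}(a): $\Hom_\Lambda(X,\tau X)=0$ together with $\Hom_\Lambda(X,\tau T)=0$ gives $\Hom_\Lambda(X,\tau(T\oplus X))=0$, i.e. $\Ext^1_\Lambda(T\oplus X,\Fac X)=0$, and since $\Fac X\subseteq\Fac T$ this yields $\Ext^1_\Lambda(X,\Fac T)=0$ after checking $\Fac(T\oplus X)=\Fac T$ (which holds because $X\in\Fac T$). Thus $X\in\add P(\Fac T)=\add T$.

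For (b)$\Rightarrow$(a) I would simply note that by Theorem \ref{thm2.3} the Bongartz completion $T'=P({}^\perp(\tau T))$ is $\tau$-tilting with $T\in\add T'$; then $T\oplus T'$ is $\tau$-rigid, so maximality (b) forces $T'\in\add T$, whence $\add T=\add T'$ and $|T|=|T'|=|\Lambda|$, so $T$ is $\tau$-tilting. Finally, for (d): clearly (b)$\Rightarrow$(d) is vacuous content since (d)'s hypotheses include $\Hom_\Lambda(X,\tau T)=0$ and $\Hom_\Lambda(T,\tau X)=0$, which is exactly the statement that $T\oplus X$ is $\tau$-rigid, so (d) and (b) are literally the same condition. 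The only subtlety worth flagging is the bookkeeping that $\Hom_\Lambda(T\oplus X,\tau(T\oplus X))=0$ unpacks into the four conditions $\Hom_\Lambda(T,\tau T)=0$, $\Hom_\Lambda(T,\tau X)=0$, $\Hom_\Lambda(X,\tau T)=0$, $\Hom_\Lambda(X,\tau X)=0$, the first being free from $T$ being $\tau$-rigid; so (d) genuinely coincides with (b). The main obstacle, such as it is, is organizing the $\Ext$-projectivity argument in (c)$\Rightarrow$(b) cleanly via Proposition \ref{Auslander-Smalo} and Theorem \ref{basic bijection} rather than re-deriving the torsion-class machinery.
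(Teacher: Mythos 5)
Your implications (a)$\Rightarrow$(c) and (b)$\Rightarrow$(a) via the Bongartz completion are fine and close to the paper's own use of Theorem \ref{thm2.3}. The cycle breaks at (c)$\Rightarrow$(b). To get from ``$X$ is $\Ext$-projective in $\Fac T$'' to ``$X\in\add T$'' you invoke $\add P(\Fac T)=\add T$ via Theorem \ref{basic bijection}, but that identification requires $T$ to be (support) $\tau$-tilting, which is exactly statement (a) and is not yet available inside the cycle; your own parenthetical ``once we have $|T|=|\Lambda|$ from (b) being forced'' concedes the circularity. Under hypothesis (c) alone, $P(\Fac T)=P({}^{\perp}(\tau T))$ is the Bongartz completion $U$ of $T$, and $\add U=\add T$ is precisely what must be proved. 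The paper closes this gap with a splitting argument that you set up but never execute: from the right $(\add T)$-approximation sequence $0\to Y\to T'\to U\to0$ (surjective because $\Fac U=\Fac T$), Lemma \ref{prop1.9} gives $\Hom_\Lambda(Y,\tau T)=0$, hence $\Hom_\Lambda(Y,\tau U)=0$ since ${}^{\perp}(\tau T)={}^{\perp}(\tau U)$, and AR duality gives $\Ext^1_\Lambda(U,Y)=0$, so the sequence splits and $U\in\add T$. The same device applied to your sequence $0\to Y\to T'\to X\to0$, using $\Hom_\Lambda(T,\tau X)=0$ to see $Y\in\Fac T\subseteq{}^{\perp}(\tau X)$, would repair (c)$\Rightarrow$(b) directly; as written, the step is missing.

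Second, (d) is not ``literally the same condition'' as (b). The hypotheses of (d) are only $\Hom_\Lambda(T,\tau X)=0$ and $\Hom_\Lambda(X,\tau T)=0$; they do not include $\Hom_\Lambda(X,\tau X)=0$, which is the fourth condition in your own unpacking of ``$T\oplus X$ is $\tau$-rigid.'' So (d)$\Rightarrow$(b) is immediate, but (b)$\Rightarrow$(d) is a strictly stronger assertion that needs an argument; the paper derives (d) from (a)$+$(c): $\Hom_\Lambda(T,\tau X)=0$ gives $\Ext^1_\Lambda(X,\Fac T)=0$ by Proposition \ref{Auslander-Smalo}(a), $\Hom_\Lambda(X,\tau T)=0$ puts $X$ in ${}^{\perp}(\tau T)=\Fac T$, and then $X\in\add P(\Fac T)=\add T$ --- where the appeal to Theorem \ref{basic bijection} is now legitimate because (a) has already been established. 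A smaller slip in the same passage: from $\Hom_\Lambda(X,\tau(T\oplus X))=0$ you obtain $\Ext^1_\Lambda(T\oplus X,\Fac X)=0$, which does not yield $\Ext^1_\Lambda(X,\Fac T)=0$ via $\Fac X\subseteq\Fac T$ (the inclusion points the wrong way); the correct input to Proposition \ref{Auslander-Smalo}(a) is $\Hom_\Lambda(T\oplus X,\tau X)=0$.
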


\begin{proof}
(a)$\Rightarrow$(b): Immediate from Proposition \ref{number of summands of tautilting}.


(b)$\Rightarrow$(c): Let $U$ be the Bongartz completion of $T$.
Since $T$ is maximal $\tau$-rigid, we have $T\simeq U$,
and hence $^{\perp}(\tau T)={^{\perp}(\tau U)}=\Fac U=\Fac T$, using Theorem \ref{thm2.3}.

(c)$\Rightarrow$(a): Let $T$ be $\tau$-rigid with $^{\perp}(\tau T)=\Fac T$.
Let $U$ be the Bongartz completion of $T$. Then we have
\[\Fac T={^{\perp}(\tau T)}\supseteq{^{\perp}(\tau U)}\supseteq\Fac U\supseteq\Fac T,\]
and hence all inclusions are equalities. Since $\Fac U=\Fac T$, there exists an exact sequence
\begin{equation}\label{approximation of U}
\xymatrix{0\ar[r]&Y\ar[r]& T'\ar[r]^{f}& U\ar[r]&0}
\end{equation}
where $f:T'\to U$ is a right $(\add T)$-approximation.
By the Wakamatsu-type Lemma \ref{prop1.9} we have
$\Hom_{\Lambda}(Y,\tau T)=0$, and hence $\Hom_{\Lambda}(Y,\tau U)=0$
since $^{\perp}(\tau T)={^{\perp}(\tau U)}$.
By the AR duality we have $\Ext^1_{\Lambda}(U,Y)\simeq D\overline{\Hom}_{\Lambda}(Y,\tau U)=0$, and hence the sequence \eqref{approximation of U} splits. Then it follows that $U$ is in $\add T$.
Thus $T$ is a $\tau$-tilting $\Lambda$-module.

(a)+(c)$\Rightarrow$(d): Assume that (a) and (c) hold, and $\Hom_{\Lambda}(T,\tau X)=0$ and $\Hom_{\Lambda}(X,\tau T)=0$.
Then $\Ext_{\Lambda}^{1}(X,\Fac T)=0$ by Proposition \ref{Auslander-Smalo}(a) and $X$ is in $^{\perp}\tau T=\Fac T$. 
Thus $X$ is in $\add P(\Fac T)=\add T$ by Theorem \ref{basic bijection}.

(d)$\Rightarrow$(b): This is clear.
\end{proof}

We note the following generalization.

\begin{corollary}\label{analog of 3 conditions}
The following are equivalent for a $\tau$-rigid pair $(T,P)$ for $\Lambda$.
\begin{itemize}
\item[(a)] $(T,P)$ is a support $\tau$-tilting pair for $\Lambda$.
\item[(b)] If $(T\oplus X,P)$ is $\tau$-rigid for some $\Lambda$-module $X$, then $X\in\add T$.
\item[(c)] $^\perp(\tau T)\cap P^\perp=\Fac T$.
\item[(d)] If $\Hom_\Lambda(T,\tau X)=0$, $\Hom_\Lambda(X,\tau T)=0$ and $\Hom_\Lambda(P,X)=0$, then $X\in\add T$.
\end{itemize}
\end{corollary}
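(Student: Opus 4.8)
The plan is to reduce Corollary~\ref{analog of 3 conditions} to Theorem~\ref{3 conditions} by passing to the quotient algebra $\Gamma:=\Lambda/\langle e\rangle$, where $e$ is the idempotent with $\add P=\add\Lambda e$. The key translation is Proposition~\ref{two notion of tau-rigidity}(a): a pair $(N,P)$ is $\tau$-rigid (respectively support $\tau$-tilting) for $\Lambda$ exactly when $N$ is a $\tau$-rigid (respectively $\tau$-tilting) $\Gamma$-module. So every condition stated in terms of the pair $(\,\cdot\,,P)$ for $\Lambda$ should be rewritten as the corresponding condition for $\Gamma$-modules and then matched against the four items of Theorem~\ref{3 conditions} applied to $T$ as a $\Gamma$-module.

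First I would handle the equivalence (a)$\Leftrightarrow$(b). By Proposition~\ref{two notion of tau-rigidity}(a), condition (a) says $T$ is a $\tau$-tilting $\Gamma$-module. For (b), the hypothesis ``$(T\oplus X,P)$ is $\tau$-rigid'' means $T\oplus X$ is a $\tau$-rigid $\Gamma$-module; here one must first observe that $\Hom_\Lambda(P,X)=0$ forces $X$ to be a $\Gamma$-module (a simple point I would spell out: the trace of $P$ in $X$ vanishes, so $eX=0$). Thus (b) is precisely the maximality condition Theorem~\ref{3 conditions}(b) for $T$ over $\Gamma$, and the equivalence (a)$\Leftrightarrow$(b) is immediate from that theorem.

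Next I would treat (c). The subtlety is that $\Fac T$ and the perpendicular categories should be computed in $\mod\Gamma$, but condition (c) is stated inside $\mod\Lambda$. The intersection with $P^\perp$ is exactly what restricts attention to modules killed by $e$: since $\add P=\add\Lambda e$, we have $P^\perp=\{X\in\mod\Lambda\mid eX=0\}=\mod\Gamma$. Using Lemma~\ref{prop1.8}(b), the condition $\Hom_\Lambda(N,\tau M)=0$ for $\Gamma$-modules $N,M$ is equivalent to $\Hom_\Gamma(N,\tau_\Gamma M)=0$, so $^\perp(\tau T)\cap P^\perp$ coincides with $^\perp(\tau_\Gamma T)$ computed in $\mod\Gamma$; likewise $\Fac_\Lambda T=\Fac_\Gamma T$ since factor modules of a $\Gamma$-module are again $\Gamma$-modules. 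Hence (c) is exactly condition (c) of Theorem~\ref{3 conditions} for $T$ over $\Gamma$, and the equivalence with (a) follows. The same dictionary, together with Lemma~\ref{prop1.8}(b) and the identification $P^\perp=\mod\Gamma$, turns condition (d) here into condition (d) of Theorem~\ref{3 conditions} over $\Gamma$, giving (a)$\Leftrightarrow$(d).

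I expect the main obstacle to be the bookkeeping of when a category or Hom-vanishing computed over $\Lambda$ genuinely agrees with its counterpart over $\Gamma$: the crucial leverage is that $\langle e\rangle$ is generated by an idempotent, which is exactly the hypothesis making Lemma~\ref{prop1.8}(b) an ``if and only if'' (rather than the one-directional Lemma~\ref{prop1.8}(a)) and making $\mod\Gamma$ closed under extensions in $\mod\Lambda$. Once the identifications $P^\perp=\mod\Gamma$, $\Fac_\Lambda T=\Fac_\Gamma T$, and $^\perp(\tau T)\cap P^\perp={}^\perp(\tau_\Gamma T)$ are in place, the corollary is a clean transport of Theorem~\ref{3 conditions} along Proposition~\ref{two notion of tau-rigidity}, with no further work needed.
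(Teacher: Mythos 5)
Your proposal is correct and is exactly the paper's argument: the paper proves this corollary by invoking Lemma \ref{prop1.8}(b) and applying Theorem \ref{3 conditions} to $\Lambda/\langle e\rangle$ where $\add P=\add\Lambda e$. You have merely spelled out the dictionary ($P^\perp=\mod(\Lambda/\langle e\rangle)$, $\Fac_\Lambda T=\Fac_{\Lambda/\langle e\rangle}T$, ${}^\perp(\tau T)\cap P^\perp={}^\perp(\tau_{\Lambda/\langle e\rangle}T)$) that the paper leaves implicit, and all of these identifications are correct.
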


\begin{proof}
In view of Lemma \ref{prop1.8}(b), the assertion follows immediately from Theorem \ref{3 conditions} by replacing $\Lambda$ by $\Lambda/\langle e\rangle$ for an idempotent $e$ of $\Lambda$ satisfying $\add P=\add\Lambda e$.
\end{proof}

In the rest of this subsection, we discuss the left-right symmetry of $\tau$-rigid modules.
It is somehow surprising that there exists a bijection between support $\tau$-tilting $\Lambda$-modules and support $\tau$-tilting $\Lambda^{\op}$-modules.
We decompose $M$ in $\mod\Lambda$ as $M=M_{\rm pr}\oplus M_{\rm np}$ where $M_{\rm pr}$ is a maximal projective direct summand of $M$.
For a $\tau$-rigid pair $(M,P)$ for $\Lambda$, let
\[(M,P)^\dagger:=(\Tr M_{\rm np}\oplus P^*,M_{\rm pr}^*)=(\Tr M\oplus P^*,M_{\rm pr}^*).\]\index{$(M,P)^\dagger$}
We denote by $\trigid\Lambda$\index{3taurigid@$\trigid\Lambda$} the set of isomorphism classes of basic $\tau$-rigid pairs of $\Lambda$.

\begin{theorem}\label{left-right symmetry}
$(-)^\dagger$ gives bijections
\begin{eqnarray*}
\trigid\Lambda\longleftrightarrow\trigid\Lambda^{\op}
\ \ \ \mbox{ and }\ \ \ \sttilt\Lambda\longleftrightarrow\sttilt\Lambda^{\op}
\end{eqnarray*}
such that $(-)^{\dagger\dagger}={\rm id}$.
\end{theorem}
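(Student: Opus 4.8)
The plan is to prove the statement first at the level of $\tau$-rigid pairs, and to deduce the module-level bijection at the end from Proposition \ref{two notion of tau-rigidity}. The whole argument rests on a single natural isomorphism, valid for all $X,Y\in\mod\Lambda$,
\[
\Hom_\Lambda(Y,\tau X)\cong D(\Tr X\otimes_\Lambda Y),
\]
which is immediate from $\tau=D\Tr$ together with the adjunction $\Hom_\Lambda(Y,D(-))\cong D((-)\otimes_\Lambda Y)$ (alternatively it can be read off from the four-term exact sequence of Proposition \ref{no common summand}(a), whose kernel term is $\Hom_\Lambda(Y,\tau X)$ and whose cokernel computes $\Tr X\otimes_\Lambda Y$). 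Combined with the symmetry $\Tr X\otimes_\Lambda Y\cong Y\otimes_{\Lambda^{\op}}\Tr X$, the isomorphisms $\Hom_{\Lambda^{\op}}(Q^*,W)\cong W\otimes_\Lambda Q$ and $Q^*\otimes_\Lambda Y\cong\Hom_\Lambda(Q,Y)$ for $Q\in\proj\Lambda$, and the involutivity $\Tr_{\Lambda^{\op}}\Tr_\Lambda=\mathrm{id}$ on non-projectives and $(-)^{**}=\mathrm{id}$ on projectives, this formula converts every Hom-vanishing condition on $\Lambda$ into one on $\Lambda^{\op}$. I expect this translation to be the \emph{only} genuine difficulty; the rest is bookkeeping.

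First I would check that $(-)^\dagger$ sends a $\tau$-rigid pair $(M,P)$ to a $\tau$-rigid pair $(\Tr M_{\rm np}\oplus P^*,\,M_{\rm pr}^*)$ over $\Lambda^{\op}$. Writing $N:=\Tr M_{\rm np}\oplus P^*$ and using $\tau P^*=0$, the $\tau$-rigidity of $N$ reduces to $\Hom_{\Lambda^{\op}}(\Tr M_{\rm np},\tau\Tr M_{\rm np})=0$ and $\Hom_{\Lambda^{\op}}(P^*,\tau\Tr M_{\rm np})=0$. Applying the displayed isomorphism over $\Lambda^{\op}$ and simplifying with $\Tr_{\Lambda^{\op}}\Tr M_{\rm np}=M_{\rm np}$ and tensor symmetry, the first becomes $\Hom_\Lambda(M_{\rm np},\tau M_{\rm np})=0$ and the second becomes $D\Hom_\Lambda(P,M_{\rm np})=0$; both are direct summands of the hypotheses $\Hom_\Lambda(M,\tau M)=0$ and $\Hom_\Lambda(P,M)=0$. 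The remaining condition $\Hom_{\Lambda^{\op}}(M_{\rm pr}^*,N)=0$ splits in the same way: $\Hom_{\Lambda^{\op}}(M_{\rm pr}^*,P^*)\cong\Hom_\Lambda(P,M_{\rm pr})=0$ by duality of projectives, while $\Hom_{\Lambda^{\op}}(M_{\rm pr}^*,\Tr M_{\rm np})\cong D\Hom_\Lambda(M_{\rm pr},\tau M_{\rm np})=0$ by the displayed isomorphism again.

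Next I would verify $(-)^{\dagger\dagger}=\mathrm{id}$. The key observation is that $\Tr M_{\rm np}$ has no nonzero projective summand, since $\Tr$ restricts to a bijection on non-projective indecomposables, whereas $P^*$ is projective; hence $N_{\rm np}=\Tr M_{\rm np}$ and $N_{\rm pr}=P^*$. A second application of $\dagger$ then gives $(\Tr_{\Lambda^{\op}}\Tr M_{\rm np}\oplus(M_{\rm pr}^*)^*,\,(P^*)^*)=(M_{\rm np}\oplus M_{\rm pr},P)=(M,P)$, using $\Tr^2=\mathrm{id}$ and $(-)^{**}=\mathrm{id}$. In particular $(-)^\dagger$ is a bijection $\trigid\Lambda\leftrightarrow\trigid\Lambda^{\op}$.

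Finally I would track the numerical invariant to descend to support $\tau$-tilting pairs and then to modules. Because $\Tr M_{\rm np}$ (having no projective summand) and $P^*$ (projective) share no summand, $N$ is basic with $|N|=|M_{\rm np}|+|P|$, and $|M_{\rm pr}^*|=|M_{\rm pr}|$, so $|N|+|M_{\rm pr}^*|=|M|+|P|$. Since $|\Lambda^{\op}|=|\Lambda|$, the condition $|M|+|P|=|\Lambda|$ is preserved, and therefore $(-)^\dagger$ restricts to a bijection $\sttilt\Lambda\leftrightarrow\sttilt\Lambda^{\op}$ on the pair level. The asserted bijection $\sttilt\Lambda\leftrightarrow\sttilt\Lambda^{\op}$ for modules then follows at once, since by Proposition \ref{two notion of tau-rigidity} a support $\tau$-tilting module determines, and is determined by, its support $\tau$-tilting pair.
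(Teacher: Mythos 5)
Your proposal is correct and follows essentially the same route as the paper: the same translation identity (your $\Hom_\Lambda(Y,\tau X)\cong D(\Tr X\otimes_\Lambda Y)$ is just the adjoint form of the paper's $\Hom_\Lambda(Y,\tau X)\cong\Hom_{\Lambda^{\op}}(\Tr X,DY)$), the same decomposition of the $\tau$-rigidity of $(M,P)^\dagger$ into the four Hom-vanishing conditions coming from $M=M_{\rm pr}\oplus M_{\rm np}$, and the same reduction of the bijectivity to the involutivity of $\dagger$. You merely spell out two points the paper leaves as "clear" (that $\Tr M_{\rm np}$ has no projective summands, so $\dagger\dagger=\mathrm{id}$, and the count $|N|+|M_{\rm pr}^*|=|M|+|P|$), which is harmless.
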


For a support $\tau$-tilting $\Lambda$-module $M$, we simply write $M^\dagger:=\Tr M_{\rm np}\oplus P^*$\index{$M^{\dagger}$} where $(M,P)$ is a support $\tau$-tilting pair for $\Lambda$.

\begin{proof}
We only have to show that $(M,P)^\dagger$ is a $\tau$-rigid pair for $\Lambda^{\op}$
since the correspondence $(M,P)\mapsto(M,P)^\dagger$ is clearly an involution.
We have
\begin{equation}\label{1}
0=\Hom_\Lambda(M_{\rm np},\tau M)=\Hom_{\Lambda^{\op}}(\Tr M,DM_{\rm np})=\Hom_{\Lambda^{\op}}(\Tr M,\tau\Tr M).
\end{equation}
Moreover we have
\begin{equation}\label{2}
0=\Hom_\Lambda(M_{\rm pr},\tau M)=\Hom_{\Lambda^{\op}}(\Tr M,DM_{\rm pr})=D\Hom_{\Lambda^{\op}}(M_{\rm pr}^*,\Tr M).
\end{equation}
On the other hand, we have
\begin{equation}\label{3}
0=\Hom_\Lambda(P,M)=\Hom_\Lambda(P,M_{\rm pr})\oplus\Hom_\Lambda(P,M_{\rm np}).
\end{equation}
Thus we have
\[0=D(P^*\otimes_\Lambda M_{\rm np})=\Hom_{\Lambda^{\op}}(P^*,DM_{\rm np})=\Hom_{\Lambda^{\op}}(P^*,\tau\Tr M).\]
This together with \eqref{1} shows that $\Tr M\oplus P^*$ is a $\tau$-rigid $\Lambda^{\op}$-module.
We have $\Hom_{\Lambda^{\op}}(M_{\rm pr}^*,P^*)=0$ by \eqref{3}. This together with \eqref{2} shows that $(M,P)^\dagger$ is a $\tau$-rigid pair for $\Lambda^{\op}$.
\end{proof}

Now we discuss dual notions of $\tau$-rigid and $\tau$-tilting modules even though we do not use them in this paper.
\begin{itemize}
\item We call $M$ in $\mod\Lambda$ \emph{$\tau^-$-rigid}\index{tauzrigid module@$\tau^-$-rigid module} if $\Hom_\Lambda(\tau^-M,M)=0$.
\item We call $M$ in $\mod\Lambda$ \emph{$\tau^-$-tilting}\index{tauztilting module@$\tau^-$-tilting module} if $M$ is $\tau^-$-rigid and $|M|=|\Lambda|$.
\item We call $M$ in $\mod\Lambda$ \emph{support $\tau^-$-tilting}\index{support tauztilting module@support $\tau^-$-tilting module} if $M$ is a $\tau^-$-tilting $(\Lambda/\langle e\rangle)$-module for some idempotent $e$ of $\Lambda$.
\end{itemize}
Clearly $M$ is $\tau^-$-rigid (respectively, $\tau^-$-tilting, support $\tau^-$-tilting) $\Lambda$-module if and only if $DM$ is $\tau$-rigid (respectively, $\tau$-tilting, support $\tau$-tilting) $\Lambda^{\op}$-module.

We denote by $\cotilt\Lambda$\index{3cotilt@$\cotilt\Lambda$} (respectively, $\tcotilt\Lambda$\index{3tauztilt@$\tcotilt\Lambda$}, $\stcotilt\Lambda$\index{3stauztilt@$\stcotilt\Lambda$}) the set of isomorphism classes of basic cotilting (respectively, $\tau^-$-tilting, support $\tau^-$-tilting) $\Lambda$-modules.
On the other hand, we denote by $\ftorf\Lambda$\index{3ftorf@$\ftorf\Lambda$} the set of functorially finite torsionfree classes in $\mod\Lambda$, and by 
$\sftorf\Lambda$\index{3sftorf@$\sftorf\Lambda$} (respectively, $\fftorf\Lambda$\index{3fftorf@$\fftorf\Lambda$}) the set of sincere (respectively, faithful) functorially finite torsionfree classes in $\mod\Lambda$.
We have the following results immediately from Theorem \ref{basic bijection} and Corollary \ref{basic bijection2}.

\begin{theorem}\label{basic cobijection}
We have bijections
\begin{eqnarray*}
\stcotilt\Lambda\longleftrightarrow\ftorf\Lambda,\ \ \ 
\tcotilt\Lambda\longleftrightarrow\sftorf\Lambda\ \ \ \mbox{and}\ \ \ 
\cotilt\Lambda\longleftrightarrow\fftorf\Lambda
\end{eqnarray*}
given by $\stcotilt\Lambda\ni T\mapsto\Sub T\in\ftorf\Lambda$ and $\ftorf\Lambda\ni\FF\mapsto I(\FF)\in\stcotilt\Lambda$.
\end{theorem}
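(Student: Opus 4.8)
The plan is to deduce all three bijections from Theorem \ref{basic bijection} and Corollary \ref{basic bijection2}, applied to the opposite algebra $\Lambda^{\op}$ and transported back to $\Lambda$ along the duality $D=\Hom_k(-,k):\mod\Lambda\leftrightarrow\mod\Lambda^{\op}$. Since the excerpt has already recorded that $M$ is support $\tau^-$-tilting over $\Lambda$ exactly when $DM$ is support $\tau$-tilting over $\Lambda^{\op}$, the whole statement should become a formal consequence once the compatibility of $D$ with the relevant operations is pinned down.

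First I would record the behaviour of $D$. It restricts to a bijection $\stcotilt\Lambda\leftrightarrow\sttilt\Lambda^{\op}$, and likewise $\tcotilt\Lambda\leftrightarrow\ttilt\Lambda^{\op}$ and $\cotilt\Lambda\leftrightarrow\tilt\Lambda^{\op}$ by the very definition of ($\tau^-$-)cotilting modules. Being an exact duality, $D$ interchanges submodules with factor modules and carries a functorially finite torsionfree class $\FF$ in $\mod\Lambda$ (so $\FF=\Sub Y$ by Proposition \ref{basics}(d)) to the functorially finite torsion class $D\FF=\Fac(DY)$ in $\mod\Lambda^{\op}$, giving $\ftorf\Lambda\leftrightarrow\ftors\Lambda^{\op}$. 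I would then verify the two key compatibility identities: $D(\Sub T)=\Fac(DT)$ directly, and, using $\Ext^1_\Lambda(A,B)\simeq\Ext^1_{\Lambda^{\op}}(DB,DA)$, that $X$ is $\Ext$-injective in $\FF$ iff $DX$ is $\Ext$-projective in $D\FF$, whence $D(I(\FF))=P(D\FF)$.

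Next I would compose these maps with Theorem \ref{basic bijection} for $\Lambda^{\op}$. Starting from $T\in\stcotilt\Lambda$, that theorem sends $DT$ to $\Fac(DT)=D(\Sub T)$, and applying $D$ again returns $\Sub T$; conversely $\FF\in\ftorf\Lambda$ gives $D\FF\mapsto P(D\FF)=D(I(\FF))$, and applying $D$ again returns $I(\FF)$. As $D^2\simeq\operatorname{id}$, the assignments $T\mapsto\Sub T$ and $\FF\mapsto I(\FF)$ are mutually inverse, establishing the first bijection $\stcotilt\Lambda\leftrightarrow\ftorf\Lambda$.

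Finally, for the refined bijections I would check that $D$ preserves sincerity and faithfulness: a simple $\Lambda$-module $S$ is a composition factor of $M$ exactly when $DS$ is a composition factor of $DM$, so $\FF$ is sincere iff $D\FF$ is; and $\ann_\Lambda M=\ann_{\Lambda^{\op}}(DM)$ under the anti-isomorphism $\Lambda\cong\Lambda^{\op}$, so $\FF$ is faithful iff $D\FF$ is. Feeding this into the sincere/faithful refinement of Corollary \ref{basic bijection2} for $\Lambda^{\op}$ then yields $\tcotilt\Lambda\leftrightarrow\sftorf\Lambda$ and $\cotilt\Lambda\leftrightarrow\fftorf\Lambda$. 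The only genuinely delicate points are the identities $D(\Sub T)=\Fac(DT)$ and $D(I(\FF))=P(D\FF)$ together with the sincere/faithful preservation; once these are settled the rest is a routine transport of structure along $D$, so I expect the bookkeeping of the contravariance to be the main, if modest, obstacle.
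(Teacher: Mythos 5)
Your proposal is correct and follows exactly the route the paper takes: the paper states Theorem \ref{basic cobijection} as an immediate consequence of Theorem \ref{basic bijection} and Corollary \ref{basic bijection2} via the duality $D$, using the observation recorded just above it that $M$ is (support) $\tau^-$-tilting over $\Lambda$ iff $DM$ is (support) $\tau$-tilting over $\Lambda^{\op}$. You have simply made explicit the compatibility checks ($D(\Sub T)=\Fac(DT)$, $D(I(\FF))=P(D\FF)$, preservation of sincerity and faithfulness) that the paper leaves implicit.
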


On the other hand, we have a bijection
\[\sttilt\Lambda\longleftrightarrow\stcotilt\Lambda\]
given by $(M,P)\mapsto D((M,P)^\dagger)=(\tau M\oplus\nu P,\nu M_{\rm pr})$.
Thus we have bijections
\[\ftors\Lambda\longleftrightarrow\sttilt\Lambda\longleftrightarrow\stcotilt\Lambda\longleftrightarrow\ftorf\Lambda\]
by Theorems \ref{basic bijection} and \ref{basic cobijection}.
We end this subsection with the following observation.

\begin{proposition}
\begin{itemize}
\item[(a)] The above bijections send $\TT\in\ftors\Lambda$ to $\TT^\perp\in\ftorf\Lambda$.
\item[(b)] For any support $\tau$-tilting pair $(M,P)$ for $\Lambda$, the torsion pairs $(\Fac M,M^\perp)$ and $({}^\perp(\tau M\oplus\nu P),\Sub(\tau M\oplus\nu P))$ in $\mod\Lambda$ coincide.
\end{itemize}
\end{proposition}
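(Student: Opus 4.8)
The plan is to establish (b) first and then obtain (a) as the special case $M=P(\TT)$. The crux of (b) is the single identity ${}^\perp(\nu P)=P^\perp$, which turns the ``right perp'' of the injective part $\nu P$ into the ``left perp'' of the projective part $P$ and so makes Corollary \ref{analog of 3 conditions}(c) directly applicable.

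First I would record the relevant Nakayama duality. Since $P\in\proj\Lambda$ and $\nu P=D(P^*)$, tensor--hom adjunction together with the fact that the evaluation map $P^*\otimes_\Lambda X\to\Hom_\Lambda(P,X)$ is an isomorphism for finitely generated projective $P$ yields a natural isomorphism $\Hom_\Lambda(X,\nu P)\simeq D\Hom_\Lambda(P,X)$ for all $X\in\mod\Lambda$. Hence $\Hom_\Lambda(X,\nu P)=0$ if and only if $\Hom_\Lambda(P,X)=0$, i.e. ${}^\perp(\nu P)=P^\perp$. Decomposing the perp of a direct sum, I then compute
\[
{}^\perp(\tau M\oplus\nu P)={}^\perp(\tau M)\cap{}^\perp(\nu P)={}^\perp(\tau M)\cap P^\perp=\Fac M,
\]
where the last equality is Corollary \ref{analog of 3 conditions}(c), valid because $(M,P)$ is a support $\tau$-tilting pair.

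To finish (b) I would argue that both $(\Fac M,M^\perp)$ and $({}^\perp(\tau M\oplus\nu P),\Sub(\tau M\oplus\nu P))$ are genuine torsion pairs and that they share the torsion class just computed. The first is a torsion pair because $\Fac M$ is a functorially finite torsion class (Proposition \ref{Auslander-Smalo}(b)) with $(\Fac M)^\perp=M^\perp$; the second because $\Sub(\tau M\oplus\nu P)$ is a functorially finite torsionfree class (Theorem \ref{basic cobijection}) whose associated left perp is exactly ${}^\perp(\tau M\oplus\nu P)$. Since a torsion pair is determined by its torsion class and the two torsion classes agree, the pairs coincide; comparing torsionfree classes gives $M^\perp=\Sub(\tau M\oplus\nu P)$, proving (b). For (a), given $\TT\in\ftors\Lambda$ I set $M=P(\TT)$, so the composite bijection sends $\TT$ to $\Sub(\tau M\oplus\nu P)$, which by (b) equals $M^\perp=(\Fac M)^\perp=\TT^\perp$ (using $\TT=\Fac P(\TT)$ from Theorem \ref{basic bijection}); note $\TT^\perp\in\ftorf\Lambda$ by Proposition \ref{basics}.

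I expect the only real obstacle to be the first step, namely pinning down ${}^\perp(\nu P)=P^\perp$; once the Nakayama duality $\Hom_\Lambda(X,\nu P)\simeq D\Hom_\Lambda(P,X)$ is in hand, the rest is formal torsion-pair bookkeeping. The one subtlety to check carefully is that the torsion pair attached to the torsionfree class $\Sub(\tau M\oplus\nu P)$ has left part ${}^\perp(\tau M\oplus\nu P)$ and not merely ${}^\perp(\Sub(\tau M\oplus\nu P))$; these agree because $\tau M\oplus\nu P$ itself lies in $\Sub(\tau M\oplus\nu P)$, so a morphism killed on $\tau M\oplus\nu P$ is killed on every submodule of a power of it.
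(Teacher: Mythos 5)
Your proposal is correct and follows essentially the same route as the paper: both reduce to the identity $\Fac M={}^\perp(\tau M)\cap P^\perp={}^\perp(\tau M\oplus\nu P)$ via Corollary \ref{analog of 3 conditions}(c), after noting that both pairs are torsion pairs. The only difference is that you make explicit the Nakayama duality step ${}^\perp(\nu P)=P^\perp$, which the paper leaves implicit.
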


\begin{proof}
(b) We only have to show $\Fac M={}^\perp(\tau M\oplus\nu P)$.
It follows from Proposition \ref{Auslander-Smalo}(b) and its dual that
$(\Fac M,M^\perp)$ and $({}^\perp(\tau M\oplus\nu P),\Sub(\tau M\oplus\nu P))$
are torsion pairs in $\mod\Lambda$.
They coincide since $\Fac M={}^\perp(\tau M)\cap P^\perp={}^\perp(\tau M\oplus\nu P)$ holds by Corollary \ref{analog of 3 conditions}(c).


(a) Let $\TT\in\ftors\Lambda$ and $(M,P)$ be the corresponding support $\tau$-tilting pair
for $\Lambda$. Since $\TT^\perp=M^\perp$ and $D(M^\dagger)=\tau M\oplus\nu P$,
the assertion follows from (b).
\end{proof}

\subsection{Mutation of support $\tau$-tilting modules}\label{ssec2.5}

In this section we prove our main result on complements for almost complete support $\tau$-tilting pairs.
Let us start with the following result.

\begin{proposition}\label{prop2.7}
Let $T$ be a basic $\tau$-rigid module which is not $\tau$-tilting. Then there
are at least two basic support $\tau$-tilting modules which have $T$ as a direct summand.
\end{proposition}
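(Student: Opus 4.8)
The plan is to exhibit two distinct functorially finite torsion classes, both having $T$ among their $\Ext$-projectives, and then transport them across the bijection of Theorem \ref{basic bijection} to obtain two distinct support $\tau$-tilting modules containing $T$. The two natural candidates are the smallest and the largest such torsion classes, namely $\Fac T$ and $^{\perp}(\tau T)$. Indeed, by Proposition \ref{prop2.1} every functorially finite torsion class $\TT$ with $T\in\add P(\TT)$ satisfies $\Fac T\subseteq\TT\subseteq{}^{\perp}(\tau T)$, so these two genuinely bracket all the completions of $T$, and using both endpoints is the way to extract two different modules.

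For the lower end, I would note that since $T$ is $\tau$-rigid, Proposition \ref{Auslander-Smalo}(b) gives that $\Fac T$ is a functorially finite torsion class with $T\in\add P(\Fac T)$. Hence, via Theorem \ref{basic bijection}, the module $P(\Fac T)$ is a support $\tau$-tilting module having $T$ as a direct summand. For the upper end, Theorem \ref{thm2.3} provides the Bongartz completion $U:=P({}^{\perp}(\tau T))$, a $\tau$-tilting (in particular support $\tau$-tilting) module which again contains $T$ as a direct summand. So both candidates are readily produced from results already established.

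It remains to check that these two modules are distinct, and this is the only substantive point of the argument. Suppose $P(\Fac T)\simeq P({}^{\perp}(\tau T))$. Applying the inverse of the bijection in Theorem \ref{basic bijection} (which sends a support $\tau$-tilting module $X$ back to $\Fac X$, so that $\Fac P(\TT)=\TT$) yields $\Fac T={}^{\perp}(\tau T)$. But by the characterization in Theorem \ref{3 conditions}, the equality $^{\perp}(\tau T)=\Fac T$ is precisely condition (c), which forces $T$ to be $\tau$-tilting, contradicting the hypothesis. Therefore $P(\Fac T)\not\simeq P({}^{\perp}(\tau T))$, and we obtain the desired two distinct basic support $\tau$-tilting modules sharing $T$ as a direct summand.

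The main obstacle is exactly this distinctness step: the constructions themselves are immediate invocations of the bijection and of Bongartz completion. The hypothesis that $T$ is not $\tau$-tilting enters only here, through the equivalence ``$\Fac T={}^{\perp}(\tau T)$ if and only if $T$ is $\tau$-tilting'' supplied by Theorem \ref{3 conditions}, so it is essential that this characterization be in hand before running the argument.
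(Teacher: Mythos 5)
Your proposal is correct and follows essentially the same route as the paper: both arguments use the two torsion classes $\Fac T$ and ${}^{\perp}(\tau T)$, pass through the bijection of Theorem \ref{basic bijection}, and derive distinctness from the characterization $\Fac T={}^{\perp}(\tau T)\Leftrightarrow T$ is $\tau$-tilting in Theorem \ref{3 conditions}. The only cosmetic difference is that the paper phrases distinctness as a proper inclusion $\Fac T\subsetneq{}^{\perp}(\tau T)$ and cites Proposition \ref{prop2.1} uniformly for both completions, whereas you argue by contradiction and invoke Proposition \ref{Auslander-Smalo}(b) and Theorem \ref{thm2.3} separately.
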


\begin{proof}
By Theorem \ref{3 conditions}, $\TT_1=\Fac T$ is properly contained in $\TT_2={^{\perp}(\tau T)}$. 
By Theorem \ref{basic bijection} and Lemma \ref{lem2.4}, we have two different support $\tau$-tilting modules $P(\TT_1)$ and $P(\TT_2)$ up to isomorphism.
By Proposition \ref{prop2.1}, they are extensions of $T$.
\end{proof}

Our aim is to prove the following result.

\begin{theorem}\label{2 complements}
Let $\Lambda$ be a finite dimensional $k$-algebra.
Then any basic almost complete support $\tau$-tilting pair $(U,Q)$ for $\Lambda$ is a direct summand of exactly two basic support $\tau$-tilting pairs $(T,P)$ and $(T',P')$ for $\Lambda$.
Moreover we have $\{\Fac T,\Fac T'\}=\{\Fac U,{}^\perp(\tau U)\cap Q^\perp\}$.
\end{theorem}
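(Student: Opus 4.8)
The plan is to translate everything into the lattice of functorially finite torsion classes via Theorem \ref{basic bijection} and to exhibit the completions of $(U,Q)$ as an interval that I will show has exactly two elements. The first step is to record the pair-version of Proposition \ref{prop2.1}: writing $\add Q=\add\Lambda e$, so that $Q^\perp=\mod(\Lambda/\langle e\rangle)$, the pair $(U,Q)$ is a direct summand of the support $\tau$-tilting pair attached to a functorially finite torsion class $\TT$ if and only if $\Fac U\subseteq\TT\subseteq{}^\perp(\tau U)\cap Q^\perp$. Indeed the inclusion into ${}^\perp(\tau U)$ encodes $U\in\add P(\TT)$ by Proposition \ref{prop2.1}, while the inclusion into $Q^\perp$ encodes that $Q$ is a summand of the projective part. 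Hence the completions of $(U,Q)$ correspond bijectively to the functorially finite torsion classes lying in the interval $[\Fac U,\ {}^\perp(\tau U)\cap Q^\perp]$, and the whole theorem becomes the assertion that this interval has precisely the two endpoints as elements.

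I would next check that \emph{both endpoints are functorially finite torsion classes and are distinct}, which yields the two required completions. The bottom class $\Fac U$ is functorially finite by Proposition \ref{Auslander-Smalo}(b), and one sees $\Fac U\subseteq{}^\perp(\tau U)\cap Q^\perp$ directly from Proposition \ref{Auslander-Smalo}(a) together with $eU=0$. For the top class I would pass to $\Lambda':=\Lambda/\langle e\rangle$: by Lemma \ref{prop1.8}(b) the class ${}^\perp(\tau U)\cap Q^\perp$ coincides with the Bongartz class ${}^\perp(\tau_{\Lambda'}U)$ computed in $\mod\Lambda'$, which is functorially finite by Theorem \ref{thm2.3}. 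Distinctness of the endpoints is then immediate from Corollary \ref{analog of 3 conditions}(c): an equality $\Fac U={}^\perp(\tau U)\cap Q^\perp$ would force $(U,Q)$ to be support $\tau$-tilting, contradicting $|U|+|Q|=|\Lambda|-1$.

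The crux is \emph{at most two}. A direct count over $\Lambda$ settles the coarse structure: for any completion $(T,P)$ one has $(|T|-|U|)+(|P|-|Q|)=1$ with both terms nonnegative, so either $T=U$ and $P=Q\oplus R$ with $R$ indecomposable projective, or $\add P=\add Q$ and $T=U\oplus X$ with $X$ indecomposable. In the first case $\Fac T=\Fac U$, and there is at most one such completion because the projective part of a support $\tau$-tilting pair is determined by its module part (Proposition \ref{two notion of tau-rigidity}(b)). It therefore remains only to bound the second, ``module'' type. Passing to $\Lambda'$ turns each such $T=U\oplus X$ into a genuine $\tau$-tilting $\Lambda'$-module, and the goal reduces to the statement $(\ast)$: every $\tau$-tilting completion $U\oplus X$ of $U$ over $\Lambda'$ has $\Fac(U\oplus X)$ equal to $\Fac U$ or to the Bongartz class ${}^\perp(\tau U)\cap Q^\perp$. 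Granting $(\ast)$, each of these two torsion classes is realised by a unique pair, so there are at most two completions in all.

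Proving $(\ast)$ is the step I expect to be the main obstacle. My approach is to use that the Bongartz completion $T=U\oplus X$ of Theorem \ref{thm2.3} is the \emph{unique} completion attaining the maximal class, and to manufacture its competitor explicitly from the minimal left $(\add U)$-approximation $X\xrightarrow{f}U'$: I would form the induced exact sequence and show, via the Wakamatsu-type Lemma \ref{prop1.9}, that the relevant cokernel term $Y$ is $\tau$-rigid, so that $U\oplus Y$ (or its support-reduced shadow) is a completion realising the bottom class. The genuine difficulty is to prove that this exhausts the module-type completions, i.e. that for any $\tau$-tilting $U\oplus Z$ with $Z\not\simeq X$ the torsion class $\Fac(U\oplus Z)$ cannot sit strictly between the endpoints but must collapse to $\Fac U$; I expect to force this by inserting $Z$ into the approximation sequence for $X$ and comparing $\Ext$-projectives, ruling out any interior point of the interval. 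Once $(\ast)$ holds, the distinctness from the second paragraph upgrades ``at most two'' to ``exactly two'', and the final equality $\{\Fac T,\Fac T'\}=\{\Fac U,\ {}^\perp(\tau U)\cap Q^\perp\}$ is simply the identification of the two realised torsion classes with the two endpoints of the interval.
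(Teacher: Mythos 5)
Your setup is sound and follows the same overall architecture as the paper: reduce to functorially finite torsion classes via Theorem \ref{basic bijection}, observe that completions of $(U,Q)$ correspond to torsion classes in the interval $[\Fac U,\,{}^\perp(\tau U)\cap Q^\perp]$, show both endpoints are distinct functorially finite torsion classes (this is Proposition \ref{prop2.7} together with Lemma \ref{lem2.4}), and then reduce ``at most two'' to your statement $(\ast)$: every $\tau$-tilting completion $U\oplus Z$ over $\Lambda'=\Lambda/\langle e\rangle$ satisfies $\Fac(U\oplus Z)\in\{\Fac U,\ {}^\perp(\tau_{\Lambda'}U)\}$. Your counting argument and the uniqueness of the pair realizing a given torsion class are both correct.

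But $(\ast)$ is precisely the heart of the theorem, and you have not proved it; you explicitly flag it as ``the main obstacle'' and offer only the hope of ``inserting $Z$ into the approximation sequence for $X$ and comparing $\Ext$-projectives.'' This is a genuine gap, and the strategy as sketched points in the wrong direction. The paper's resolution (Proposition \ref{prop2.10}) does \emph{not} approximate the complement $Z$ by $\add U$; it goes the other way: letting $Y\oplus U$ be the Bongartz completion of $U$, one takes a minimal left $(\Fac(Z\oplus U))$-approximation of $Y\oplus U$ via Lemma \ref{approximation lemma} (applied to the triple $(Z\oplus U,0,Y\oplus U)$, which is legitimate because ${}^\perp(\tau(Z\oplus U))\subseteq{}^\perp(\tau U)={}^\perp(\tau(Y\oplus U))$), obtaining $Y\oplus U\to T'\oplus U\to T''\to 0$ with $T''\in\add Z$ and $\add(T'\oplus U)\cap\add T''=0$. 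The dichotomy is then read off the cokernel: if $T''\neq 0$ then $T''\simeq Z^\ell$ forces $T'\in\add U$ and hence $Z\in\Fac U$, giving $\Fac(U\oplus Z)=\Fac U$; if $T''=0$ then the sincerity of $Z\oplus U$ and Lemma \ref{approximation is surjective} force the approximation to be an isomorphism, so $Y\simeq Z$ and $\Fac(U\oplus Z)={}^\perp(\tau(U\oplus Z))={}^\perp(\tau U)$. Your proposed sequence $X\xto{f}U'\to Y\to 0$ built from a left $(\add U)$-approximation of the Bongartz complement is the exchange sequence of Theorem \ref{thm2.8}: it constructs the \emph{second} completion but by itself says nothing about an arbitrary third completion $U\oplus Z$, which is exactly what must be excluded. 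Until $(\ast)$ is established by an argument of the above kind, the proof is incomplete.
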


Before proving Theorem \ref{2 complements}, we introduce a notion of mutation.

\begin{definition}
Two basic support $\tau$-tilting pairs $(T,P)$ and $(T',P')$ for $\Lambda$ are
said to be \emph{mutations}\index{mutation!support tautilting pair@support $\tau$-tilting pair} of each other 
if there exists a basic almost complete support $\tau$-tilting pair $(U,Q)$ which is a direct summand of $(T,P)$ and $(T',P')$.
In this case we write $(T',P')=\mu_X(T,P)$ or simply $T'=\mu_X(T)$\index{3mu@$\mu$} if
$X$ is an indecomposable $\Lambda$-module satisfying either $T=U\oplus X$ or $P=Q\oplus X$.
\end{definition}

We can also describe mutation as follows:
Let $(T,P)$ be a basic support $\tau$-tilting pair for $\Lambda$, and $X$ an indecomposable direct summand of either $T$ or $P$.
\begin{itemize}
\item[(a)] If $X$ is a direct summand of $T$, precisely one of the following holds.
\begin{itemize}
\item[$\bullet$] There exists an indecomposable $\Lambda$-module $Y$ such that $X\ {\not\simeq}\ Y$ and $\mu_X(T,P):=(T/X\oplus Y,P)$ is a basic support $\tau$-tilting pair for $\Lambda$.
\item[$\bullet$] There exists an indecomposable projective $\Lambda$-module $Y$ such that $\mu_X(T,P):=(T/X,P\oplus Y)$ is a basic support $\tau$-tilting pair for $\Lambda$.
\end{itemize}
\item[(b)] If $X$ is a direct summand of $P$, there exists an indecomposable $\Lambda$-module $Y$ such that $\mu_X(T,P):=(T\oplus Y,P/X)$ is a basic support $\tau$-tilting pair for $\Lambda$.
\end{itemize}
Moreover, such a module $Y$ in each case is unique up to isomorphism.

In the rest of this subsection, we give a proof of Theorem \ref{2 complements}. The following is the first step.

\begin{lemma}\label{approximation lemma}
Let $(T,P)$ be a $\tau$-rigid pair for $\Lambda$. If $U$ is a $\tau$-rigid $\Lambda$-module satisfying $^{\perp}(\tau T)\cap P^\perp\subseteq{{^\perp}(\tau U)}$, then there is an exact sequence
$U\xto{f}T'\to C\to0$ satisfying the following conditions.
\begin{itemize}
\item $f$ is a minimal left $(\Fac T)$-approximation.
\item $T'$ is in $\add T$, $C$ is in $\add P(\Fac T)$ and $\add T'\cap\add C=0$.
\end{itemize}
\end{lemma}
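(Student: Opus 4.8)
The plan is to build $f$ as a minimal left $(\add T)$-approximation and then upgrade it to a minimal left $(\Fac T)$-approximation using the hypothesis. Since $\add T$ is functorially finite, $U$ admits a minimal left $(\add T)$-approximation $f\colon U\to T'$ with $T'\in\add T$; set $C:=\Cok f$ and $W:=\im f$, so that $U\twoheadrightarrow W\hookrightarrow T'$ and $0\to W\to T'\to C\to 0$ is exact. First I would record two inclusions used repeatedly: $\Fac T\subseteq{}^\perp(\tau T)$ (as $T$ is $\tau$-rigid we have $\Ext^1_\Lambda(T,\Fac T)=0$, whence $\Hom_\Lambda(\Fac T,\tau T)=0$ by Proposition \ref{Auslander-Smalo}(a)) and $\Fac T\subseteq P^\perp$ (since $\Hom_\Lambda(P,T)=0$ and $P$ is projective, so $\Hom_\Lambda(P,-)$ is exact and annihilates every quotient of a module in $\add T$). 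Together with the hypothesis these give $\Fac T\subseteq{}^\perp(\tau T)\cap P^\perp\subseteq{}^\perp(\tau U)$.

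The key step is to show that $f$ is in fact a left $(\Fac T)$-approximation, and this is where the hypothesis enters through the Wakamatsu-type Lemma \ref{prop1.9}. Given $N\in\Fac T$ and $\phi\colon U\to N$, choose a right $(\add T)$-approximation $p\colon T_0\to N$ with $T_0\in\add T$, which is surjective because $N\in\Fac T$, and put $L:=\Ker p$. By Lemma \ref{prop1.9} we have $L\in{}^\perp(\tau T)$, and since $\Hom_\Lambda(P,T_0)=0$ the exactness of $\Hom_\Lambda(P,-)$ gives $L\in P^\perp$; hence $L\in{}^\perp(\tau T)\cap P^\perp\subseteq{}^\perp(\tau U)$, so $\Hom_\Lambda(L,\tau U)=0$ and therefore $\Ext^1_\Lambda(U,L)=0$ by Proposition \ref{Auslander-Smalo}(a). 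Thus $\phi$ lifts along $p$ to some $\psi\colon U\to T_0$, and as $f$ is a left $(\add T)$-approximation and $T_0\in\add T$, $\psi$ factors through $f$; composing with $p$ shows $\phi$ factors through $f$. Since $f$ is left minimal, it is then a minimal left $(\Fac T)$-approximation.

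Next I would identify the cokernel. As $C$ is a quotient of $T'\in\add T$ it lies in $\Fac T$, so it remains to see that $C$ is $\Ext$-projective in $\Fac T$, i.e. $\Ext^1_\Lambda(C,N)=0$ for all $N\in\Fac T$, which places $C$ in $\add P(\Fac T)$. Applying $\Hom_\Lambda(-,N)$ to $0\to W\to T'\to C\to 0$ and using $\Ext^1_\Lambda(T',N)=0$ (because $T'\in\add T$ and $T$ is $\tau$-rigid), the group $\Ext^1_\Lambda(C,N)$ is the cokernel of $\Hom_\Lambda(T',N)\to\Hom_\Lambda(W,N)$. This map is surjective: any $g\colon W\to N$ yields $g\circ(U\twoheadrightarrow W)\colon U\to N$, which factors through $f$ by the previous paragraph, and cancelling the epimorphism $U\twoheadrightarrow W$ shows that $g$ extends to $T'$. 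Hence $\Ext^1_\Lambda(C,N)=0$ and $C\in\add P(\Fac T)$.

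Finally, $\add T'\cap\add C=0$ follows from left-minimality of $f$ by the standard argument: if $X$ were a common indecomposable summand, one could use a splitting of $X$ off $C$ to produce an endomorphism $h$ of $T'$ with $hf=f$ that annihilates the corresponding summand $X$ of $T'$, contradicting that $f$ is left minimal. The main obstacle is the second step, promoting the $(\add T)$-approximation to a $(\Fac T)$-approximation, since it is precisely there that the inclusion ${}^\perp(\tau T)\cap P^\perp\subseteq{}^\perp(\tau U)$ is used, funneled through the Wakamatsu-type lemma; once that is in place, the identification of $C$ and the orthogonality are routine.
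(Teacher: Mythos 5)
Your construction of $f$ and the arguments for the first two conclusions are essentially the paper's proof: you build a minimal left $(\add T)$-approximation, use the Wakamatsu-type Lemma \ref{prop1.9} together with the inclusion ${}^{\perp}(\tau T)\cap P^\perp\subseteq{}^{\perp}(\tau U)$ to kill $\Ext^1_\Lambda(U,\Ker(T_0\to N))$ and promote $f$ to a $(\Fac T)$-approximation, and then deduce $\Ext^1_\Lambda(C,\Fac T)=0$ from the surjectivity of $\Hom_\Lambda(T',N)\to\Hom_\Lambda(\im f,N)$. All of that is correct and matches steps (i) and (ii) of the paper.

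The gap is in the last conclusion. You assert that $\add T'\cap\add C=0$ ``follows from left-minimality of $f$ by the standard argument,'' but no such argument exists: left minimality of $f$ is \emph{equivalent} to the cokernel map $g\colon T'\to C$ lying in $\rad(T',C)$, because the endomorphisms $h$ of $T'$ with $hf=f$ are exactly those of the form $h=1+vg$ with $v\colon C\to T'$, and once $g$ is radical every such $h$ is automatically an automorphism. So you can never manufacture a non-invertible $h$ with $hf=f$ out of a common summand, and a radical surjection $T'\to C$ is perfectly compatible with $T'$ and $C$ sharing an indecomposable summand. Concretely, take $\Lambda=kQ$ with $Q\colon 1\to 2$, $U=S_1\oplus S_2$ and $T=S_1\oplus P_1$. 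The minimal left $(\add T)$-approximation of $U$ is $f=\mathrm{id}_{S_1}\oplus(\mathrm{soc}\colon S_2\to P_1)\colon U\to S_1\oplus P_1$ (it is an approximation since $\Ext^1_\Lambda(S_1,S_1\oplus P_1)=0$, and it is left minimal since its cokernel map is radical), yet $\Cok f\simeq S_1$ is a direct summand of $T'=S_1\oplus P_1$. Of course this $U$ is not $\tau$-rigid, which is exactly the point: the paper's step for $\add T'\cap\add C=0$ uses the $\tau$-rigidity of $U$, via $\Ext^1_\Lambda(U,\im f)=0$ (which holds because $\im f\in\Fac U$ and $\Hom_\Lambda(U,\tau U)=0$, by Proposition \ref{Auslander-Smalo}(a)). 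With that vanishing, any $s\colon T'\to C$ satisfies $sf=gt$ for some $t\colon U\to T'$; writing $t=uf$ via the approximation property one gets $(s-gu)f=0$, hence $s=gu+vg$ for some $v\colon C\to C$, so $s\in\rad(T',C)$ since $g$ is radical. That $\Hom_\Lambda(T',C)\subseteq\rad(T',C)$ is what forces $\add T'\cap\add C=0$; you need to supply this argument rather than appeal to minimality.
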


\begin{proof}
Consider the exact sequence $U\xto{f}T'\xto{g}C\to0$, where $f$ is a minimal left $(\add T)$-approximation. Then $g\in\rad(T',C)$.

(i) $f$ is a minimal left $(\Fac T)$-approximation:
Take any $X\in\Fac T$ and $s:U\to X$. By the Wakamatsu-type Lemma \ref{prop1.9},
there exists an exact sequence
$$0\to Y\to T''\xrightarrow{h}X\to0$$
where $h$ is a right $(\add T)$-approximation and $Y\in{^{\perp}}(\tau T)$.
Moreover we have $Y\in P^\perp$ since $T''\in P^\perp$. By the assumption that
$^{\perp}(\tau T)\cap P^\perp\subseteq{^{\perp}(\tau U)}$, we have $\Hom_\Lambda(Y,\tau U)=0$,
hence $\Ext_\Lambda^1(U,Y)=0$. Then we have an exact sequence
$$\Hom_\Lambda(U,T'')\to\Hom_\Lambda(U,X)\to\Ext_\Lambda^1(U,Y)=0.$$
Thus there is some $t:U\to T''$ such that $s=ht$.
$$\xymatrix{
&&&U\ar@{.>}[dl]_t\ar[d]^s\ar[r]^f&T'\\
0\ar[r]&Y\ar[r]&T''\ar[r]^h&X\ar[r] &0
}$$
Since $T''\in \add T$ and $f$ is a left $(\add T)$-approximation, there is some $u:T'\to T''$ such that $t=uf$.
Hence we have $hu:T'\to X$ such that $(hu)f=ht=s$, and the claim follows. 

(ii) $C\in\add P(\Fac T)$:
We have an exact sequence $0\to\im f\xto{i} T'\to C\to0$, which gives rise to an exact sequence
$$\Hom_\Lambda(T',\Fac T)\xto{(i,\Fac T)}\Hom_\Lambda(\im f,\Fac T)\to\Ext_\Lambda^1(C,\Fac T)\to\Ext_\Lambda^1(T',\Fac T).$$
We know from (i) that $(f,\Fac T):\Hom_\Lambda(T',\Fac T)\to\Hom_\Lambda(U,\Fac T)$ is surjective, and hence $(i,\Fac T)$ is surjective. 
Further, $\Ext_\Lambda^1(T',\Fac T)=0$ by Proposition \ref{Auslander-Smalo} since $T'$ is in $\add T$ and $T$ is $\tau$-rigid.
Then it follows that $\Ext_\Lambda^1(C,\Fac T)=0$. Since $C\in\Fac T$, this means that $C$ is $\Ext$-projective in $\Fac T$.

(iii) $\add T'\cap \add C=0$: To show this, it is clearly
sufficient to show $\Hom_\Lambda(T',C)\subseteq\rad(T',C)$.

Let $s: T'\to C$ be an arbitrary map. We have an exact sequence
$\Hom_\Lambda(U,T')\to\Hom_\Lambda(U,C)\to\Ext_\Lambda^1(U,\im f)$.
Since $\Ext_\Lambda^1(U,\im f)=0$
because $\im f$ is in $\Fac U$, and $U$ is $\tau$-tilting, there is a map
$t: U\to T'$ such that $sf=gt$. Since $f$ is a left $(\add T)$-approximation,
and $T'$ is in $\add T$, there is a map $u:T'\to T'$ such that $t=uf$. Then $(s-gu)f=sf-gt=0$, hence there is some $v: C\to C$ such that $s-gu=vg$, and hence $s=gu+vg$.
$$\xymatrix@C0.6cm@R0.3cm{
&& U\ar[rr]^f\ar[dd]^t&& T'\ar[rr]^g\ar[dd]^s\ar@{.>}[ddll]_u &&
C\ar[rr]\ar@{.>}[ddll]_v && 0\\
\\
U\ar@{->>}[dr]\ar[rr]^f&& T'\ar[rr]^g && C\ar[rr]&&0\\
&\im f\ar@{>->}[ur]&&&&
}$$
Since $g\in \rad(T',C)$, it follows that $s\in\rad(T',C)$. Hence
$\Hom_\Lambda(T',C)\subseteq \rad(T',C)$, and consequently $\add T'\cap \add C=0$. 
\end{proof}

The following information on the previous lemma is useful.

\begin{lemma}\label{approximation is surjective}
In Lemma \ref{approximation lemma}, assume $C=0$.
Then $f:U\to T'$ induces an isomorphism $U/\langle e\rangle U\simeq T'$ for a maximal idempotent $e$ of $\Lambda$ satisfying $eT=0$.
In particular, if $T$ is sincere, then $U\simeq T'$.
\end{lemma}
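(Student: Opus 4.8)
The plan is to deduce the isomorphism from the surjectivity of $f$ (which is precisely what $C=0$ supplies) by passing to the quotient algebra $\Lambda/\langle e\rangle$.

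First I would set up the reduction. Since $T'\in\add T$ and $eT=0$, the module $T'$ is a $(\Lambda/\langle e\rangle)$-module, so $f(\langle e\rangle U)=\langle e\rangle f(U)=\langle e\rangle T'=0$; hence $\langle e\rangle U\subseteq\Ker f$ and $f$ factors through the canonical surjection $U\to U/\langle e\rangle U$, inducing a surjection $\bar f\colon U/\langle e\rangle U\to T'$. The statement is now equivalent to $\bar f$ being injective. I would then reformulate this once more: because $U/\langle e\rangle U$ is the largest $(\Lambda/\langle e\rangle)$-quotient of $U$ and $T'$ is such a quotient, $T'$ is automatically a quotient of $U/\langle e\rangle U$; conversely, if $U/\langle e\rangle U$ were in $\add T$, then the projection $U\to U/\langle e\rangle U$ would factor through the left $(\add T)$-approximation $f$, exhibiting $U/\langle e\rangle U$ as a quotient of $T'$ too, and two mutually surjective finite-dimensional modules are isomorphic. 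Thus the theorem reduces to the single assertion $U/\langle e\rangle U\in\add T$.

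Next I would record the structure of $\bar f$. For every $V\in\add T$ the projection induces $\Hom_\Lambda(U/\langle e\rangle U,V)\cong\Hom_\Lambda(U,V)$ (any map to a $(\Lambda/\langle e\rangle)$-module kills $\langle e\rangle U$), so the approximation property of $f$ coming from Lemma \ref{approximation lemma} transports to $\bar f$, making it a left $(\add T)$-approximation, and it is left minimal because $f$ is. Putting $K=\Ker\bar f$ and applying $\Hom_\Lambda(-,T)$ to $0\to K\to U/\langle e\rangle U\xrightarrow{\bar f}T'\to0$, the map $\Hom_\Lambda(\bar f,T)$ is bijective (surjective since $\bar f$ is an approximation, injective since $\bar f$ is epi), and $\Ext^1_\Lambda(T',T)=0$ because $T$ is $\tau$-rigid with $T'\in\add T$; hence $\Hom_\Lambda(K,T)=0$.

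The crux, and the step I expect to be the main obstacle, is to promote $\Hom_\Lambda(K,T)=0$ to $K=0$ (equivalently $U/\langle e\rangle U\in\add T$). Maximality of $e$ makes $T$ a sincere $(\Lambda/\langle e\rangle)$-module, but sincerity alone is insufficient: a nonzero $K$ with $\Hom(K,T)=0$ can exist for sincere $T$, so one must genuinely use the $\tau$-rigidity of $U$ and the standing hypothesis ${}^\perp(\tau T)\cap P^\perp\subseteq{}^\perp(\tau U)$. The subtlety is that $U/\langle e\rangle U$, being only a quotient of $U$, need not itself be $\tau$-rigid, so the information coming from $U$ has to be extracted before quotienting. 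The route I would pursue is to place $K$ inside the torsion pair $({}^\perp(\tau U),\Sub\tau U)$ attached to the $\tau$-rigid module $U$ and show it lies in both halves. Membership in ${}^\perp(\tau U)$ I would obtain from the hypothesis: $K$ is a $(\Lambda/\langle e\rangle)$-module, so $\Hom_\Lambda(P,K)=0$, i.e. $K\in P^\perp$, while $\Hom_\Lambda(K,T)=0$ together with $\Ext^1_\Lambda(T,\Fac U)=0$ and Proposition \ref{Auslander-Smalo}(a) should yield $K\in{}^\perp(\tau T)$; the opposite membership $K\in\Sub\tau U$ should follow from $K\subseteq U/\langle e\rangle U\in\Fac U$ and the $\tau$-rigidity of $U$. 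Then $K\in{}^\perp(\tau U)\cap\Sub\tau U=0$, which completes the argument. Verifying these two memberships cleanly — rather than the bookkeeping in the reduction — is the real heart of the matter.
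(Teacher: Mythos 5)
Your setup is fine: the factorization of $f$ through $U\to U/\langle e\rangle U$, the surjectivity of $\bar f$ coming from $C=0$, and the computation giving $\Hom_\Lambda(K,T)=0$ (indeed, since $f$ is a minimal left $(\Fac T)$-approximation and not merely an $(\add T)$-approximation, the same computation gives the stronger statement $\Hom_\Lambda(K,\Fac T)=0$) all match the first half of the paper's argument. The gap is exactly where you predict it, and the route you sketch for closing it does not work. First, the claim that $K\in\Sub\tau U$ ``follows from $K\subseteq U/\langle e\rangle U\in\Fac U$ and the $\tau$-rigidity of $U$'' is false in principle: $\Fac U$ sits inside the torsion class ${}^\perp(\tau U)$, and a submodule of an object of a torsion class has no reason to lie in the corresponding torsion-free class $\Sub\tau U$ --- for instance $U$ itself is a submodule of an object of $\Fac U$, yet $U\in{}^\perp(\tau U)$, so $U\in\Sub\tau U$ would force $U=0$. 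Second, for $K\in{}^\perp(\tau T)$ you invoke $\Ext^1_\Lambda(T,\Fac U)=0$, i.e.\ $\Hom_\Lambda(U,\tau T)=0$; this is not among the hypotheses of Lemma \ref{approximation lemma} (the standing hypothesis ${}^\perp(\tau T)\cap P^\perp\subseteq{}^\perp(\tau U)$ points in the opposite direction), and even if it held, $\Fac K$ is not contained in $\Fac U$ ($K$ is only a submodule of a quotient of $U$), so one cannot deduce $\Ext^1_\Lambda(T,\Fac K)=0$. Neither membership is established, so the torsion pair $({}^\perp(\tau U),\Sub\tau U)$ is the wrong one to use here.

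The paper closes the argument with a different torsion pair, and this is precisely where the maximality of $e$ enters --- a hypothesis your proof never uses, which is itself a warning sign, since without it $T$ need not be sincere over $\Lambda/\langle e\rangle$ and the statement fails. Because $e$ is maximal with $eT=0$, the module $T$ is sincere as a $(\Lambda/\langle e\rangle)$-module, so $\mod(\Lambda/\langle e\rangle)$ is the smallest torsion-free class of $\mod\Lambda$ containing $\Fac T$; equivalently $({}^\perp(\Fac T),\mod(\Lambda/\langle e\rangle))$ is a torsion pair, whose canonical sequence for any $X$ is $0\to\langle e\rangle X\to X\to X/\langle e\rangle X\to0$. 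Using the upgraded conclusion $\Hom_\Lambda(\Ker f,\Fac T)=0$, the sequence $0\to\Ker f\to U\to T'\to0$ has its left end in the torsion class and its right end in the torsion-free class, hence it is the canonical sequence of $U$; this gives $\Ker f=\langle e\rangle U$ and $U/\langle e\rangle U\simeq T'$ in one stroke. In your notation the finishing move is $K\in{}^\perp(\Fac T)\cap\mod(\Lambda/\langle e\rangle)=0$.
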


\begin{proof}
By our assumption, we have an exact sequence
\begin{equation}\label{canonical}
\xymatrix{0\ar[r]&\Ker f\ar[r]&U\ar[r]^f&T'\ar[r]&0.}
\end{equation}
Applying $\Hom_\Lambda(-,\Fac T)$, we have an exact sequence
\[\Hom_\Lambda(T',\Fac T)\xto{(f,\Fac T)}\Hom_\Lambda(U,\Fac T)\to\Hom_\Lambda(\Ker f,\Fac T)\to\Ext^1_\Lambda(T',\Fac T).\]
We have $\Ext_\Lambda^1(T',\Fac T)=0$ because $T'$ is in $\add T$ and $T$ is $\tau$-tilting.
Since $(f,\Fac T)$ is surjective, it follows that $\Hom_\Lambda(\Ker f,\Fac T)=0$ and so $\Ker f\in{}^{\perp}(\Fac T)$.
On the other hand, since $T$ is a sincere $(\Lambda/\langle e\rangle)$-module, $\mod(\Lambda/\langle e\rangle)$ is the smallest torsionfree class of $\mod\Lambda$ containing $\Fac T$.
Thus we have a torsion pair $({}^{\perp}(\Fac T),\mod(\Lambda/\langle e\rangle))$, 
and the canonical sequence for $X$ associated with this torsion pair is given by
\[\xymatrix{0\ar[r]&\langle e\rangle X\ar[r]&X\ar[r]&X/\langle e\rangle X\ar[r]&0.}\]
Since $\Ker f\in{}^{\perp}(\Fac T)$ and $T'\in\Fac T\subseteq\mod(\Lambda/\langle e\rangle)$, the canonical sequence of $U$ is given by \eqref{canonical}.
Thus we have $U/\langle e\rangle U\simeq T'$.
\end{proof}

In the next result we prove a useful restriction on $X$ when $T=X\oplus U$ is $\tau$-tilting and $X $ is indecomposable.

\begin{proposition}\label{prop2.10}
Let $T=X\oplus U$ be a basic $\tau$-tilting $\Lambda$-module, with $X$ indecomposable.
Then exactly one of $^{\perp}(\tau U)\subseteq{^{\perp}(\tau X)}$ and $X\in\Fac U$ holds.
\end{proposition}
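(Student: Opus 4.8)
The plan is to translate both alternatives into statements about torsion classes and settle them separately. Since $U$ is a direct summand of the $\tau$-rigid module $T$ it is itself $\tau$-rigid, and because $T$ is $\tau$-tilting Theorem \ref{3 conditions}(c) gives ${}^\perp(\tau T)=\Fac T$; as ${}^\perp(\tau T)={}^\perp(\tau X)\cap{}^\perp(\tau U)$ we always have the chain $\Fac U\subseteq\Fac T\subseteq{}^\perp(\tau U)$. I would first record two reformulations: ${}^\perp(\tau U)\subseteq{}^\perp(\tau X)$ holds if and only if $\Fac T={}^\perp(\tau U)$ (intersect both sides with ${}^\perp(\tau U)$), while $X\in\Fac U$ holds if and only if $\Fac T=\Fac U$ (since $\Fac(X\oplus U)=\Fac U$ exactly when $X\in\Fac U$). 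Thus the two conditions say precisely that $\Fac T$ is the top, respectively the bottom, of the above chain.

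For \emph{at most one}, observe that if both held then $\Fac U={}^\perp(\tau U)$, so $U$ would be $\tau$-tilting by Theorem \ref{3 conditions}(c); this contradicts $|U|=|\Lambda|-1$. Hence the two conditions are mutually exclusive.

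The substance is \emph{at least one}: assuming $X\notin\Fac U$ I must prove ${}^\perp(\tau U)\subseteq{}^\perp(\tau X)$. Let $V:=P({}^\perp(\tau U))$ be the Bongartz completion of $U$ (Theorem \ref{thm2.3}); since $|V|=|\Lambda|$ and $U$ is a summand with $|U|=|\Lambda|-1$, we may write $V=U\oplus X'$ with $X'$ indecomposable and $\Fac V={}^\perp(\tau U)$. By the reformulation above, ${}^\perp(\tau U)\subseteq{}^\perp(\tau X)$ is equivalent to $\Fac T=\Fac V$, i.e. to $X\cong X'$ (Theorem \ref{basic bijection}). So I would argue by contradiction and suppose $X\not\cong X'$.

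To exploit this I would apply Lemma \ref{approximation lemma} to the $\tau$-rigid pair $(T,0)$ and the module $X'$. Its hypothesis ${}^\perp(\tau T)\subseteq{}^\perp(\tau X')$ holds because $\Fac T\subseteq{}^\perp(\tau U)\subseteq{}^\perp(\tau X')$, the last inclusion coming from $X'$ being $\Ext$-projective in $\Fac V={}^\perp(\tau U)$ via Proposition \ref{Auslander-Smalo}(a). This produces a sequence $U$-style $X'\xrightarrow{f}T'\to C\to0$ with $T',C\in\add T$ (as $P(\Fac T)=T$) and $\add T'\cap\add C=0$. First, $C\neq0$: otherwise $f$ is surjective, and since $T$ is sincere (Proposition \ref{sincere}(a)) Lemma \ref{approximation is surjective} forces $X'\cong T'\in\add T$, whence $X'\cong X$ (because $X'\notin\add U$), contradicting $X\not\cong X'$. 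Next, $C\in\add U$: if $X$ were a summand of $C$, then $T'\in\add U$ by disjointness, and the surjection $T'\twoheadrightarrow C\twoheadrightarrow X$ would give $X\in\Fac U$, against our assumption. Finally I reach the contradiction: $\im f$ is a quotient of $X'\in{}^\perp(\tau U)$, so $\im f\in{}^\perp(\tau U)$ and hence $\Ext^1_\Lambda(U,\im f)=0$ by Proposition \ref{Auslander-Smalo}(a); as $C\in\add U$ this gives $\Ext^1_\Lambda(C,\im f)=0$, so the exact sequence $0\to\im f\to T'\to C\to0$ splits and $C\in\add T'$, contradicting $\add T'\cap\add C=0$ with $C\neq0$. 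The delicate points — and the main obstacle — are checking the approximation hypothesis for $(T,0)$ and $X'$, and pinning $C$ down to $\add U$ so that this final splitting argument applies.
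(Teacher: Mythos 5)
Your proof is correct and follows essentially the same route as the paper's: both pass to the Bongartz completion $V=U\oplus X'$ of $U$, apply Lemma \ref{approximation lemma} with the pair $(T,0)$ to approximate into $\Fac T$, and invoke Lemma \ref{approximation is surjective} (with sincerity of $T$) in the degenerate case where the cokernel vanishes. The only real difference is that the paper approximates all of $X'\oplus U$ at once, so the disjointness condition forces the cokernel into $\add X$ and the dichotomy is read off directly, whereas you approximate $X'$ alone and argue by contradiction, which obliges you to add the extra (valid) splitting argument to rule out summands of $U$ in the cokernel $C$.
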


\begin{proof}
First we assume that $^{\perp}(\tau U)\subseteq{^{\perp}(\tau X)}$ and $X\in\Fac U$ both hold. Then we have
\[\Fac U=\Fac T={}^\perp(\tau T)={}^\perp(\tau U),\]
which implies that $U$ is $\tau$-tilting by Theorem \ref{3 conditions}, a contradiction.

Let $Y\oplus U$ be the Bongartz completion of $U$.
Then we have $^{\perp}\tau(Y\oplus U)={}^{\perp}(\tau U)\supseteq{^{\perp}\tau T}$.
Using the triple $(T,0,Y\oplus U)$ instead of $(T,P,U)$ in Lemma \ref{approximation lemma},
there is an exact sequence
\[\xymatrix{Y\oplus U\ar[r]^{{f\ 0\choose0\ 1}}&T'\oplus U\ar[r]&T''\ar[r]&0,}\]
where $f:Y\to T'$ and ${f\ 0\choose0\ 1}:Y\oplus U\to T'\oplus U$ are minimal left $(\Fac T)$-approximations, $T'$ and $T''$ are in $\add T$ and $\add(T'\oplus U)\cap \add T''=0$.
Then we have $T''\in\add X$. 

Assume first $T''\neq0$. Then $T''\simeq X^\ell$ for some $\ell\ge1$, so we have $T'\in\add U$.
Since we have a surjective map $T'\to T''$, we have $X\in\Fac T'\subseteq\Fac U$.

Assume now that $T''=0$. 
Applying Lemma \ref{approximation is surjective}, we have that ${{f\ 0\choose0\ 1}}:Y\oplus U\to T'\oplus U$ is an isomorphism since $T$ is sincere.
Thus $Y\in\add T$, and we must have $Y\simeq X$.
Thus $^\perp(\tau X)={}^\perp(\tau Y)\supseteq{}^\perp(\tau U)$.
\end{proof}

Now we are ready to prove Theorem \ref{2 complements}.

(i) First we assume that $Q=0$ (i.e. $U$ is an almost complete $\tau$-tilting module).

In view of Proposition \ref{prop2.7} it only remains to show that
there are at most two extensions of $U$ to a support $\tau$-tilting module.
Using the bijection in Theorem \ref{basic bijection}, we only have to show that for any support $\tau$-tilting module $X\oplus U$, the torsion class $\Fac(X\oplus U)$ is either $\Fac U$ or $^{\perp}(\tau U)$.
If $X=0$ (i.e. $U$ is a support $\tau$-tilting module), then this is clear.
If $X\neq0$, then $X\oplus U$ is a $\tau$-tilting $\Lambda$-module.
Moreover by Proposition \ref{prop2.10} either $X\in\Fac U$ or $^{\perp}(\tau U)\subseteq{^{\perp}(\tau X)}$ holds.
If $X\in\Fac U$, then we have $\Fac(X\oplus U)=\Fac U$. If $^{\perp}(\tau U)\subseteq{^{\perp}(\tau X)}$, then we have $\Fac(X\oplus U)={^{\perp}(\tau(X\oplus U))}={^{\perp}(\tau U)}$.
Thus the assertion follows.

(ii) Let $(U,Q)$ be a basic almost complete support $\tau$-tilting pair for $\Lambda$
and $e$ be an idempotent of $\Lambda$ such that $\add Q=\add\Lambda e$.
Then $U$ is an almost complete $\tau$-tilting $(\Lambda/\langle e\rangle)$-module
by Proposition \ref{two notion of tau-rigidity}(a).
It follows from (i) that $U$ is a direct summand of exactly two basic support
$\tau$-tilting $(\Lambda/\langle e\rangle)$-modules.
Thus the assertion follows since basic support $\tau$-tilting $(\Lambda/\langle e\rangle)$-modules 
which have $U$ as a direct summand correspond bijectively to
basic support $\tau$-tilting pairs for $\Lambda$ which have $(U,Q)$ as a direct summand.
\qed

\medskip
The following special case of Lemma \ref{approximation lemma} is useful.

\begin{proposition}\label{two term left resolution}
Let $T$ be a support $\tau$-tilting $\Lambda$-module.
Assume that one of the following conditions is satisfied.
\begin{itemize}
\item[(i)] $U$ is a $\tau$-rigid $\Lambda$-module such that $\Fac T\subseteq{}^\perp(\tau U)$.
\item[(ii)] $U$ is a support $\tau$-tilting $\Lambda$-module such that $U\ge T$.
\end{itemize}
Then there exists an exact sequence $U\xto{f}T^0\to T^1\to0$ such that $f$ is a minimal
left $(\Fac T)$-approximation of $U$ and $T^0$ and $T^1$ are in $\add T$ and satisfy $\add T^0\cap\add T^1=0$.
\end{proposition}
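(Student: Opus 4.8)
The plan is to deduce this proposition directly from Lemma~\ref{approximation lemma}, applied to the support $\tau$-tilting pair attached to $T$; the only extra point is to identify the cokernel term as lying in $\add T$.

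First, since $T$ is support $\tau$-tilting, Proposition~\ref{two notion of tau-rigidity}(b) provides a projective $P$ (unique up to additive equivalence) making $(T,P)$ a support $\tau$-tilting pair, and Corollary~\ref{analog of 3 conditions}(c) gives
\[
{}^\perp(\tau T)\cap P^\perp=\Fac T.
\]
To invoke Lemma~\ref{approximation lemma} with this pair I must verify that $U$ is $\tau$-rigid and that its hypothesis ${}^\perp(\tau T)\cap P^\perp\subseteq{}^\perp(\tau U)$ holds, i.e.\ that $\Fac T\subseteq{}^\perp(\tau U)$. In case~(i), $U$ is $\tau$-rigid by assumption and the inclusion is exactly the stated hypothesis. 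In case~(ii), $U$ is support $\tau$-tilting, hence $\tau$-rigid, and $U\ge T$ means $\Fac U\supseteq\Fac T$; since $U$ is $\tau$-rigid we have $U\in{}^\perp(\tau U)$, and ${}^\perp(\tau U)$ is a torsion class by Lemma~\ref{lem2.4}, hence closed under factor modules, so $\Fac U\subseteq{}^\perp(\tau U)$ and therefore $\Fac T\subseteq\Fac U\subseteq{}^\perp(\tau U)$. Thus the hypothesis is satisfied in both cases.

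Lemma~\ref{approximation lemma} then yields an exact sequence $U\xto{f}T'\to C\to0$ in which $f$ is a minimal left $(\Fac T)$-approximation of $U$, $T'\in\add T$, $C\in\add P(\Fac T)$ and $\add T'\cap\add C=0$. It remains only to upgrade $C\in\add P(\Fac T)$ to $C\in\add T$, and this is where the bijection of Theorem~\ref{basic bijection} enters: $\Fac T$ is a functorially finite torsion class in $\mod\Lambda$ by Proposition~\ref{basics}(c), and since $T$ is support $\tau$-tilting, the torsion class $\Fac T$ corresponds under that bijection to $P(\Fac T)\simeq T$, whence $\add P(\Fac T)=\add T$. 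Setting $T^0:=T'$ and $T^1:=C$ then produces the required sequence with both terms in $\add T$ and $\add T^0\cap\add T^1=0$.

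I expect no genuine obstacle here, consistent with the proposition being advertised as a special case of Lemma~\ref{approximation lemma}. The only step carrying any content is the reduction of hypothesis~(ii) to hypothesis~(i) via the chain $\Fac T\subseteq\Fac U\subseteq{}^\perp(\tau U)$, together with the identification $T\simeq P(\Fac T)$ that converts the conclusion ``$C\in\add P(\Fac T)$'' of the lemma into ``$C\in\add T$''.
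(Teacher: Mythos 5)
Your proof is correct and follows essentially the same route as the paper: apply Lemma~\ref{approximation lemma} to the support $\tau$-tilting pair $(T,P)$ after checking ${}^\perp(\tau T)\cap P^\perp=\Fac T\subseteq{}^\perp(\tau U)$ in both cases, then identify $\add P(\Fac T)=\add T$ via Theorem~\ref{basic bijection}. The only difference is that you spell out the reduction of case~(ii) through $\Fac T\subseteq\Fac U\subseteq{}^\perp(\tau U)$, which the paper leaves implicit.
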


\begin{proof}
Let $(T,P)$ be a support $\tau$-tilting pair for $\Lambda$. Then ${}^\perp(\tau T)\cap P^\perp=\Fac T$ holds by Corollary \ref{analog of 3 conditions}(c).
Thus ${}^\perp(\tau T)\cap P^\perp\subseteq{}^\perp(\tau U)$ holds for both cases.
Hence the assertion is immediate from Lemma \ref{approximation lemma} since $C$ is in
$\add P(\Fac T)=\add T$ by Theorem \ref{basic bijection}.
\end{proof}

The following well-known result \cite{HU1} can be shown as an application of our results.

\begin{corollary}
Let $\Lambda$ be a finite dimensional $k$-algebra and $U$ a basic almost complete tilting $\Lambda$-module.
Then $U$ is faithful if and only if $U$ is a direct summand of precisely two basic tilting $\Lambda$-modules.
\end{corollary}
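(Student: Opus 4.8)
The plan is to recognize $U$ as the module part of an almost complete support $\tau$-tilting pair, read off its two completions from Theorem \ref{2 complements}, and then decide which of these completions are genuine tilting modules by a short faithfulness computation. The substantive input is already available; what remains is a careful translation between the classical and the $\tau$-tilting pictures.

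First I would note that an almost complete tilting module is in particular a partial tilting module, hence $\tau$-rigid, and satisfies $|U|=|\Lambda|-1$; therefore $(U,0)$ is a basic almost complete support $\tau$-tilting pair. Applying Theorem \ref{2 complements} to $(U,0)$ (here $Q=0$, so $Q^\perp=\mod\Lambda$) produces exactly two basic support $\tau$-tilting pairs having $(U,0)$ as a direct summand, and their torsion classes are precisely $\Fac U$ and ${}^{\perp}(\tau U)$. Via the bijection of Theorem \ref{basic bijection} together with Proposition \ref{prop2.1}, these are exactly the functorially finite torsion classes $\TT$ with $\Fac U\subseteq\TT\subseteq{}^{\perp}(\tau U)$, equivalently the functorially finite torsion classes with $U\in\add P(\TT)$.

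Next I would translate ``basic tilting module containing $U$'' into this language. Such a $T$ has $U$ as a summand if and only if $U\in\add P(\Fac T)=\add T$, which by Proposition \ref{prop2.1} means $\Fac U\subseteq\Fac T\subseteq{}^{\perp}(\tau U)$; and by Corollary \ref{basic bijection2} a support $\tau$-tilting module is a tilting module exactly when its associated torsion class is faithful. Hence the basic tilting modules having $U$ as a summand correspond, via $T\mapsto\Fac T$, bijectively to the faithful members of the two-element set $\{\Fac U,\ {}^{\perp}(\tau U)\}$, and the whole statement reduces to counting how many of these two torsion classes are faithful. Since $U$ generates $\Fac U$ we have $\ann(\Fac U)=\ann U$, so $\Fac U$ is faithful if and only if $U$ is; and since $\Fac U\subseteq{}^{\perp}(\tau U)$ and annihilators reverse inclusions, $\ann({}^{\perp}(\tau U))\subseteq\ann(\Fac U)=\ann U$. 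If $U$ is faithful then $\ann U=0$, so both torsion classes are faithful and $U$ is a summand of exactly two basic tilting modules; if $U$ is not faithful then $\Fac U$ is not faithful, so at most one of the two classes is faithful and $U$ is a summand of at most one, in particular not of exactly two. This yields the biconditional.

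I expect the only genuine subtlety to be the bookkeeping in the middle step: confirming that $\Fac U$ and ${}^{\perp}(\tau U)$ are the \emph{only} two candidate torsion classes (which is exactly the content of Theorem \ref{2 complements}) and that the tilting condition translates cleanly into faithfulness of the torsion class (Corollary \ref{basic bijection2}). The hard part, the existence of precisely two support $\tau$-tilting completions, is already in hand, so the remaining faithfulness argument is the elementary observation that enlarging a faithful torsion class keeps it faithful, which is what makes the ``faithful'' direction work with no extra effort.
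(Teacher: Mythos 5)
Your proposal is correct and follows essentially the same route as the paper: invoke Theorem \ref{2 complements} to get the two support $\tau$-tilting completions with torsion classes $\Fac U$ and ${}^{\perp}(\tau U)$, then use the fact that tilting modules are exactly the faithful support $\tau$-tilting modules to decide, via the faithfulness of these torsion classes, how many completions are genuinely tilting. The paper's version is just terser, phrasing the annihilator bookkeeping as ``$T$ is faithful because $\Fac T=\Fac U$ and $U$ is faithful.''
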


\begin{proof}
It follows from Theorem \ref{2 complements} that $U$ is a direct summand of exactly two basic support $\tau$-tilting $\Lambda$-modules $T$ and $T'$ such that $\Fac T=\Fac U$.
If $U$ is faithful, then $T$ and $T'$ are tilting $\Lambda$-modules by Proposition \ref{sincere}(b). Thus the `only if' part follows.
If $U$ is not faithful, then $T$ is not a tilting $\Lambda$-module since it is not faithful because $\Fac T=\Fac U$. Thus the `if' part follows.
\end{proof}

\subsection{Partial order, exchange sequences and Hasse quiver}

In this section we investigate two quivers. One is defined by partial order, and the other one by mutation.
We show that they coincide.

Since we have a bijection $T\mapsto\Fac T$ between $\sttilt\Lambda$ and $\ftors\Lambda$,
then inclusion in $\ftors\Lambda$ gives rise to a partial order on $\sttilt\Lambda$,
and we have an associated Hasse quiver. Note that $\sttilt\Lambda$ has a unique maximal element $\Lambda$ and a unique minimal element $0$.

The following description of when $T\ge U$ holds will be useful.

\begin{lemma}\label{partial order of tautilting}
Let $(T,P)$ and $(U,Q)$ be support $\tau$-tilting pairs for $\Lambda$.
Then the following conditions are equivalent.
\begin{itemize}
\item[(a)] $T\ge U$.
\item[(b)] $\Hom_\Lambda(U,\tau T)=0$ and $\add P\subseteq\add Q$.
\item[(c)] $\Hom_\Lambda(U_{\rm np},\tau T_{\rm np})=0$, $\add T_{\rm pr}\supseteq\add U_{\rm pr}$ and $\add P\subseteq\add Q$.
\end{itemize}
\end{lemma}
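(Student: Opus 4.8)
The plan is to establish the chain of implications $(a)\Rightarrow(b)\Rightarrow(c)\Rightarrow(a)$, or perhaps more economically to show that all three conditions are equivalent by proving $(a)\Leftrightarrow(b)$ and then $(b)\Leftrightarrow(c)$. Recall that $T\ge U$ means $\Fac T\supseteq\Fac U$ by definition of the partial order on $\sttilt\Lambda$ coming from the bijection of Theorem \ref{basic bijection}. The central tool will be Proposition \ref{Auslander-Smalo}(a), which translates $\Hom_\Lambda(U,\tau T)=0$ into $\Ext^1_\Lambda(T,\Fac U)=0$, together with Corollary \ref{analog of 3 conditions}(c), which says $\Fac T={}^\perp(\tau T)\cap P^\perp$ for the support $\tau$-tilting pair $(T,P)$.

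\textbf{From $(a)$ to $(b)$.} Assume $\Fac T\supseteq\Fac U$. First I would handle the projective part: the idempotents $e$ and $f$ with $\add P=\add\Lambda e$ and $\add Q=\add\Lambda f$ satisfy $\langle f\rangle=\ann\Fac U$-type conditions, and since $\Fac U\subseteq\Fac T$, the support of $U$ is contained in the support of $T$, giving $\add P\subseteq\add Q$ (equivalently, every simple annihilated by $T$ is annihilated by $U$). For the $\Hom$ vanishing, since $U\in\Fac U\subseteq\Fac T={}^\perp(\tau T)\cap P^\perp$ by Corollary \ref{analog of 3 conditions}(c), we get in particular $U\in{}^\perp(\tau T)$, i.e. $\Hom_\Lambda(U,\tau T)=0$.

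\textbf{From $(b)$ to $(a)$.} Conversely, assume $\Hom_\Lambda(U,\tau T)=0$ and $\add P\subseteq\add Q$. By Proposition \ref{Auslander-Smalo}(a) the first condition gives $\Ext^1_\Lambda(T,\Fac U)=0$, so $T$ is $\Ext$-projective in the torsion class $\Fac U$; combined with $U\in\Fac U$ I would argue $T\in\Fac U$ or directly that $\Fac U\subseteq{}^\perp(\tau T)\cap P^\perp=\Fac T$, using $\add P\subseteq\add Q$ to ensure $\Fac U\subseteq P^\perp$ (every module in $\Fac U$ is a $\Lambda/\langle f\rangle$-module, and $P\in\add\Lambda e\subseteq\add\Lambda f$ kills it). This yields $\Fac U\subseteq\Fac T$, i.e. $T\ge U$.

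\textbf{The equivalence $(b)\Leftrightarrow(c)$ and the main obstacle.} The passage between $(b)$ and $(c)$ is the decomposition into projective and non-projective parts $T=T_{\rm pr}\oplus T_{\rm np}$, $U=U_{\rm pr}\oplus U_{\rm np}$. I expect the main subtlety here: one must show that $\Hom_\Lambda(U,\tau T)=0$ together with $\add P\subseteq\add Q$ is equivalent to the three separate conditions in $(c)$. Since $\tau$ annihilates projective summands (so $\tau T=\tau T_{\rm np}$) and $\Hom_\Lambda(U_{\rm pr},\tau T_{\rm np})$ need not be automatically zero, the conditions $\add T_{\rm pr}\supseteq\add U_{\rm pr}$ must be extracted by a separate argument. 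The cleanest route is to observe that $\Hom_\Lambda(U,\tau T)=\Hom_\Lambda(U_{\rm pr}\oplus U_{\rm np},\tau T_{\rm np})$, so condition $(c)$'s vanishing $\Hom_\Lambda(U_{\rm np},\tau T_{\rm np})=0$ is only part of this; I would need to show that the inclusions on projective summands, via the left--right duality $(-)^\dagger$ of Theorem \ref{left-right symmetry} applied to $T^\dagger\ge U^\dagger$ in $\sttilt\Lambda^{\op}$, force the $U_{\rm pr}$-contribution to vanish as well. This duality step, converting a statement about projective summands over $\Lambda$ into the non-projective vanishing over $\Lambda^{\op}$, is the part I anticipate requiring the most care, since it interlocks the $\add T_{\rm pr}\supseteq\add U_{\rm pr}$ condition with the $\Hom$-vanishing in a genuinely two-sided way.
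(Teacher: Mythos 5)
Your treatment of $(a)\Leftrightarrow(b)$ is correct and is essentially the paper's argument: $(a)\Rightarrow(b)$ because $U\in\Fac U\subseteq\Fac T={}^\perp(\tau T)\cap P^\perp$, and $(b)\Rightarrow(a)$ by running Corollary \ref{analog of 3 conditions}(c) in reverse. The gap is in $(b)\Leftrightarrow(c)$, which you explicitly leave unfinished, and the route you sketch for it is the wrong one. Invoking the order-reversal of $(-)^\dagger$ (i.e.\ passing to $T^\dagger\ge U^\dagger$ in $\sttilt\Lambda^{\op}$) is circular in the paper's development: that order-reversal is Proposition \ref{reverse}(b), whose proof is precisely an application of the present lemma. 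Even apart from circularity, you do not actually carry out the step; you only "anticipate" it.

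Both missing implications are in fact elementary and need no duality. For $(c)\Rightarrow(b)$: since $\tau T=\tau T_{\rm np}$, one has $\Hom_\Lambda(U,\tau T)=\Hom_\Lambda(U_{\rm np},\tau T_{\rm np})\oplus\Hom_\Lambda(U_{\rm pr},\tau T_{\rm np})$; the first summand vanishes by hypothesis, and the second vanishes because $\add U_{\rm pr}\subseteq\add T_{\rm pr}\subseteq\add T$ and $T$ is $\tau$-rigid, so $\Hom_\Lambda(T,\tau T)=0$. (This is why the paper calls $(c)\Rightarrow(b)$ "clear".) For the converse direction one should prove $(a)\Rightarrow(c)$ rather than $(b)\Rightarrow(c)$ directly, which is how the paper closes the cycle $(a)\Rightarrow(c)\Rightarrow(b)\Rightarrow(a)$: given $\Fac U\subseteq\Fac T$, the module $U_{\rm pr}$ is projective and lies in $\Fac T$, hence is $\Ext$-projective there, hence lies in $\add P(\Fac T)=\add T$ by Theorem \ref{basic bijection}; being projective it lies in $\add T_{\rm pr}$. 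Together with your $(a)\Rightarrow(b)$ this gives all of $(c)$. Without these two observations the three-way equivalence is not established.
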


\begin{proof}
(a)$\Rightarrow$(c) Since $\Fac T\supseteq\Fac U$, we have $\add T_{\rm pr}\supseteq\add U_{\rm pr}$ and $\Hom_\Lambda(U,\tau T)=0$. Moreover $\add P\subseteq\add Q$ holds by Proposition \ref{sincere}(a).

(b)$\Rightarrow$(a) We have $\Fac T={}^\perp(\tau T)\cap P^\perp$ by Corollary \ref{analog of 3 conditions}(c).
Since $\add P\subseteq\add Q$, we have $U\in Q^\perp\subseteq P^\perp$.
Since $\Hom_\Lambda(U,\tau T)=0$, we have $U\in{}^\perp(\tau T)\cap P^\perp=\Fac T$, which implies $\Fac T\supseteq\Fac U$.

(c)$\Rightarrow$(b) This is clear.
\end{proof}

Also we shall need the following.

\begin{proposition}\label{common summand}
Let $T,U,V\in\sttilt\Lambda$ such that $T\ge U\ge V$. Then $\add T\cap\add V\subseteq\add U$.
\end{proposition}

\begin{proof}
Clearly we have $P(\Fac T)\cap\Fac U\subseteq P(\Fac U)=\add U$.
Thus we have $\add T\cap\add V\subseteq P(\Fac T)\cap\Fac U\subseteq\add U$.
\end{proof}

The following observation is immediate.

\begin{proposition}\label{reverse}
\begin{itemize}
\item[(a)] For any idempotent $e$ of $\Lambda$, the inclusion $\sttilt(\Lambda/\langle e\rangle)\to\sttilt\Lambda$ preserves the partial order.
\item[(b)] The bijection $(-)^\dagger:\sttilt\Lambda\to\sttilt\Lambda^{\op}$ in Theorem \ref{left-right symmetry} reverses the partial order.
\end{itemize}
\end{proposition}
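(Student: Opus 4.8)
The plan is to reduce both statements to the characterization of the partial order given in Lemma \ref{partial order of tautilting}, together with the bijection $\sttilt\Lambda\longleftrightarrow\ftors\Lambda$ of Theorem \ref{basic bijection}.

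For part (a), the key observation is that $\mod(\Lambda/\langle e\rangle)$ is closed under factor modules inside $\mod\Lambda$. Hence for a support $\tau$-tilting $(\Lambda/\langle e\rangle)$-module $M$, the class $\Fac M$ computed in $\mod\Lambda$ coincides with the one computed in $\mod(\Lambda/\langle e\rangle)$, since every factor of a direct sum of copies of $M$ is again annihilated by $\langle e\rangle$. As the partial orders on both $\sttilt(\Lambda/\langle e\rangle)$ and $\sttilt\Lambda$ are defined by inclusion of the associated torsion classes $\Fac(-)$, the implication ``$T\ge U$ in $\sttilt(\Lambda/\langle e\rangle)$ $\Rightarrow$ $T\ge U$ in $\sttilt\Lambda$'' is then immediate. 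I would simply record this factor-closure and conclude.

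For part (b), I would take support $\tau$-tilting pairs $(T,P)$ and $(U,Q)$ for $\Lambda$ with $T\ge U$ and aim to prove $U^\dagger\ge T^\dagger$ in $\sttilt\Lambda^{\op}$, applying Lemma \ref{partial order of tautilting}(b) on the opposite algebra. Writing $(T,P)^\dagger=(\Tr T\oplus P^*,T_{\rm pr}^*)$ and $(U,Q)^\dagger=(\Tr U\oplus Q^*,U_{\rm pr}^*)$, the criterion to verify is
\[\Hom_{\Lambda^{\op}}(\Tr T\oplus P^*,\tau_{\Lambda^{\op}}(\Tr U\oplus Q^*))=0\quad\text{and}\quad\add U_{\rm pr}^*\subseteq\add T_{\rm pr}^*.\]
The projective condition is immediate: $T\ge U$ gives $\add T_{\rm pr}\supseteq\add U_{\rm pr}$ by Lemma \ref{partial order of tautilting}(c), and applying the duality $(-)^*\colon\proj\Lambda\to\proj\Lambda^{\op}$ yields the claim.

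For the Hom-vanishing I would reuse the identities established in the proof of Theorem \ref{left-right symmetry}: since $Q^*$ is projective and $\tau_{\Lambda^{\op}}\Tr U=DU_{\rm np}$, one has $\tau_{\Lambda^{\op}}(\Tr U\oplus Q^*)=DU_{\rm np}$, so the group splits as $\Hom_{\Lambda^{\op}}(\Tr T,DU_{\rm np})\oplus\Hom_{\Lambda^{\op}}(P^*,DU_{\rm np})$. The first summand is isomorphic to $\Hom_\Lambda(U_{\rm np},\tau T)$ via the transpose--duality adjunction $\Hom_\Lambda(N,\tau M)\cong\Hom_{\Lambda^{\op}}(\Tr M,DN)$, and this vanishes because $\Hom_\Lambda(U,\tau T)=0$ by Lemma \ref{partial order of tautilting}(b). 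The second summand equals $D\Hom_\Lambda(P,U_{\rm np})$ through $\Hom_\Lambda(P,-)\cong P^*\otimes_\Lambda(-)$, and it vanishes because $\add P\subseteq\add Q$ together with $\Hom_\Lambda(Q,U)=0$ (the pair $(U,Q)$ being $\tau$-rigid) forces $\Hom_\Lambda(P,U)=0$. Hence $U^\dagger\ge T^\dagger$, and since $(-)^{\dagger\dagger}=\mathrm{id}$ the map is an order-reversing bijection. The only delicate point is the bookkeeping with left/right module structures and the exact identification $\tau_{\Lambda^{\op}}\Tr U=DU_{\rm np}$; once these are pinned down precisely as in Theorem \ref{left-right symmetry}, the rest is formal.
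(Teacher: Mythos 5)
Your proof is correct and follows essentially the same route as the paper: part (a) is immediate from the definition of the order via $\Fac(-)$ (the paper simply declares it clear), and part (b) reduces to Lemma \ref{partial order of tautilting} combined with the identities $\tau_{\Lambda^{\op}}\Tr U\simeq DU_{\rm np}$ and $\Hom_{\Lambda^{\op}}(\Tr T,DU_{\rm np})\simeq\Hom_\Lambda(U_{\rm np},\tau T)$ already used in the proof of Theorem \ref{left-right symmetry}. The only cosmetic difference is that the paper phrases (b) as a chain of equivalences of the three conditions in Lemma \ref{partial order of tautilting}(c), whereas you verify criterion (b) on $\Lambda^{\op}$ in one direction and invoke $(-)^{\dagger\dagger}=\mathrm{id}$ for the converse; both are equally valid.
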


\begin{proof}
(a) This is clear.

(b) Let $(T,P)$ and $(U,Q)$ be support $\tau$-tilting pairs of $\Lambda$.
By Lemma \ref{partial order of tautilting}, $T\ge U$ if and only if $\Hom_\Lambda(U_{\rm np},\tau T_{\rm np})=0$, $\add T_{\rm pr}\supseteq\add U_{\rm pr}$ and $\add P\subseteq\add Q$.
This is equivalent to $\Hom_{\Lambda^{\op}}(\Tr T_{\rm np},\tau\Tr U_{\rm np})=0$, $\add T_{\rm pr}^*\supseteq\add U_{\rm pr}^*$ and $\add P^*\subseteq\add Q^*$.
By Lemma \ref{partial order of tautilting} again, this is equivalent to $(\Tr T_{\rm np}\oplus P^*,T_{\rm pr}^*)\le(\Tr U_{\rm np}\oplus Q^*,U_{\rm pr}^*)$.
\end{proof}

In the rest of this section, we study a relationship between partial order and mutation.

\begin{definition-proposition}
Let $T=X\oplus U$ and $T'$ be support $\tau$-tilting $\Lambda$-modules such that
$T'=\mu_X(T)$ for some indecomposable $\Lambda$-module $X$.
Then either $T>T'$ or $T<T'$ holds by Theorem \ref{2 complements}.
We say that $T'$ is a \emph{left mutation}\index{left mutation!support tautilting module@support $\tau$-tilting module} 
(respectively, \emph{right mutation}\index{right mutation!support tautilting module@support $\tau$-tilting module}) of $T$ and 
we write $T'=\mu^{-}_X(T)$\index{3mu@$\mu^{+}$, $\mu^{-}$} (respectively, $T'=\mu^{+}_X(T)$\index{3mu@$\mu^{+}$, $\mu^{-}$}) if the following
equivalent conditions are satisfied.
\begin{itemize}
\item[(a)] $T>T'$ (respectively, $T<T'$).
\item[(b)] $X\notin\Fac U$ (respectively, $X\in\Fac U$).
\item[(c)] $^\perp(\tau X)\supseteq{}^\perp(\tau U)$ (respectively, $^\perp(\tau X)\ {\not \supseteq}\ {}^\perp(\tau U)$).
\end{itemize}
If $T$ is a $\tau$-tilting $\Lambda$-module, then the following condition is also equivalent to the above conditions.
\begin{itemize}
\item[(d)] $T$ is a Bongartz completion of $U$ (respectively, $T$ is a non-Bongartz completion of $U$).
\end{itemize}
\end{definition-proposition}

\begin{proof}
This follows immediately from Theorem \ref{2 complements} and Proposition \ref{prop2.10}.
\end{proof}

\begin{definition}\label{support tau-tilting quiver}
We define the \emph{support $\tau$-tilting quiver}\index{support tautilting quiver@support $\tau$-tilting quiver} 
$\Q(\sttilt\Lambda)$\index{3qstautilt@$\Q(\sttilt\Lambda)$} of $\Lambda$ as follows:
\begin{itemize}
\item The set of vertices is $\sttilt\Lambda$.
\item We draw an arrow from $T$ to $U$ if $U$ is a left mutation of $T$.
\end{itemize}
\end{definition}

Next we show that one can calculate left mutation of support $\tau$-tilting
$\Lambda$-modules by exchange sequences which are constructed from left approximations.

\begin{theorem}\label{thm2.8}
Let $T=X\oplus U$ be a basic $\tau$-tilting module which is the Bongartz completion of $U$, where $X$ is indecomposable. 
Let $X\xto{f}U'\xto{g} Y\to 0$ be an exact sequence, where $f$ is a minimal left $(\add U)$-approximation.
Then we have the following.
\begin{itemize}
\item[(a)] If $U$ is not sincere, then $Y=0$.  In this case $U=\mu^{-}_X(T)$ holds and this is a basic support $\tau$-tilting $\Lambda$-module which is not $\tau$-tilting.
\item[(b)] If $U$ is sincere, then $Y$ is a direct sum of copies of an indecomposable $\Lambda$-module $Y_1$ and is not in $\add T$.
In this case $Y_1\oplus U=\mu^{-}_X(T)$ holds and this is a basic $\tau$-tilting $\Lambda$-module.
\end{itemize}
\end{theorem}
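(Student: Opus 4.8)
The plan is to identify the target of the left mutation first, and then to recognise the exchange sequence of the statement as the $X$-component of a two-term $(\add T')$-resolution of the module $T$. Since $T=X\oplus U$ is the Bongartz completion of $U$, Theorem~\ref{thm2.3} gives $\Fac T={}^\perp(\tau U)$, whence ${}^\perp(\tau U)={}^\perp(\tau X)\cap{}^\perp(\tau U)\subseteq{}^\perp(\tau X)$. Applying Theorem~\ref{2 complements} with $Q=0$, there is exactly one further basic support $\tau$-tilting module $T'$ having $U$ as a summand, and $\{\Fac T,\Fac T'\}=\{\Fac U,{}^\perp(\tau U)\}$; thus $\Fac T'=\Fac U$, and Theorem~\ref{basic bijection} identifies $T'=P(\Fac U)$. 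As $T>T'$, this $T'$ is the left mutation $\mu^{-}_X(T)$, so the whole problem reduces to identifying $T'$ and computing the cokernel $Y$.

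Next I would produce $Y$. Because $T>T'$, Proposition~\ref{two term left resolution}(ii) applies to the support $\tau$-tilting module $T'$ and the module $T$, giving an exact sequence $T\to T'^{0}\to T'^{1}\to0$ whose first map is a minimal left $(\Fac T')$-approximation, with $T'^{0},T'^{1}\in\add T'$ and $\add T'^{0}\cap\add T'^{1}=0$. I would split this along $T=X\oplus U$: since $U\in\add T'\subseteq\Fac T'$, its minimal left approximation is the identity and contributes nothing to the cokernel, so $T'^{0}=U'\oplus U$ and $T'^{1}=Y$, where $X\to U'$ is the minimal left $(\Fac T')$-approximation of $X$. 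As $\Fac T'=\Fac U$, Lemma~\ref{approximation lemma}(i) shows that the minimal left $(\add U)$-approximation $f$ of the statement is also a minimal left $(\Fac U)$-approximation; by uniqueness of minimal left approximations it coincides with $X\to U'$, so $U'\in\add U$ and $Y=T'^{1}\in\add T'=\add P(\Fac U)$.

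The decisive point, which I expect to be the main obstacle, is the disjointness $\add Y\cap\add U=0$. I would extract it from the resolution above: since $U'\in\add U$ we have $\add T'^{0}=\add U$, so the condition $\add T'^{0}\cap\add T'^{1}=0$ supplied by Proposition~\ref{two term left resolution} reads precisely $\add U\cap\add Y=0$. The delicate step here is exactly the identification made in the previous paragraph of the abstract resolution term $T'^{1}$ with the concrete cokernel $Y$ of the $(\add U)$-approximation $f$; everything hinges on the two minimal left approximations being literally the same map, which in turn rests on Lemma~\ref{approximation lemma}(i) and the uniqueness of minimal left approximations.

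Finally I would read off the two cases from $Y\in\add P(\Fac U)$ together with $\add Y\cap\add U=0$. If $U$ is not sincere, then $\ann U$ contains a primitive idempotent, so $|P(\Fac U)|=|\Lambda/\ann U|\le|\Lambda|-1=|U|$ by Proposition~\ref{basics}(e), while $U\in\add P(\Fac U)$ by Proposition~\ref{Auslander-Smalo}(b); hence $\add T'=\add P(\Fac U)=\add U$. Then $Y\in\add U$ and $\add Y\cap\add U=0$ force $Y=0$, and $U=\mu^{-}_X(T)$ is support $\tau$-tilting but not sincere, hence not $\tau$-tilting by Proposition~\ref{sincere}(a). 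If instead $U$ is sincere, then $\Fac T'=\Fac U$ is sincere, so $T'$ is sincere support $\tau$-tilting, i.e.\ $\tau$-tilting by Proposition~\ref{sincere}(a); thus $|T'|=|\Lambda|=|U|+1$ and $T'=Y_1\oplus U$ with $Y_1$ indecomposable and $Y_1\notin\add T$ (as $T\neq T'$ are basic sharing only $U$). Now $Y\in\add(Y_1\oplus U)$ with $\add Y\cap\add U=0$ gives $Y\in\add Y_1$, and $Y\neq0$, since $Y=0$ would yield, by Lemma~\ref{approximation is surjective} (with $e=0$ by sincerity), an isomorphism $X\cong U'\in\add U$, contradicting that $T$ is basic with $X$ indecomposable. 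Hence $Y$ is a nonzero direct sum of copies of $Y_1$, $Y\notin\add T$, and $Y_1\oplus U=T'=\mu^{-}_X(T)$ is basic $\tau$-tilting, as required.
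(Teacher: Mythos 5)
Your proof is correct, but it runs the paper's argument in the opposite direction. The paper applies Lemma \ref{approximation lemma} once, to the $\tau$-rigid pair $(U,0)$ and the module $X$, obtaining in one stroke that $f$ is a minimal left $(\Fac U)$-approximation, that $Y\in\add P(\Fac U)$ and that $\add U'\cap\add Y=0$; it then verifies by hand that $Y\oplus U$ is $\tau$-rigid (a direct $\Hom(-,\tau(-))$ computation using the surjection $g:U'\to Y$) and that $Y$ has no indecomposable summand in $\add T$ (using that $g$ is a right $(\add T)$-approximation together with the left minimality of $f$), and only afterwards identifies $Y_1\oplus U$ as the second complement via Theorem \ref{2 complements}. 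You instead pin down the mutation target $T'=P(\Fac U)$ abstractly at the outset, via Theorems \ref{2 complements} and \ref{basic bijection}, and then recognize the exchange sequence as the $X$-component of the two-term $(\add T')$-resolution of $T$ supplied by Proposition \ref{two term left resolution}; the memberships $Y\in\add T'$ and the disjointness $\add U\cap\add Y=0$ then come for free from that proposition, replacing the paper's explicit $\tau$-rigidity computation and its right-approximation argument. Since Proposition \ref{two term left resolution} is itself a corollary of Lemma \ref{approximation lemma}, both proofs run on the same engine, but your packaging makes the structure of $Y$ (a sum of copies of the single new summand $Y_1$ of $T'$) immediate, at the cost of the bookkeeping needed to match the two minimal left approximations (which you handle correctly via Lemma \ref{approximation lemma}(i) and uniqueness of minimal approximations). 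The endgame is identical in both: $\add P(\Fac U)=\add U$ forces $Y=0$ in the non-sincere case, and Lemma \ref{approximation is surjective} forces $Y\neq 0$ in the sincere case.
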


\begin{proof}
We first make some preliminary observations.
We have $^{\perp}(\tau U)\subseteq{^{\perp}(\tau X)}$ because $T$ is a Bongartz completion of $U$.
By Lemma \ref{approximation lemma}, we have an exact sequence
\[X\xto{f}U'\xto{g}Y\to0\]
such that $U'$ is in $\add U$, $Y$ is in $\add P(\Fac U)$, $\add U'\cap\add Y=0$ and $f$ is a left $(\Fac U)$-approximation.
We have $\Ext_\Lambda^1(Y,\Fac U)=0$ since $Y\in\add P(\Fac U)$, and hence
$\Hom_\Lambda(U,\tau Y)=0$ by Proposition \ref{Auslander-Smalo}.
We have an injective map $\Hom_{\Lambda}(Y,\tau(Y\oplus U))\to\Hom_{\Lambda}(U',\tau(Y\oplus U))$.
Since $U$ is $\tau$-rigid, we have that $\Hom_{\Lambda}(U',\tau(Y\oplus U))=0$, and consequently $\Hom_{\Lambda}(Y,\tau(Y\oplus U))=0$.
It follows that $Y\oplus U$ is $\tau$-rigid.

We show that $g:U'\to Y$ is a right $(\add T)$-approximation.
To see this, consider the exact sequence
$$\Hom_\Lambda(T,U')\to\Hom_\Lambda(T,Y)\to\Ext_\Lambda^1(T,\im f).$$
Since $\im f\in\Fac T$, we have $\Ext_\Lambda^1(T,\im f)=0$, which proves the claim.

We have that $Y$ does not have any indecomposable direct summand from $\add T$.
For if $T'$ in $\add T$ is an indecomposable direct summand of $Y$, then the natural inclusion $T'\to Y$ factors through $g:U'\to Y$.
This contradicts the fact that $f:X\to U'$ is left minimal.

Now we are ready to prove the claims (a) and (b).

(a) Assume first that $U$ is not sincere. Let $e$ be a primitive
idempotent with $eU=0$. Then $U$ is a $\tau$-rigid $(\Lambda/\langle e\rangle)$-module.
Since $|U|=|\Lambda|-1=|\Lambda/\langle e\rangle|$, 
we have that $U$ is a $\tau$-tilting $(\Lambda/\langle e\rangle)$-module,
and hence a support $\tau$-tilting $\Lambda$-module which is not $\tau$-tilting.

(b) Next assume that $U$ is sincere.
Since we have already shown that $Y\oplus U$ is $\tau$-rigid and $Y\notin\add T$, it is enough to show $Y\neq0$.
Otherwise we have $X\simeq U'$ by Lemma \ref{approximation is surjective} since $U$ is sincere.
This is not possible since $U'$ is in $\add U$, but $X$ is not. Hence it follows that $Y\neq0$.
\end{proof}

We do not know the answer to the following.

\begin{question}
Is $Y$ always indecomposable in Theorem \ref{thm2.8}(b)?
\end{question}

Note that right mutation can not be calculated as directly as left mutation.

\begin{remark}
Let $T$ and $T'$ be support $\tau$-tilting $\Lambda$-modules such that
$T'=\mu_X(T)$ for some indecomposable $\Lambda$-module $X$.
\begin{itemize}
\item[(a)] If $T'=\mu^{-}_X(T)$, then we can calculate $T'$ by applying Theorem \ref{thm2.8}.
\item[(b)] If $T'=\mu^{+}_X(T)$, then we can calculate $T'$ using the following three steps:
First calculate $T^\dagger$. Then calculate $T'{}^\dagger$ by applying
Theorem \ref{thm2.8} to $T^\dagger$. Finally calculate $T'$ by applying
$(-)^\dagger$ to $T'{}^\dagger$.
\end{itemize}
\end{remark}

Our next main result is the following.

\begin{theorem}\label{Hasse is mutation}
For $T,U\in\sttilt\Lambda$, the following conditions are equivalent.
\begin{itemize}
\item[(a)] $U$ is a left mutation of $T$.
\item[(b)] $T$ is a right mutation of $U$.
\item[(c)] $T>U$ and there is no $V\in\sttilt\Lambda$ such that $T>V>U$.\end{itemize}
\end{theorem}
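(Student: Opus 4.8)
The plan is to prove $(a)\Leftrightarrow(b)$ directly from the definition, $(a)\Rightarrow(c)$ from the two-complement theorem, and $(c)\Rightarrow(a)$ by producing a mutation inside the interval $(U,T]$. The equivalence $(a)\Leftrightarrow(b)$ is a matter of reading the Definition-Proposition defining $\mu^-$ and $\mu^+$: if $U=\mu^-_X(T)$, then $(T,P)$ and the pair of $U$ share an almost complete support $\tau$-tilting pair and $T>U$; regarded from $U$, this same exchange is the mutation producing the \emph{larger} neighbour, so $T=\mu^+_Y(U)$, and conversely. Nothing beyond Theorem \ref{2 complements} (which gives exactly two, comparable, completions) is needed here.

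For $(a)\Rightarrow(c)$, write $(E,F)$ for the almost complete support $\tau$-tilting pair shared by $(T,P)$ and the pair $(U,Q)$. Suppose some $V\in\sttilt\Lambda$, with pair $(V,R)$, satisfied $T>V>U$. Proposition \ref{common summand} applied to $T\ge V\ge U$ gives $\add E\subseteq\add T\cap\add U\subseteq\add V$, while Lemma \ref{partial order of tautilting}(b) gives $\add F\subseteq\add P\subseteq\add R$; hence $(E,F)$ is a direct summand of $(V,R)$. Thus $(V,R)$ is a completion of the almost complete pair $(E,F)$, so by Theorem \ref{2 complements} $(V,R)\in\{(T,P),(U,Q)\}$, i.e. $V\in\{T,U\}$, contradicting $T>V>U$. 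Hence $T>U$ has nothing strictly between, which is $(c)$.

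For $(c)\Rightarrow(a)$ I would reduce everything to the following Key Lemma: \emph{if $T>U$, then $T$ has a left mutation $T'=\mu^-_X(T)$ with $U\le T'<T$.} Granting this, a covering $T>U$ forces $T'=U$, so $U=\mu^-_X(T)$ and $(a)$ holds. To prove the Key Lemma, note $\Fac U\subsetneq\Fac T=\Fac(\bigoplus_iX_i)$, so some indecomposable summand $X$ of $T$ satisfies $X\notin\Fac U$; using the dichotomy of Proposition \ref{prop2.10} one arranges $X$ to be left-mutable (i.e. $X\notin\Fac(T/X)$), and then $\Fac T'=\Fac(T/X)$ by Theorem \ref{2 complements}. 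It then remains to verify $U\le T'$, i.e. $\Fac U\subseteq\Fac(T/X)$. Here one would use the already-proven implication $(a)\Rightarrow(c)$, which makes $\Fac(T/X)\subsetneq\Fac T$ a covering in $\ftors\Lambda$; analysing the canonical sequence of $U$ for the torsion pair of $\Fac(T/X)$ against this minimality, together with the exchange sequence $X\to U'\to Y\to0$ of Theorem \ref{thm2.8} and the fact that $X$ is $\Ext$-projective in $\Fac T$, should force either $U\in\Fac(T/X)$ or $X\in\Fac U$; the latter contradicts the choice of $X$.

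The main obstacle is precisely this \emph{no-overshoot} step $\Fac U\subseteq\Fac(T/X)$: one must exclude a left mutation that drops strictly below $U$. A likely cleaner route is to exploit the left-right symmetry of Theorem \ref{left-right symmetry}: since $(-)^\dagger$ reverses the partial order (Proposition \ref{reverse}(b)), the Key Lemma for $(T,U)$ over $\Lambda$ is equivalent to its dual for $(U^\dagger,T^\dagger)$ over $\Lambda^{\op}$, namely the existence of a right mutation of $U$ inside $(U,T]$. The summand of $U$ to mutate can then be read off from the nonzero cokernel $U^1$ of the minimal left $(\Fac U)$-approximation $T\xrightarrow{f}U^0$ furnished by Proposition \ref{two term left resolution}(ii) (nonzero because $T\neq U$, by Lemma \ref{approximation is surjective}), which localizes the exchange to a single indecomposable and thereby pins the mutation to lie below $T$.
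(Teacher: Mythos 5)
Your treatment of (a)$\Leftrightarrow$(b) and of (a)$\Rightarrow$(c) is correct and essentially identical to the paper's (the paper likewise combines Proposition \ref{common summand} with Theorem \ref{2 complements}; your extra care with the projective parts via Lemma \ref{partial order of tautilting} is fine). You have also correctly isolated the crux of (c)$\Rightarrow$(a): everything reduces to your Key Lemma, which is exactly Theorem \ref{make closer}(b) of the paper. The gap is that your proposed proof of the Key Lemma does not go through.

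Concretely, the dichotomy you invoke --- for an indecomposable summand $X$ of $T$ with $X\notin\Fac U$, either $U\in\Fac(T/X)$ or $X\in\Fac U$ --- is false. Take $\Lambda=kA_2$ with quiver $1\to 2$, so $P_2=S_2$ is simple projective, $P_1$ has top $S_1$ and socle $S_2$, and $\tau S_1=S_2$. Let $T=\Lambda=P_1\oplus P_2$ and $U=S_1$ (with pair $(S_1,P_2)$), so $T>U$, and choose $X=P_1$. Then $X\notin\Fac U=\add S_1$, and since $P_1\notin\Fac P_2=\add S_2$ the mutation at $X$ is a left mutation with $\Fac\mu^-_X(T)=\add S_2$; but $S_1\notin\add S_2$ and $P_1\notin\add S_1$, so both horns of your dichotomy fail and this choice of $X$ overshoots $U$. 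The correct choice of $X$ must be dictated by the minimal left approximation, as in the paper's Lemma \ref{make closer lemma}: given $U\to T^0\to T^1\to 0$, one must take $X\notin\add T^0$. Your fallback via $(-)^\dagger$ is also flawed at the same point: the cokernel $U^1$ of the minimal left $(\Fac U)$-approximation of $T$ can vanish even when $T\neq U$ (in the same example the approximation $\Lambda\to S_1$ is surjective), and Lemma \ref{approximation is surjective} only yields $T/\langle e\rangle T\simeq U^0$, not $T\simeq U$. This degenerate case is precisely where the paper's proof of Theorem \ref{make closer} needs its long steps (ii)--(v), passing through the transpose and ultimately mutating at a projective summand of the pair. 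So your reduction in (c)$\Rightarrow$(a) is the right one, but the existence statement it relies on is left essentially unproved.
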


Before proving Theorem \ref{Hasse is mutation}, we give the following result as a direct consequence.

\begin{corollary}\label{Hasse is mutation 2}
The support $\tau$-tilting quiver $\Q(\sttilt\Lambda)$ is the Hasse quiver of the partially ordered set $\sttilt\Lambda$.
\end{corollary}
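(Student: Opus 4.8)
The statement \ref{Hasse is mutation 2} is immediate once Theorem~\ref{Hasse is mutation} is available, which I may assume as it is stated earlier: by Definition~\ref{support tau-tilting quiver} the arrows of $\Q(\sttilt\Lambda)$ are exactly the left mutations, while the arrows of the Hasse quiver of $\sttilt\Lambda$ are exactly the covering relations $T>U$ (those with no $V$ satisfying $T>V>U$); the equivalence (a)$\Leftrightarrow$(c) of Theorem~\ref{Hasse is mutation} identifies these two sets of arrows, so the two quivers coincide. Thus the genuine content lies in Theorem~\ref{Hasse is mutation}, and the rest of this note sketches how I would prove \emph{that}, treating (a)$\Leftrightarrow$(b) and (a)$\Rightarrow$(c) as formal and (c)$\Rightarrow$(a) as the substantial direction.

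For (a)$\Leftrightarrow$(b): by Theorem~\ref{2 complements} the relation ``is a mutation of'' on basic support $\tau$-tilting pairs is symmetric, two pairs being mutations of each other exactly when they share a basic almost complete support $\tau$-tilting pair as a direct summand. By the definition-proposition introducing left and right mutation (just before Definition~\ref{support tau-tilting quiver}), the words \emph{left} and \emph{right} only record which of the two partners is larger in the order. Hence $U=\mu^{-}_{X}(T)$ (i.e.\ $U$ is a mutation of $T$ with $T>U$) holds if and only if $T=\mu^{+}_{Y}(U)$ for the corresponding exchanged summand $Y$, which says precisely that $T$ is a right mutation of $U$. For (a)$\Rightarrow$(c), the inequality $T>U$ is part of the definition of left mutation, so only the covering property remains. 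Let $(W,R)$ be the common almost complete pair of $(T,P)$ and $(U,Q)$, and suppose $T>V>U$ for some $V\in\sttilt\Lambda$. Proposition~\ref{common summand} applied to $T>V>U$ gives $\add T\cap\add U\subseteq\add V$, so $W$ is a summand of $V$; Lemma~\ref{partial order of tautilting} applied along the chain gives $\add P\subseteq\add P_V\subseteq\add Q$ for the projective part $P_V$ of $V$, so $R\in\add P\subseteq\add P_V$ as well. Thus $(W,R)$ is a direct summand of $V$, and Theorem~\ref{2 complements} forces $V\in\{T,U\}$, a contradiction.

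The hard direction is (c)$\Rightarrow$(a). Assume $T>U$ with nothing strictly between. First I would reduce to the case that $T$ is sincere, i.e.\ $\tau$-tilting: writing $\add P=\add\Lambda e$ for the pair $(T,P)$, every $V$ with $T\ge V\ge U$ has support contained in that of $T$ by Lemma~\ref{partial order of tautilting}, so the whole interval $[U,T]$ lives in $\sttilt(\Lambda/\langle e\rangle)$, where order and covering agree with those in $\sttilt\Lambda$ by Proposition~\ref{reverse}(a). The crux is then the \emph{existence claim}: there is an indecomposable summand $X$ of $T$ with $X\notin\Fac(T/X)$ (so that $\mu^{-}_{X}(T)$ is a genuine left mutation, hence $T>\mu^{-}_{X}(T)$ by Theorem~\ref{thm2.8}) and moreover $\Fac(T/X)\supseteq\Fac U$, that is $\mu^{-}_{X}(T)\ge U$. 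Granting this, $T>\mu^{-}_{X}(T)\ge U$ together with the covering hypothesis forces $\mu^{-}_{X}(T)=U$, so $U$ is a left mutation of $T$.

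The main obstacle is exactly this existence claim, because a left mutation of $T$ always strictly lowers the torsion class but need not remain above $\Fac U$, and (as small examples over $kA_3$ show) one cannot simply take an arbitrary indecomposable summand outside $\Fac U$. To locate the right $X$, I would work with minimal right $(\add T)$-approximations: a summand $Z$ of $U$ lies in $\Fac(T/X)$ precisely when $X$ does not occur in the minimal $(\add T)$-approximation of $Z$, so the goal is a left-mutable summand $X$ absent from the approximations of all summands of $U$. That such an $X$ exists uses $\Fac U\subsetneq\Fac T$ together with Proposition~\ref{prop2.10} to decide which summands are left-mutable, and the finiteness $|T|\le|\Lambda|$ to make the search terminate; verifying $\mu^{-}_{X}(T)\ge U$ afterwards reduces via Lemma~\ref{partial order of tautilting} to the vanishing $\Hom_\Lambda(U,\tau\,\mu^{-}_{X}(T))=0$ and a support containment, both of which follow from $\Fac U\subseteq\Fac(T/X)$ and the $\Ext$-projectivity of the exchanged summand in $\Fac(T/X)$ (Proposition~\ref{Auslander-Smalo}). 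This existence step is where essentially all the work lies; everything else is bookkeeping with the earlier bijections and approximation lemmas.
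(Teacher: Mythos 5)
Your proof of the corollary itself is correct and coincides with the paper's: the corollary is recorded there as a direct consequence of Theorem \ref{Hasse is mutation}, and your first paragraph is exactly that deduction (arrows of $\Q(\sttilt\Lambda)$ are the left mutations, arrows of the Hasse quiver are the covering relations, and (a)$\Leftrightarrow$(c) identifies the two). Since Theorem \ref{Hasse is mutation} is stated before the corollary, you are entitled to assume it, and nothing more is needed.

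Since you present the supplementary sketch of Theorem \ref{Hasse is mutation} as the real content, a comparison is in order. Your treatment of (a)$\Leftrightarrow$(b) and (a)$\Rightarrow$(c) matches the paper's (Proposition \ref{common summand} plus Theorem \ref{2 complements}); your extra care with the projective parts via Lemma \ref{partial order of tautilting} is a sensible supplement, since Proposition \ref{common summand} only controls the module parts. For (c)$\Rightarrow$(a), however, your route diverges from the paper's and has a genuine gap. The paper deduces (c)$\Rightarrow$(a) from Theorem \ref{make closer}, whose part (b) is obtained from part (a) by the duality $(-)^\dagger$ of Proposition \ref{reverse}(b), and whose part (a) is proved by a multi-step argument (steps (i)--(v)) involving transposes, minimal projective presentations over $\Lambda^{\op}$, and a separate analysis of the degenerate case where the two modules differ only by a projective summand and their supports. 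Your "existence claim" --- a left-mutable indecomposable summand $X$ of $T$ absent from the minimal $(\add T)$-approximations of all summands of $U$, with $\Fac(T/X)\supseteq\Fac U$ --- is precisely where all the difficulty of the theorem is concentrated, and your justification ("Proposition \ref{prop2.10} to decide which summands are left-mutable, and the finiteness $|T|\le|\Lambda|$ to make the search terminate") is not an argument: nothing in it explains why some left-mutable summand must avoid all the approximations simultaneously, nor does the reduction to the sincere case dispose of the situation where the mutation must occur at a summand of the support part $P$ rather than of $T$ (this is the case $U\simeq T\oplus P$ isolated in step (v) of the paper's proof). So the corollary is proved correctly, but the appended sketch of the theorem should not be regarded as a proof of it.
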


The following analog of \cite[Proposition 2.36]{AI} is a main step to prove Theorem \ref{Hasse is mutation}.

\begin{theorem}\label{make closer}
Let $U$ and $T$ be basic support $\tau$-tilting $\Lambda$-modules such that $U>T$. Then:
\begin{itemize}
\item[(a)] There exists a right mutation $V$ of $T$ such that $U\ge V$.
\item[(b)] There exists a left mutation $V'$ of $U$ such that $V'\ge T$.
\end{itemize}
\end{theorem}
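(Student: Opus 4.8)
The plan is to prove one of the two parts directly and obtain the other by the left--right duality, and then to build the required mutation by an explicit exchange sequence; the genuine difficulty is only the verification of the order relation.

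\medskip\noindent\emph{Reduction of {\rm (b)} to {\rm (a)} via duality.}
First I would observe that the involution $(-)^\dagger$ of Theorem \ref{left-right symmetry} carries the whole situation to $\Lambda^{\op}$: it is a bijection $\sttilt\Lambda\to\sttilt\Lambda^{\op}$ which \emph{reverses} the partial order (Proposition \ref{reverse}(b)) and, since a common basic almost complete support $\tau$-tilting pair is sent by $\dagger$ to one of the same kind, it carries mutations to mutations and hence exchanges left and right mutations. Thus, given $U>T$ in $\sttilt\Lambda$ we have $T^\dagger>U^\dagger$ in $\sttilt\Lambda^{\op}$, and part (a) applied to $\Lambda^{\op}$ yields a right mutation $W$ of $U^\dagger$ with $U^\dagger<W\le T^\dagger$; applying $\dagger$ again gives $U>W^\dagger\ge T$ with $W^\dagger$ a left mutation of $U$, which is exactly (b). So it suffices to prove (a).

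\medskip\noindent\emph{Construction of the right mutation.}
Since $U>T$, by Proposition \ref{two term left resolution}(ii) I would fix an exact sequence $U\xto{f}T^0\to T^1\to0$ with $T^0,T^1\in\add T$, $\add T^0\cap\add T^1=0$, and $f$ a minimal left $(\Fac T)$-approximation. If $T^1\neq0$, pick an indecomposable summand $X\mid T^1$; then $X\in\Fac T^0\subseteq\Fac(T/X)$, so by Theorem \ref{2 complements} the pair $(T/X,P)$ has $\Fac T$ as its \emph{smaller} completion and $V:=\mu^+_X(T)$ as its larger one, a right mutation with $\Fac V={}^\perp(\tau(T/X))\cap P^\perp$. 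If instead $T^1=0$, then $f$ is surjective and Lemma \ref{approximation is surjective} gives $U/\langle e\rangle U\simeq T^0\in\add T$, which forces the support of $T$ to be strictly smaller than that of $U$; here I would choose a primitive idempotent $e$ with $\Lambda e\mid P$ but $\Lambda e\nmid Q$ (possible as $\add Q\subsetneq\add P$) and right-mutate $(T,P)$ at the projective summand $\Lambda e$, obtaining $V:=(T\oplus Y,P/\langle e\rangle)$ with $\Fac V={}^\perp(\tau T)\cap(P/\langle e\rangle)^\perp$. In both cases $V>T$ by construction.

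\medskip\noindent\emph{Verification $V\le U$ (the main obstacle).}
It remains to prove $\Fac V\subseteq\Fac U$. Because $U\ge T$ forces $\add Q\subseteq\add P$ (Lemma \ref{partial order of tautilting}(b)), the containment $P^\perp\subseteq Q^\perp$ handles the projective part, and by Lemma \ref{partial order of tautilting} everything reduces to showing $\Hom_\Lambda(X',\tau U)=0$ for the new indecomposable summand $X'$ of $V$ (equivalently $X'\in\Fac U$), the other summands of $V$ lying in $\add T\subseteq\Fac T\subseteq\Fac U={}^\perp(\tau U)$. The route I would take is to exploit that $T$ is itself the left mutation of $V$: applying Theorem \ref{thm2.8} to $V$ produces an exact sequence $X'\to W\to X^m\to0$ with $W\in\add(T/X)\subseteq\Fac U$, and I would then combine this with the resolution $U\xto{f}T^0\to T^1\to0$ (using $T^0\in\add(T/X)$, whence $\Ext^1_\Lambda(T^0,\Fac W)=0$) and AR duality to force $\Hom_\Lambda(X',\tau U)=0$. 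I expect this last step to be the hard part: one must genuinely tie the freshly created summand $X'$ back to $U$ through the two approximation sequences together with $\tau$-rigidity, and carry the support case ($T^1=0$) through by the same mechanism. The reason a constructive argument of this kind is unavoidable—rather than simply choosing a cover of $T$ lying below $U$—is that no finiteness of the interval between $T$ and $U$ is assumed, so the mutation must be exhibited explicitly.
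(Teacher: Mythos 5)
Your global architecture matches the paper's: prove (a) and deduce (b) by applying $(-)^\dagger$ and Proposition \ref{reverse}(b) (that reduction is fine), then use the resolution $U\xto{f}T^0\to T^1\to0$ of Proposition \ref{two term left resolution} to locate a summand of $(T,P)$ at which to right-mutate. But the proof has two genuine gaps, and they are exactly where all the work in the paper lies.

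First, the verification $V\le U$ is never carried out. You correctly reduce it to $\Hom_\Lambda(X',\tau U)=0$ for the single new summand $X'$, and then say you ``expect this last step to be the hard part'' and only gesture at combining Theorem \ref{thm2.8} with AR duality. That is the entire content of the theorem. The paper isolates this as Lemma \ref{make closer lemma}, and its proof is not a formal manipulation: it passes to $\Lambda^{\op}$ via $(-)^\dagger$, takes the minimal right $(\add T^\dagger)$-approximation $\Tr T_0\oplus P\to U^\dagger$ (surjective because $U^\dagger<T^\dagger$), and proves a separate compatibility lemma showing that $\Tr$ turns the left $(\add T)$-approximation $U\to T^0$ into this right approximation, so that $T_0\in\add T^0$; since the mutated summand $X$ is not in $\add T^0$, one gets $U^\dagger\in\Fac(T^\dagger/\Tr X)$ and hence $U^\dagger\le\mu_X(T)^\dagger$. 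Nothing in your sketch substitutes for this, and it is not clear that your proposed route through the exchange sequence $X'\to W\to X^m\to0$ closes.

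Second, your case split is on $T^1=0$ versus $T^1\neq 0$, but the correct dichotomy is whether $T\in\add T^0$. When $T^1=0$ but some indecomposable summand of $T$ is still missing from $T^0$, the right move is again a module mutation via Lemma \ref{make closer lemma}, not a projective one. More seriously, in your residual case you pass from ``the supports differ'' directly to mutating at a projective summand $\Lambda e''$ and asserting $\Fac V={}^\perp(\tau T)\cap(P/\langle e''\rangle)^\perp\subseteq\Fac U$. By Lemma \ref{partial order of tautilting} this needs $\Hom_\Lambda(Y,\tau U)=0$ for the new module summand $Y$ of $V$; you only know $\Hom_\Lambda(Y,\tau T)=0$, and these coincide only if $\tau U\in\add\tau T$. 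The paper earns this by first proving, in steps (ii)--(iv) of its argument (the construction of the presentation $P_1\xto{a}\Tr T^0\oplus P_0\to\Tr U\to0$ and the contradiction with minimality forcing $P_1=0$), that in this residual case $U\simeq T\oplus(\text{projective})$, whence $\tau U=\tau T$. Your one-line observation that the supports must differ is true but far weaker than what the projective-mutation step actually requires.
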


Before proving Theorem \ref{make closer}, we finish the proof of Theorem
\ref{Hasse is mutation} by using Theorem \ref{make closer}.

(a)$\Leftrightarrow$(b) Immediate from the definitions.

(a)$\Rightarrow$(c) Assume that $V\in\sttilt\Lambda$ satisfies $T>V\ge U$.
Then we have $\add T\cap\add U\subseteq\add V$ by Proposition \ref{common summand}.
Thus $T$ and $V$ have an almost complete support $\tau$-tilting pair for $\Lambda$ as a common direct summand.
Hence we have $V\simeq U$ by Theorem \ref{2 complements}.

(c)$\Rightarrow$(a) By Theorem \ref{make closer}, there exists a left mutation $V$ of $T$ such that $T>V\ge U$. 
Then $V\simeq U$ by our assumption. Thus $U$ is a left mutation of $T$.
\qed

\medskip
To prove Theorem \ref{make closer}, we shall need the following results.

\begin{lemma}\label{make closer lemma}
Let $U$ and $T$ be basic support $\tau$-tilting $\Lambda$-modules such that $U>T$.
Let $U\xto{f}T^0\to T^1\to0$ be an exact sequence as given in Proposition \ref{two term left resolution}.
If $X$ is an indecomposable direct summand of $T$ which does not belong to $\add T^0$,
then we have $U\ge\mu_X(T)>T$.
\end{lemma}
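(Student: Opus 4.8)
The plan is to feed Proposition~\ref{two term left resolution}(ii) the hypothesis $U\ge T$ and thereby produce the exact sequence $U\xrightarrow{f}T^0\to T^1\to0$ in which $f$ is a minimal left $(\Fac T)$-approximation, $T^0,T^1\in\add T$ and $\add T^0\cap\add T^1=0$. The single place where the hypothesis on $X$ enters is the observation that, since $T^0\in\add T$ and $X\notin\add T^0$, we have $T^0\in\add(T/X)$. Writing $(T,P)$ for the support $\tau$-tilting pair, I would split the statement into $\mu_X(T)>T$ and $U\ge\mu_X(T)$. By Theorem~\ref{2 complements} the two completions of the almost complete pair $(T/X,P)$ have torsion classes $\Fac(T/X)$ and ${}^\perp(\tau(T/X))\cap P^\perp$, and $\Fac T$ is one of them; so everything comes down to locating $T$ among these two and comparing the other with $\Fac U$.

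For $\mu_X(T)>T$ I would prove $X\in\Fac(T/X)$, i.e. $\Fac T=\Fac(T/X)$: this identifies $T$ as the \emph{lower} completion, whence $\mu_X(T)$ is the upper one with $\Fac\mu_X(T)={}^\perp(\tau(T/X))\cap P^\perp\supsetneq\Fac T$. The argument is a trace computation. Since $U>T$ we have $X\in\Fac T\subseteq\Fac U$, so the homomorphisms $U\to X$ are jointly surjective. On the other hand $X\in\add T$, so every map $U\to X$ factors through the approximation $f$, hence through $T^0\in\add(T/X)$; its image therefore lies in the trace $\mathrm{tr}_{T/X}(X)$ (the sum of the images of all maps $T/X\to X$). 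If $X$ were not in $\Fac(T/X)$ this trace would be a proper submodule of $X$, contradicting joint surjectivity. Hence $X\in\Fac(T/X)$, so $\mu_X(T)>T$, and we write $\mu_X(T)=(T/X\oplus Y,P)$ for its unique new indecomposable summand $Y$.

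For $U\ge\mu_X(T)$ I would apply Lemma~\ref{partial order of tautilting}: from $U\ge T$ we already have $\add Q\subseteq\add P$ and $\Hom_\Lambda(T,\tau U)=0$, so it remains only to show $\Hom_\Lambda(Y,\tau U)=0$. Because $X\in\Fac(T/X)$, the minimal right $(\add(T/X))$-approximation $p\colon(T/X)'\to X$ is surjective, and by the Wakamatsu-type Lemma~\ref{prop1.9} its kernel lies in ${}^\perp(\tau(T/X))\cap P^\perp$; a routine check that $(T/X\oplus\Ker p,P)$ is $\tau$-rigid with $\Ker p\notin\add T$ identifies it, via Theorem~\ref{2 complements}, with $\mu_X(T)$, so $Y=\Ker p$ and we have the exchange sequence $\xi\colon0\to Y\to(T/X)'\xrightarrow{p}X\to0$. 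Applying $\Hom_\Lambda(-,\tau U)$ and using $\Hom_\Lambda((T/X)'\oplus X,\tau U)=0$, the connecting map $\delta\colon\Hom_\Lambda(Y,\tau U)\to\Ext^1_\Lambda(X,\tau U)$ is injective, so it suffices to prove $\delta=0$. This is the crux and the main obstacle: via the AR-duality isomorphism $\Ext^1_\Lambda(X,\tau U)\cong D\,\underline{\Hom}_\Lambda(U,X)$, the element $\delta(\phi)=\phi_*\xi$ corresponds to the functional $\psi\mapsto\mathrm{tr}\,\phi_*(\psi^*\xi)$ on $\underline{\Hom}_\Lambda(U,X)$, so I only need $\psi^*\xi=0$ for every $\psi\colon U\to X$. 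Here $T^0\in\add(T/X)$ is decisive: any such $\psi$ factors as $U\xrightarrow{f}T^0\xrightarrow{h}X$, and since $p$ is a right $(\add(T/X))$-approximation and $T^0\in\add(T/X)$, the map $h$ factors as $h=p\,h'$; as the pullback $p^*\xi$ of $\xi$ along $p$ splits, $\psi^*\xi=f^*h^*\xi=f^*(h')^*p^*\xi=0$. Thus $\delta=0$, giving $\Hom_\Lambda(Y,\tau U)=0$, hence $Y\in{}^\perp(\tau U)\cap Q^\perp=\Fac U$ and $\Fac\mu_X(T)=\Fac(T/X\oplus Y)\subseteq\Fac U$, which is exactly $U\ge\mu_X(T)$.
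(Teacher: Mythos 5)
Your first half is correct and is essentially the paper's own argument: from $X\in\Fac T\subseteq\Fac U$ and the fact that every map $U\to X$ factors through the left $(\Fac T)$-approximation $f$, hence through $T^0\in\add(T/X)$, one gets $X\in\Fac T^0\subseteq\Fac(T/X)$, so $\Fac T=\Fac(T/X)$ and $T$ is the lower of the two completions of $(T/X,P)$, whence $\mu_X(T)>T$.

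The second half has a genuine gap at the step you call ``a routine check'': the claim that $(T/X\oplus\Ker p,P)$ is $\tau$-rigid, where $p\colon (T/X)'\to X$ is the minimal right $(\add(T/X))$-approximation, and hence that the new complement is $Y=\Ker p$. The classical exchange sequence $0\to Y\to U'\to X\to 0$ does not survive in the $\tau$-tilting setting. Concretely, take $\Lambda=kQ/\langle\beta\alpha\rangle$ with $Q\colon 1\xto{\alpha}2\xto{\beta}3$ (the paper's last example), $U=\Lambda$, $T=P_1\oplus S_1\oplus P_3$ and $X=S_1$. Then $\Fac T=\add(P_1\oplus S_1\oplus P_3)$, the approximation sequence of Proposition \ref{two term left resolution} is $\Lambda\to P_1^2\oplus P_3\to S_1\to 0$, so $X=S_1\notin\add T^0$ and all hypotheses of the lemma hold. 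Your recipe gives $p\colon P_1\to S_1$ with $\Ker p=S_2$, but $\tau S_2=S_3=P_3$, so $\Hom_\Lambda(P_3,\tau\Ker p)\neq 0$ and $P_1\oplus P_3\oplus S_2$ is not $\tau$-rigid; the actual complement is $Y=P_2$, i.e.\ $\mu_{S_1}(T)=\Lambda$. Wakamatsu's Lemma \ref{prop1.9} only yields $\Ker p\in{}^\perp(\tau(T/X))$; the other condition $\Hom_\Lambda(T/X,\tau\Ker p)=0$ can fail, which is exactly why the paper remarks after Theorem \ref{thm2.8} that right mutation cannot be computed by such a direct approximation. Since your entire $\delta=0$ computation is carried out for $\Ker p$ rather than for the genuine complement $Y$, it does not establish $\Hom_\Lambda(Y,\tau U)=0$. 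The paper avoids this by transporting the problem to $\Lambda^{\op}$ via $(-)^\dagger$ (Proposition \ref{reverse}(b)): there $U^\dagger<T^\dagger$ forces the minimal right $(\add T^\dagger)$-approximation of $U^\dagger$ to be surjective, a sub-lemma shows via the stable duality $\Tr$ that its non-projective part lies in $\add\Tr T^0$, and since $X\notin\add T^0$ one concludes $U^\dagger\in\Fac(\Tr(T/X)\oplus e'\Lambda)$, hence $U^\dagger\le\mu_X(T)^\dagger$ and $U\ge\mu_X(T)$.
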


\begin{proof}
First we show $\mu_X(T)>T$.
Since $X$ is in $\Fac T\subseteq \Fac U$, there exists a surjective map $a:U^\ell\to X$
for some $\ell>0$. Since $f^\ell:U^\ell\to (T^0)^\ell$ is a left $(\add T)$-approximation, $a$ factors through $f^\ell$ and we have $X\in\Fac T^0$.
It follows from $X\notin\add T^0$ that $X\in\Fac T^0\subseteq\Fac\mu_X(T)$.
Thus $\Fac T\subseteq\Fac\mu_X(T)$ and we have $\mu_X(T)>T$.

Next we show $U\ge\mu_X(T)$.
Let $(U,\Lambda e)$ and $(T,\Lambda e')$ be support $\tau$-tilting pairs for $\Lambda$.
By Proposition \ref{reverse}(b), we know that
$U^\dagger=\Tr U\oplus e\Lambda$ and $T^\dagger=\Tr T\oplus e'\Lambda$ are support
$\tau$-tilting $\Lambda^{\op}$-modules such that $U^\dagger<T^\dagger$.
In particular, any minimal right $(\add T^\dagger)$-approximation
\begin{equation}\label{Tr T oplus e'Lambda approximation}
\Tr T_0\oplus P\to U^\dagger
\end{equation}
of $U^\dagger$ with $T_0\in\add T_{\rm np}$ and $P\in\add e'\Lambda$ is surjective.
The following observation shows $T_0\in\add T^0$.

\begin{lemma}
Let $X$ and $Y$ be in $\mod\Lambda$ and $P$ in $\proj\Lambda^{\op}$.
Let $f:Y\to X^0$ be a left $(\add X)$-approximation of $Y$ and
$g:\Tr X_0\oplus P_0\to\Tr Y$ be a minimal right $(\add\Tr X\oplus P)$-approximation
of $\Tr Y$ with $X_0\in\add X_{\rm np}$ and $P_0\in\add P$.
If $g$ is surjective, then $X_0$ is a direct summand of $X^0$.
\end{lemma}

\begin{proof}
Assume that $g$ is surjective and consider the exact sequence 
\[\xymatrix{
0\ar[r]&K\ar[r]^(.3)h&\Tr X_0\oplus P_0\ar[r]^(.6)g&\Tr Y\ar[r]&0.}\]
Then $h$ is in $\rad(K,\Tr X_0\oplus P_0)$ since $g$ is right minimal.
It is easy to see that in the stable category $\underline{\mod}\Lambda^{\op}$, 
a pseudokernel of $g$ is given by $h$, which is in the radical of $\underline{\mod}\Lambda^{\op}$.
In particular, $g$ is a minimal right $(\add\Tr X)$-approximation in
 $\underline{\mod}\Lambda^{\op}$.
Since $\Tr:\underline{\mod}\Lambda\to\underline{\mod}\Lambda^{\op}$ is a duality,
we have that $\Tr g:\Tr\Tr Y\to\Tr(\Tr X_0\oplus P_0)=X_0$ is a minimal left
$(\add X)$-approximation of $\Tr\Tr Y$ in $\underline{\mod}\Lambda$.
On the other hand, $f:Y\to X^0$ is clearly a left $(\add X)$-approximation of $Y$ in $\underline{\mod}\Lambda$.
Since $\Tr\Tr Y$ is a direct summand of $Y$, we have that
$X_0$ is a direct summand of $X^0$ in $\underline{\mod}\Lambda$.
Thus the assertion follows.
\end{proof}

We now finish the proof of Lemma \ref{make closer lemma}.

Since $T_0\in\add T^0$ and $X\notin\add T^0$, we have $X\notin\add T_0$ and hence
$U^\dagger\in\Fac(\Tr(T/X)\oplus e'\Lambda)$ by \eqref{Tr T oplus e'Lambda approximation}.
Hence we have $U^\dagger\le\mu_X(T)^\dagger$, which implies $U\ge\mu_X(T)$ by Proposition \ref{reverse}(b).
\end{proof}

Now we are ready to prove Theorem \ref{make closer}.

We only prove (a) since (b) follows from (a) and Proposition \ref{reverse}(b).

(i) Let $(U,\Lambda e)$ and $(T,\Lambda e')$ be support $\tau$-tilting pairs
for $\Lambda$. Let
\begin{equation}\label{T approximation of U}
\xymatrix{
U\ar[r]&T^0\ar[r]&T^1\ar[r]&0
}\end{equation}
be an exact sequence given by Proposition \ref{two term left resolution}.
If $T\notin\add T^0$, then any indecomposable direct summand $X$ of $T$
which is not in $\add T^0$ satisfies $U\ge\mu_X(T)>T$ by Lemma \ref{make closer lemma}.
Thus we assume $T\in\add T^0$ in the rest of proof.
Since $\add T^0\cap\add T^1=0$, we have $T^1=0$ which implies
$T^0=U/\langle e'\rangle U$ by Lemma \ref{approximation is surjective}.

(ii) By Proposition \ref{reverse}(b), we know that
$U^\dagger=\Tr U\oplus e\Lambda$ and $T^\dagger=\Tr T\oplus e'\Lambda$ are support
$\tau$-tilting $\Lambda^{\op}$-modules such that $U^\dagger<T^\dagger$. Let
\[\xymatrix{T^\dagger_0 \ar[r]^(0.6){f}&U^\dagger\ar[r]&0}\]
be a minimal right $(\add T^\dagger)$-approximation of $U^\dagger$.
If $e'\Lambda\notin\add T^\dagger_0$, then any indecomposable direct summand
$Q$ of $e'\Lambda$ which is not in $\add T^\dagger_0$ satisfies
$U^\dagger\in\Fac(T^\dagger/Q)$.
Thus we have $U^\dagger\le\mu_Q(T^\dagger)$
and $U\ge\mu_{Q^*}(T)>T$ by Proposition \ref{reverse}.
We assume $e'\Lambda\in\add T^\dagger_0$ in the rest of proof.

(iii) We show that there exists an exact sequence
\begin{equation}\label{another sequence for Tr U}
\xymatrix{P_1\ar[r]^(0.3)a&\Tr T^0\oplus P_0\ar[r]&\Tr U\ar[r]&0}
\end{equation}
in $\mod\Lambda^{\op}$ such that $P_0\in\proj\Lambda^{\op}$, $P_1\in\add e'\Lambda$,
$a\in\rad(P_1,\Tr T^0\oplus P_0)$ and the map
\begin{equation}\label{to Tr U}
\xymatrix{(a,U^\dagger):\Hom_{\Lambda^{\op}}(\Tr T^0\oplus P_0,U^\dagger)\ar[r]&\Hom_{\Lambda^{\op}}(P_1,U^\dagger)}
\end{equation}
is surjective.

Let $Q_1\xto{d} Q_0\to U\to0$ be a minimal projective presentation of $U$.
Let $d':Q'_1\to Q_0$ be a right $(\add\Lambda e')$-approximation of $Q_0$.
Since $T^0=U/\langle e'\rangle U$ by (i), we have a projective presentation
$Q'_1\oplus Q_1\xto{{d'\choose d}} Q_0\to T^0\to0$ of $T^0$.
Thus we have an exact sequence
\[\xymatrix{Q_0^*\ar[r]^(.4){(d'{}^*\ d^*)}&Q'_1{}^*\oplus Q_1^*\ar[r]^{{c'\choose c}}&\Tr T^0\oplus Q\ar[r]&0}\]
for some projective $\Lambda^{\op}$-module $Q$. We have a commutative diagram
\[\xymatrix{
Q_0^*\ar[r]^{d^*}\ar[d]^{d'{}^*}&Q_1^*\ar[r]\ar[d]^{-c}&\Tr U\ar[r]\ar@{=}[d]&0\\
Q'_1{}^*\ar[r]^(.4){c'}&\Tr T^0\oplus Q\ar[r]&\Tr U\ar[r]&0
}\]
of exact sequences. Now we decompose the morphism $c'$ as
\[\xymatrix{c'={a\ \ 0\ \ \choose 0\ 1_{Q''}}:Q'_1{}^*=P_1\oplus Q''\ar[r]&
\Tr T^0\oplus Q=\Tr T^0\oplus P_0\oplus Q'',}\]
where $a$ is in the radical. Then we naturally have an exact sequence
\eqref{another sequence for Tr U}, and clearly we have $P_0\in\proj\Lambda^{\op}$
and $P_1\in\add e'\Lambda$ by our construction.
It remains to show that \eqref{to Tr U} is surjective. We only have to show that the map
\[\xymatrix{(c',U^\dagger):\Hom_{\Lambda^{\op}}(\Tr T^0\oplus Q,U^\dagger)\ar[r]&\Hom_{\Lambda^{\op}}(Q'_1{}^*,U^\dagger)}\]
is surjective.
Take any map $s:Q'_1{}^*\to U^\dagger$. By Proposition \ref{no common summand}(c),
there exists $t:Q_1^*\to U^\dagger$ such that $sd'{}^*=td^*$.
Thus there exists $u:\Tr T^0\oplus Q\to U^\dagger$ such that $s=uc'$ and $t=-uc$,
which shows the assertion.

(iv) First we assume $P_1$ in (iii) is non-zero.
Since $e'\Lambda\in\add T^\dagger_0$ by (ii) and $P_1\in\add e'\Lambda$, we have $P_1\in\add T^\dagger_0$.
Thus there exists a morphism $s:P_1\to T^\dagger_0$ which is not in the radical.
Since \eqref{to Tr U} is surjective, there exists $t:\Tr T^0\oplus P_0\to U^\dagger$ such that $ta=fs$.
Since $f$ is a surjective right $(\add T^\dagger)$-approximation and $P_0$ is projective,
there exists $u:\Tr T^0\oplus P_0\to T^\dagger_0$ such that $t=fu$.
\[\xymatrix{
P_1\ar[r]^(.4)a\ar[d]^s&\Tr T^0\oplus P_0\ar[r]\ar[d]^t\ar@{.>}[dl]^u&\Tr U\ar[r]&0\\
T^\dagger_0\ar[r]_f&U^\dagger\ar[r]&0
}\]
Since $f(s-ua)=0$ and $f$ is right minimal, we have that $s-ua$ is in the radical.
Since $a$ is in the radical, so is $s$, a contradiction.

Consequently, we have $P_1=0$. Thus $\Tr T^0\oplus P_0\simeq\Tr U$ and
$\Tr T^0\simeq \Tr U$. Since $T\in\add T^0$ by our assumption, we have
$\add T_{\rm np}=\add U_{\rm np}$. Since $U>T$, we have
$T_{\rm pr}\in\add U_{\rm pr}$. Thus $U\simeq T\oplus P$ for some projective
$\Lambda$-module $P$.

(v) It remains to consider the case $U\simeq T\oplus P$ for some projective
$\Lambda$-module $P$.

Since $U>T$, we have $\add \Lambda e\subsetneq\add\Lambda e'$.
Take any indecomposable summand $\Lambda e''$ of $\Lambda(e'-e)$
and let $V:=\mu_{\Lambda e''}(T,\Lambda e')$, which has a form
$(T\oplus X,\Lambda(e'-e''))$ with $X$ indecomposable.
Clearly $V>T$ holds. Since $\tau U\in\add\tau(T\oplus X)$ by our assumption
and $\Lambda e\in\add\Lambda(e'-e'')$ by our choice of $e''$, we have
\[\Fac U={}^\perp(\tau U)\cap(\Lambda e)^\perp\supseteq
{}^\perp(\tau(T\oplus X))\cap(\Lambda(e'-e''))^\perp=\Fac V\]
by Corollary \ref{analog of 3 conditions}(c). Thus $U\ge V$ holds.
\qed

\medskip
We end this section with the following application, which is an analog of \cite[Corollary 2.2]{HU2}.

\begin{corollary}\label{connected}
If $\Q(\sttilt\Lambda)$ has a finite connected component $C$, then $\Q(\sttilt\Lambda)=C$.
\end{corollary}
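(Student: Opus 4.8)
The plan is to exploit that $\Q(\sttilt\Lambda)$ is the Hasse quiver of the partially ordered set $\sttilt\Lambda$ (Corollary \ref{Hasse is mutation 2}), that this poset has a unique maximal element $\Lambda$ and a unique minimal element $0$, and that Theorem \ref{make closer} allows us to take one mutation step closer between any two comparable elements. The first observation I would record is that a connected component $C$ is closed under mutation: if $T\in C$ and $T'=\mu_X(T)$ for some indecomposable $X$, then by Theorem \ref{Hasse is mutation} there is an arrow between $T$ and $T'$ in $\Q(\sttilt\Lambda)$ (namely $T\to T'$ when $T'$ is a left mutation of $T$, and $T'\to T$ when $T'$ is a right mutation of $T$). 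In either case $T$ and $T'$ are joined by an edge, so $T'$ lies in the same component $C$.

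Next I would locate $\Lambda$ inside $C$. Since $C$ is finite, I choose an element $T_0$ that is maximal in $C$ with respect to $\ge$. If $T_0\neq\Lambda$, then $\Lambda>T_0$ because $\Lambda$ is the unique maximal element of $\sttilt\Lambda$, so Theorem \ref{make closer}(a) applied to $\Lambda>T_0$ yields a right mutation $V$ of $T_0$ with $\Lambda\ge V$; in particular $V>T_0$. By the closure observation $V\in C$, contradicting the maximality of $T_0$ in $C$. Hence $T_0=\Lambda$, so $\Lambda\in C$.

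Finally I would show that every support $\tau$-tilting module lies in $C$ by descending from $\Lambda$. Fix $T\in\sttilt\Lambda$; then $\Lambda\ge T$. Put $V_0:=\Lambda\in C$, and as long as $V_i>T$, apply Theorem \ref{make closer}(b) to the relation $V_i>T$ to obtain a left mutation $V_{i+1}$ of $V_i$ with $V_i>V_{i+1}\ge T$; by closure $V_{i+1}\in C$. This produces a strictly decreasing chain $\Lambda=V_0>V_1>\cdots$ entirely inside the finite set $C$, which must therefore terminate, and by construction it can terminate only when some $V_n=T$. Thus $T\in C$. As $T$ was arbitrary, $C$ contains every vertex of $\Q(\sttilt\Lambda)$, and being a full connected component it then equals the whole quiver, giving $\Q(\sttilt\Lambda)=C$.

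The delicate point, and the only place where the hypothesis is genuinely used, is the termination of this descending chain exactly at $T$: finiteness of $C$ forbids an infinite strictly decreasing sequence, and Theorem \ref{make closer}(b) guarantees that the descent never stalls above $T$, so the two together force $V_n=T$ for some $n$. Everything else is formal bookkeeping with the Hasse-quiver description and the closure of components under mutation.
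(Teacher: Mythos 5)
Your proof is correct and follows essentially the same route as the paper: both halves rest on Theorem \ref{make closer} together with the finiteness of $C$ and the (implicit in the paper, explicit in your write-up) fact that a connected component is closed under mutation. The only cosmetic difference is that you locate $\Lambda$ in $C$ by an extremal-element argument, whereas the paper builds an increasing chain of right mutations starting from an arbitrary $T\in C$; these are interchangeable.
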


\begin{proof}
Fix $T$ in $C$. Applying Theorem \ref{make closer}(a) to $\Lambda\ge T$, we have a sequence $T=T_0<T_1<T_2<\cdots$ of right mutations of support $\tau$-tilting modules such that $\Lambda\ge T_i$ for any $i$.
Since $C$ is finite, this sequence must be finite. Thus $\Lambda=T_i$ for some $i$, and $\Lambda$ belongs to $C$.
Now we fix any $U\in\sttilt\Lambda$. Applying Theorem \ref{make closer}(b) to $\Lambda\ge U$, we have a sequence $\Lambda=V_0>V_1>V_2>\cdots$ of left mutations of support $\tau$-tilting modules such that $V_i\ge U$ for any $i$.
Since $C$ is finite, this sequence must be finite. Thus $U=V_j$ for some $j$, and $U$ belongs to $C$.
\end{proof}

\section{Connection with silting theory}

Throughout this section, let $\Lambda$ be a finite dimensional algebra
over a field $k$. Any almost complete silting complex has infinitely many complements.
But if we restrict to two-term silting complexes, we get another class of objects extending
the (classical) tilting modules and satisfying the two complement property (Corollary \ref{sptsil5}).
Moreover we will show that there is a bijection between support $\tau$-tilting
$\Lambda$-modules and two-term silting complexes for $\Lambda$, which is of independent interest (Theorem \ref{sptsil4}).
The two-term silting complexes are defined as follows.

\begin{definition}
We call a complex $P=(P^{i},d^i)$ in $\KKb(\proj \Lambda)$ \emph{two-term}\index{two-term complex} if $P^{i}=0$ for all $i\neq 0,-1$.
Clearly $P\in\KKb(\proj \Lambda)$ is two-term\index{two-term complex} if and only if $\Lambda\ge P\ge\Lambda[1]$.
\end{definition}

We denote by $\twosilt\Lambda$\index{2silt@$\twosilt\Lambda$} (respectively, $\twopresilt\Lambda$\index{2presilt@$\twopresilt\Lambda$}) the set of isomorphism classes of basic two-term silting (respectively, presilting) complexes for $\Lambda$.

Clearly any two-term complex is isomorphic to a two-term complex $P=(P^{i},d^i)$ satisfying $d^{-1}\in \rad(P^{-1}, P^{0})$ in $\KKb(\proj \Lambda)$.
Moreover, for any two-term complexes $P$ and $Q$, we have
$\Hom_{\KKb(\proj\Lambda)}(P,Q[i])= 0$ for any $i\neq -1,0,1$.

The aim of this section is to prove the following result.

\begin{theorem}\label{sptsil4}
Let $\Lambda$ be a finite dimensional $k$-algebra.
Then there exists a bijection
\[\twosilt\Lambda\longleftrightarrow\sttilt\Lambda\]
given by $\twosilt\Lambda\ni P\mapsto H^0(P)\in\sttilt\Lambda$
and $\sttilt\Lambda\ni(M,P)\mapsto (P_1\oplus P\xto{(f\ 0)}P_0)\in\twosilt\Lambda$
where $f:P_{1}\rightarrow P_{0}$ is a minimal projective presentation of $M$.
\end{theorem}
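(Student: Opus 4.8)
The plan is to realize the two assignments as mutually inverse bijections by passing, in both directions, through an explicit decomposition of a two-term complex and by reducing all homotopy-category $\Hom$'s to module $\Hom$'s involving $\tau$. The computational heart is the following reduction. For two-term complexes $T=(P_1\xto{f}P_0)$ and $T'=(P_1'\xto{f'}P_0')$ (differentials in the radical), with $M:=H^0(T)$ and $N:=H^0(T')$, applying $\Hom_{\KKb(\proj\Lambda)}(T,-)$ to the triangle $P_1'[1]\xto{f'[1]}P_0'[1]\to T'[1]\to P_1'[2]$, using $\Hom_{\KKb(\proj\Lambda)}(T,Q[2])=0$ and the isomorphism $\Hom_{\KKb(\proj\Lambda)}(T,Q[1])\cong D\Hom_\Lambda(Q,\tau M)$ coming from Proposition \ref{no common summand}(a), yields
\[
\Hom_{\KKb(\proj\Lambda)}(T,T'[1])\cong D\Hom_\Lambda(N,\tau M).
\]
I would also record the elementary computations $\Hom_{\KKb(\proj\Lambda)}(Q[1],T'[1])\cong\Hom_\Lambda(Q,N)$ and $\Hom_{\KKb(\proj\Lambda)}(T,Q[2])=0$ for projectives $Q$. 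Since $\Hom_{\KKb(\proj\Lambda)}(P,Q[i])=0$ for two-term $P,Q$ and $i\neq-1,0,1$ (noted above the statement), being presilting amounts exactly to the vanishing of $\Hom_{\KKb(\proj\Lambda)}(-,-[1])$.

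First I would show that every two-term complex is, up to isomorphism in $\KKb(\proj\Lambda)$, of the form $T_{(M,P)}:=T_M\oplus P[1]$, where $T_M=(P_1\xto{f}P_0)$ is the complex of a minimal projective presentation of $M:=H^0(T)$ and $P$ is projective. Indeed, reducing so that the differential $d\colon Q_1\to Q_0$ lies in the radical makes $Q_0\to M$ a projective cover, so $Q_0\cong P_0$; a projective-cover splitting of $Q_1\twoheadrightarrow\Omega M=\im d$ chosen inside $\Ker d$ then gives $Q_1\cong P_1\oplus P$ with $d|_P=0$, i.e. $T\cong T_M\oplus P[1]$. Using Proposition \ref{no common summand 2} one checks that $T_M$ has no shifted-projective summand and that $M$ indecomposable forces $T_M$ indecomposable, whence $|T|=|M|+|P|$. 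Combining the displayed formula over the summands $T_M$ and $P[1]$ gives
\[
\Hom_{\KKb(\proj\Lambda)}(T_{(M,P)},T_{(M,P)}[1])\cong D\Hom_\Lambda(M,\tau M)\oplus\Hom_\Lambda(P,M),
\]
so $T_{(M,P)}$ is presilting if and only if $(M,P)$ is a $\tau$-rigid pair in the sense of Definition \ref{define support tau tilting}(a). This yields a bijection between basic two-term presilting complexes and basic $\tau$-rigid pairs preserving the number of summands, with $H^0$ and $(M,P)\mapsto T_{(M,P)}$ as inverse assignments ($M$ determines $P$ by Proposition \ref{two notion of tau-rigidity}(b)).

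It then remains to match the maximal objects on both sides, namely to show that a two-term presilting complex $T$ is \emph{silting} precisely when $|T|=|\Lambda|$; granting this, $T=T_{(M,P)}$ is silting iff $|M|+|P|=|\Lambda|$, i.e. iff $(M,P)$ is a support $\tau$-tilting pair, which is the assertion. This equivalence is the main obstacle. One direction is Proposition \ref{number of summands of silting}(a). For the converse I would argue that the classes of the indecomposable summands of a two-term presilting complex are linearly independent in $K_0(\KKb(\proj\Lambda))$, forcing $|T|\le|\Lambda|$, and that any presilting complex is a direct summand of a silting complex; since a silting complex has exactly $|\Lambda|$ summands by Proposition \ref{number of summands of silting}, a presilting complex with $|\Lambda|$ summands must already be silting. (Alternatively one could transport maximality from the module side, using that $\Lambda$ corresponds to the silting complex $\Lambda$, that every support $\tau$-tilting module is reached from $\Lambda$ by mutations, and that these mutations match the silting mutations of Definition-Proposition \ref{silt2.31}.)

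Finally, the two maps are mutually inverse: on one side $H^0(T_{(M,P)})=\Cok(f\ 0)=M$ directly, while on the other the decomposition $T\cong T_{H^0(T)}\oplus P[1]=T_{(H^0(T),P)}$ obtained above shows that the remaining composite is the identity. The steps I expect to be routine are the $\Hom$-reduction formula and the structural decomposition $T\cong T_M\oplus P[1]$; the genuinely delicate point is the passage from the numerical condition $|T|=|\Lambda|$ to the generation condition $\thick T=\KKb(\proj\Lambda)$ defining silting.
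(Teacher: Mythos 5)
Your proposal is correct and follows essentially the same route as the paper: the Hom-reduction you derive from Proposition \ref{no common summand}(a) is exactly the content of the paper's Lemma \ref{sptsil1} (and your computation for shifted projectives is its Lemma \ref{lemma for support}), the decomposition $T\cong T_M\oplus P[1]$ is how Propositions \ref{sptsil2} and \ref{sptsil3} pass between the two sides, and the step from $|T|=|\Lambda|$ to siltingness is the paper's Proposition \ref{silt2.28}, resting on the same external input that a two-term presilting complex is a summand of a two-term silting complex. Only your parenthetical alternative---reaching every support $\tau$-tilting module from $\Lambda$ by mutation---should be dropped, since connectedness of the mutation quiver is not known in general.
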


The following result is quite useful.

\begin{proposition}\label{silt2.28}
Let $P$ be a two-term presilting complex for $\Lambda$.
\begin{itemize}
\item[(a)] $P$ is a direct summand of a two-term silting complex for $\Lambda$.
\item[(b)] $P$ is a silting complex for $\Lambda$ if and only if $|P|=|\Lambda|$.
\end{itemize}
\end{proposition}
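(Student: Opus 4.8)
The plan is to deduce (b) quickly from (a) together with Proposition \ref{number of summands of silting}, and to spend the real effort on (a), which I would prove by a Bongartz-type completion adapted to the two-term setting. Recall that for two-term complexes presilting amounts to the single vanishing $\Hom_{\KKb(\proj\Lambda)}(-,-[1])=0$, since $\Hom_{\KKb(\proj\Lambda)}(P,Q[i])=0$ for $i\neq-1,0,1$ when $P,Q$ are two-term. For (a), set $V:=\Hom_{\KKb(\proj\Lambda)}(P,\Lambda[1])$, which is finite-dimensional of dimension $n$, fix a basis, and let $b\colon P^{\oplus n}\to\Lambda[1]$ be the map whose components are the basis elements, so $b$ is \emph{universal} among maps from copies of $P$ to $\Lambda[1]$. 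Completing $b$ to a triangle and rotating gives
\begin{equation*}
\Lambda\xrightarrow{\ a\ }X\xrightarrow{\ c\ }P^{\oplus n}\xrightarrow{\ b\ }\Lambda[1]
\end{equation*}
in $\KKb(\proj\Lambda)$, and my candidate completion is $P\oplus X$.

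First I would check that $X$ is two-term, i.e. $\Lambda\ge X\ge\Lambda[1]$. Applying $\Hom_{\KKb(\proj\Lambda)}(\Lambda,-)$ to the triangle gives $H^i(X)=0$ for $i>0$ from $H^i(\Lambda)=H^i(P^{\oplus n})=0$ there, while applying $\Hom_{\KKb(\proj\Lambda)}(-,\Lambda[j])$ gives $\Hom_{\KKb(\proj\Lambda)}(X,\Lambda[j])=0$ for $j>1$ because both $\Hom_{\KKb(\proj\Lambda)}(P^{\oplus n},\Lambda[j])$ and $\Hom_{\KKb(\proj\Lambda)}(\Lambda,\Lambda[j])=H^j(\Lambda)$ vanish for $j>1$ (here I use that $P$ is two-term). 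Thus $X$ is two-term.

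Next I would verify that $P\oplus X$ is presilting, i.e. that $\Hom_{\KKb(\proj\Lambda)}(P,P[1])$, $\Hom_{\KKb(\proj\Lambda)}(P,X[1])$, $\Hom_{\KKb(\proj\Lambda)}(X,P[1])$ and $\Hom_{\KKb(\proj\Lambda)}(X,X[1])$ all vanish. The first is the hypothesis. For $\Hom_{\KKb(\proj\Lambda)}(P,X[1])$ I apply $\Hom_{\KKb(\proj\Lambda)}(P,-)$ to the triangle and read off the segment $\Hom_{\KKb(\proj\Lambda)}(P,P^{\oplus n})\xrightarrow{b_\ast}\Hom_{\KKb(\proj\Lambda)}(P,\Lambda[1])\to\Hom_{\KKb(\proj\Lambda)}(P,X[1])\to\Hom_{\KKb(\proj\Lambda)}(P,P^{\oplus n}[1])$; the last term vanishes as $P$ is presilting, and $b_\ast$ is surjective by the universality of $b$ (for $w=\sum_i\lambda_i w_i\in V$ the map $\iota_w=(\lambda_1\,\mathrm{id}_P,\dots,\lambda_n\,\mathrm{id}_P)^{\mathsf T}\colon P\to P^{\oplus n}$ satisfies $b\iota_w=w$), whence $\Hom_{\KKb(\proj\Lambda)}(P,X[1])=0$. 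Applying $\Hom_{\KKb(\proj\Lambda)}(-,P[1])$ sandwiches $\Hom_{\KKb(\proj\Lambda)}(X,P[1])$ between $\Hom_{\KKb(\proj\Lambda)}(P^{\oplus n},P[1])=0$ and $\Hom_{\KKb(\proj\Lambda)}(\Lambda,P[1])=H^1(P)=0$; and applying $\Hom_{\KKb(\proj\Lambda)}(-,X[1])$ sandwiches $\Hom_{\KKb(\proj\Lambda)}(X,X[1])$ between $\Hom_{\KKb(\proj\Lambda)}(P^{\oplus n},X[1])=0$ (just proved) and $\Hom_{\KKb(\proj\Lambda)}(\Lambda,X[1])=H^1(X)=0$. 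So $P\oplus X$ is a two-term presilting complex. Finally, rotating the triangle exhibits $\Lambda[1]$ as a cone of $c\colon X\to P^{\oplus n}$, so $\Lambda\in\thick(P\oplus X)$, and since $\thick\Lambda=\KKb(\proj\Lambda)$ we get $\thick(P\oplus X)=\KKb(\proj\Lambda)$. Hence $P\oplus X$ is a two-term silting complex containing $P$ as a direct summand, proving (a).

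For (b), the forward direction is Proposition \ref{number of summands of silting}(a). Conversely, if $|P|=|\Lambda|$ then by (a) $P$ is a direct summand of a (basic) two-term silting complex $S$ with $|S|=|\Lambda|=|P|$, which forces $\add P=\add S$; thus $\thick P=\thick S=\KKb(\proj\Lambda)$ and $P$, being presilting, is silting. The one genuinely delicate point in the whole argument is the surjectivity of $b_\ast$ used to obtain $\Hom_{\KKb(\proj\Lambda)}(P,X[1])=0$: this is exactly the universality built into the choice of $b$, and once it is in place every remaining step is routine bookkeeping with the long exact sequences and the two-term condition.
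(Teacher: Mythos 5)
Your argument is correct, and every step checks out: the two-term property of $X$ via the characterization $\Lambda\ge X\ge\Lambda[1]$, the surjectivity of $b_*$ coming from the universality of $b$ (which is indeed the one nontrivial point, killing $\Hom_{\KKb(\proj\Lambda)}(P,X[1])$), the remaining three vanishings by sandwiching in the long exact sequences, and the generation statement from the rotated triangle. The comparison with the paper is a little lopsided, because for part (a) the paper gives no argument at all: it simply cites \cite[Proposition 2.16]{Ai}. What you have written is in effect a self-contained proof of that cited result, namely the silting-theoretic analogue of Bongartz completion (form the universal map $P^{\oplus n}\to\Lambda[1]$, take the cone triangle $\Lambda\to X\to P^{\oplus n}\to\Lambda[1]$, and check that $P\oplus X$ is two-term silting); this is also the standard proof found in the reference. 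For part (b) your argument coincides with the paper's: the forward direction is Proposition \ref{number of summands of silting}(a), and the converse combines (a) with the count $|P\oplus X|=|\Lambda|=|P|$ to force $\add P=\add(P\oplus X)$, hence $\thick P=\KKb(\proj\Lambda)$. So the only real difference is that you have opened the black box the paper leaves closed, at the cost of the routine verifications; what this buys is a section that does not depend on \cite{Ai} for the existence of completions.
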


\begin{proof}
(a) This is shown in \cite[Proposition 2.16]{Ai}.

(b) The `only if' part follows from Proposition \ref{number of summands of silting}(a).
We will show the `if' part.
Let $P$ be a two-term presilting complex for $\Lambda$ with $|P|=|\Lambda|$.
By (a), there exists a complex $X$ such that $P\oplus X$ is silting.
Then we have $|P\oplus X| =|\Lambda|=|P|$ by Proposition \ref{number of summands of silting}(a),
so $X$ is in $\add P$. Thus $P$ is silting.
\end{proof}

The following lemma is important.

\begin{lemma}\label{sptsil1}
Let $M,N\in\mod \Lambda$. 
Let $P_{1}\overset{p_{1}}{\rightarrow} P_{0}\overset{p_{0}}{\rightarrow} M\rightarrow 0$ and 
$Q_{1}\overset{q_{1}}{\rightarrow} Q_{0}\overset{q_{0}}{\rightarrow} N\rightarrow 0$
be minimal projective presentations of $M$ and $N$ respectively.
Let $P=(P_{1}\overset{p_{1}}{\rightarrow} P_{0})$ and $Q=(Q_{1}\overset{q_{1}}{\rightarrow} Q_{0})$ be two-term complexes for $\Lambda$.
Then the following conditions are equivalent:
\begin{itemize}
\item[(a)] $\Hom_{\Lambda}(N,\tau M)=0$.
\item[(b)] $\Hom_{\KKb(\proj\Lambda)}(P,Q[1])=0$.
\end{itemize}
In particular, $M$ is a $\tau$-rigid $\Lambda$-module if and only if $P$ is a presilting complex for $\Lambda$.
\end{lemma}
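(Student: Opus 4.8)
The goal is to prove that for $M,N\in\mod\Lambda$ with minimal projective presentations $P_1\xto{p_1}P_0\to M\to0$ and $Q_1\xto{q_1}Q_0\to N\to0$, the vanishing $\Hom_\Lambda(N,\tau M)=0$ is equivalent to $\Hom_{\KKb(\proj\Lambda)}(P,Q[1])=0$, where $P=(P_1\xto{p_1}P_0)$ and $Q=(Q_1\xto{q_1}Q_0)$ sit in degrees $-1,0$. The key is to compute the Hom-space in the homotopy category \emph{explicitly} as a quotient of a space of chain maps by homotopies, and to match it up with the exact sequence already provided in Proposition~\ref{no common summand}(a), which computes $\Hom_\Lambda(N,\tau M)$ in terms of the presentation of $M$ applied to $N$.

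First I would write down what a degree-$1$ chain map $P\to Q[1]$ is. Since $Q[1]=(Q_1\xto{-q_1}Q_0)$ placed in degrees $-2,-1$, a chain map $P\to Q[1]$ is determined by a single morphism $P_0\to Q_1$ (the only overlapping degree is $-1$, giving a component $P^0\to (Q[1])^{-1}=Q_1$); the other component lands in a zero term. So the space of chain maps is $\Hom_\Lambda(P_0,Q_1)$, with no closedness condition beyond what the degrees force. Next I would identify the homotopies: a homotopy from the zero map consists of maps raising degree by one, and the relevant one is $P_0\to Q_0$ composed appropriately, so the image of the homotopy relation inside $\Hom_\Lambda(P_0,Q_1)$ is exactly $\{\,q_1 h + h' p_1 \mid \dots\,\}$. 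Carefully bookkeeping the two contributions, the homotopy boundaries are the morphisms $P_0\to Q_1$ of the form $s\,p_1$ (factoring through $p_1\colon P_1\to P_0$) together with those of the form $q_1$-something; the upshot is that
\[
\Hom_{\KKb(\proj\Lambda)}(P,Q[1])\;\cong\;\Cok\Bigl(\Hom_\Lambda(P_0,Q_0)\xto{}\ \Hom_\Lambda(P_0,Q_1)\ \Bigr)\Big/\,(\text{maps factoring through }p_1),
\]
which I would rather express as the cokernel of the map $\Hom_\Lambda(P_1,Q_0)\oplus\Hom_\Lambda(P_0,Q_0)\to\Hom_\Lambda(P_0,Q_1)$ induced by pre/post-composition.

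The decisive comparison is with Proposition~\ref{no common summand}(a). Applying that result to $X=M$ (with presentation $P_1\xto{p_1}P_0$) and $Y=N$ gives an exact sequence whose left end reads
\[
0\to\Hom_\Lambda(N,\tau M)\to D\Hom_\Lambda(P_1,N)\xto{D(p_1,N)}D\Hom_\Lambda(P_0,N),
\]
so $\Hom_\Lambda(N,\tau M)=0$ exactly when $(p_1,N)\colon\Hom_\Lambda(P_0,N)\to\Hom_\Lambda(P_1,N)$ is surjective (this is precisely part (b) of that proposition). I would then show that surjectivity of $(p_1,N)$ is equivalent to the vanishing of the homotopy-quotient computed above. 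The bridge is that a morphism $P_0\to Q_1$ representing a class in $\Hom_{\KKb}(P,Q[1])$, when post-composed with $q_0\colon Q_0\to N$ through $q_1$, measures exactly the failure of lifting elements of $\Hom_\Lambda(P_1,N)$ through $p_1$; because $Q_1\to Q_0\to N$ is a minimal presentation, every map $P_1\to N$ lifts to $P_1\to Q_0$, and the obstruction to factoring through $p_1$ lives in the same cokernel. The main obstacle I anticipate is the bookkeeping of this identification: one must use minimality of both presentations to discard the homotopy terms correctly and to ensure the boundary maps match on the nose rather than merely up to summands, and one must handle the dualizing functor $D$ carefully so that surjectivity (rather than injectivity) is the condition that survives. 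Once the two cokernel descriptions are aligned, the equivalence (a)$\Leftrightarrow$(b) is immediate, and the final sentence follows by taking $N=M$, since then $P=Q$ and presilting means $\Hom_{\KKb(\proj\Lambda)}(P,P[1])=0$ (the higher $\Hom$-vanishing for $i>1$ being automatic for two-term complexes, as noted just before the statement).
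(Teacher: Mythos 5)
Your overall strategy is the right one, and it is essentially the paper's: reduce (a) to the surjectivity of $(p_1,N)\colon\Hom_\Lambda(P_0,N)\to\Hom_\Lambda(P_1,N)$ via Proposition \ref{no common summand}(b), then compare this with an explicit description of $\Hom_{\KKb(\proj\Lambda)}(P,Q[1])$ as chain maps modulo homotopy. But the central computation is carried out with the degrees reversed, and this is a genuine error rather than a washable typo. With $P_1$ in degree $-1$ and $P_0$ in degree $0$, one has $(Q[1])^{-1}=Q^{0}=Q_0$ and $(Q[1])^{-2}=Q_1$, so the unique possibly nonzero component of a chain map $P\to Q[1]$ is $P^{-1}=P_1\to(Q[1])^{-1}=Q_0$: the space of chain maps is $\Hom_\Lambda(P_1,Q_0)$, not $\Hom_\Lambda(P_0,Q_1)$ as you assert, and the null-homotopic ones are exactly those of the form $q_1h_1+h_0p_1$ with $h_1\in\Hom_\Lambda(P_1,Q_1)$ and $h_0\in\Hom_\Lambda(P_0,Q_0)$. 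The space $\Hom_\Lambda(P_0,Q_1)$ instead parametrizes chain maps $P\to Q[-1]$, a genuinely different object whose vanishing is not equivalent to $\Hom_\Lambda(N,\tau M)=0$. Your proposed cokernel presentation also does not typecheck: there is no map $\Hom_\Lambda(P_1,Q_0)\oplus\Hom_\Lambda(P_0,Q_0)\to\Hom_\Lambda(P_0,Q_1)$ induced by pre/post-composition with $p_1$ and $q_1$, because the differentials point the wrong way for that.

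The error propagates into your "bridge": you propose to post-compose a map $P_0\to Q_1$ with $q_0$ "through $q_1$", but $q_0q_1=0$, so this composite is always zero and measures nothing. With the correct Hom-space the bridge is elementary and needs no cokernel comparison at all: given $f\in\Hom_\Lambda(P_1,Q_0)$, surjectivity of $(p_1,N)$ produces $g\colon P_0\to N$ with $q_0f=gp_1$; lifting $g$ through $q_0$ (projectivity of $P_0$) and then lifting the difference through $q_1$ exhibits $f=q_1h_1+h_0p_1$, i.e.\ $f$ is null-homotopic. Conversely, any $f\in\Hom_\Lambda(P_1,N)$ lifts through $q_0$ to some $g\colon P_1\to Q_0$, and writing $g=q_1h_1+h_0p_1$ gives $f=q_0h_0p_1$, which is surjectivity of $(p_1,N)$. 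Contrary to your worries, no delicate handling of the duality $D$ or of minimality of the presentation of $N$ is required beyond what Proposition \ref{no common summand}(b) already packages. Your final reduction (taking $N=M$, with the higher $\Hom$-vanishing automatic for two-term complexes) is correct.
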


\begin{proof}
The condition (a) is equivalent to the fact that $(p_{1},N):\Hom_{\Lambda}(P_{0},N)\rightarrow \Hom_{\Lambda}(P_{1},N)$ is surjective by Proposition \ref{no common summand}(b).

(a)$\Rightarrow$(b) Any morphism $f\in\Hom_{\KKb(\proj\Lambda)}(P,Q[1])$ is given by some $f\in\Hom_\Lambda(P_1,Q_0)$.
Since $(p_{1},N)$ is surjective, there exists $g:P_{0}\rightarrow N$ such that $q_{0} f=gp_{1}$.
Moreover, since $P_{0}$ is projective, there exists $h_{0}:P_{0}\rightarrow Q_{0}$ such that $q_{0}h_{0}=g$.
Since $q_{0}(f-h_{0}p_{1})=0$, we have $h_{1}:P_{1}\rightarrow Q_{1}$ with $f=q_{1} h_{1}+h_{0}p_{1}$.
\[
\xymatrix{
&0\ar[r]&P_{1}\ar[r]^{p_{1}}\ar[d]_{f}\ar@{.>}[dl]_{h_{1}}&P_{0}\ar[r]^{p_{0}}\ar[d]^g\ar@{.>}[dl]_{h_{0}}&M\ar[r]&0\\
0\ar[r]&Q_{1}\ar[r]_{q_{1}}&Q_{0}\ar[r]_{q_{0}}&N\ar[r]&0.
}
\]
Hence we have $\Hom_{\KKb(\proj\Lambda)}(P,Q[1])=0$.

(b)$\Rightarrow$(a) Take any $f\in\Hom_\Lambda(P_1,N)$.
Since $P_{1}$ is projective, there exists $g:P_{1}\rightarrow Q_{0}$ such that $q_{0} g=f$.
\[
\xymatrix{
&P_{1}\ar[r]^{p_{1}}\ar@{.>}[d]_g\ar[dr]^{f}&P_{0}\\
Q_{1}\ar[r]_{q_{1}}&Q_{0}\ar[r]_{q_{0}}&N \ar[r]&0.
}
\]
Then $g$ gives a morphism $P\to Q[1]$ in $\KKb(\proj\Lambda)$.
Since $\Hom_{\KKb(\proj\Lambda)}(P,Q[1])=0$, there exist $h_{0}:P_{0}\rightarrow Q_{0}$ and $h_{1}:P_{1}\rightarrow Q_{1}$ such that 
$g=q_{1} h_{1}+h_{0}p_{1}$.
Hence we have $f=q_{0}(q_{1} h_{1}+h_{0}p_{1})=q_{0} h_{0}p_{1}$. Therefore $(p_{1},N)$ is surjective.
\end{proof}

We also need the following observation.

\begin{lemma}\label{lemma for support}
Let $P_{1}\overset{p_{1}}{\rightarrow} P_{0}\overset{p_{0}}{\rightarrow} M\rightarrow 0$ be a minimal projective presentation of $M$ in $\mod\Lambda$ and $P:=(P_{1}\overset{p_{1}}{\rightarrow} P_{0})$ be a two-term complex for $\Lambda$.
Then for any $Q$ in $\proj\Lambda$, the following conditions are equivalent.
\begin{itemize}
\item[(a)] $\Hom_\Lambda(Q,M)=0$.
\item[(b)] $\Hom_{\KKb(\proj\Lambda)}(Q,P)=0$.
\end{itemize}
\end{lemma}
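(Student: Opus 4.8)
The plan is to compute both Hom-spaces explicitly and observe that they are canonically isomorphic, so that one vanishes exactly when the other does. First I would unwind what a morphism $Q\to P$ in $\KKb(\proj\Lambda)$ is. Viewing $Q$ as a complex concentrated in degree $0$ and writing $P=(P_{1}\overset{p_{1}}{\rightarrow}P_{0})$ with $P_{0}$ in degree $0$ and $P_{1}$ in degree $-1$, a chain map $Q\to P$ is given by a single homomorphism $\phi\colon Q\to P_{0}$, and every such $\phi$ is automatically a chain map (the commutativity conditions are vacuous). A homotopy for $\phi$ is a map $s\colon Q\to P_{1}$, and the null-homotopy condition reads $\phi=p_{1}s$. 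Hence
\[
\Hom_{\KKb(\proj\Lambda)}(Q,P)\cong\Cok\bigl(\Hom_\Lambda(Q,P_{1})\xrightarrow{(Q,p_{1})}\Hom_\Lambda(Q,P_{0})\bigr).
\]

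Next I would apply the functor $\Hom_\Lambda(Q,-)$ to the projective presentation $P_{1}\overset{p_{1}}{\rightarrow}P_{0}\overset{p_{0}}{\rightarrow}M\to0$. Since $Q$ is projective, this functor is exact, giving the exact sequence
\[
\Hom_\Lambda(Q,P_{1})\xrightarrow{(Q,p_{1})}\Hom_\Lambda(Q,P_{0})\xrightarrow{(Q,p_{0})}\Hom_\Lambda(Q,M)\to0,
\]
which identifies the cokernel in the previous display with $\Hom_\Lambda(Q,M)$. Combining the two isomorphisms yields $\Hom_{\KKb(\proj\Lambda)}(Q,P)\cong\Hom_\Lambda(Q,M)$, so conditions (a) and (b) are equivalent.

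I do not expect a genuine obstacle here: the statement is exactly the standard description of maps out of a stalk complex into a two-term complex, and projectivity of $Q$ is precisely what makes $\Hom_\Lambda(Q,-)$ exact. The only point needing care is the bookkeeping of the homotopy relation, namely that a homotopy can only be a map into $P^{-1}=P_{1}$, so that the null-homotopic chain maps are exactly those factoring through $p_{1}$. Should a more hands-on argument be preferred, in the style of Lemma \ref{sptsil1}, one can instead prove the two implications directly: for (a)$\Rightarrow$(b), given $\phi\colon Q\to P_{0}$ one has $p_{0}\phi=0$ by hypothesis, so $\im\phi\subseteq\Ker p_{0}=\im p_{1}$ and projectivity of $Q$ lifts $\phi$ through $p_{1}$, showing $\phi$ is null-homotopic; for (b)$\Rightarrow$(a), any $g\colon Q\to M$ lifts to $\phi\colon Q\to P_{0}$ with $p_{0}\phi=g$ by projectivity, and the null-homotopy $\phi=p_{1}s$ forces $g=p_{0}p_{1}s=0$.
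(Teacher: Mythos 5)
Your proof is correct; the paper itself omits the argument ("left to the reader since it is straightforward"), and what you supply is exactly the intended routine verification. The identification $\Hom_{\KKb(\proj\Lambda)}(Q,P)\cong\Cok\bigl(\Hom_\Lambda(Q,P_1)\to\Hom_\Lambda(Q,P_0)\bigr)\cong\Hom_\Lambda(Q,M)$, using projectivity of $Q$ both for the vacuous chain-map conditions and for the exactness of $\Hom_\Lambda(Q,-)$ on the presentation, is sound, and your hands-on fallback in the style of Lemma \ref{sptsil1} is equally valid.
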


\begin{proof}
The proof is left to the reader since it is straightforward.
\end{proof}

The following result shows that silting complexes for $\Lambda$ give support $\tau$-tilting modules.

\begin{proposition}\label{sptsil2}
Let $P=(P_1\overset{d}{\rightarrow}P_{0})$ be a two-term complex for $\Lambda$ and $M:=\Cok d$.
\begin{itemize}
\item[(a)] If $P$ is a silting complex for $\Lambda$ and $d$ is right minimal, then $M$ is a $\tau$-tilting $\Lambda$-module.
\item[(b)] If $P$ is a silting complex for $\Lambda$, then $M$ is a support $\tau$-tilting $\Lambda$-module.
\end{itemize}
\end{proposition}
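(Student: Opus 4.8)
The plan is to move back and forth between the module side and the complex side via Lemmas \ref{sptsil1} and \ref{lemma for support}, and to combine this with the characterizations of (support) $\tau$-tilting objects as \emph{maximal} $\tau$-rigid objects (Theorem \ref{3 conditions} and Corollary \ref{analog of 3 conditions}), using Proposition \ref{silt2.28} to control the number of summands. I will prove the stronger pair statement and obtain (a) as a special case of (b).

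First I would normalize the complex. Adopting the standing convention that complexes in $\KKb(\proj\Lambda)$ have radical differentials, we may assume $d\in\rad(P_1,P_0)$; this changes neither $M=\Cok d=H^0(P)$ nor the silting property. Since $d\in\rad$, I split off the degree $-1$ stalks and write $P\simeq\hat P\oplus R[1]$, where $R[1]=(R\to 0)$ collects the summands of $P_1$ killed by $d$ and $\hat P=(\hat P_1\xto{\hat d}P_0)$ has $\hat d$ right minimal. Then $\hat P_1\xto{\hat d}P_0\to M\to 0$ is a minimal projective presentation of $M$: $d\in\rad$ makes $P_0\to M$ a projective cover, and right minimality of $\hat d$ makes $\hat P_1\to\im\hat d$ a projective cover. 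As a direct summand of the silting complex $P$, the complex $\hat P$ is presilting, so $M$ is $\tau$-rigid by Lemma \ref{sptsil1}. Moreover $\Hom_{\KKb(\proj\Lambda)}(R,\hat P)=\Hom_{\KKb(\proj\Lambda)}(R[1],\hat P[1])$ is a direct summand of $\Hom_{\KKb(\proj\Lambda)}(P,P[1])=0$, so $\Hom_\Lambda(R,M)=0$ by Lemma \ref{lemma for support}. Hence $(M,R)$ is a $\tau$-rigid pair.

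The heart of the argument is to show that $(M,R)$ is \emph{maximal}, i.e. condition (b) of Corollary \ref{analog of 3 conditions}. Given an indecomposable $X$ with $(M\oplus X,R)$ $\tau$-rigid, let $C_X$ be the two-term complex attached to a minimal projective presentation of $X$. I would check that $P\oplus C_X=\hat P\oplus R[1]\oplus C_X$ is again presilting. Because all summands are two-term, only the degree $1$ self-extensions matter, and these decompose into blocks that vanish for one of three reasons: the blocks internal to $P$ vanish since $P$ is presilting; the $\hat P$--$C_X$ and $C_X$--$C_X$ blocks vanish by Lemma \ref{sptsil1} applied to the $\tau$-rigidity of $M\oplus X$; and the $R[1]$--$C_X$ block is $\Hom_{\KKb(\proj\Lambda)}(R,C_X)$, which vanishes by Lemma \ref{lemma for support} since $\Hom_\Lambda(R,X)=0$ (the remaining mixed blocks live in shifts $\ge 2$ and vanish automatically for two-term complexes). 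Now Proposition \ref{silt2.28}(a) makes $P\oplus C_X$ a direct summand of a two-term silting complex of size $|\Lambda|$; since $P$ is already silting with $|P|=|\Lambda|$ by Proposition \ref{number of summands of silting}(a), this forces $C_X\in\add P$, whence $X=H^0(C_X)\in\add H^0(P)=\add M$. Thus $(M,R)$ is a support $\tau$-tilting pair, so $M$ is support $\tau$-tilting, proving (b). Part (a) is the case where $d$ is right minimal, i.e. $R=0$: then $(M,0)$ is a support $\tau$-tilting pair, which by Proposition \ref{two notion of tau-rigidity}(a) says precisely that $M$ is a $\tau$-tilting $\Lambda$-module.

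The step I expect to be the main obstacle is the presilting verification for $P\oplus C_X$: one must match each $\Hom$-block across the three summands to the correct module-level vanishing and invoke that two-term complexes have no extensions in degrees $\ge 2$. The conceptual crux is the size bound at the end — once $P\oplus C_X$ is presilting, it is the maximality of silting among two-term presilting complexes (packaged in Proposition \ref{silt2.28}) together with $|P|=|\Lambda|$ that collapses the extension and yields $X\in\add M$.
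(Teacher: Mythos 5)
Your proof is correct, but the decisive step is genuinely different from the paper's. Both arguments start identically: split $P\simeq \hat P\oplus R[1]$ with $\hat P=(P_1'\xto{d'}P_0)$ the minimal projective presentation of $M$ and $R=P_1''$ the stalk part, then use Lemma \ref{sptsil1} and Lemma \ref{lemma for support} to see that $(M,P_1'')$ is a $\tau$-rigid pair. From there the paper simply counts: $|M|=|\hat P|$ because $\hat P$ is a minimal presentation, so $|M|+|P_1''|=|P|=|\Lambda|$ by Proposition \ref{number of summands of silting}(a), and the pair is support $\tau$-tilting directly from Definition \ref{define support tau tilting}. You instead verify the maximality criterion of Corollary \ref{analog of 3 conditions}(b): any $X$ with $(M\oplus X,R)$ $\tau$-rigid produces a two-term presilting complex $P\oplus C_X$ (your block-by-block check of the degree-one Hom spaces is accurate, and the shift-two blocks do vanish for two-term complexes), and then Proposition \ref{silt2.28}(a) together with $|P|=|\Lambda|$ forces $C_X\in\add P$, hence $X\in\add M$. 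The trade-off is this: the paper's route is shorter but leans on the identity $|M|=|P_1'\xto{d'}P_0|$, i.e.\ on minimal presentations of non-isomorphic indecomposables giving non-isomorphic indecomposable complexes; your route avoids that counting identity but imports the Bongartz-type completion of two-term presilting complexes (Proposition \ref{silt2.28}(a), Aihara's result), a strictly heavier external input, and in effect re-derives a fragment of the injectivity in Theorem \ref{sptsil4}. Your deduction of (a) from (b) via Proposition \ref{two notion of tau-rigidity}(a) matches the paper's.
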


\begin{proof}
(b) We write $d=(d^{\prime}\ 0):P_1=P'_1\oplus P''_1 \rightarrow P_0$, where $d^{\prime}$ is right minimal. 
Then the sequence $P'_1\overset{d^{\prime}}{\rightarrow}P_0\rightarrow M\rightarrow 0$ is a minimal projective presentation of $M$.
We show that $(M,P_1'')$ is a support $\tau$-tilting pair for $\Lambda$.
Since $P$ is silting, $M$ is a $\tau$-rigid $\Lambda$-module by Lemma \ref{sptsil1}.
On the other hand, since $P$ is silting, we have
$\Hom_{\KKb(\proj\Lambda)}(P''_1,P)=0$. By Lemma \ref{lemma for support}, we have
$\Hom_\Lambda(P''_1,M)=0$. Thus $(M,P''_1)$ is a $\tau$-rigid pair for $\Lambda$.
Since $d'$ is a minimal projective presentation of $M$, we have $|M|=|P'_{1}\xto{d'} P_{0}|$.
Thus we have
\[
|M|+|P''_1|=|P'_{1}\xto{d'} P_{0}|+|P''_1|=|P|,
\]  
which is equal to $|\Lambda|$ by Proposition \ref{number of summands of silting}(a).
Hence $(M,P''_1)$ is a support $\tau$-tilting pair for $\Lambda$.

(a) This is the case $P_1''=0$ in (b).
\end{proof}

The following result shows that support $\tau$-tilting $\Lambda$-modules give silting complexes for $\Lambda$.

\begin{proposition}\label{sptsil3}
Let $P_{1}\xto{d_1} P_{0}\xto{d_0}M\rightarrow 0$ be a minimal projective presentation of $M$ in $\mod\Lambda$.
\begin{itemize}
\item[(a)] If $M$ is a $\tau$-tilting $\Lambda$-module, then $(P_{1}\xto{d_1} P_{0})$ is a silting complex for $\Lambda$.
\item[(b)] If $(M,Q)$ is a support $\tau$-tilting pair for $\Lambda$,
then $P_{1} \oplus Q\xrightarrow{(d_1\ 0)} P_{0}$ is a silting complex for $\Lambda$.
\end{itemize}
\end{proposition}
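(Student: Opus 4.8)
The plan is to reduce both parts to a single uniform statement. Writing $P:=(P_{1}\xto{d_1}P_{0})$ for the minimal presentation complex of $M$ and $Q[1]$ for the stalk complex with $Q$ placed in degree $-1$, the complex in (b) is exactly $\tilde P:=P\oplus Q[1]$. Since a $\tau$-tilting module $M$ is the same thing as a support $\tau$-tilting pair $(M,0)$, part (a) is the special case $Q=0$, so I would only treat (b). By Proposition \ref{silt2.28}(b) a two-term presilting complex is silting as soon as it has exactly $|\Lambda|$ indecomposable summands, so the whole proof splits into two tasks: show $\tilde P$ is presilting, and show $|\tilde P|=|\Lambda|$.

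For presilting I must check $\Hom_{\KKb(\proj\Lambda)}(\tilde P,\tilde P[1])=0$, all other positive shifts vanishing automatically because $\tilde P$ is two-term. Expanding $\tilde P=P\oplus Q[1]$ splits this $\Hom$ into four summands. The cross term $\Hom_{\KKb(\proj\Lambda)}(P,Q[2])$ vanishes for degree reasons, and $\Hom_{\KKb(\proj\Lambda)}(Q[1],Q[2])\cong\Ext^1_\Lambda(Q,Q)=0$ since $Q$ is projective. The term $\Hom_{\KKb(\proj\Lambda)}(P,P[1])$ vanishes by Lemma \ref{sptsil1}, because $(M,Q)$ being a support $\tau$-tilting pair forces $M$ to be $\tau$-rigid over $\Lambda$ by Proposition \ref{two notion of tau-rigidity}(a). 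Finally $\Hom_{\KKb(\proj\Lambda)}(Q[1],P[1])\cong\Hom_{\KKb(\proj\Lambda)}(Q,P)$, and this is zero by Lemma \ref{lemma for support}, since the pair condition gives $\Hom_\Lambda(Q,M)=0$. Thus $\tilde P$ is presilting.

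For the count, the key input is that a minimal projective presentation yields $|P|=|M|$. I would argue this by decomposing $M=\bigoplus_i M_i$ into indecomposables; minimal presentations are additive, so $P=\bigoplus_i P^{(i)}$ with $H^0(P^{(i)})=M_i$, and it suffices to see each $P^{(i)}$ is indecomposable. A nonzero direct summand of a two-term complex with radical differential whose $H^0$ vanishes must be a stalk concentrated in degree $-1$, hence a projective summand of $P_1$ lying in $\ker d_1$; but minimality makes $\ker d_1$ superfluous in $P_1$, so it contains no nonzero projective summand. Granting $|P|=|M|$, the complexes $P$ and $Q[1]$ share no indecomposable summand (one has no degree-$(-1)$ stalk part, the other is such a stalk), so $|\tilde P|=|P|+|Q|=|M|+|Q|=|\Lambda|$ by the defining equality of a support $\tau$-tilting pair. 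Proposition \ref{silt2.28}(b) then finishes the argument. The main obstacle is precisely the bookkeeping identity $|P|=|M|$ (the same fact already used silently in the proof of Proposition \ref{sptsil2}); the presilting part is a formal consequence of Lemmas \ref{sptsil1} and \ref{lemma for support}, and once the count is in hand the conclusion is immediate.
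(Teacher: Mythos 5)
Your argument is correct and follows essentially the same route as the paper's: reduce (a) to (b) with $Q=0$, verify presilting via Lemmas \ref{sptsil1} and \ref{lemma for support}, count $|P_1\oplus Q\to P_0|=|M|+|Q|=|\Lambda|$, and conclude by Proposition \ref{silt2.28}(b). The only difference is that you spell out the justification of $|P_1\xto{d_1}P_0|=|M|$ (via minimality forcing $\ker d_1\subseteq\rad P_1$), which the paper asserts without proof; this is a welcome but inessential addition.
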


\begin{proof}
(b) We know that $(P_{1}\xto{d_1} P_{0})$ is a presilting complex for $\Lambda$ by Lemma \ref{sptsil1}.
Let $P:=(P_{1}\oplus Q\xto{(d_1\ 0)} P_{0})$.
By Lemmas \ref{sptsil1} and \ref{lemma for support}, we have that $P$ is a presilting complex for $\Lambda$.
Since $d_1$ is a minimal projective presentation, we have $|P_{1}\xto{d_1}P_{0}|=|M|$.
Moreover, since $(M,Q)$ is a support $\tau$-tilting pair for $\Lambda$,
we have $|M|+|Q|=|\Lambda|$. Thus we have 
\[
|P|=|P_{1}\xto{d_1}P_{0}|+|Q|=|M|+|Q|=|\Lambda|.  
\]
Hence $P$ is a silting complex for $\Lambda$ by Proposition \ref{silt2.28}(b).

(a) This is the case $Q=0$ in (b).
\end{proof}

Now Theorem \ref{sptsil4} follows from Propositions \ref{sptsil2} and \ref{sptsil3}.
\qed

\medskip
We give some applications of Theorem \ref{sptsil4}.

\begin{corollary}\label{sptsil5}
Let $\Lambda$ be a finite dimensional $k$-algebra.
\begin{itemize}
\item[(a)] Any basic two-term presilting complex $P$ for $\Lambda$ with $|P|=|\Lambda|-1$ is a direct summand of exactly two basic two-term silting complexes for $\Lambda$.
\item[(b)] Let $P,Q\in\twosilt\Lambda$. Then $P$ and $Q$ have  all but one indecomposable direct summand in common if and only if $P$ is a left or right mutation of $Q$.
\end{itemize}
\end{corollary}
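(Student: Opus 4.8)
The plan is to transport both statements through the bijection $\twosilt\Lambda\leftrightarrow\sttilt\Lambda$ of Theorem \ref{sptsil4} and to invoke the two-complement property for support $\tau$-tilting pairs (Theorem \ref{2 complements}). The first thing I would record is that the correspondence of Theorem \ref{sptsil4} extends to a summand-preserving bijection between basic two-term \emph{presilting} complexes and basic $\tau$-rigid pairs: it sends $P$, written up to isomorphism as $(P'_1\xto{d'}P_0)\oplus P''_1[1]$ with $d'$ right minimal, to the pair $(H^0(P),P''_1)$, and conversely sends $(M,Q)$ to $(P_1\xto{d_1}P_0)\oplus Q[1]$ for a minimal projective presentation $P_1\xto{d_1}P_0$ of $M$. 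Lemmas \ref{sptsil1} and \ref{lemma for support} guarantee that these land in $\tau$-rigid pairs and presilting complexes respectively, and the assignment sends indecomposable summands to indecomposable summands (an indecomposable two-term presilting complex is either a stalk $Q[1]$ of an indecomposable projective or the minimal complex of an indecomposable $\tau$-rigid module) with $|P|=|H^0(P)|+|P''_1|$.

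For (a), let $P$ be a basic two-term presilting complex with $|P|=|\Lambda|-1$ and let $(M,P''_1)$ be the associated basic $\tau$-rigid pair. The summand count gives $|M|+|P''_1|=|P|=|\Lambda|-1$, so $(M,P''_1)$ is a basic almost complete support $\tau$-tilting pair. By Theorem \ref{2 complements} it is a direct summand of exactly two basic support $\tau$-tilting pairs, and applying the inverse correspondence (Proposition \ref{sptsil3}) to these yields exactly two basic two-term silting complexes having $P$ as a direct summand. Conversely, every basic two-term silting complex containing $P$ corresponds under $H^0$ to a support $\tau$-tilting pair containing $(M,P''_1)$, so there are no others; existence of at least one completion is also Proposition \ref{silt2.28}(a).

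For (b), the ($\Leftarrow$) direction is immediate from Definition-Proposition \ref{silt2.31}: if $Q=\mu^-_X(P)$ (or $\mu^+_X(P)$) then $Q=Y\oplus(P/X)$ shares the $|\Lambda|-1$ summands of $P/X$ with $P=X\oplus(P/X)$, and $Y\not\simeq X$ since $Q$ is a different basic silting complex. For ($\Rightarrow$), write $P=X\oplus R$ and $Q=Y\oplus R$ with common part $R$, $|R|=|\Lambda|-1$; then $R$ is a basic two-term presilting complex with $|R|=|\Lambda|-1$, so by (a) its only two completions are $P$ and $Q$. I would then show $P,Q$ are comparable in the silting order and that one covers the other: any silting $V$ with $P\ge V\ge Q$ is automatically two-term, since $\Lambda\ge P$ and $Q\ge\Lambda[1]$ force $\Lambda\ge V\ge\Lambda[1]$, and transporting $P>V>Q$ through the bijection of Theorem \ref{sptsil4} gives $T_P>T_V>T_Q$ in $\sttilt\Lambda$. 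But $T_P$ and $T_Q$ share an almost complete support $\tau$-tilting pair, hence are the two completions of it and are comparable by Theorem \ref{2 complements}, so by Theorem \ref{Hasse is mutation} no such $T_V$ exists. Thus $P\to Q$ is a covering relation in $\silt\Lambda$, and by \cite[Theorem 2.35]{AI} this covering relation is precisely a left mutation $Q=\mu^-_X(P)$; the reverse inequality gives a right mutation.

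The main obstacle is the final step of (b): reconciling the silting mutation $\mu^\pm$ of Definition-Proposition \ref{silt2.31} with the mutation of support $\tau$-tilting pairs. Concretely, one must verify that the bijection of Theorem \ref{sptsil4} is compatible with the partial orders — the silting order on $\twosilt\Lambda\subseteq\silt\Lambda$ and the torsion-class inclusion order on $\sttilt\Lambda$ — and that covering relations within the two-term part of $\silt\Lambda$ coincide with covering relations in all of $\silt\Lambda$, which rests on the observation that betweenness stays two-term. Once this order-compatibility is established, both parts reduce to the already-proved Hasse-quiver descriptions (Theorem \ref{Hasse is mutation}, Corollary \ref{Hasse is mutation 2}, and \cite[Theorem 2.35]{AI}), and the arguments in (a) and in the two directions of (b) become routine bookkeeping with summand counts.
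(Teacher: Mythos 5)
Your proof of (a) is correct and is exactly the paper's argument: the paper disposes of (a) with the single line ``This follows from Theorems \ref{2 complements} and \ref{sptsil4}'', and your summand-preserving dictionary between basic two-term presilting complexes and basic $\tau$-rigid pairs is precisely the bookkeeping that makes that line work. For (b) the paper says only ``This is immediate from (a)'', so here you are supplying an argument the paper does not display, and the route you choose is legitimate and non-circular: the order-compatibility of the bijection that you flag as the main obstacle is exactly Corollary \ref{sptsil6}, which the paper proves right after \ref{sptsil5} using only Lemma \ref{partial order of tautilting} and Lemmas \ref{sptsil1}, \ref{lemma for support} (not \ref{sptsil5} itself), so you may establish it first. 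Your two auxiliary observations are the ones that make ``immediate'' honest: (i) comparability of the two completions, which you correctly extract from the ``Moreover'' clause of Theorem \ref{2 complements} since $\Fac U\subseteq{}^\perp(\tau U)\cap Q^\perp$; and (ii) the fact that any silting $V$ with $P\ge V\ge Q$ satisfies $\Lambda\ge V\ge\Lambda[1]$ and is therefore two-term, which is what lets you pass from ``no two-term silting complex strictly between'' (Theorem \ref{Hasse is mutation} transported through the order isomorphism) to a genuine covering relation in all of $\silt\Lambda$, where \cite[Theorem 2.35]{AI} identifies covers with left mutations. This second point is worth having explicitly, because the naive shortcut --- apply $\mu^\pm_X$ to $P=X\oplus R$ and invoke (a) --- stalls on the fact that the cone in Definition-Proposition \ref{silt2.31} is not visibly a two-term complex. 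The only trivial gaps are cosmetic: in ($\Leftarrow$) you should note that the new summand $Y$ is indecomposable and not in $\add(P/X)$, which follows from $|\mu^-_X(P)|=|\Lambda|$ and $\mu^-_X(P)\not\simeq P$.
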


\begin{proof}
(a) This follows from Theorems \ref{2 complements} and \ref{sptsil4}.

(b) This is immediate from (a).
\end{proof}

Now we define $\Q(\twosilt\Lambda)$\index{3q2silt@$\Q(\twosilt\Lambda)$} as the full subquiver of $\Q(\silt\Lambda)$ with vertices corresponding to two-term silting complexes for $\Lambda$.

\begin{corollary}\label{sptsil6}
The bijection in Theorem \ref{sptsil4} is an isomorphism of the partially ordered sets.
In particular, it induces an isomorphism between the two-term silting quiver $\Q(\twosilt\Lambda)$ and the support $\tau$-tilting quiver $\Q(\sttilt\Lambda)$.
\end{corollary}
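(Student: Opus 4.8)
The plan is to reduce the statement to a single order-theoretic fact: that the bijection of Theorem \ref{sptsil4} is order-preserving and order-reflecting, i.e.\ $T\ge U$ in $\sttilt\Lambda$ if and only if the corresponding two-term silting complexes satisfy $P_T\ge P_U$ in $\silt\Lambda$. Once this is shown, the statement about quivers follows formally: $\Q(\sttilt\Lambda)$ is the Hasse quiver of $\sttilt\Lambda$ by Corollary \ref{Hasse is mutation 2}, while $\Q(\silt\Lambda)$ is the Hasse quiver of $\silt\Lambda$ by \cite[Theorem 2.35]{AI}, so a poset isomorphism, which carries covering relations to covering relations, will do the rest.

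For the order comparison I would use the two characterizations already available. On the silting side, since $P_T$ and $P_U$ are two-term, $\Hom_{\KKb(\proj\Lambda)}(P_T,P_U[i])=0$ for $i\neq-1,0,1$ automatically, so by Definition-Proposition \ref{silt2.11} we have $P_T\ge P_U$ if and only if $\Hom_{\KKb(\proj\Lambda)}(P_T,P_U[1])=0$. On the module side, Lemma \ref{partial order of tautilting} says $T\ge U$ if and only if $\Hom_\Lambda(U,\tau T)=0$ and $\add P\subseteq\add Q$, where $(T,P)$ and $(U,Q)$ are the associated support $\tau$-tilting pairs. The heart of the argument is therefore to compute $\Hom_{\KKb(\proj\Lambda)}(P_T,P_U[1])$ and match it with these two module-theoretic conditions.

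To do this I would decompose each complex according to its pair, writing $P_T\simeq \Pi_T\oplus P[1]$, where $\Pi_T$ is the two-term complex coming from a minimal projective presentation of $T$ and $P[1]$ is the shifted projective part of the pair, and similarly $P_U\simeq\Pi_U\oplus Q[1]$. Expanding $\Hom_{\KKb(\proj\Lambda)}(\Pi_T\oplus P[1],\,\Pi_U[1]\oplus Q[2])$ into four summands: two of them admit no nonzero chain maps for degree reasons (the target $Q[2]$ sits in degree $-2$, while $\Pi_T$, $P[1]$ are concentrated in degrees $\ge -1$), the summand $\Hom_{\KKb(\proj\Lambda)}(\Pi_T,\Pi_U[1])$ equals $\Hom_\Lambda(U,\tau T)$ by Lemma \ref{sptsil1}, and the summand $\Hom_{\KKb(\proj\Lambda)}(P[1],\Pi_U[1])\simeq\Hom_{\KKb(\proj\Lambda)}(P,\Pi_U)$ reduces, since $P$ is projective, to $\Hom_\Lambda(P,U)$. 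Thus $\Hom_{\KKb(\proj\Lambda)}(P_T,P_U[1])=0$ if and only if $\Hom_\Lambda(U,\tau T)=0$ and $\Hom_\Lambda(P,U)=0$. It then remains to identify $\Hom_\Lambda(P,U)=0$ with $\add P\subseteq\add Q$: writing $\add Q=\add\Lambda e_U$, the module $U$ is a sincere (hence $\tau$-tilting) $(\Lambda/\langle e_U\rangle)$-module by Proposition \ref{sincere}(a), so for a primitive idempotent $e_i$ one has $e_iU=\Hom_\Lambda(\Lambda e_i,U)=0$ exactly when $\Lambda e_i\in\add Q$; applied to the indecomposable summands of $P$ this yields $\Hom_\Lambda(P,U)=0$ if and only if $\add P\subseteq\add Q$. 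Combining everything gives $P_T\ge P_U\iff T\ge U$, the desired poset isomorphism.

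Finally, for the quiver statement the key point is that $\twosilt\Lambda$ is the interval $[\Lambda[1],\Lambda]$ in $\silt\Lambda$, by the characterization $\Lambda\ge P\ge\Lambda[1]$ of two-term complexes. Because this subset is convex, any $P''$ with $P>P''>P'$ and $P,P'\in\twosilt\Lambda$ automatically satisfies $\Lambda\ge P''\ge\Lambda[1]$, hence lies in $\twosilt\Lambda$; so covering relations of $\silt\Lambda$ between two-term complexes coincide with covering relations within $\twosilt\Lambda$. Therefore the full subquiver $\Q(\twosilt\Lambda)$ of the Hasse quiver $\Q(\silt\Lambda)$ is exactly the Hasse quiver of $\twosilt\Lambda$, and the order-preserving bijection above transports it isomorphically onto the Hasse quiver $\Q(\sttilt\Lambda)$ of $\sttilt\Lambda$, with orientations matching since the bijection preserves (rather than reverses) the order. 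I expect the main technical obstacle to be the bookkeeping in the four-summand $\Hom$ computation—keeping the projective parts of the pairs correctly separated and justifying the reduction $\Hom_{\KKb(\proj\Lambda)}(P,\Pi_U)\simeq\Hom_\Lambda(P,U)$—together with the sincerity translation of $\Hom_\Lambda(P,U)=0$ into $\add P\subseteq\add Q$.
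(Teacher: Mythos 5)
Your proof is correct and follows essentially the same route as the paper: reduce everything to showing $T\ge U$ if and only if $\Hom_{\KKb(\proj\Lambda)}(P_T,P_U[1])=0$, using Lemma \ref{partial order of tautilting} on the module side, Lemma \ref{sptsil1} and Lemma \ref{lemma for support} for the two nontrivial summands, the sincerity argument from Proposition \ref{sincere}(a) to translate $\Hom_\Lambda(P,U)=0$ into $\add P\subseteq\add Q$, and the vanishing of the remaining shifted summands for degree reasons. The only point where you go beyond the paper is in spelling out the convexity argument identifying the full subquiver $\Q(\twosilt\Lambda)$ of the Hasse quiver $\Q(\silt\Lambda)$ with the Hasse quiver of the sub-poset $\twosilt\Lambda$, a step the paper leaves implicit in its ``in particular.''
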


\begin{proof}
Let $(M,\Lambda e)$ and $(N,\Lambda f)$ be support $\tau$-tilting pairs for $\Lambda$.
Let $P:=(P_{1}{\rightarrow} P_{0})$ and $Q:=(Q_{1}{\rightarrow}Q_{0})$ be minimal projective presentations of $M$ and $N$ respectively.
We only have to show that $M\ge N$ if and only if $\Hom_{\KKb(\proj\Lambda)}(P\oplus \Lambda e[1],(Q\oplus\Lambda f[1])[1])=0$.

We know that $M\ge N$ if and only if $\Hom_\Lambda(N,\tau M)=0$ and $\Lambda e\in\add\Lambda f$ by Lemma \ref{partial order of tautilting}.
Moreover $\Hom_\Lambda(N,\tau M)=0$ if and only if $\Hom_{\KKb(\proj\Lambda)}(P,Q[1])=0$ by by Lemma \ref{sptsil1}.
On the other hand $\Lambda e\in\add\Lambda f$ if and only if $\Hom_\Lambda(\Lambda e,N)=0$ since $N$ is a sincere $(\Lambda/\langle f\rangle)$-module.
Thus $\Lambda e\in\add\Lambda f$ is equivalent to $\Hom_{\KKb(\proj\Lambda)}(\Lambda e,Q)=0$ by Lemma \ref{lemma for support}.
Consequently $M\ge N$ if and only if $\Hom_{\KKb(\proj\Lambda)}(P\oplus \Lambda e[1],Q[1])=0$,
and this is equivalent to $\Hom_{\KKb(\proj\Lambda)}(P\oplus \Lambda e[1],(Q\oplus\Lambda f[1])[1])=0$ since $\Hom_{\KKb(\proj\Lambda)}(P\oplus \Lambda e[1],\Lambda f[2])=0$ is automatic.
Thus the assertion follows.
\end{proof}

Immediately we have the following application.

\begin{corollary}\label{connected2}
If $\Q(\twosilt\Lambda)$ has a finite connected component $C$, then $\Q(\twosilt\Lambda)=C$.
\end{corollary}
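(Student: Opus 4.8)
The plan is to deduce this immediately from the corresponding statement for support $\tau$-tilting modules, namely Corollary \ref{connected}, by transporting it across the quiver isomorphism established in Corollary \ref{sptsil6}. First I would recall that the bijection of Theorem \ref{sptsil4} induces, by Corollary \ref{sptsil6}, an isomorphism of quivers $\Q(\twosilt\Lambda)\cong\Q(\sttilt\Lambda)$. Since a quiver isomorphism is a bijection on vertices that preserves arrows in both directions, it restricts to a bijection between the connected components of the two quivers, and it carries finite components to finite components. In particular, a finite connected component $C$ of $\Q(\twosilt\Lambda)$ corresponds under the isomorphism to a finite connected component $C'$ of $\Q(\sttilt\Lambda)$.

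Next I would apply Corollary \ref{connected} to $C'$: it asserts that a finite connected component of $\Q(\sttilt\Lambda)$ must exhaust the whole quiver, so $\Q(\sttilt\Lambda)=C'$. Pulling this equality back through the isomorphism yields $\Q(\twosilt\Lambda)=C$, which is precisely the claim.

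There is essentially no obstacle here once Corollary \ref{sptsil6} is in hand; the entire content of the argument is the order-preserving quiver isomorphism, together with the purely formal fact that connectedness and finiteness are invariants of a quiver. Alternatively, one could give a self-contained proof mirroring that of Corollary \ref{connected}: fixing a vertex $P$ in $C$, one would use the two-term analogue of the ``make closer'' result (via Corollary \ref{sptsil5}) to build a chain of right mutations from $P$ up to $\Lambda$ and a chain of left mutations from $\Lambda$ down to an arbitrary two-term silting complex; finiteness of $C$ forces both chains to terminate inside $C$, whence every vertex lies in $C$. But since the isomorphism of Corollary \ref{sptsil6} already matches mutations with mutations and the partial orders with each other, transporting Corollary \ref{connected} directly is the shortest route, and that is the approach I would take.
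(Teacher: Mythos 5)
Your proposal is correct and follows exactly the paper's route: the paper's proof reads simply ``This is immediate from Corollaries \ref{connected} and \ref{sptsil6},'' i.e.\ it transports the finiteness statement for $\Q(\sttilt\Lambda)$ across the quiver isomorphism of Corollary \ref{sptsil6}, just as you do. Your write-up merely makes explicit the formal step (isomorphisms preserve connected components and their finiteness) that the paper leaves to the reader.
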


\begin{proof}
This is immediate from Corollaries \ref{connected} and \ref{sptsil6}.
\end{proof}

Note also that Theorem \ref{sptsil4} and Corollary \ref{sptsil6} give an alternative proof of
Theorem \ref{make closer} since the corresponding property for two-term silting
complexes holds by \cite[Proposition 2.36]{AI}.

\section{Connection with cluster-tilting theory}\label{sec3}

Let $\CC$ be a Hom-finite Krull-Schmidt 2-Calabi-Yau (2-CY for short) triangulated category
(for example, the cluster category $\CC_Q$ associated with a finite acyclic quiver $Q$ \cite{BMRRT}). 
We shall assume that our category $\CC$ has a cluster-tilting object $T$.
Associated with $T$, we have by definition the 2-CY-tilted algebra $\Lambda=\End_{\CC}(T)^{\op}$, whose module category is closely connected with the 2-CY-category $\CC$. In particular, there is an equivalence of categories \cite{BMR1,KR}:
\begin{equation}\label{bar functor}
\functor{(-)}:=\Hom_{\CC}(T,-):\CC/[T[1]]\to\mod\Lambda.
\end{equation}\index{$\functor{(-)}$}
In this section we investigate this relationship more closely by giving a bijection between cluster-tilting objects in $\CC$ and support $\tau$-tilting 
$\Lambda$-modules (Theorem \ref{bijection between CT and PT}). 
This was the starting point for the theory of $\tau$-rigid and $\tau$-tilting modules.
As an application, we give a proof of some known results for cluster-tilting objects (Corollary \ref{application to CT}). Also we give a direct connection between
cluster-tilting objects in $\CC$ and two-term silting complexes for $\Lambda$ (Theorem \ref{ctsilt3}).
There is an induced isomorphism between the associated graphs (Corollary \ref{ctsilt4}).


\subsection{Support $\tau$-tilting modules and cluster-tilting objects}

In this subsection we show that there is a close relationship between
the cluster-tilting objects in $\CC$ and support $\tau$-tilting $\Lambda$-modules.
We use this to apply our main Theorem \ref{main theorem 1}
to get a new proof of the fact that almost complete cluster-tilting objects have exactly two complements,
and of the fact that all maximal rigid objects are cluster-tilting, as first proved in \cite{IY} and \cite{ZZ}, respectively.

We denote by $\iso\CC$\index{3iso@$\iso\CC$} the set of isomorphism classes of objects in a category $\CC$.
From our equivalence \eqref{bar functor}, we have a bijection
\begin{equation*}
\widetilde{(-)}:\iso\CC\longleftrightarrow\iso(\mod\Lambda)\times\iso(\proj\Lambda)
\end{equation*}\index{$\widetilde{(-)}$}
given by $X=X'\oplus X''\mapsto\widetilde{X}:=(\functor{X'},\functor{X''[-1]})$, where $X''$ is a maximal direct summand of $X$ which belongs to $\add T[1]$.
We denote by $\rigid\CC$\index{3rigid@$\rigid\CC$} (respectively, $\mrigid\CC$\index{3mrigid@$\mrigid\CC$}) the set of isomorphism classes of basic rigid (respectively, maximal rigid) objects in $\CC$,
and by $\ctilt_T\CC$\index{3ctiltt@$\ctilt_T\CC$} the set of isomorphism classes of basic cluster-tilting objects in $\CC$ which do not have non-zero direct summands in $\add T[1]$.

Our main result in this section is the following.

\begin{theorem}\label{bijection between CT and PT}
The bijection $\widetilde{(-)}$ induces bijections
\begin{eqnarray*}
\rigid\CC\longleftrightarrow\trigid\Lambda,\ \ \ 
\ctilt\CC\longleftrightarrow\sttilt\Lambda\ \ \ \mbox{and}\ \ \ 
\ctilt_T\CC\longleftrightarrow\ttilt\Lambda.
\end{eqnarray*}
Moreover we have $\ctilt\CC=\mrigid\CC=\{U\in\rigid\CC\ |\ |U|=|T|\}$.
\end{theorem}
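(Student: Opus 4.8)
The plan is to upgrade the bijection $\widetilde{(-)}\colon\iso\CC\leftrightarrow\iso(\mod\Lambda)\times\iso(\proj\Lambda)$ into the three asserted bijections by matching the relevant finiteness conditions term by term, and then to read off the description of $\ctilt\CC$ as the maximal rigid objects. The starting point is the formula of Proposition \ref{thm3.3}, which expresses $\Hom_\CC(X,Y[1])$ for $X,Y\in\CC$ through the module data $\widetilde{X}$ and $\widetilde{Y}$. Applying it with $Y=X$ shows that $X$ is rigid if and only if $\widetilde{X}=(M,P)$ is a $\tau$-rigid pair; since $\widetilde{(-)}$ is additive and respects indecomposable decompositions (the functor \eqref{bar functor} is an equivalence $\CC/[T[1]]\to\mod\Lambda$ restricting to an equivalence $\add T\to\proj\Lambda$), it carries basic rigid objects bijectively to basic $\tau$-rigid pairs, giving $\rigid\CC\leftrightarrow\trigid\Lambda$. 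The same additivity yields $|X|=|M|+|P|$, and in particular $|T|=|\Lambda|$ because $\widetilde{T}=(\Lambda,0)$.

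Next I would record the numerical bound. By Proposition \ref{two notion of tau-rigidity}(a) a $\tau$-rigid pair $(M,P)$ with $\add P=\add\Lambda e$ corresponds to a $\tau$-rigid $(\Lambda/\langle e\rangle)$-module $M$, so Proposition \ref{number of summands of tautilting} gives $|M|\le|\Lambda/\langle e\rangle|=|\Lambda|-|P|$, i.e. $|M|+|P|\le|\Lambda|$, with equality exactly when $(M,P)$ is support $\tau$-tilting. Transported through $\widetilde{(-)}$ this says that every rigid $U\in\CC$ satisfies $|U|\le|T|$, with equality precisely when $\widetilde{U}$ is support $\tau$-tilting. Combined with the Bongartz completion (Theorem \ref{thm2.3}, in its pair form), which embeds any $\tau$-rigid pair as a direct summand of a support $\tau$-tilting pair, this already identifies the maximal rigid objects: $\mrigid\CC=\{U\in\rigid\CC\mid|U|=|T|\}$, and the restriction of $\widetilde{(-)}$ gives $\mrigid\CC\leftrightarrow\sttilt\Lambda$. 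One inclusion of the final equality is then formal: a cluster-tilting object is maximal rigid, since if $U$ is cluster-tilting and $U\oplus Z$ is rigid then $\Hom_\CC(U,Z[1])=0$ forces $Z\in\add U$; hence $\ctilt\CC\subseteq\mrigid\CC$.

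The crux is the reverse inclusion $\{U\in\rigid\CC\mid|U|=|T|\}\subseteq\ctilt\CC$, i.e. that maximal rigid objects are cluster-tilting (the theorem of \cite{ZZ}). Here the difficulty is that for an object $Y$ with $\Hom_\CC(U,Y[1])=0$ one has no a priori control of $Y$ being rigid, so one cannot simply invoke maximality of $U$. The resolution is Corollary \ref{analog of 3 conditions}(d), which is tailored for exactly this situation: writing $\widetilde{U}=(M,P)$ (support $\tau$-tilting) and taking $Y$ indecomposable with $\Hom_\CC(U,Y[1])=0$, the $2$-CY property gives also $\Hom_\CC(Y,U[1])=0$, and feeding both vanishings into Proposition \ref{thm3.3} yields $\Hom_\Lambda(M,\tau N)=0$, $\Hom_\Lambda(N,\tau M)=0$ and $\Hom_\Lambda(P,N)=0$ for $\widetilde{Y}=(N,Q)$. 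When $Y\notin\add T[1]$ (so $N\neq0$, $Q=0$) Corollary \ref{analog of 3 conditions}(d) forces $N\in\add M$, hence $Y\in\add U$; when $Y\in\add T[1]$ (so $N=0$) the surviving condition $\Hom_\Lambda(Q,M)=0$ would make $(M,P\oplus Q)$ a $\tau$-rigid pair, which by the bound forces $Q\in\add P$ and again $Y\in\add U$. Thus $\add U=\{Y\mid\Hom_\CC(U,Y[1])=0\}$, i.e. $U$ is cluster-tilting. This closes the chain $\ctilt\CC\subseteq\mrigid\CC=\{U\in\rigid\CC\mid|U|=|T|\}\subseteq\ctilt\CC$, proving all three sets equal and establishing $\ctilt\CC\leftrightarrow\sttilt\Lambda$.

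Finally, the bijection $\ctilt_T\CC\leftrightarrow\ttilt\Lambda$ is obtained by restriction: $X\in\ctilt_T\CC$ means $X$ is cluster-tilting with no summand in $\add T[1]$, i.e. $\widetilde{X}=(M,0)$ with $(M,0)$ support $\tau$-tilting, which by Proposition \ref{two notion of tau-rigidity} is the same as $M$ being a $\tau$-tilting $\Lambda$-module. The main obstacle throughout is the maximal-rigid-implies-cluster-tilting step of the previous paragraph; the rest is bookkeeping once Proposition \ref{thm3.3} and the count $|X|=|M|+|P|$ are in hand.
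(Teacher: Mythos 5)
Your proposal is correct and follows essentially the same route as the paper: both translate rigidity through Proposition \ref{thm3.3} to get $\rigid\CC\leftrightarrow\trigid\Lambda$, identify maximal rigid objects with support $\tau$-tilting pairs (you unpack Corollary \ref{analog of 3 conditions}(a)$\Leftrightarrow$(b) via Bongartz completion and the summand count, which is exactly how that corollary is proved), and settle the crucial inclusion $\mrigid\CC\subseteq\ctilt\CC$ by feeding the four vanishing conditions from Proposition \ref{thm3.3} and the 2-CY symmetry into Corollary \ref{analog of 3 conditions}(a)$\Leftrightarrow$(d). Your handling of the summands in $\add T[1]$ via the bound $|M|+|P\oplus Q|\le|\Lambda|$ is a valid (and slightly more explicit) substitute for the paper's appeal to sincerity of $M$ over $\Lambda/\langle e\rangle$.
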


We start with the following easy observation (see \cite{KR}). 

\begin{lemma}\label{ctsilt1}
The functor $\functor{(-)}$ induces an equivalence of categories between $\add T$ (respectively, $\add T[2]$) and $\proj \Lambda$ (respectively, $\inj \Lambda$).
Moreover we have an isomorphism $\functor{(-)}\circ[2]\simeq\nu\circ\functor{(-)}:\add T\to\inj \Lambda$ of functors.
\end{lemma}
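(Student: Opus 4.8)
The plan is to pin down $\functor{(-)}$ on $\add T$ first, then produce the functorial isomorphism, and finally read off the statement about $\add T[2]$ formally from the two.

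\textbf{Step 1.} First I would show $\functor{(-)}$ restricts to an equivalence $\add T\to\proj\Lambda$. The functor \eqref{bar functor} is fully faithful, so it suffices to check that the quotient functor $\CC\to\CC/[T[1]]$ is fully faithful on $\add T$, i.e. that the ideal $[T[1]]$ vanishes on $\Hom_\CC(\add T,\add T)$. Any morphism in this ideal factors through an object of $\add T[1]$, but $\Hom_\CC(\add T,\add T[1])=0$ since $T$ is rigid, so the ideal is zero and $\functor{(-)}|_{\add T}$ is fully faithful. Its essential image is $\add\functor{T}$, and since $\functor{T}=\Hom_\CC(T,T)$ is $\Lambda$ as a left $\Lambda$-module (precomposition is a left action of $\End_\CC(T)^{\op}=\Lambda$), this image is $\add\Lambda=\proj\Lambda$. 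Hence $\functor{(-)}:\add T\to\proj\Lambda$ is an equivalence, which is the first assertion.

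\textbf{Step 2.} Next I would produce the functorial isomorphism $\functor{(-)}\circ[2]\simeq\nu\circ\functor{(-)}$ on $\add T$. For $T'\in\add T$, 2-CY Serre duality (the isomorphism $\Hom_\CC(X,Y)\simeq D\Hom_\CC(Y,X[2])$, which follows from the defining isomorphism $D\Ext^1_\CC(X,Y)\simeq\Ext^1_\CC(Y,X)$ since the Serre functor is $[2]$) gives
\[\functor{T'[2]}=\Hom_\CC(T,T'[2])\simeq D\Hom_\CC(T'[2],T[2])\simeq D\Hom_\CC(T',T).\]
On the other side, using $\functor{T}\simeq\Lambda$ together with the full faithfulness from Step 1,
\[\nu\functor{T'}=D\Hom_\Lambda(\functor{T'},\Lambda)\simeq D\Hom_\Lambda(\functor{T'},\functor{T})\simeq D\Hom_\CC(T',T).\]
Combining the two identifies $\functor{T'[2]}$ with $\nu\functor{T'}$; since every isomorphism used is natural in $T'$, this is an isomorphism of functors.

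\textbf{Step 3.} Finally I would deduce the equivalence $\add T[2]\to\inj\Lambda$ for free. The Nakayama functor $\nu:\proj\Lambda\to\inj\Lambda$ is an equivalence and $\functor{(-)}:\add T\to\proj\Lambda$ is one by Step 1, so $\nu\circ\functor{(-)}:\add T\to\inj\Lambda$ is an equivalence; by Step 2 so is $\functor{(-)}\circ[2]:\add T\to\inj\Lambda$. As the shift $[2]:\add T\to\add T[2]$ is an autoequivalence, it follows that $\functor{(-)}:\add T[2]\to\inj\Lambda$ is an equivalence, with essential image $\add\functor{T[2]}=\add\nu\Lambda=\add D\Lambda=\inj\Lambda$. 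The one genuine obstacle is that all the isomorphisms above must be isomorphisms of \emph{left} $\Lambda$-modules: one has to check that the right $\Lambda$-action on $\Hom_\Lambda(\functor{T'},\Lambda)$ (hence on its $D$) matches, through the ring isomorphism $\End_\CC(T)\simeq\Lambda^{\op}$ furnished by Step 1, the action governing Serre duality, so that the two descriptions of $\functor{T'[2]}$ coincide as modules and not merely as $k$-vector spaces. This bookkeeping, together with the naturality of Serre duality, is routine but is where the actual content sits.
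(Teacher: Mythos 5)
Your proof is correct. The paper itself offers no argument for this lemma, simply citing Keller--Reiten, so there is nothing to compare against; your write-up supplies the standard proof. Step 1 is right: $[T[1]]$ vanishes on $\Hom_{\CC}(\add T,\add T)$ because $\Hom_{\CC}(T,T[1])=0$, and idempotent-completeness of $\add T$ gives that the essential image is exactly $\add\functor{T}=\add\Lambda=\proj\Lambda$. Step 2 correctly combines Serre duality $\Hom_{\CC}(X,Y)\simeq D\Hom_{\CC}(Y,X[2])$ with the identification $\Hom_{\Lambda}(\functor{T'},\functor{T})\simeq\Hom_{\CC}(T',T)$, and you are right that the only real content is checking the $\Lambda$-module structures agree, which follows from naturality of Serre duality in the $T$-variable. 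Step 3 is a purely formal consequence of the first two steps together with the fact that $[2]:\add T\to\add T[2]$ is an equivalence, and it correctly avoids having to know anything about $[T[1]]$ on $\Hom_{\CC}(\add T[2],\add T[2])$, since $\Hom_{\CC}(T,-)$ restricted to $\add T[2]$ is being analyzed directly as a functor on $\CC$.
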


Now we express $\Ext^1_{\CC}(X,Y)$ in terms of the images $\overline{X}$
and $\overline{Y}$ in our fixed 2-CY tilted algebra $\Lambda$.
We let
\[\langle X,Y\rangle_\Lambda=\langle X,Y\rangle:=\dim_k\Hom_\Lambda(X,Y).\]\index{$\langle -,-\rangle$}

\begin{proposition}\label{prop3.1}
Let $X$ and $Y$ be objects in $\CC$. Assume that
there are no nonzero indecomposable direct summands of $T[1]$ for $X$ and $Y$.
\begin{itemize}
\item[(a)] We have $\functor{X[1]}\simeq\tau\functor{X}$ and $\functor{Y[1]}\simeq\tau\functor{Y}$ as $\Lambda$-modules.
\item[(b)] We have an exact sequence
$$0\to D\Hom_{\Lambda}(\functor{Y},\tau\functor{X})\to\Ext^1_{\CC}(X,Y)\to\Hom_{\Lambda}(\functor{X},\tau\functor{Y})\to 0.$$
\item[(c)] $\dim\Ext_{\CC}^1(X,Y)=\langle\functor{X},\tau\functor{Y}\rangle_\Lambda+\langle\functor{Y},\tau\functor{X}\rangle_\Lambda$.
\end{itemize}
\end{proposition}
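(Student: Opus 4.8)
The plan is to reduce everything to one approximation triangle for $X$ together with the functor $\functor{(-)}=\Hom_\CC(T,-)$, feeding the resulting projective presentation into the identities of Proposition \ref{no common summand}. First I would fix a triangle $T_1\xto{a}T_0\xto{b}X\xto{c}T_1[1]$ in which $b$ is a minimal right $(\add T)$-approximation. Applying $\Hom_\CC(T,-)$ and using rigidity ($\Hom_\CC(T,T_0[1])=0$) shows $\Hom_\CC(T,T_1[1])=0$, so $T_1\in\add T$, and the same long exact sequence produces an exact sequence $\functor{T_1}\xto{\functor a}\functor{T_0}\to\functor X\to 0$ in $\mod\Lambda$. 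I would then verify that this is a \emph{minimal} projective presentation: minimality of $b$ rules out summands of $T_0$ killed by $b$, while the hypothesis that $X$ has no summand in $\add T[1]$ rules out summands of $T_1$ killed by $a$ (such a summand $T'$ would split a copy of $T'[1]$ off $X$). This is the step that uses the standing hypothesis, and the one I would treat most carefully.

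For (a), I would shift this triangle by $[1]$ and apply $\Hom_\CC(T,-)$ once more. Injectivity at the left comes from $\Hom_\CC(T,T_0[1])=0$, and Lemma \ref{ctsilt1} identifies $\functor{T_i[2]}\simeq\nu\functor{T_i}$ compatibly with $a$, so the sequence collapses to $0\to\functor{X[1]}\to\nu\functor{T_1}\xto{\nu\functor a}\nu\functor{T_0}$. Comparing with the defining sequence of $\tau$ applied to the minimal presentation of $\functor X$ yields $\functor{X[1]}\simeq\tau\functor X$, and likewise $\functor{Y[1]}\simeq\tau\functor Y$. Note that the $2$-CY hypothesis enters precisely through Lemma \ref{ctsilt1}.

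For (b), I would compute $\Ext^1_\CC(X,Y)=\Hom_\CC(X,Y[1])$ by applying $\Hom_\CC(-,Y[1])$ to the triangle for $X$. Since $\Hom_\CC(T_i,Z)\simeq\Hom_\Lambda(\functor{T_i},\functor Z)$ for $T_i\in\add T$ (maps factoring through $\add T[1]$ vanish on $\add T$ by rigidity), part (a) rewrites $\Ext^1_\CC(T_i,Y)\simeq\Hom_\Lambda(\functor{T_i},\tau\functor Y)$. Exactness then produces a surjection from $\Ext^1_\CC(X,Y)$ onto $\ker\big(\Hom_\Lambda(\functor{T_0},\tau\functor Y)\xto{(\functor a,\tau\functor Y)}\Hom_\Lambda(\functor{T_1},\tau\functor Y)\big)$, and applying $\Hom_\Lambda(-,\tau\functor Y)$ to the presentation of $\functor X$ identifies this kernel with $\Hom_\Lambda(\functor X,\tau\functor Y)$, giving the quotient term in (b). Tracing the long exact sequence one step further identifies the kernel of this surjection with $\Cok\big(\Hom_\Lambda(\functor{T_0},\functor Y)\xto{(\functor a,\functor Y)}\Hom_\Lambda(\functor{T_1},\functor Y)\big)$, which by the $D$-dual of Proposition \ref{no common summand}(a) (applied to the minimal presentation of $\functor X$ and the module $\functor Y$) equals $D\Hom_\Lambda(\functor Y,\tau\functor X)$. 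Splicing the two identifications gives the short exact sequence of (b), and taking $\dim_k$ (with $\dim_k DV=\dim_k V$) gives (c).

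The main obstacle I anticipate is bookkeeping rather than depth: arranging the approximation triangle so that it yields an \emph{honest} minimal projective presentation of $\functor X$ (the role of the no-$\add T[1]$-summand hypothesis), and then correctly matching the two ends of the $\Hom_\CC(-,Y[1])$ long exact sequence to the two module-level Hom spaces. In particular, recognizing the sub-object as the cokernel computed by Proposition \ref{no common summand}(a) is what introduces the duality $D$, and hence the asymmetry between sub and quotient in (b).
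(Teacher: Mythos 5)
Your argument is correct, and part (a) coincides with the paper's proof: the same approximation triangle, the same passage to a minimal projective presentation (the paper simply asserts minimality from the no-$\add T[1]$-summand hypothesis, whereas you also invoke right minimality of the approximation --- both points are needed and you supply them), and the same comparison with the Nakayama functor via Lemma \ref{ctsilt1}. For part (b), however, you take a genuinely different route. The paper filters $\Hom_{\CC}(X,Y[1])$ by the ideal $[T[1]]$ of maps factoring through $\add T[1]$, identifies the quotient $\Hom_{\CC/[T[1]]}(X,Y[1])$ with $\Hom_\Lambda(\functor{X},\tau\functor{Y})$ via the equivalence $\functor{(-)}$ and part (a), and identifies the sub $[T[1]](X,Y[1])$ with $D\Hom_\Lambda(\functor{Y},\tau\functor{X})$ by quoting Palu's duality $[T[1]](X,Y[1])\simeq D\Hom_{\CC/[T[1]]}(Y,X[1])$. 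You instead apply $\Hom_{\CC}(-,Y[1])$ to the approximation triangle and read off the short exact sequence from the resulting five-term exact sequence, using that $\Hom_{\CC}(T_i,Y[1])\simeq\Hom_\Lambda(\functor{T_i},\tau\functor{Y})$ and $\Hom_{\CC}(T_i,Y)\simeq\Hom_\Lambda(\functor{T_i},\functor{Y})$ naturally (rigidity of $T$ kills the $[T[1]]$-ideal on $\add T$), and then identifying the cokernel term with $D\Hom_\Lambda(\functor{Y},\tau\functor{X})$ via Proposition \ref{no common summand}(a); I checked that the naturality of these identifications makes the relevant squares commute, so the splice is legitimate. What each approach buys: yours is self-contained within the paper's own toolkit (it replaces the external citation of Palu by the module-theoretic AR-duality computation of Proposition \ref{no common summand}), at the cost of a longer diagram chase; the paper's is shorter and exhibits the sequence as the canonical $[T[1]]$-filtration, which makes its functoriality in $X$ and $Y$ manifest. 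Both ultimately draw the 2-CY hypothesis from the same source, namely the Serre-duality statement underlying Lemma \ref{ctsilt1} (respectively Palu's bilinear form).
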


\begin{proof}
(a) This can be shown as in the proof of \cite[Proposition 3.2]{BMR1}. Here we give a direct proof. Take a triangle
\begin{equation}\label{T resolution}
\xymatrix{T_1\ar[r]^{g}& T_0\ar[r]^{f}& X\ar[r]& T_1[1]}
\end{equation}
with a minimal right $(\add T)$-approximation $f$ and $T_0,T_1\in\add T$.
Applying $\functor{(\ )}$ to \eqref{T resolution}, we have an exact sequence
\begin{equation}\label{min proj resol}
\xymatrix{\functor{T_1}\ar[r]^{\functor{g}}&\functor{T_0}\ar[r]^{\functor{f}}&\functor{X}\ar[r]&0.}
\end{equation}
This gives a minimal projective presentation of $\functor{X}$ since $X$ has no nonzero indecomposable direct summands of $T[1]$.
Applying the Nakayama functor to \eqref{min proj resol} and $\Hom_{\CC}(T,-)$ to \eqref{T resolution} and comparing them by Lemma \ref{ctsilt1}, we have the following commutative diagram of exact sequences:
\[\xymatrix{
0\ar[r]&\tau\functor{X}\ar[r]&\nu\functor{T_1}\ar[r]^{\nu\functor{g}}\ar[d]^\wr&\nu\functor{T_0}\ar[d]^\wr\\
0=\functor{T_0[1]}\ar[r]&\functor{X[1]}\ar[r]&\functor{T_1[2]}\ar[r]^{\functor{g[2]}}&\functor{T_0[2]}.
}\]
Thus we have $\tau\functor{X}\simeq\functor{X[1]}$.

(b) We have an exact sequence
\[0\to[T[1]](X,Y[1])\to\Hom_{\CC}(X,Y[1])\to\Hom_{\CC/[T[1]]}(X,Y[1])\to0,\]
where $[T[1]]$ is the ideal of $\CC$ consisting of morphisms which factor through $\add T[1]$.
We have a functorial isomorphism
\begin{equation}\label{X,Y[1]}
\Hom_{\CC/[T[1]]}(X,Y[1])\simeq\Hom_\Lambda(\functor{X},\functor{Y[1]})\stackrel{{\rm (a)}}{\simeq}\Hom_\Lambda(\functor{X},\tau\functor{Y}).
\end{equation}
On the other hand, the first of following functorial isomorphism was given in \cite[3.3]{P}.
\[[T[1]](X,Y[1])\simeq D\Hom_{\CC/[T[1]]}(Y,X[1])\stackrel{\eqref{X,Y[1]}}{\simeq}D\Hom_\Lambda(\functor{Y},\tau\functor{X}).\]
Thus the assertion follows.

(c) This is immediate from (b).
\end{proof}

We now consider the general case, where we allow indecomposable direct summands from $T[1]$ in $X$ or $Y$.

\begin{proposition}\label{thm3.3}
Let $X=X'\oplus X''$ and $Y=Y'\oplus Y''$ be objects in $\CC$
such that $X''$ and $Y''$ are the maximal direct summands of $X$ and $Y$ respectively,
which belong to $\add T[1]$. Then
\begin{eqnarray*}
\dim\Ext^1_{\CC}(X,Y)&=&\langle\functor{X'},\tau\functor{Y'}\rangle_\Lambda+\langle\functor{Y'},\tau\functor{X'}\rangle_\Lambda+
\langle\functor{X''[-1]},\functor{Y'}\rangle_\Lambda+\langle\functor{Y''[-1]},\functor{X'}\rangle_\Lambda.
\end{eqnarray*}
\end{proposition}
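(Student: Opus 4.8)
The plan is to reduce the general formula to Proposition \ref{prop3.1}(c) by exploiting additivity of $\Ext^1_{\CC}(-,-)$ in both variables. Writing $X=X'\oplus X''$ and $Y=Y'\oplus Y''$, I would first decompose
\[
\Ext^1_{\CC}(X,Y)\simeq\Ext^1_{\CC}(X',Y')\oplus\Ext^1_{\CC}(X',Y'')\oplus\Ext^1_{\CC}(X'',Y')\oplus\Ext^1_{\CC}(X'',Y''),
\]
and then evaluate the four summands separately. The first is immediate: since neither $X'$ nor $Y'$ has a nonzero direct summand in $\add T[1]$, Proposition \ref{prop3.1}(c) gives $\dim\Ext^1_{\CC}(X',Y')=\langle\functor{X'},\tau\functor{Y'}\rangle_\Lambda+\langle\functor{Y'},\tau\functor{X'}\rangle_\Lambda$, which accounts for the first two terms of the asserted formula. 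It then remains to identify the two mixed terms with the remaining summands and to show that $\Ext^1_{\CC}(X'',Y'')$ vanishes.

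For the mixed terms I would write $X''=T_X[1]$ and $Y''=T_Y[1]$ with $T_X,T_Y\in\add T$, so that $\functor{X''[-1]}=\functor{T_X}$ and $\functor{Y''[-1]}=\functor{T_Y}$. The key auxiliary fact is that, because $T$ is rigid, $\Hom_{\CC}(T',T''[1])=0$ for all $T',T''\in\add T$; hence no morphism $T_X\to Y'$ factors through $\add T[1]$, and the equivalence \eqref{bar functor} together with Lemma \ref{ctsilt1} yields $\Hom_{\CC}(T_X,Y')\simeq\Hom_\Lambda(\functor{T_X},\functor{Y'})$. Applying this to
\[
\Ext^1_{\CC}(X'',Y')=\Hom_{\CC}(T_X[1],Y'[1])\simeq\Hom_{\CC}(T_X,Y')
\]
gives $\dim\Ext^1_{\CC}(X'',Y')=\langle\functor{X''[-1]},\functor{Y'}\rangle_\Lambda$, the third term. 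For $\Ext^1_{\CC}(X',Y'')=\Hom_{\CC}(X',T_Y[2])$ I would use the $2$-CY Serre duality $\Hom_{\CC}(A,B)\simeq D\Hom_{\CC}(B,A[2])$ (a direct consequence of the stated isomorphism $D\Ext^1_{\CC}(A,B)\simeq\Ext^1_{\CC}(B,A)$) to rewrite it as $D\Hom_{\CC}(T_Y,X')\simeq D\Hom_\Lambda(\functor{T_Y},\functor{X'})$, whence $\dim\Ext^1_{\CC}(X',Y'')=\langle\functor{Y''[-1]},\functor{X'}\rangle_\Lambda$, the fourth term. Finally $\Ext^1_{\CC}(X'',Y'')\simeq\Hom_{\CC}(T_X,T_Y[1])=0$ by rigidity of $T$, so this summand contributes nothing. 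Summing the four contributions produces exactly the claimed expression.

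I expect the only delicate point to be the bookkeeping for the mixed terms: one must verify that morphisms out of, or into, summands of $\add T$ do not factor through $\add T[1]$, so that the localization functor $\functor{(-)}$ computes the relevant Hom-spaces faithfully. This is precisely where rigidity of $T$ and the equivalence \eqref{bar functor} enter, and where a shift like $X''=T_X[1]$ must be tracked carefully so that the arguments of $\langle-,-\rangle_\Lambda$ come out as $\functor{X''[-1]}$ and $\functor{Y''[-1]}$ rather than $\functor{X''}$ and $\functor{Y''}$. The $2$-CY duality is needed only once, to swap the roles of $X'$ and $Y'$ in the term $\Ext^1_{\CC}(X',Y'')$; apart from that, the entire argument is additivity plus the already-established special case in Proposition \ref{prop3.1}(c).
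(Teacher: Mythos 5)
Your proof is correct and follows essentially the same route as the paper: decompose $\Ext^1_{\CC}(X,Y)$ over the four pairs of summands, kill $\Ext^1_{\CC}(X'',Y'')$ by rigidity of $T$, apply Proposition \ref{prop3.1}(c) to the $(X',Y')$ term, and identify the two mixed terms with the Hom-spaces over $\Lambda$. The paper dismisses the mixed terms with a ``clearly''; your verification (that no map out of $\add T$ factors through $\add T[1]$, plus one use of $2$-CY duality) correctly supplies the omitted details.
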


\begin{proof}
Since $\Ext^1_{\CC}(X'',Y'')=0$, we have
\[\dim\Ext^1_{\CC}(X,Y)=\dim\Ext^1_{\CC}(X',Y')+\dim\Ext^1_{\CC}(X'',Y')+\dim\Ext^1_{\CC}(X',Y'').\]
By Proposition \ref{prop3.1}, the first term equals $\langle\functor{X'},\tau\functor{Y'}\rangle_\Lambda+\langle\functor{Y'},\tau\functor{X'}\rangle_\Lambda$.
Clearly the second term equals $\langle\functor{X''[-1]},\functor{Y'}\rangle_\Lambda$,
and the third term equals $\langle\functor{Y''[-1]},\functor{X'}\rangle_\Lambda$.
\end{proof}

Now we are ready to prove Theorem \ref{bijection between CT and PT}.

By Proposition \ref{thm3.3}, we have that $X$ is rigid if and only if
$\widetilde{X}$ is a $\tau$-rigid pair for $\Lambda$.
Thus we have bijections $\rigid\CC\leftrightarrow\trigid\Lambda$, which induces a bijection $\mrigid\CC\leftrightarrow\sttilt\Lambda$ by Corollary \ref{analog of 3 conditions}(a)$\Leftrightarrow$(b).

On the other hand we show that a bijection $\ctilt\CC\leftrightarrow\sttilt\Lambda$ is induced. Since $\ctilt\CC\subseteq\mrigid\CC$, we only have to show that
any $X\in\rigid\CC$ satisfying that $\widetilde{X}$ is a support $\tau$-tilting pair for $\Lambda$ is a cluster-tilting object in $\CC$.
Assume that $Y\in\CC$ satisfies $\Ext^1_{\CC}(X,Y)=0$. By Proposition \ref{thm3.3},
we have $\Hom_\Lambda(\functor{X'},\tau\functor{Y'})=0$, $\Hom_\Lambda(\functor{Y'},\tau\functor{X'})=0$, $\Hom_\Lambda(\functor{X''[-1]},\functor{Y'})=0$ and
$\Hom_\Lambda(\functor{Y''[-1]},\functor{X'})=0$.
By the first 3 equalities, we have $\functor{Y'}\in\add\functor{X'}$
by Corollary \ref{analog of 3 conditions}(a)$\Leftrightarrow$(d).
By the last equality we have $\functor{Y''[-1]}\in\add\functor{X''[-1]}$.
Thus $Y\in\add X$ holds, which shows that $X$ is a cluster-tilting object in $\CC$.

The remaining statements follow immediately.
\qed

\medskip
Now we recover the following results in \cite{IY} and \cite{ZZ}.

\begin{corollary}\label{application to CT}
Let $\CC$ be a 2-CY triangulated category with a cluster-tilting object $T$.
\begin{itemize}
\item[(a)] \cite{IY} Any basic almost complete cluster-tilting object is a direct summand of exactly two basic cluster-tilting objects.
In particular, $T$ is a mutation of $V$ if and only if $T$ and $V$ have all but one indecomposable direct summand in common.
\item[(b)] \cite{ZZ} An object $X$ in $\CC$ is cluster-tilting if and only if it is maximal rigid if and only if it is rigid and $|X|=|T|$.
\end{itemize}
\end{corollary}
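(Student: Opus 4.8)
The plan is to deduce both parts formally from the bijections in Theorem \ref{bijection between CT and PT}, combined with the two-complement property for support $\tau$-tilting pairs (Theorem \ref{2 complements}).

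Part (b) requires essentially no work: it is exactly the final assertion of Theorem \ref{bijection between CT and PT}, that $\ctilt\CC=\mrigid\CC=\{U\in\rigid\CC\mid |U|=|T|\}$. So I would simply cite that statement, noting $|T|=|\Lambda|$.

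For part (a), I would first record that the bijection $\widetilde{(-)}:\iso\CC\leftrightarrow\iso(\mod\Lambda)\times\iso(\proj\Lambda)$ is compatible with direct sums. Decomposing $X=X'\oplus X''$ with $X''$ the maximal summand in $\add T[1]$, and likewise for a summand $U$ of $X$, one checks that $U$ is a direct summand of $X$ in $\CC$ if and only if the pair $\widetilde U=(\functor{U'},\functor{U''[-1]})$ is a direct summand of $\widetilde X$, and that $|X|=|\functor{X'}|+|\functor{X''[-1]}|$; this uses only that $\functor{(-)}$ is an additive equivalence $\CC/[T[1]]\to\mod\Lambda$ and the identification of $\add T$ with $\proj\Lambda$ from Lemma \ref{ctsilt1}. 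Now let $U$ be a basic almost complete cluster-tilting object, that is, a basic rigid object with $|U|=|T|-1=|\Lambda|-1$. By Proposition \ref{thm3.3} the pair $\widetilde U$ is $\tau$-rigid, and by the count above it has $|\Lambda|-1$ indecomposable constituents, so it is a basic almost complete support $\tau$-tilting pair. By Theorem \ref{2 complements} this pair is a direct summand of exactly two basic support $\tau$-tilting pairs, which under Theorem \ref{bijection between CT and PT} correspond to exactly two basic cluster-tilting objects; transporting the summand relation back through $\widetilde{(-)}$ shows that $U$ is a direct summand of precisely these two cluster-tilting objects.

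It remains to treat the ``in particular'' clause. The forward direction is immediate from Definition-Proposition \ref{ct3}: the mutation $\mu_X(T)$ of $T=X\oplus U$ is $Y\oplus U$, so it shares the indecomposable summands of $U$ with $T$. For the converse, if two basic cluster-tilting objects have all but one indecomposable summand in common, then their common part is a basic almost complete cluster-tilting object, and both objects are completions of it; by the two-complement statement just proved there are only two such completions, so the two objects are exactly the pair of mutations of each other. I expect the only genuinely delicate point to be the first step, namely the careful verification that $\widetilde{(-)}$ matches the summand relation and the number of indecomposable summands on both sides; once this bookkeeping is in place, the corollary is a purely formal consequence of Theorems \ref{bijection between CT and PT} and \ref{2 complements}.
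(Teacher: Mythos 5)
Your proposal is correct and follows exactly the paper's route: the paper proves (a) by citing the bijections of Theorem \ref{bijection between CT and PT} together with Theorem \ref{2 complements}, and (b) as the last equality in Theorem \ref{bijection between CT and PT}. You merely spell out the bookkeeping (that $\widetilde{(-)}$ preserves direct summands and the count of indecomposables, and the ``in particular'' clause via Definition-Proposition \ref{ct3}) that the paper leaves implicit.
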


\begin{proof}
(a) This is immediate from the bijections given in Theorem \ref{bijection between CT and PT}
and the corresponding result for support $\tau$-tilting pairs given in Theorem \ref{2 complements}.

(b) This is the last equality in Theorem \ref{bijection between CT and PT}.
\end{proof}

Connections between cluster-tilting objects in $\CC$ and tilting $\Lambda$-modules have been investigated in \cite{Smi,FL}.
It was shown that a tilting $\Lambda$-module always comes from a cluster-tilting object in $\CC$, but the image of a cluster-tilting object is not always a tilting $\Lambda$-module.
This is explained by Theorem \ref{bijection between CT and PT} asserting that the $\Lambda$-modules corresponding to the cluster-tilting objects of $\CC$ are the support $\tau$-tilting $\Lambda$-modules, which are not necessarily tilting $\Lambda$-modules.

\subsection{Two-term silting complexes and cluster-tilting objects}

Throughout this section, let $\CC$ be a 2-CY category with a cluster-tilting object $T$. 
Fix a cluster-tilting object $T\in \CC$.
Let $\Lambda:=\End_{\CC}(T)^{\op}$ and let $\KKb(\proj \Lambda)$ be the homotopy category of bounded complexes of finitely generated projective $\Lambda$-modules.
In this section, we shall show that there is a bijection between cluster-tilting objects in $\CC$ and two-term silting complexes for $\Lambda$ and that the mutations are compatible with each other.
  
The following result will be useful, where we denote by $\KKtwo(\proj\Lambda)$\index{3k2@$\KKtwo(\proj\Lambda)$} the full subcategory of $\KKb(\proj\Lambda)$ consisting of two-term complexes for $\Lambda$.

\begin{proposition}\label{ctsilt2}
There exists a bijection
\[\iso\CC\longleftrightarrow\iso(\KKtwo(\proj\Lambda))\]
which preserves the number of non-isomorphic indecomposable direct summands.
\end{proposition}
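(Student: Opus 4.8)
The plan is to factor the desired bijection as a composite
\[
\iso\CC \longleftrightarrow \iso(\mod\Lambda)\times\iso(\proj\Lambda) \longleftrightarrow \iso(\KKtwo(\proj\Lambda)),
\]
where the first arrow is the bijection $\widetilde{(-)}$ introduced just before Theorem \ref{bijection between CT and PT}, and the second is the object-level correspondence $\Phi$ underlying Theorem \ref{sptsil4}. I would then check separately that each of the two bijections preserves the number of non-isomorphic indecomposable direct summands, after which the composite is the required map.

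First I would make $\Phi$ precise for arbitrary pairs (not only support $\tau$-tilting ones). For a pair $(M,Q)$ choose a minimal projective presentation $P_1\xrightarrow{f}P_0$ of $M$, write $P_M:=(P_1\xrightarrow{f}P_0)$, and set $\Phi(M,Q):=(P_1\oplus Q\xrightarrow{(f\ 0)}P_0)=P_M\oplus Q[1]$. For the inverse, given $P=(P^{-1}\xrightarrow{d}P^0)$ I may assume $d\in\rad(P^{-1},P^0)$ and decompose $d=(d'\ 0)$ with $d'$ right minimal exactly as in the proof of Proposition \ref{sptsil2}; this sends $P$ to the pair $(\Cok d,Q)$, where $Q$ is the complementary projective summand of $P^{-1}$. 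Since right-minimal versions are unique up to isomorphism these assignments are well defined and mutually inverse; alternatively, bijectivity follows by applying $H^0$ (which recovers $M$ from $\Phi(M,Q)$) together with Krull-Schmidt in $\KKb(\proj\Lambda)$, which lets one cancel $P_M$ to recover $Q[1]$ and hence $Q$.

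For the count under $\Phi$, since $\Phi(M,Q)=P_M\oplus Q[1]$ it suffices to prove $|P_M|=|M|$, that $|Q[1]|=|Q|$, and that no indecomposable summand of $P_M$ is isomorphic to one of $Q[1]$. The last point is immediate from cohomology: a summand of $Q[1]$ has $H^0=0$, while the minimal presentation complex of a nonzero indecomposable module $M_i$ has $H^0=M_i\neq0$. Writing $M=\bigoplus M_i$ into indecomposables, $P_M$ is the direct sum of the $P_{M_i}$, and applying $H^0$ shows $P_{M_i}\simeq P_{M_j}$ iff $M_i\simeq M_j$; thus $|P_M|=|M|$ \emph{provided each $P_{M_i}$ is indecomposable}. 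This is the crux, which I would isolate as a lemma: \emph{the minimal projective presentation complex $P_M$ of an indecomposable module $M$ is indecomposable}. I would prove it by showing $\End_{\KKb(\proj\Lambda)}(P_M)$ is local. Taking $H^0$ gives a ring homomorphism $\End_{\KKb(\proj\Lambda)}(P_M)\to\End_\Lambda(M)$; it is surjective because an endomorphism of $M$ lifts first along the projective cover $P_0\to M$ and then along $P_1\to\Ker(P_0\to M)$, and its kernel is a nil ideal, since an endomorphism inducing $0$ on $M=\Cok f$ is, up to homotopy, of the form $(h_1,0)$ with $fh_1=0$, so $h_1(P_1)\subseteq\Ker f\subseteq\rad P_1$ by minimality, whence $h_1$ is nilpotent in $\End_\Lambda(P_1)$. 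As $\End_\Lambda(M)$ is local, so is $\End_{\KKb(\proj\Lambda)}(P_M)$.

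Finally, that $\widetilde{(-)}$ preserves the count follows from equivalences already recorded: for $X=X'\oplus X''$ with $X''$ the maximal summand in $\add T[1]$, the equivalence $\functor{(-)}\colon\CC/[T[1]]\to\mod\Lambda$ of \eqref{bar functor} carries the distinct indecomposable summands of $X'$ bijectively onto those of $\functor{X'}$, and $\functor{(-)}\colon\add T\to\proj\Lambda$ of Lemma \ref{ctsilt1} does the same for $X''[-1]$ and $\functor{X''[-1]}$; since $X'$ and $X''$ have no common indecomposable summand, $|X|=|X'|+|X''|=|\functor{X'}|+|\functor{X''[-1]}|$. Combining, $\Phi\circ\widetilde{(-)}$ is a count-preserving bijection, as required. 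The main obstacle is the indecomposability lemma (the local endomorphism ring argument for $P_M$); once it is in hand, everything else is bookkeeping with the two source bijections and $H^0$.
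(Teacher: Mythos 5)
Your proof is correct, and it fills in precisely the part that the paper's own proof waves away with ``we can easily check''. The paper works entirely inside $\CC$: it sends $U$ to $\functor{T_1}\xrightarrow{\functor{g}}\functor{T_0}$ using a minimal right $(\add T)$-approximation triangle, constructs the inverse by lifting a two-term complex along the equivalence $\add T\simeq\proj\Lambda$ of Lemma \ref{ctsilt1} and taking a cone, and then asserts without argument that this is a bijection preserving the number of indecomposable summands. Your route factors the same correspondence through $\iso(\mod\Lambda)\times\iso(\proj\Lambda)$ via $\widetilde{(-)}$ and the assignment $(M,Q)\mapsto P_M\oplus Q[1]$; unwinding the paper's triangle for $U=U'\oplus U''$ (the approximation of a summand of $T[1]$ being zero, and the approximation triangle of $U'$ inducing a minimal projective presentation of $\functor{U'}$ as in the proof of Proposition \ref{prop3.1}(a)) shows the two maps agree. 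What your approach buys is an actual verification of the count: the lemma that $\End_{\KKb(\proj\Lambda)}(P_M)$ is local for $M$ indecomposable (surjectivity of $H^0$ onto $\End_\Lambda(M)$ with nil kernel, using $\Ker f\subseteq\rad P_1$) is the genuine content hidden behind the paper's last sentence, and your argument for it is sound. One small remark: you could avoid the locality lemma entirely by observing that your composite is a bijection on isomorphism classes commuting with direct sums between Krull--Schmidt categories, and any such bijection automatically carries indecomposables to indecomposables (if $F(X)=A\oplus B$ with $A,B\neq0$, write $A=F(X_1)$, $B=F(X_2)$ and use injectivity to split $X$); but the explicit endomorphism-ring computation is more informative and equally valid.
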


\begin{proof}
For any object $U\in \CC$, there exists a triangle
\[
\xymatrix{
T_{1}\ar[r]^{g}&T_{0}\ar[r]^{f}&U\ar[r]&T_{1}[1]
}
\]
where $T_{1},T_{0}\in \add T$ and $f$ is a minimal right $(\add T)$-approximation. 
By Lemma \ref{ctsilt1}, we have a two-term complex $\functor{T_{1}}\xto{\functor{g}}\functor{T_{0}}$ in $\KKb(\proj\Lambda)$.

Conversely, let $P_{1}\overset{d}{\rightarrow}P_{0}$ be a two-term complex for $\Lambda$.
By Lemma \ref{ctsilt1}, there exists a morphism $g:T_{1}\rightarrow T_{0}$ in $\add T$ such that $\functor{g}=d$.
Taking the cone of $g$, we have an object $U$ in $\CC$.
Then we can easily check that the correspondence gives a bijection and preserves the number of non-isomorphic indecomposable direct summands. 
\end{proof}

Using this, we get the desired correspondence.

\begin{theorem}\label{ctsilt3}
The bijection in Proposition \ref{ctsilt2} induces bijections
\begin{eqnarray*}
\rigid\CC\longleftrightarrow\twopresilt\Lambda\ \ \ \mbox{ and }\ \ \ \ctilt\CC\longleftrightarrow\twosilt\Lambda.
\end{eqnarray*}
\end{theorem}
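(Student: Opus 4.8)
The plan is to reduce everything to bijections already proved, by recognizing the map of Proposition~\ref{ctsilt2} as a composite. Write $\Phi\colon\iso\CC\to\iso(\KKtwo(\proj\Lambda))$ for the bijection of Proposition~\ref{ctsilt2}, and let $\Psi$ send a pair $(M,P)$ with $M\in\mod\Lambda$ and $P\in\proj\Lambda$ to the two-term complex $(P_1\oplus P\xto{(f\ 0)}P_0)$, where $f\colon P_1\to P_0$ is a minimal projective presentation of $M$; this is the rule appearing in Theorem~\ref{sptsil4}. I would first prove the identity $\Phi=\Psi\circ\widetilde{(-)}$, where $\widetilde{(-)}\colon\iso\CC\leftrightarrow\iso(\mod\Lambda)\times\iso(\proj\Lambda)$ is the bijection of Theorem~\ref{bijection between CT and PT}.

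Granting this identity the theorem follows formally: the restriction of $\widetilde{(-)}$ gives bijections $\rigid\CC\leftrightarrow\trigid\Lambda$ and $\ctilt\CC\leftrightarrow\sttilt\Lambda$ (Theorem~\ref{bijection between CT and PT}), while $\Psi$ restricts to bijections $\trigid\Lambda\leftrightarrow\twopresilt\Lambda$ and $\sttilt\Lambda\leftrightarrow\twosilt\Lambda$; composing these yields exactly the two asserted bijections. The second restriction of $\Psi$ is Theorem~\ref{sptsil4} itself. The first is its presilting analogue: by Lemmas~\ref{sptsil1} and~\ref{lemma for support}, $(M,P)$ is $\tau$-rigid if and only if $\Psi(M,P)$ is presilting, and every basic two-term presilting complex, written as $(R_1\oplus R''\xto{(d'\ 0)}R_0)$ with $d'$ right minimal, is visibly of the form $\Psi(M,R'')$; these are precisely the vanishing arguments isolated in the proofs of Propositions~\ref{sptsil2} and~\ref{sptsil3}.

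The substance of the proof is the identity $\Phi=\Psi\circ\widetilde{(-)}$, and this is where I expect the only real work to lie. Since minimal right $(\add T)$-approximations are additive over the Krull-Schmidt category $\CC$, the construction of Proposition~\ref{ctsilt2} respects direct sums, so both sides are additive and it suffices to compare them on objects with no summand in $\add T[1]$ and on objects in $\add T[1]$. For $X$ with no non-zero summand in $\add T[1]$, a triangle $T_1\xto{g}T_0\xto{f}X\to T_1[1]$ with $f$ a minimal right $(\add T)$-approximation gives, after applying $\functor{(-)}$, a minimal projective presentation $\functor{T_1}\xto{\functor{g}}\functor{T_0}\to\functor{X}\to0$, the minimality being exactly what is checked in the proof of Proposition~\ref{prop3.1}(a). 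Hence $\Phi(X)=(\functor{T_1}\xto{\functor{g}}\functor{T_0})=\Psi(\functor{X},0)=\Psi(\widetilde{X})$. For $X=T'[1]$ in $\add T[1]$, rigidity of $T$ gives $\Hom_{\CC}(T,X)=0$, so the minimal right $(\add T)$-approximation of $X$ is $0\to X$ and the defining triangle degenerates to $T'\to0\to X\to T'[1]$; thus $\Phi(X)$ is the stalk complex $\functor{T'}$ in degree $-1$, which is precisely $\Psi(0,\functor{T'})=\Psi(\widetilde{X})$.

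The main obstacle is thus the bookkeeping in this last step: one must reconcile the two a priori different recipes for a two-term complex, namely an $(\add T)$-triangle with a minimal right approximation versus a minimal projective presentation together with an auxiliary projective summand, and in particular verify that the summands of $X$ in $\add T[1]$ are exactly what produce the extra projective summand $P$ in $\Psi$. Once the identity is secured nothing further is needed, since all genuine content (rigidity versus $\tau$-rigidity, cluster-tilting versus support $\tau$-tilting, and $\tau$-tilting versus two-term silting) already resides in Theorems~\ref{bijection between CT and PT} and~\ref{sptsil4}. As a consistency check, $\Phi$ preserves the number of indecomposable summands (Proposition~\ref{ctsilt2}) and $|T|=|\Lambda|$, so cluster-tilting objects, which are rigid with $|X|=|T|$ by Corollary~\ref{application to CT}(b), correspond to presilting complexes with $|P|=|\Lambda|$, that is, to two-term silting complexes by Proposition~\ref{silt2.28}(b), in agreement with the second bijection.
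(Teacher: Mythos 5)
Your proof is correct, but it takes a genuinely different route from the paper's. The paper proves Theorem \ref{ctsilt3} entirely inside $\CC$: for a rigid $U$ it lifts an arbitrary chain endomorphism $a\colon T_1\to T_0$ through the defining triangle (using $fah[-1]=0$) to exhibit a null-homotopy, and for the converse it runs a longer diagram chase on the cone, invoking $\Hom_{\KKb(\proj\Lambda)}(P,\nu P[-1])\simeq D\Hom_{\KKb(\proj\Lambda)}(P[-1],P)$ and the fact that morphisms with vanishing image under $\functor{(-)}$ factor through $\add T[1]$. You instead factor the map of Proposition \ref{ctsilt2} as $\Psi\circ\widetilde{(-)}$ and compose the bijections of Theorems \ref{bijection between CT and PT} and \ref{sptsil4}, extended to the rigid/presilting level via Lemmas \ref{sptsil1} and \ref{lemma for support}. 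Both arguments are sound and the real content is merely relocated: the paper's hard direction (presilting implies rigid) is replaced in your version by the $\Ext^1$-computation of Proposition \ref{thm3.3} already buried inside Theorem \ref{bijection between CT and PT}, plus the bookkeeping that minimal right $(\add T)$-approximation triangles go to minimal projective presentations (which is exactly the content of the proof of Proposition \ref{prop3.1}(a)) and that summands in $\add T[1]$ go to stalk complexes in degree $-1$. What your route buys is a shorter proof modulo the factorization identity, and it makes explicit that the triangle of bijections among $\ctilt\CC$, $\sttilt\Lambda$ and $\twosilt\Lambda$ commutes --- something the paper never states and which is genuinely useful for Corollary \ref{ctsilt4}; what the paper's route buys is independence from Theorem \ref{sptsil4}, so that the two bijections out of $\ctilt\CC$ are established on separate grounds. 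Two small points you should make explicit: when writing a basic two-term presilting complex as $(R_1\oplus R''\xrightarrow{(d'\ 0)}R_0)$ you must first normalize the differential to lie in the radical (discarding contractible summands) so that $d'$ really is a minimal projective presentation of its cokernel; and the additivity reduction needs the observation that the indecomposable summands of $X$ split into those in $\add T[1]$ and the rest, which is exactly the decomposition $X=X'\oplus X''$ underlying $\widetilde{(-)}$.
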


\begin{proof}
(i) For any rigid object $U\in \CC$, we have a triangle
\[
\xymatrix{T_{1}\ar[r]^{g}&T_{0}\ar[r]^{f}&U\ar[r]^{h}&T_{1}[1]}
\]
where $T_{1},T_{0}\in \add T$ and $f$ is a minimal right $(\add T)$-approximation. 
Let $a: T_{1}\rightarrow T_{0}$ be an arbitrary morphism in $\CC$.
Since $U$ is rigid, we have $fah[-1]=0$. Thus we have a commutative diagram
\[
\xymatrix{
U[-1]\ar[r]^{h[-1]}\ar[d]&T_{1}\ar[r]^{g}\ar[d]^{a}&T_{0}\ar[r]^{f}\ar[d]^{b}&U\ar[d]\\
T_{1}\ar[r]^{g}&T_{0}\ar[r]^{f}&U\ar[r]^{h}&T_{1}[1]
}
\]
of triangles in $\CC$. Since $hb=0$, there exists $k_{0}:T_{0}\rightarrow T_{0}$
such that $b=fk_{0}$. Since $f(a-k_{0}g)=0$, there exists
$k_{1}:T_{1}\rightarrow T_{1}$ such that $gk_{1}=a-k_{0}g$. Therefore we have 
\[
\Hom_{\KKb(\proj\Lambda)}((\functor{T_{1}}\xrightarrow{\functor{g}}\functor{T_{0}}), (\functor{T_{1}}\xrightarrow{\functor{g}}\functor{T_{0}})[1])=0. 
\]
Thus $\functor{T_{1}}\xrightarrow{\functor{g}}\functor{T_{0}}$ is a presilting complex for $\Lambda$.

(ii) Let $P:=(P_{1}\overset{d}{\rightarrow} P_{0})$ be a two-term presilting complex
for $\Lambda$. There exists a unique $g:T_{1}\rightarrow T_{0}$ in $\add T$ such that
$\functor{g}=d$. We consider a triangle
\[
\xymatrix{T_{1}\ar[r]^{g}&T_{0}\ar[r]^{f}&U\ar[r]^{h}&T_{1}[1]}
\]
in $\CC$. We take a morphism $a:U\rightarrow U[1]$ in $\CC$. Then we have the commutative diagram
\[
\xymatrix{
T_{1}\ar[r]^{g}\ar[d]&T_{0}\ar[r]\ar[d]^{h[1]af}&0\ \ar[d]\\
0\ar[r]&T_{1}[2]\ar[r]^{g[2]}&T_{0}[2].
}
\]
Applying $\functor{(-)}$, we have a commutative diagram
\[
\xymatrix{
P_{1}\ar[r]^{d}\ar[d]&P_{0}\ar[r]\ar[d]^{\functor{h[1]af}}&0\ar[d]\\
0\ar[r]&\nu P_{1}\ar[r]^{\nu d}&\nu P_{0}.
}
\]
Thus we have a morphism $P\to\nu P[-1]$ in $\KKb(\proj\Lambda)$.
Since $P$ is a presilting complex for $\Lambda$, we have 
\[
\Hom_{\KKb(\proj\Lambda)}(P,\nu P[-1])\simeq \kD \Hom_{\KKb(\proj\Lambda)}(P[-1],P)=0.
\]
Therefore $\functor{h[1]af}=0$, and the morphism $h[1]af$ factors through $\add T[1]$.
Hence we have $h[1]af=0$. Thus we have a commutative diagram
\[\xymatrix{
T_1\ar[r]^g&T_{0}\ar[r]^{f}\ar[d]^{a_{0}}&U\ar[r]^{h}\ar[d]^{a}&T_{1}[1]\\
T_{1}[1]\ar[r]^{g[1]}&T_{0}[1]\ar[r]^{f[1]}&U[1]\ar[r]^{h[1]}&T_{1}[2].}
\]
Since $T_{0}\in\add T$, we have $a_{0}=0$.
Thus $af=0$, so there exists $\varphi:T_{1}[1]\rightarrow U[1]$ such that $a=\varphi h$.
Since $T_{1}\in\add T$, we have $h[1]\varphi=0$.
Thus there exists $b:T_{1}[1]\rightarrow T_{0}[1]$ such that $\varphi=f[1]b$.
Consequently, we have commutative diagrams
\[\xymatrix{0\ar[r]\ar[d]&T_1\ar[r]^g\ar[d]^{b[-1]}&T_0\ar[d]\\
T_1\ar[r]^g&T_0\ar[r]&0}\ \ \ \ \ 
\xymatrix{0\ar[r]\ar[d]&P_1\ar[r]^d\ar[d]^{\functor{b[-1]}}&P_0\ar[d]\\
P_1\ar[r]^d&P_0\ar[r]&0}
\]
Since $P$ is a presilting complex for $\Lambda$,
there exist $s:T_{0}[1]\rightarrow T_{0}[1]$ and $t:T_{1}[1]\rightarrow T_{1}[1]$ such that $b=sg[1]+g[1]t$. Therefore we have 
\[
a=\varphi h =f[1]bh=f[1]sg[1]h+f[1]g[1]th=0. 
\]
Hence $\Hom_{\CC}(U,U[1])=0$, that is, $U$ is rigid, and the claim follows.
\end{proof}

\begin{corollary}\label{ctsilt4}
The bijections in Theorems \ref{sptsil4} and \ref{ctsilt3} induce isomorphisms of the following graphs.
\begin{itemize}
\item[(a)] The underlying graph of the support $\tau$-tilting quiver $\Q(\sttilt\Lambda)$ of $\Lambda$.
\item[(b)] The underlying graph of the two-term silting quiver $\Q(\twosilt\Lambda)$ of $\Lambda$.
\item[(c)] The cluster-tilting graph $\G(\ctilt\CC)$ of $\CC$.
\end{itemize}
\end{corollary}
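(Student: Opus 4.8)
The plan is to reduce the statement to the combinatorial characterization of edges in each graph, exploiting that the arrows (resp. edges) in all three graphs are governed by the common notion of \emph{sharing all but one indecomposable direct summand}. The isomorphism between the underlying graphs of $\Q(\sttilt\Lambda)$ and $\Q(\twosilt\Lambda)$ is already in hand: Corollary \ref{sptsil6} furnishes an isomorphism of quivers $\Q(\twosilt\Lambda)\cong\Q(\sttilt\Lambda)$, and forgetting orientation yields (a)$\cong$(b) immediately. Thus the only genuinely new content is linking the cluster-tilting graph $\G(\ctilt\CC)$ of (c) to these, and for this I would work with the bijection $\ctilt\CC\leftrightarrow\twosilt\Lambda$ of Theorem \ref{ctsilt3}.

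First I would observe that the bijection $\iso\CC\leftrightarrow\iso(\KKtwo(\proj\Lambda))$ of Proposition \ref{ctsilt2} is additive: for $U\simeq U'\oplus U''$ in $\CC$, a minimal right $(\add T)$-approximation of $U$ is the direct sum of those of $U'$ and $U''$, so applying $\functor{(-)}$ to the approximation triangle (via Lemma \ref{ctsilt1}) shows that the associated two-term complex $\functor{T_1}\xto{\functor{g}}\functor{T_0}$ decomposes accordingly. Combined with the fact, already recorded in Proposition \ref{ctsilt2}, that the bijection preserves the number of non-isomorphic indecomposable summands, this matches indecomposable direct summands of $U$ with indecomposable summands of its image, and hence respects the relation of having a common direct summand. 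Consequently, for $T,V\in\ctilt\CC$, the pair $T,V$ has all but one indecomposable direct summand in common if and only if their images in $\twosilt\Lambda$ do.

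With this in hand the edges match up. Edges of $\G(\ctilt\CC)$ are exactly the pairs sharing all but one indecomposable summand by Corollary \ref{application to CT}(a), while by Corollary \ref{sptsil5}(b) two two-term silting complexes are joined in the underlying graph of $\Q(\twosilt\Lambda)$ (i.e. one is a left or right mutation of the other) precisely when they share all but one indecomposable summand. Therefore the additive bijection of Theorem \ref{ctsilt3} carries edges to edges and non-edges to non-edges in both directions, giving the graph isomorphism (b)$\cong$(c); together with (a)$\cong$(b) this establishes the isomorphism of all three graphs.

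The main obstacle I anticipate is precisely the additivity and indecomposable-compatibility claim for the bijection of Proposition \ref{ctsilt2}: one must check carefully that passing to minimal right $(\add T)$-approximations is compatible with direct-sum decompositions, so that $\functor{(-)}$ applied to the approximation triangle splits off summands correctly, and that an indecomposable object of $\CC$ with no summand in $\add T[1]$ corresponds to an indecomposable two-term complex and vice versa. This is a bookkeeping argument using Lemma \ref{ctsilt1} and the equivalence \eqref{bar functor}, but since edge-preservation hinges entirely on it, it is the step deserving the most care.
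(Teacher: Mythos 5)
Your proof is correct and follows essentially the same route as the paper: the identification (a)$\cong$(b) via Corollary \ref{sptsil6}, and the identification (b)$\cong$(c) by matching the edge criteria of Corollary \ref{application to CT}(a) and Corollary \ref{sptsil5}(b) through the summand-preserving bijection of Theorem \ref{ctsilt3}. The only difference is that you make explicit the additivity of the bijection in Proposition \ref{ctsilt2}, a point the paper passes over with ``clearly''; this is a reasonable step to flag but does not change the argument.
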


\begin{proof}
(a) and (b) are the same by Corollary \ref{sptsil6}.

We show that (b) and (c) are the same.
Let $U$ and $V$ be cluster-tilting objects in $\CC$.
Let $P$ and $Q$ be the two-term silting complexes for $\Lambda$ corresponding respectively to $U$ and $V$ by Theorem \ref{ctsilt3}.
By Corollary \ref{application to CT}(a) the following conditions are equivalent:
\begin{enumerate}
\item[(a)] There exists an edge between $U$ and $V$ in the exchange graph.
\item[(b)] $U$ and $V$ have all but one indecomposable direct summand in common. 
\end{enumerate}
Clearly (b) is equivalent to the following condition:
\begin{enumerate}
\item[(c)] $P$ and $Q$ have all but one indecomposable direct summand in common.
\end{enumerate}
Now (c) is equivalent to the following condition by Corollary \ref{sptsil5}(b).
\begin{enumerate}
\item[(d)] There exists an edge between $P$ and $Q$ in the underlying graph of the silting quiver.
\end{enumerate}
Therefore the exchange graph of $\CC$ and the underlying graph of the silting full subquiver consisting of two-term complexes for $\Lambda$ coincide.
\end{proof}

We end this section with the following application.

\begin{corollary}\label{ctsilt5}
If $\G(\ctilt\CC)$ has a finite connected component $C$, then $\G(\ctilt\CC)=C$.
\end{corollary}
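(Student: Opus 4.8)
The plan is to deduce this directly from the graph isomorphism established in Corollary \ref{ctsilt4}, combined with the analogous finiteness statement already proved for support $\tau$-tilting modules in Corollary \ref{connected}. This parallels the way Corollary \ref{connected2} was obtained from Corollaries \ref{connected} and \ref{sptsil6}, so I expect the argument to be essentially a transport-of-structure along a known isomorphism.

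First I would recall that Corollary \ref{ctsilt4} provides an isomorphism of graphs between the cluster-tilting graph $\G(\ctilt\CC)$ and the underlying graph of the support $\tau$-tilting quiver $\Q(\sttilt\Lambda)$ (equivalently, the underlying graph of the two-term silting quiver $\Q(\twosilt\Lambda)$). Under such an isomorphism, connected components correspond bijectively to connected components, and finiteness is preserved because the isomorphism is in particular a bijection on vertices. Thus a finite connected component $C$ of $\G(\ctilt\CC)$ is carried to a finite connected component of the underlying graph of $\Q(\sttilt\Lambda)$. I would then invoke Corollary \ref{connected}: if $\Q(\sttilt\Lambda)$ has a finite connected component, then the whole quiver equals that component. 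Passing to underlying graphs does not change connected components, so the underlying graph of $\Q(\sttilt\Lambda)$ coincides with the image of $C$; transporting back through the isomorphism of Corollary \ref{ctsilt4} yields $\G(\ctilt\CC)=C$, as desired.

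There is no genuine obstacle here, since all the content has been pushed into the graph isomorphism of Corollary \ref{ctsilt4} and the module-theoretic statement of Corollary \ref{connected}. The only point requiring a moment's care is the verification that a graph isomorphism matches finite connected components with finite connected components, which is immediate from its being a vertex bijection preserving adjacency. One should also note that Corollary \ref{connected} is phrased for the quiver $\Q(\sttilt\Lambda)$ while $\G(\ctilt\CC)$ is undirected, but this causes no difficulty because connected components of a quiver are by definition those of its underlying graph, which is exactly the object matched by Corollary \ref{ctsilt4}.
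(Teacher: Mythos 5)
Your proposal is correct and matches the paper's argument exactly: the paper proves Corollary \ref{ctsilt5} by citing Corollaries \ref{connected} and \ref{ctsilt4}, which is precisely the transport-of-structure you describe. Your extra care about finite components and underlying graphs versus quivers is sound but is exactly what the paper leaves implicit in the word ``immediate.''
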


\begin{proof}
This is immediate from Corollaries \ref{connected} and \ref{ctsilt4}.
\end{proof}


\section{Numerical invariants}\label{sec4}

In this section, we introduce $g$-vectors following \cite{AR3} and \cite{DK}.
We show that $g$-vectors of indecomposable direct summands of support $\tau$-tilting modules form a basis of the Grothendieck group (Theorem \ref{g-vectors}).
Moreover we observe that non-isomorphic $\tau$-rigid pairs have different $g$-vectors (Theorem \ref{g-vectors are complete invariant}).
In \cite{DWZ} the authors defined what they called $E$-invariants of finite dimensional
decorated representations of Jacobian algebras, and used this to solve several
conjectures from \cite{FZ}. In the case of finite dimensional Jacobian algebras they showed that the
$E$-invariants were given by formulas which we were led to in section 4.1, by considering
$\dim_k\Ext^1_{\CC}(T,T)$ for a cluster-tilting object $T$ in $\CC$.
We here consider $E$-invariants for any finite dimensional algebra, using the
same formula, and show that they can be expressed in terms of homomorphism spaces,
dimension vectors and $g$-vectors. We give some further 
results on the case of 2-CY tilted algebras, including a comparison for neighbouring
2-CY tilted algebras (Theorem \ref{thm4.3}).

In the rest of this paper we assume that our base field $k$ is algebraically closed. Let $\Lambda$ be a finite dimensional $k$-algebra.

\subsection{$g$-vectors and $E$-invariants for finite dimensional algebras}

Recall from \cite{DK} that the $g$-vectors are defined as follows:
Let $K_0(\proj\Lambda)$ be the Grothendieck group of the additive category $\proj\Lambda$.
Then the isomorphism classes $P(1),\dots,P(n)$ of indecomposable projective $\Lambda$-modules form a basis of $K_0(\proj\Lambda)$.
Consider $M$ in $\mod\Lambda$ and let
\[\xymatrix{P_1\ar[r]& P_0\ar[r]& M\ar[r]&0}\]
be its minimal projective presentation in $\mod\Lambda$. Then we write
\[P_0-P_1=\sum_{i=1}^{n}g_i^MP(i),\]
where by definition $g^M=(g_1^M,\dots,g_n^M)$\index{3g@$g^{M}$} is the \emph{$g$-vector}\index{gvector@$g$-vector!of a module} of $M$.
The element $P_0-P_1$ is also called an \emph{index}\index{index!of a module} of $M$, which
was investigated in \cite{AR3}, in connection with studying modules determined
by their composition factors, and in \cite{DK}.

Another useful vector associated with $M$ is the dimension vector $c^M=(c^M_1,\dots,c_n^M)$\index{3c@$c^{M}$}.
Denote by $S(i)$ the simple top of $P(i)$. Then $c_i^M$ is by definition the multiplicity of the simple module $S(i)$ as composition factor of $M$.
This vector has played an important role in cluster theory for the acyclic case, since the denominators of cluster variables are determined by dimension vectors of indecomposable rigid modules over path algebras \cite{BMRT,CK}.
Now this result is not true in general \cite{BMR2}.

We have the following result on $g$-vectors of support $\tau$-tilting modules.

\begin{theorem}\label{g-vectors}
Let $(M,P)$ be a support $\tau$-tilting pair for $\Lambda$ with $M=\bigoplus_{i=1}^\ell M_i$ and $P=\bigoplus_{i=\ell+1}^nP_i$ with $M_i$ and $P_i$ indecomposable.
Then $g^{M_1},\cdots,g^{M_\ell},g^{P_{\ell+1}},\cdots,g^{P_n}$ form a basis of the Grothendieck group $K_0(\proj\Lambda)$.
\end{theorem}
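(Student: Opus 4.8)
The plan is to deduce the statement from the corresponding basis property for two-term silting complexes via the bijection of Theorem \ref{sptsil4}, after transporting everything across the standard identification of Grothendieck groups. First I would recall the isomorphism $K_0(\KKb(\proj\Lambda))\simeq K_0(\proj\Lambda)$ induced by viewing a projective module as a complex concentrated in degree $0$; its inverse sends a complex $(P^i)$ to its Euler characteristic $\sum_i(-1)^i[P^i]$, so that a two-term complex $(P^{-1}\to P^0)$ maps to $[P^0]-[P^{-1}]$. Under this identification the $g$-vector $g^{M_i}=[Q_0^{(i)}]-[Q_1^{(i)}]$, computed from a minimal projective presentation $Q_1^{(i)}\xto{}Q_0^{(i)}\to M_i\to 0$, is exactly the class of the two-term complex $(Q_1^{(i)}\to Q_0^{(i)})$, while $g^{P_i}=[P_i]$ equals $-1$ times the class of the complex $(P_i\to 0)$ having $P_i$ placed in degree $-1$.

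Next I would apply Theorem \ref{sptsil4}: the pair $(M,P)$ corresponds to the two-term silting complex
\[
T=\Bigl(Q_1\oplus P\xrightarrow{(f\ 0)}Q_0\Bigr),
\]
where $Q_1\xto{f}Q_0\to M\to 0$ is a minimal projective presentation of $M$. Since a minimal projective presentation of $M=\bigoplus_{i=1}^{\ell}M_i$ is the direct sum of those of the $M_i$, and $P=\bigoplus_{i=\ell+1}^{n}P_i$, this exhibits $T$ as a direct sum of the $n$ nonzero complexes $(Q_1^{(i)}\to Q_0^{(i)})$ for $i\le\ell$ together with $(P_i\to 0)$ for $i>\ell$. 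The key point is that these are precisely the indecomposable summands of $T$: by Proposition \ref{number of summands of silting}(a) the basic silting complex $T$ satisfies $|T|=|\Lambda|=n$, so the number of indecomposable summands of $T$ counted with multiplicity is $n$; having written $T$ as a sum of exactly $n$ nonzero complexes forces each of them to be indecomposable, and basicness forces them to be pairwise non-isomorphic.

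I would then invoke Proposition \ref{number of summands of silting}(b), which states that the classes of the indecomposable summands of a basic silting complex form a basis of $K_0(\KKb(\proj\Lambda))$. Transporting this basis along the isomorphism above, the classes of the summands become $g^{M_1},\dots,g^{M_\ell}$ and $-g^{P_{\ell+1}},\dots,-g^{P_n}$; since negating some basis vectors again yields a basis, $g^{M_1},\dots,g^{M_\ell},g^{P_{\ell+1}},\dots,g^{P_n}$ form a basis of $K_0(\proj\Lambda)$, as desired.

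I expect the only genuinely delicate step to be the bookkeeping that identifies the $n$ summands of $T$ with the full list of indecomposable summands. The counting argument using $|T|=n$ circumvents proving directly that each $(Q_1^{(i)}\to Q_0^{(i)})$ is indecomposable in $\KKb(\proj\Lambda)$, which would otherwise require invoking minimality of the presentation together with Proposition \ref{no common summand 2}. The compatibility of the Euler characteristic with the definition of $g$-vectors is routine.
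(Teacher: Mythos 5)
Your proof is correct and follows essentially the same route as the paper: pass to the corresponding two-term silting complex via Theorem \ref{sptsil4} and invoke Proposition \ref{number of summands of silting}(b) under the identification $K_0(\KKb(\proj\Lambda))\simeq K_0(\proj\Lambda)$. The paper's own proof is just a terser version of this; your explicit treatment of the sign on $g^{P_i}$ (the class of $P_i[1]$ is $-[P_i]$) and the Krull--Schmidt counting that identifies the $n$ summands are details the paper leaves implicit.
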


\begin{proof}
By Theorem \ref{sptsil4}, we have a corresponding silting complex 
$Q=\bigoplus_{i=1}^nQ_i$ for $\Lambda$ with indecomposable $Q_i$, where the vectors $g^{M_1},\cdots,g^{M_\ell},g^{P_{\ell+1}},\cdots,g^{P_n}$
are exactly the classes of $Q_1,\cdots,Q_n$ in the Grothendieck group
$K_0(\KKb(\proj\Lambda))=K_0(\proj\Lambda)$.
By Proposition \ref{number of summands of silting}(b), we have the assertion.
\end{proof}

This gives a result below due to Dehy-Keller. Recall that for a cluster-tilting object $T\in\CC$ and an object $X\in\CC$, there exists
a triangle
\[T''\to T'\to X\to T''[1]\]
in $\CC$ with $T',T''\in\add T$. We call ${\rm ind}_T(X):=T'-T''\in K_0(\add T)$\index{3ind@${\rm ind}_T(X)$} the \emph{index}\index{index!of an object} of $X$.

\begin{corollary}\cite[Theorem 2.4]{DK}
Let $\CC$ be a 2-CY triangulated category, and $T$ and $U=\bigoplus_{i=1}^nU_i$ be basic cluster-tilting objects with $U_i$ indecomposable.
Then the indices ${\rm ind}_T(U_1),\cdots,{\rm ind}_T(U_n)$ form a basis of the Grothendieck group $K_0(\add T)$ of the additive category $\add T$.
\end{corollary}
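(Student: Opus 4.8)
The plan is to deduce this from Theorem \ref{g-vectors} by transporting everything through the 2-CY tilted algebra $\Lambda := \End_\CC(T)^\op$. First I would recall from Lemma \ref{ctsilt1} that $\functor{(-)} = \Hom_\CC(T,-)$ restricts to an equivalence $\add T \to \proj\Lambda$; since an equivalence of additive categories sends indecomposables to indecomposables, it induces an isomorphism of Grothendieck groups $K_0(\add T) \xrightarrow{\sim} K_0(\proj\Lambda)$. It therefore suffices to show that this isomorphism carries each index ${\rm ind}_T(U_i)$ to the class of the corresponding summand of the two-term silting complex attached to $U$, since by Theorem \ref{g-vectors} (combined with the bijections of Theorems \ref{bijection between CT and PT} and \ref{sptsil4}) these classes form a basis of $K_0(\proj\Lambda)$.

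Next I would make the matching explicit. Splitting $U = U' \oplus U''$ with $U''$ the maximal summand in $\add T[1]$, we get a support $\tau$-tilting pair $\widetilde{U} = (\functor{U'}, \functor{U''[-1]}) =: (M,P)$ by Theorem \ref{bijection between CT and PT}. For an indecomposable summand $U_i$ of $U'$, I would use the triangle $T_1 \to T_0 \to U_i \to T_1[1]$ with $T_0, T_1 \in \add T$ and $T_0 \to U_i$ a minimal right $(\add T)$-approximation; this gives ${\rm ind}_T(U_i) = [T_0] - [T_1]$, and applying $\functor{(-)}$ yields the minimal projective presentation $\functor{T_1} \to \functor{T_0} \to \functor{U_i} \to 0$ exactly as in the proof of Proposition \ref{prop3.1}, so the image of ${\rm ind}_T(U_i)$ is precisely $g^{\functor{U_i}}$. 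For an indecomposable summand $U_i = T_i[1]$ of $U''$ (with $T_i \in \add T$ indecomposable), the triangle $T_i \to 0 \to U_i \to T_i[1]$ shows ${\rm ind}_T(U_i) = -[T_i]$, whose image $-[\functor{T_i}]$ is the class of the shifted projective $\functor{T_i}[1]$, i.e.\ the contribution $g^{P_j}$ of the projective part of the pair under Theorem \ref{sptsil4}.

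Finally I would invoke Theorem \ref{g-vectors}, which states that $g^{M_1}, \dots, g^{M_\ell}, g^{P_{\ell+1}}, \dots, g^{P_n}$ form a basis of $K_0(\proj\Lambda)$; pulling this basis back along the isomorphism $K_0(\add T) \cong K_0(\proj\Lambda)$ shows that the ${\rm ind}_T(U_i)$ form a basis of $K_0(\add T)$. The main obstacle is the sign bookkeeping in the second case: one must check that the index of a summand lying in $\add T[1]$ lands, with the correct sign, on the class assigned by Theorem \ref{sptsil4} to the projective part of the pair. Concretely this amounts to verifying that the silting complex attached to $(M,P)$ places $P$ in cohomological degree $-1$, so its class is $-[P]$, matching the index triangle $T_i \to 0 \to T_i[1] \to T_i[1]$; once this sign is reconciled the two cases align uniformly and the conclusion is immediate.
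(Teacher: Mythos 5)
Your proposal is correct and follows essentially the same route as the paper: transport along the equivalence $\Hom_{\CC}(T,-):\add T\to\proj\Lambda$, identify ${\rm ind}_T(U_i)$ with $g^{\overline{U_i}}$ for summands outside $\add T[1]$ and with $-g^{\overline{U_i[-1]}}$ for summands in $\add T[1]$, and invoke Theorem \ref{g-vectors}. The sign issue you flag is real but harmless, since flipping signs of basis vectors preserves the basis property, which is exactly how the paper treats it.
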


\begin{proof}
We can assume that $U_i\notin\add T[1]$ for $1\le i\le\ell$, and $U_i\in\add T[1]$ for $\ell+1\le i\le n$.
Then $(\bigoplus_{i=1}^\ell\overline{U_i},\bigoplus_{i=\ell+1}^n\overline{U_i[-1]})$ is a support $\tau$-tilting pair for $\Lambda$ by Theorem \ref{bijection between CT and PT}.
The equivalence $\Hom_{\CC}(T,-):\add T\to\proj\Lambda$ gives an isomorphism $K_0(\add T)\simeq K_0(\proj\Lambda)$.
This sends ${\rm ind}_T(U_i)$ to $g^{\overline{U_i}}$ for $1\le i\le\ell$, and to $-g^{\overline{U_i[-1]}}$ for $\ell+1\le i\le n$. Thus the assertion follows from Theorem \ref{g-vectors}.
\end{proof}

Now we consider a pair $M=(X,P)$ of a $\Lambda$-module $X$ and a projective
$\Lambda$-module $P$. We regard a $\Lambda$-module $X$ as a pair $(X,0)$.
For such pairs $M=(X,P)$ and $N=(Y,Q)$, let
\begin{eqnarray*}
g^M&:=&g^X-g^P,\\
E'_\Lambda(M,N)&:=&\langle X,\tau Y\rangle+\langle P,Y\rangle,\\
E_\Lambda(M,N)&:=&E_\Lambda'(M,N)+E_\Lambda'(N,M),\\
E_\Lambda(M)&:=&E_\Lambda(M,M).
\end{eqnarray*}\index{3g@$g^{M}$}\index{3e@$E'_\Lambda(M,N)$, $E_\Lambda(M,N)$, $E_\Lambda(M)$}
We call $g^M$ the \emph{$g$-vector}\index{gvector@$g$-vector!of a pair} of $M$, and $E_\Lambda(M,N)$ the \emph{$E$-invariant}\index{einvariant@$E$-invariant} of $M$ and $N$.
Clearly a pair $(M,0)$ is $\tau$-rigid if and only if $E_\Lambda(M)=0$.

There is the following relationship between $E$-invariants and $g$-vectors,
where we denote by $a\cdot b$ the standard inner product $\sum_{i=1}^na_ib_i$ for vectors $a=(a_1,\cdots,a_n)$ and $b=(b_1,\cdots,b_n)$.

\begin{proposition}\label{prop4.1}
Let $\Lambda$ be a finite dimensional algebra, and let $X$ and $Y$ be in $\mod\Lambda$. Then we have the following.
\begin{eqnarray*}
E_\Lambda'(X,Y)&=&\langle Y,X\rangle-g^Y\cdot c^X,\\
E_\Lambda(X,Y)&=&\langle Y,X\rangle+\langle X,Y\rangle-g^Y\cdot c^X-g^X\cdot c^Y,\\
E_\Lambda(X)&=&2(\langle X,X\rangle-g^X\cdot c^X).
\end{eqnarray*}
\end{proposition}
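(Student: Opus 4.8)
The plan is to prove the three formulas by relating the $E$-invariant to the pairing between a $g$-vector and a dimension vector, via the minimal projective presentation. The central computation will establish the first formula $E'_\Lambda(X,Y) = \langle Y,X\rangle - g^Y \cdot c^X$, after which the other two formulas follow formally. Indeed, once the first formula is known, the second follows by applying it twice and using $E_\Lambda(X,Y) = E'_\Lambda(X,Y) + E'_\Lambda(Y,X)$, and the third is the special case $Y = X$ (giving a factor of $2$).

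\medskip
To prove the first formula, I would start from the definition $E'_\Lambda(X,Y) = \langle X,\tau Y\rangle + \langle P, Y\rangle$, but since we are regarding $X$ and $Y$ as honest modules (pairs with zero projective part), the term $\langle P,Y\rangle$ is absent and we simply have $E'_\Lambda(X,Y) = \langle X,\tau Y\rangle = \dim_k\Hom_\Lambda(X,\tau Y)$. The key input is Proposition \ref{no common summand}(a): if $Q_1 \xto{d_1} Q_0 \xto{d_0} Y \to 0$ is a minimal projective presentation of $Y$, then there is an exact sequence
\[
0\to\Hom_\Lambda(X,\tau Y)\to D\Hom_\Lambda(Q_1,X)\to D\Hom_\Lambda(Q_0,X)\to D\Hom_\Lambda(Y,X)\to0.
\]
Taking $k$-dimensions and using that $\dim_k D V = \dim_k V$, the alternating sum gives
\[
\dim_k\Hom_\Lambda(X,\tau Y) = \dim_k\Hom_\Lambda(Q_1,X) - \dim_k\Hom_\Lambda(Q_0,X) + \dim_k\Hom_\Lambda(Y,X).
\]

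\medskip
The remaining step is to identify the first two terms with $g^Y \cdot c^X$. Writing $Q_j = \bigoplus_i P(i)^{b_{ji}}$, one has $\dim_k\Hom_\Lambda(P(i),X) = c^X_i$, the multiplicity of $S(i)$ as a composition factor of $X$ (this is the standard fact that $\Hom_\Lambda(P(i),X) \simeq e_i X$ has dimension equal to the $i$-th entry of the dimension vector). Hence $\dim_k\Hom_\Lambda(Q_j,X) = \sum_i b_{ji}\, c^X_i$, and therefore
\[
\dim_k\Hom_\Lambda(Q_0,X) - \dim_k\Hom_\Lambda(Q_1,X) = \sum_i (b_{0i}-b_{1i})\, c^X_i = g^Y \cdot c^X,
\]
since by definition $g^Y_i = b_{0i}-b_{1i}$ records the coefficients of $Q_0 - Q_1 = \sum_i g^Y_i P(i)$. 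Combining, $E'_\Lambda(X,Y) = \langle Y,X\rangle - g^Y \cdot c^X$, as claimed.

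\medskip
I do not anticipate a serious obstacle here: the argument is essentially bookkeeping, and the only substantive ingredient is the already-proved exact sequence of Proposition \ref{no common summand}(a) together with the elementary identification $\dim_k\Hom_\Lambda(P(i),X) = c^X_i$. The one point requiring a little care is that $g^Y$ is defined using the \emph{minimal} projective presentation, so I must apply Proposition \ref{no common summand}(a) to a minimal presentation of $Y$ to ensure the $g$-vector appearing in the formula is the correct one; any non-minimal presentation would add matching projective summands to $Q_0$ and $Q_1$ that cancel in $Q_0 - Q_1$ but also cancel in the difference of $\Hom$-dimensions, so in fact minimality is not strictly needed for this identity, though it is what makes $g^Y$ well-defined.
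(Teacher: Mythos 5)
Your proposal is correct and follows essentially the same route as the paper: the first identity is obtained by taking dimensions in the four-term exact sequence of Proposition \ref{no common summand}(a) applied to the minimal projective presentation of $Y$, identifying $\langle P_0,X\rangle-\langle P_1,X\rangle$ with $g^Y\cdot c^X$ via $\dim_k\Hom_\Lambda(P(i),X)=c^X_i$, and the remaining two formulas follow formally. The paper's proof is just a terser version of the same computation.
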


\begin{proof}
We only have to show the first equality.
Since $P_0-P_1=\sum_{i=1}^ng_i^YP(i)$, then $\langle P_0,X\rangle-\langle P_1,X\rangle=g^Y\cdot c^X$.
By Proposition \eqref{no common summand}(a), we have 
\[
E'_\Lambda(X,Y)=\langle X,\tau Y\rangle=\langle Y,X\rangle+\langle P_1,X\rangle-\langle P_0,X\rangle
=\langle Y,X\rangle-g^Y\cdot c^X.
\]
\end{proof}

The following more general description of $E$-invariants is also clear.

\begin{proposition}\label{thm4.2}
For any pair $M=(X,P)$ and $N=(Y,Q)$, we have
$$E_\Lambda(M,N)=\langle Y,X\rangle+\langle X,Y\rangle-g^M\cdot c^Y-g^N\cdot c^X.$$
\end{proposition}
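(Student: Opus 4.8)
The plan is to reduce everything to the already-established formula in Proposition \ref{prop4.1} together with one elementary computation of a $\Hom$-space over a projective module. By definition $E_\Lambda(M,N)=E_\Lambda'(M,N)+E_\Lambda'(N,M)$ where $E_\Lambda'(M,N)=\langle X,\tau Y\rangle+\langle P,Y\rangle$, so it suffices to rewrite each of the two summands of $E_\Lambda'(M,N)$ in terms of $\langle-,-\rangle$, $g$-vectors and $c$-vectors, and then add the symmetric expression $E_\Lambda'(N,M)$ and regroup.

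For the first summand, note that $\langle X,\tau Y\rangle$ is exactly $E_\Lambda'$ of the plain modules $X$ and $Y$ (i.e. of the pairs $(X,0)$ and $(Y,0)$), so Proposition \ref{prop4.1} applies verbatim and gives $\langle X,\tau Y\rangle=\langle Y,X\rangle-g^Y\cdot c^X$. For the second summand I would use that $P$ is projective: writing $P=\bigoplus_{i=1}^n P(i)^{a_i}$, the $g$-vector of $P$ is simply $g^P=(a_1,\dots,a_n)$ (its minimal projective presentation is $0\to P$). Since $\dim_k\Hom_\Lambda(P(i),Y)$ equals the multiplicity $c_i^Y$ of the simple $S(i)$ as a composition factor of $Y$, one gets $\langle P,Y\rangle=\sum_{i=1}^n a_i c_i^Y=g^P\cdot c^Y$. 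Combining the two computations yields
\[
E_\Lambda'(M,N)=\langle Y,X\rangle-g^Y\cdot c^X+g^P\cdot c^Y.
\]

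Adding the analogous expression $E_\Lambda'(N,M)=\langle X,Y\rangle-g^X\cdot c^Y+g^Q\cdot c^X$ and collecting the coefficients of $c^X$ and of $c^Y$ separately, the terms involving $c^X$ combine to $-(g^Y-g^Q)\cdot c^X=-g^N\cdot c^X$ and those involving $c^Y$ to $-(g^X-g^P)\cdot c^Y=-g^M\cdot c^Y$, using the definitions $g^M=g^X-g^P$ and $g^N=g^Y-g^Q$. This is precisely the claimed identity
\[
E_\Lambda(M,N)=\langle Y,X\rangle+\langle X,Y\rangle-g^M\cdot c^Y-g^N\cdot c^X.
\]

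The proof is essentially bookkeeping once Proposition \ref{prop4.1} is invoked, so there is no serious obstacle; the only genuinely non-formal input is the identification $\langle P,Y\rangle=g^P\cdot c^Y$, which is where I would take most care, as it hinges on the standard fact that $\Hom_\Lambda(P(i),-)$ reads off the multiplicity of $S(i)$ (equivalently $\dim_k e_iY$) and on the observation that for a projective module the $g$-vector coincides with its class in $K_0(\proj\Lambda)$.
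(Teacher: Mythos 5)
Your proof is correct and is exactly the computation the paper has in mind: the authors state Proposition \ref{thm4.2} without proof as ``also clear'' from Proposition \ref{prop4.1}, and your reduction via $\langle X,\tau Y\rangle=\langle Y,X\rangle-g^Y\cdot c^X$ together with the identity $\langle P,Y\rangle=g^P\cdot c^Y$ for projective $P$ is the intended bookkeeping. Nothing is missing.
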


We end this subsection with the following analog of \cite[Theorem 2.3]{DK}, which was also observed by Plamondon.

\begin{theorem}\label{g-vectors are complete invariant}
The map $M\mapsto g^M$ gives an injection from the set of isomorphism classes of $\tau$-rigid pairs for $\Lambda$ to $K_0(\proj\Lambda)$.
\end{theorem}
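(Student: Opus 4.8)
The plan is to prove the statement in the equivalent form: if $M=(X,P)$ and $N=(Y,Q)$ are basic $\tau$-rigid pairs for $\Lambda$ with $g^M=g^N$, then $M\simeq N$. I would first extract numerical consequences of the hypothesis from Proposition \ref{thm4.2}. Since $M$ and $N$ are $\tau$-rigid we have $E_\Lambda(M)=E_\Lambda(N)=0$ (by the definition of the $E$-invariant), and specialising Proposition \ref{thm4.2} to $M=N$ gives $g^M\cdot c^X=\langle X,X\rangle$ and $g^N\cdot c^Y=\langle Y,Y\rangle$. Substituting $g^M=g^N$ into the mixed formula then collapses it to
\[
E_\Lambda(M,N)=\langle X,Y\rangle+\langle Y,X\rangle-\langle X,X\rangle-\langle Y,Y\rangle .
\]
Because $E_\Lambda(M,N)=E'_\Lambda(M,N)+E'_\Lambda(N,M)$ is a sum of dimensions of Hom-spaces it is non-negative, so one inequality is automatic; the content is to prove $E_\Lambda(M,N)\le 0$, equivalently that the four summands $\Hom_\Lambda(X,\tau Y)$, $\Hom_\Lambda(Y,\tau X)$, $\Hom_\Lambda(P,Y)$ and $\Hom_\Lambda(Q,X)$ all vanish.

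To obtain this vanishing I would pass to silting complexes. By Theorem \ref{sptsil4} and Lemma \ref{sptsil1}, $M$ and $N$ correspond to basic two-term presilting complexes $R,R'$ for $\Lambda$ with $[R]=g^M$ and $[R']=g^N$ in $K_0(\KKb(\proj\Lambda))$, and the four Hom-spaces above are exactly the pieces assembling $\Hom_{\KKb(\proj\Lambda)}(R,R'[1])$ and $\Hom_{\KKb(\proj\Lambda)}(R',R[1])$; thus $E_\Lambda(M,N)=0$ is equivalent to $R\oplus R'$ being presilting. Here I would complete $R$ to a two-term silting complex $S$ (Proposition \ref{silt2.28}(a)), whose indecomposable summands form a basis of $K_0(\KKb(\proj\Lambda))$ by Proposition \ref{number of summands of silting}(b); since $R$ is a summand of the presilting complex $S$ we already have $\Hom_{\KKb(\proj\Lambda)}(S,R[1])=0$. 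The crux, and the step I expect to be the main obstacle, is to deduce $\Hom_{\KKb(\proj\Lambda)}(S,R'[1])=0$ from the single identity $[R']=[R]$: this is where the positivity built into silting theory must be combined with the bilinearity of the Euler form on $K_0(\KKb(\proj\Lambda))$ and the Serre-type duality via the Nakayama functor (as used in the proof of Theorem \ref{ctsilt3}) to pin down the off-diagonal extensions. Granting this, restriction to the summand $R$ of $S$ gives $\Hom(R,R'[1])=0$, and the symmetric argument gives $\Hom(R',R[1])=0$, so $E_\Lambda(M,N)=0$.

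With $E_\Lambda(M,N)=0$ in hand the remainder is linear algebra. The vanishing of the four Hom-spaces makes $X\oplus Y$ a $\tau$-rigid module and gives $\Hom_\Lambda(P\oplus Q,X\oplus Y)=0$, so the basic pair $(X\oplus Y,\,P\oplus Q)$ is a $\tau$-rigid pair for $\Lambda$; in particular its module part and projective part share no indecomposable summand. Completing it to a basic support $\tau$-tilting pair (Theorem \ref{thm2.3} together with Proposition \ref{two notion of tau-rigidity}) and invoking Theorem \ref{g-vectors}, the $g$-vectors of the indecomposable summands of this completion form a basis of $K_0(\proj\Lambda)$. In this basis the identity $g^M=g^N$ reads: each indecomposable summand of $X$ (resp.\ of $Y$) contributes its basis vector with coefficient $+1$, while each indecomposable summand of $P$ (resp.\ of $Q$) contributes the corresponding projective basis vector with coefficient $-1$, all on distinct basis elements. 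Linear independence therefore forces $X$ and $Y$ to have the same indecomposable summands and likewise $P$ and $Q$; since all pairs are basic, $X\simeq Y$ and $P\simeq Q$, i.e.\ $M\simeq N$, as required.
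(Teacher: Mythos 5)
Your reduction and your endgame are both sound, but the argument has a genuine gap at exactly the point where the whole content of the theorem lies. You correctly compute, using Proposition \ref{thm4.2} and $E_\Lambda(M)=E_\Lambda(N)=0$, that $g^M=g^N$ collapses $E_\Lambda(M,N)$ to $\langle X,Y\rangle+\langle Y,X\rangle-\langle X,X\rangle-\langle Y,Y\rangle$, and you correctly observe that once $E_\Lambda(M,N)=0$ is known, completing the $\tau$-rigid pair built from $X\oplus Y$ and $P\oplus Q$ to a support $\tau$-tilting pair and invoking Theorem \ref{g-vectors} forces $M\simeq N$. But the claim $E_\Lambda(M,N)=0$ is \emph{equivalent} to the theorem (it holds trivially when $M\simeq N$, and by your own endgame it implies $M\simeq N$), and you establish it only up to the sentence ``Granting this''. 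The tools you invoke there cannot close the gap: for two-term complexes the Euler form on $K_0(\KKb(\proj\Lambda))$ controls only the alternating sum $-\dim\Hom(S,R'[-1])+\dim\Hom(S,R')-\dim\Hom(S,R'[1])$, and since nothing bounds the degree $-1$ and degree $0$ terms, the identity $[R']=[R]$ together with $\Hom(S,R[1])=0$ gives no information about $\Hom(S,R'[1])$; Serre-type duality via $\nu$ merely trades one unknown Hom-space for another. As written, the proposal assumes the theorem.

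The paper's proof, following \cite[Theorem 2.3]{DK}, goes a different way and never needs to compare $M$ with $N$ homologically in advance. First, Proposition \ref{no common summand 2} together with $\Hom_\Lambda(P,X)=0$ shows that $P_0$ and $P_1\oplus P$ have no indecomposable summand in common, so the single class $g^M=[P_0]-[P_1\oplus P]$ already determines $P_0$ and $P_1\oplus P$ up to isomorphism; hence $P_0\simeq Q_0$ and $P_1\oplus P\simeq Q_1\oplus Q$, and the two pairs are presented by two points $d,e$ of the \emph{same} affine space $\Hom_\Lambda(P_1\oplus P,P_0)$. One then runs a general-position argument there: the surjectivity criterion of Proposition \ref{no common summand}(c) is exactly what makes the locus of morphisms presenting $X$ (respectively $Y$) a dense open subset, and two dense open subsets of an irreducible affine space intersect, forcing $X\simeq Y$ and then $P\simeq Q$. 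To salvage your approach you would have to supply an argument of this kind for your ``crux'' step.
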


\begin{proof}
The proof is based on Propositions \ref{no common summand}(c) and \ref{no common summand 2}, and is the same as that of \cite[Theorem 2.3]{DK}.
\end{proof}

\subsection{$E$-invariants for $2$-CY tilted algebras}

In the rest of this section, let $\CC$ be a 2-CY triangulated $k$-category
and let $T$ be a cluster-tilting object in $\CC$. Let $\Lambda:=\End_{\CC}(T)^{\rm op}$.
For any object $X\in\CC$, we take a decomposition $X=X'\oplus X''$ where $X''$ is a
maximal direct summand of $X$ which belongs to $\add T[1]$ and define a pair by
\[\widetilde{X}_\Lambda:=(\functor{X'},\functor{X''[-1]}),\]
where $\functor{(-)}$ is an equivalence $\Hom_{\CC}(T,-):\CC/[T[1]]\to\mod\Lambda$ given in \eqref{bar functor}.

We have the following interpretation of $E$-invariants.

\begin{proposition}\label{E=Ext^1}
We have $E_\Lambda(\widetilde{X}_\Lambda,\widetilde{Y}_\Lambda)=\dim_k\Ext^1_{\CC}(X,Y)$ for any $X,Y\in\CC$.
\end{proposition}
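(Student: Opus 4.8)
The plan is to simply unwind the definition of the $E$-invariant of the pairs $\widetilde{X}_\Lambda$ and $\widetilde{Y}_\Lambda$ and observe that the resulting expression coincides term-by-term with the formula for $\dim_k\Ext^1_{\CC}(X,Y)$ already established in Proposition \ref{thm3.3}. Essentially all of the geometric content lives in Proposition \ref{thm3.3}; what remains is pure bookkeeping.

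First I would fix the decompositions $X=X'\oplus X''$ and $Y=Y'\oplus Y''$, where $X''$ and $Y''$ are the maximal direct summands lying in $\add T[1]$, so that by definition $\widetilde{X}_\Lambda=(\functor{X'},\functor{X''[-1]})$ and $\widetilde{Y}_\Lambda=(\functor{Y'},\functor{Y''[-1]})$. Since $X''[-1],Y''[-1]\in\add T$ and $\functor{(-)}$ restricts to an equivalence $\add T\to\proj\Lambda$ by Lemma \ref{ctsilt1}, the second components $\functor{X''[-1]}$ and $\functor{Y''[-1]}$ are genuine projective $\Lambda$-modules. Hence $\widetilde{X}_\Lambda$ and $\widetilde{Y}_\Lambda$ are pairs of the type $(X,P)$ for which $E_\Lambda(-,-)$ was defined, so the left-hand side makes sense.

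Next I would expand, using $E_\Lambda(M,N)=E'_\Lambda(M,N)+E'_\Lambda(N,M)$ together with $E'_\Lambda((A,P),(B,Q))=\langle A,\tau B\rangle+\langle P,B\rangle$, applied to $M=\widetilde{X}_\Lambda$ and $N=\widetilde{Y}_\Lambda$. This yields
\[
E_\Lambda(\widetilde{X}_\Lambda,\widetilde{Y}_\Lambda)=\langle\functor{X'},\tau\functor{Y'}\rangle+\langle\functor{Y'},\tau\functor{X'}\rangle+\langle\functor{X''[-1]},\functor{Y'}\rangle+\langle\functor{Y''[-1]},\functor{X'}\rangle.
\]
This is precisely the right-hand side of the identity in Proposition \ref{thm3.3}, which asserts that this sum equals $\dim_k\Ext^1_{\CC}(X,Y)$. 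Comparing the two therefore gives the claim at once. There is no real obstacle to overcome here: the only points requiring care are checking that the second components of the pairs are projective (so that the invariant is defined) and that the four summands match up in the correct order, both of which are immediate from Lemma \ref{ctsilt1} and the definition of $E'_\Lambda$.
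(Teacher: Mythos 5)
Your proposal is correct and follows exactly the paper's own argument: the paper also dispatches this proposition as an immediate consequence of Proposition \ref{thm3.3} combined with the definition of the $E$-invariant, and your term-by-term matching of the four summands is just that argument written out in full.
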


\begin{proof}
This is immediate from Proposition \ref{thm3.3} and our definition of $E$-invariants.
\end{proof}

Now let $T'$ be a cluster-tilting mutation of $T$.
Then we refer to the 2-CY-tilted algebras $\Lambda=\End_{\CC}(T)^{\op}$ and 
$\Lambda'=\End_{\CC}(T')^{\op}$ as {\it neighbouring}\index{neighbouring 2-CY-tilted algebras} 2-CY-tilted algebras.
We define a pair $\widetilde{X}_{\Lambda'}$ for $\Lambda'$ in a similar way to
$\widetilde{X}_\Lambda$ by using the equivalence $\Hom_{\CC}(T',-):\CC/[T'[1]]\to\mod\Lambda'$.

By our approach to the $E$-invariant, the following is now a direct consequence.

\begin{theorem}\label{thm4.3}
With the above notation, let $M$ and $N$ be objects in $\CC$.
Then $E_{\Lambda}(\widetilde{M}_\Lambda,\widetilde{N}_\Lambda)=
E_{\Lambda'}(\widetilde{M}_{\Lambda'},\widetilde{N}_{\Lambda'})$.
\end{theorem}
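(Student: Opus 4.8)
The plan is to reduce the statement to the observation that $\dim_k\Ext^1_{\CC}(M,N)$ is intrinsic to the triangulated category $\CC$ and therefore cannot depend on the choice of cluster-tilting object used to pass to a module category. The bridge is Proposition \ref{E=Ext^1}, which identifies the $E$-invariant computed in the module category of a $2$-CY-tilted algebra with an $\Ext^1$-space in $\CC$. In effect the entire content of the theorem is already packaged in the identity $E=\dim_k\Ext^1$, so what remains is only to read this identity twice.

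First I would apply Proposition \ref{E=Ext^1} to the cluster-tilting object $T$ and the algebra $\Lambda=\End_{\CC}(T)^{\op}$, obtaining
\[
E_{\Lambda}(\widetilde{M}_\Lambda,\widetilde{N}_\Lambda)=\dim_k\Ext^1_{\CC}(M,N).
\]
Next I would apply the same proposition to the mutated cluster-tilting object $T'$ and its algebra $\Lambda'=\End_{\CC}(T')^{\op}$, obtaining
\[
E_{\Lambda'}(\widetilde{M}_{\Lambda'},\widetilde{N}_{\Lambda'})=\dim_k\Ext^1_{\CC}(M,N).
\]
Comparing the two right-hand sides, which are literally the same integer, yields the claimed equality.

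The only point requiring a word of care is that Proposition \ref{E=Ext^1} was phrased for the fixed cluster-tilting object $T$; I would note that its proof rests solely on Proposition \ref{thm3.3}, which holds for an arbitrary cluster-tilting object and its associated $2$-CY-tilted algebra, together with the definition of the $E$-invariant. Hence the proposition applies verbatim with $T$ replaced by $T'$ and $\Lambda$ replaced by $\Lambda'$, and there is no genuine obstacle to overcome. The \emph{neighbouring} hypothesis on $T$ and $T'$ plays no role beyond guaranteeing that both algebras arise from cluster-tilting objects of one and the same category $\CC$; indeed the argument shows more generally that $E_{\Lambda}(\widetilde{M}_\Lambda,\widetilde{N}_\Lambda)$ is unchanged under passing to \emph{any} $2$-CY-tilted algebra of $\CC$, so the statement is a direct consequence of Proposition \ref{E=Ext^1} as indicated.
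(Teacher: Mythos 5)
Your proof is correct and is essentially identical to the paper's: both sides are identified with $\dim_k\Ext^1_{\CC}(M,N)$ via Proposition \ref{E=Ext^1}, applied once for $T$ and once for $T'$. Your extra remark that the proposition holds verbatim for any cluster-tilting object (since it rests only on Proposition \ref{thm3.3}) is a sensible clarification the paper leaves implicit.
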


\begin{proof}
This is clear from Proposition \ref{E=Ext^1} since both sides are equal to $\dim_k\Ext^1_{\CC}(M,N)$.
\end{proof}

In particular, $\widetilde{M}_\Lambda$ is $\tau$-rigid if and only if $\widetilde{M}_{\Lambda'}$ is $\tau$-rigid.

This result is analogous to the corresponding result for (neighbouring) Jacobian 
algebras proved in \cite{DWZ}, in a larger generality. 
It is however not clear whether the two concepts of neighbouring algebras coincide
for finite dimensional neighbouring Jacobian algebras.
See \cite{BIRS} for more information.

\section{Examples}

In this section we illustrate some of our work with easy examples.

\begin{example}
Let $\Lambda$ be a local finite dimensional $k$-algebra.
Then we have $\sttilt\Lambda=\{\Lambda,0\}$ since the condition $\Hom_\Lambda(M,\tau M)=0$ implies either $M=0$ or $\tau M=0$ (i.e. $M$ is projective).
We have $\Q(\sttilt\Lambda)=(\xymatrix{\Lambda\ar[r]&0})$,
$\Q(\ftors\Lambda)=(\xymatrix{\mod\Lambda\ar[r]&0})$ and
$\Q(\twosilt\Lambda)=(\xymatrix{\Lambda\ar[r]&\Lambda[1]})$.
\end{example}

\begin{example}
Let $\Lambda$ be a finite dimensional $k$-algebra given by the quiver
 $\xymatrix{1\ar@<.3mm>[r]^a&2\ar@<.3mm>[l]^a}$ with relations $a^2=0$.
Then $\Q(\sttilt\Lambda)$, $\Q(\ftors\Lambda)$ and $\Q(\twosilt\Lambda)$ are the following:
\begin{eqnarray*}&\xymatrix{
{\begin{smallmatrix}1\\ 2\end{smallmatrix}}\oplus{\begin{smallmatrix}2\\ 1\end{smallmatrix}}\ar[r]\ar[d]&{\begin{smallmatrix}1\\ 2\end{smallmatrix}}\oplus 1\ar[r]&1\ar[d]\\
2\oplus{\begin{smallmatrix}2\\ 1\end{smallmatrix}}\ar[r]&2\ar[r]&0
}&\\
&\xymatrix{
\mod\Lambda\ar[r]\ar[d]&\add({\begin{smallmatrix}1\\ 2\end{smallmatrix}}\oplus 1)\ar[r]&\add 1\ar[d]\\
\add(2\oplus{\begin{smallmatrix}2\\ 1\end{smallmatrix}})\ar[r]&\add 2\ar[r]&0
}&\\
&\xymatrix{
\Lambda\ar[r]\ar[d]&
\left[{\begin{smallmatrix}2\\ 1\end{smallmatrix}}\xrightarrow{[a\ 0]}{\begin{smallmatrix}1\\ 2\end{smallmatrix}}\oplus{\begin{smallmatrix}1\\ 2\end{smallmatrix}}\right]\ar[r]&
\left[{\begin{smallmatrix}2\\ 1\end{smallmatrix}}\oplus{\begin{smallmatrix}2\\ 1\end{smallmatrix}}\xrightarrow{[a\ 0]}{\begin{smallmatrix}1\\ 2\end{smallmatrix}}\right]\ar[d]\\
\left[{\begin{smallmatrix}1\\ 2\end{smallmatrix}}\xrightarrow{[a\ 0]}{\begin{smallmatrix}2\\ 1\end{smallmatrix}}\oplus{\begin{smallmatrix}2\\ 1\end{smallmatrix}}\right]\ar[r]&
\left[{\begin{smallmatrix}1\\ 2\end{smallmatrix}}\oplus{\begin{smallmatrix}1\\ 2\end{smallmatrix}}\xrightarrow{[a\ 0]}{\begin{smallmatrix}2\\ 1\end{smallmatrix}}\right]\ar[r]&
\Lambda[1]
}&\end{eqnarray*}
\end{example}

\begin{example}
Let $\Lambda$ be a finite dimensional $k$-algebra given by the quiver
 $\xymatrix@C=.5em@R=.5em{&2\ar[rd]^a\\ 1\ar[ur]^a&&3\ar[ll]^a}$ with relations $a^2=0$.
Then $\Lambda$ is a cluster-tilted algebra of type $A_3$, and 
there are 14 elements in $\ctilt\CC$ for the cluster category $\CC$ of type $A_3$.
By our bijections, we know that there are 14 elements in each set
$\sttilt\Lambda$, $\ftors\Lambda$ and $\twosilt\Lambda$.

\[\xymatrix@R=1em{
&{\begin{smallmatrix}1\\ 2\end{smallmatrix}}\oplus{\begin{smallmatrix}2\\ 3\end{smallmatrix}}\oplus 2\ar[rr]\ar[rd]&&
{\begin{smallmatrix}2\\ 3\end{smallmatrix}}\oplus2\ar[rd]\\
&&{\begin{smallmatrix}1\\ 2\end{smallmatrix}}\oplus2\ar[rr]\ar[rd]&&2\ar[rdd]\\
{\begin{smallmatrix}1\\ 2\end{smallmatrix}}\oplus{\begin{smallmatrix}2\\ 3\end{smallmatrix}}\oplus{\begin{smallmatrix}3\\ 1\end{smallmatrix}}\ar[ruu]\ar[r]\ar[rdd]&
{\begin{smallmatrix}1\\ 2\end{smallmatrix}}\oplus1\oplus{\begin{smallmatrix}3\\ 1\end{smallmatrix}}\ar[rr]\ar[rd]&&
{\begin{smallmatrix}1\\ 2\end{smallmatrix}}\oplus1\ar[rd]\\
&&{\begin{smallmatrix}3\\ 1\end{smallmatrix}}\oplus1\ar[rr]\ar[rd]&&1\ar[r]&0\\
&3\oplus{\begin{smallmatrix}2\\ 3\end{smallmatrix}}\oplus{\begin{smallmatrix}3\\ 1\end{smallmatrix}}\ar[rr]\ar[rd]&&
{\begin{smallmatrix}3\\ 1\end{smallmatrix}}\oplus3\ar[rd]\\
&&{\begin{smallmatrix}2\\ 3\end{smallmatrix}}\oplus3\ar[rr]\ar[ruuuuu]&&3\ar[ruu]
}\]
\end{example}

\begin{example}
Let $\Lambda=kQ/\langle\beta\alpha\rangle$, where $Q$ is the quiver
$1\stackrel{\alpha}{\longrightarrow}2\stackrel{\beta}{\longrightarrow}3$. Then $T=S_1\oplus P_1\oplus P_3$ is a $\tau$-tilting
module which is not a tilting module. Here $S_i$ denotes the simple $\Lambda$-module associated with the vertex $i$, and $P_i$ denotes the corresponding indecomposable projective $\Lambda$-module.

In this case there are 12 basic support $\tau$-tilting $\Lambda$-modules, and $\Q(\sttilt\Lambda)$ is the following.
\[\xymatrix{
&{\begin{smallmatrix}1\\ 2\end{smallmatrix}}\oplus{\begin{smallmatrix}2\\ 3\end{smallmatrix}}\oplus 2\ar[r]\ar[rrd]&
{\begin{smallmatrix}1\\ 2\end{smallmatrix}}\oplus 2\ar[r]\ar[rrd]&
{\begin{smallmatrix}1\\ 2\end{smallmatrix}}\oplus 1\ar[r]&1\ar[rd]\\
{\begin{smallmatrix}1\\ 2\end{smallmatrix}}\oplus{\begin{smallmatrix}2\\ 3\end{smallmatrix}}\oplus 3\ar[ur]\ar[r]\ar[dr]&
{\begin{smallmatrix}1\\ 2\end{smallmatrix}}\oplus 1\oplus 3\ar[rru]\ar[r]&
1\oplus 3\ar[rru]\ar[rrd]&
{\begin{smallmatrix}2\\ 3\end{smallmatrix}}\oplus 2\ar[r]&
2\ar[r]&0\\
&{\begin{smallmatrix}2\\ 3\end{smallmatrix}}\oplus 3\ar[rrr]\ar[urr]&&&3\ar[ru]
}\]
\end{example}

We refer to \cite{Ad,J,Miz,Z} for more examples of support $\tau$-tilting modules.

\printindex

\end{document}